\numberwithin{equation}{section}
\theoremstyle{plain}
\newtheorem{theorem}[equation]{Theorem}
\newtheorem{proposition}[equation]{Proposition}
\newtheorem{corollary}[equation]{Corollary}
\newtheorem{lemma}[equation]{Lemma}
\newtheorem{conjecture}[equation]{Conjecture}
\theoremstyle{definition}
\newtheorem{definition}[equation]{Definition}
\newtheorem{notation}[equation]{Notation}
\newtheorem{example}[equation]{Example}
\newtheorem{remark}[equation]{Remark}
\newcommand{\DMO}{\DeclareMathOperator}
\newcommand{\beq}{\begin{equation}}
\newcommand{\eeq}{\end{equation}}
\newcommand{\bbar}[1]{\overline{#1}}
\DeclareMathOperator{\id}{{Id}}
\DeclareMathOperator{\Hom}{{Hom}}
\DeclareMathOperator{\End}{{End}}
\DeclareMathOperator{\Ext}{{Ext}}
\DeclareMathOperator{\Tor}{Tor}
\DeclareMathOperator{\Aut}{{Aut}}
\DeclareMathOperator{\Proj}{Proj}
\DeclareMathOperator{\pd}{pdim}
\DeclareMathOperator{\coker}{Coker}
\DeclareMathOperator{\im}{Im}
\DeclareMathOperator{\gr}{gr}
\DeclareMathOperator{\Div}{Div}
\DeclareMathOperator{\Ann}{Ann}
\DeclareMathOperator{\grk}{grk}
\newcommand{\mc}{\mathcal}
\newcommand{\kk}{{\Bbbk}}
\newcommand{\timesg}{{\raisebox{1pt}{$\scriptscriptstyle\bullet$}} g}
\newcommand{\bigdot}{{\raisebox{0.5pt}{$\scriptscriptstyle\bullet$}}}
\newcommand{\dotms}{{\,\raisebox{1pt}{${\scriptscriptstyle\bullet}_{_{M\hskip -1pt S}}$}}}
\newcommand{\ZZ}{{\mathbb Z}}
\newcommand{\PP}{{\mathbb P}}
\newcommand{\FF}{{\mathbb F}}
\newcommand{\mb}{\mathbb}
\newcommand{\wt}{\widetilde}
\newcommand{\II}{\mb I}
\newcommand{\JJ}{\mb J}
\newcommand{\KK}{\mb K}
\newcommand{\LL}{\mb L}
\newcommand{\sL}{\mc{L}}
\newcommand{\sM}{\mc{M}}
\newcommand{\sN}{\mc{N}}
\DeclareMathOperator{\Qcoh}{Qcoh}
\DeclareMathOperator{\coh}{coh}
\DeclareMathOperator{\rgr}{gr-\!}
\DeclareMathOperator{\lgr}{\!-gr}
\DeclareMathOperator{\rGr}{Gr-\!}
\DeclareMathOperator{\rmod}{mod-\!}
\DeclareMathOperator{\rMod}{Mod-\!}
\DeclareMathOperator{\lQgr}{\!-Qgr}
\DeclareMathOperator{\rQgr}{Qgr-\!}
\DeclareMathOperator{\rqgr}{qgr-\!}
\DeclareMathOperator{\rTors}{Tors-\!}
\DeclareMathOperator{\rtors}{tors-\!}
\DeclareMathOperator{\GKdim}{GKdim}
\newcommand{\dra}{\dashrightarrow}
\newcommand{\hra}{\hookrightarrow}
\newcommand{\uExt}{\underline{\Ext}}
\newcommand{\uHom}{\underline{\Hom}}
\newcommand{\uEnd}{\underline{\End}}
\DeclareMathAlphabet{\mathpzc}{OT1}{pzc}{m}{it}
 \newcommand{\hilb}{{\sf hilb}\,}
\DMO{\gldim}{gldim}
\DMO{\pdim}{pdim}
\DMO{\injdim}{injdim}
\DMO{\Tot}{Tot}
\newcommand{\wh}{\widehat}
\newcommand{\too}{\longrightarrow}
\title{Ring-theoretic blowing down: I}
\author{D. Rogalski,  S. J. Sierra, and J. T. Stafford}
\address{(Rogalski)
Department of Mathematics, UCSD, La Jolla, CA 92093-0112, USA. }
\email{drogalsk@math.ucsd.edu}
 \address{(Sierra) School of Mathematics,
University of Edinburgh, Edinburgh EH9 3FD, U.K.}
\email{s.sierra@ed.ac.uk}
\address{(Stafford) School of Mathematics,  The University of Manchester,   Manchester M13 9PL,
U.K.}
\email{Toby.Stafford@manchester.ac.uk}
\thanks{The first author is partially supported by NSF grant DMS-1201572. The second author is partially supported by EPSRC grant EP/M008460/1.
  The third author is partially supported by EPSRC grant EP/L018322/1.}
\date{\today}
\subjclass[2010]{Primary: 14A22,  16P40,    16S38, 16W50; Secondary:  14H52,  18E15.}
\keywords{Noncommutative projective geometry,  noncommutative surfaces, Sklyanin algebras,
noetherian  graded rings,
noncommutative  blowing up and blowing down, Castelnuovo's contraction theorem}
\begin{document}
 
 \begin{abstract}
  One of the major open  problems in noncommutative algebraic geometry is the classification of noncommutative projective 
  surfaces (or, slightly more generally, of  noetherian connected graded domains of Gelfand-Kirillov dimension 3). Earlier work of the authors
   classified the connected graded noetherian subalgebras of Sklyanin algebras using a noncommutative  analogue of blowing up.
    In order to understand other algebras birational to a Sklyanin algebra, one also needs a notion of blowing down. 
  This is achieved in this paper, where we give a noncommutative analogue of Castelnuovo's classic theorem that $(-1)$-lines on a
   smooth surface can be contracted. 
The resulting noncommutative   blown-down algebra  has pleasant properties; in particular it is always noetherian and is  smooth if the original noncommutative surface is smooth. 
  
  In a companion paper we will use this technique to construct explicit birational transformations between  various noncommutative surfaces which contain an elliptic curve. 

\end{abstract}

\maketitle


\section{Introduction}\label{INTRO}  
Throughout the paper, $\kk$ will denote an algebraically closed field and  all rings will be $\kk$-algebras. 
A $\kk$-algebra $R$ is \emph{connected graded}\label{cg-defn} or \emph{cg} if $R=\bigoplus_{n\geq 0}R_n$ is a finitely 
generated, $\mathbb{N}$-graded algebra with $R_0=\kk$. For such a ring $R$, the category of graded noetherian right 
$R$-modules will be denoted $\rgr R$ with quotient category $\rqgr R$ obtained by quotienting out the Serre subcategory 
of finite dimensional modules. An effective intuition    is  to regard  $\rqgr R$ as the category of coherent sheaves on the (nonexistent) space $\Proj(R)$.

The classification of noetherian, connected graded domains $R$ of Gelfand-Kirillov dimension 3 (or the corresponding noncommutative
   surfaces $\rqgr R$) is one of the major open problems in noncommutative algebraic geometry. This
 has been solved in many particular cases and those solutions have lead to some fundamental advances in the subject; 
 see, for example, \cite{ATV1990,  RSSlong, KRS, SV, VdB-blowups,VdB3} and the references therein. In \cite{Ar}, 
 Artin conjectured that, birationally at least, there is a short list of such surfaces, with the generic case being a Sklyanin 
 algebra. Here, the graded quotient ring $Q_{gr}(R)$ of $R$ is obtained by inverting the non-zero homogeneous elements 
 and  two such  domains  $R, S$ are \emph{birational} if  $Q_{gr}(R)_0 \cong Q_{gr}(S)_0$. Sklyanin algebras are defined in Example~\ref{sklyanin-defn}.

In earlier work of the authors the  connected graded noetherian subalgebras of any  Sklyanin algebra were classified \cite{R-Sklyanin,RSSlong}
and this was achieved through a noncommutative variant of blowing up. However, if one wishes to classify more general algebras 
birational to a Sklyanin algebra one certainly also needs an analogue of blowing down (contracting) exceptional lines. 
This is achieved in this paper.
Before describing these results in  detail we set the stage by reviewing key classical results from commutative algebraic geometry.  
 Thus, let $x$ be a closed point on    a smooth projective surface $X$ over $\kk$, and let $\pi:  Bl_x(X) \to X$ be the blowup of $X$ at $x$. 
These maps, also known as monoidal transformations, are of course fundamental to the birational geometry of surfaces. 

It is well-known that:
\begin{proposition}\label{prop1}
 $Bl_x(X)$ is also a smooth projective surface.  If $L = \pi^{-1}(x)$ is the exceptional locus of $\pi$, then $L \cong \mathbb P^1$ 
 with self-intersection  $L \bigdot L=-1$.
\end{proposition}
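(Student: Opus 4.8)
The plan is to prove this classical fact about blowups by working locally near the point $x$, since both the smoothness of $Bl_x(X)$ and the computation $L \cong \mathbb{P}^1$ with $L \bigdot L = -1$ are local questions at $x$; away from $x$ the map $\pi$ is an isomorphism. First I would set up the standard affine model: choose local coordinates $(u,v)$ at $x$, so that an étale (or analytic) neighbourhood of $x$ looks like $\mathbb{A}^2$ with $x$ the origin, and recall the explicit description of $Bl_0(\mathbb{A}^2) \subseteq \mathbb{A}^2 \times \mathbb{P}^1$ as the closed subvariety $\{ (u,v), [s:t] : ut = vs \}$. I would then cover $Bl_0(\mathbb{A}^2)$ by the two standard charts $s \neq 0$ and $t \neq 0$: in the chart $s \neq 0$, setting $w = t/s$, the blowup is $\Spec \kk[u, w]$ with the map given by $(u,w) \mapsto (u, uw)$; symmetrically on the other chart. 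Each chart is thus isomorphic to $\mathbb{A}^2$, so $Bl_x(X)$ is covered by smooth charts and hence is a smooth surface; it is projective because $\pi$ is projective (being a blowup of a coherent ideal sheaf) and $X$ is projective, and irreducible of dimension $2$ because $\pi$ is birational.

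Next I would identify the exceptional locus. In the chart $s \neq 0$ with coordinates $(u,w)$, the preimage of $x = \{u = v = 0\}$ is cut out by $u = 0$, giving the line $\Spec \kk[w]$; in the other chart it is $\Spec \kk[w']$ with $w' = 1/w$ on the overlap. Gluing these two affine lines along $w \mapsto 1/w$ yields exactly $\mathbb{P}^1$, so $L = \pi^{-1}(x) \cong \mathbb{P}^1$.

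For the self-intersection, I would compute $L \bigdot L$ as the degree of the normal bundle $\struct_L(L) = \struct_X(L)|_L$, or equivalently via the linear equivalence class. The cleanest route: $u$ is a local function on $X$ vanishing simply at $x$, so on $Bl_x(X)$ the divisor of $\pi^*(u)$ is $L + \wt{C}$, where $\wt{C}$ is the strict transform of the curve $\{u = 0\}$; since $\pi^*(u) \bigdot L = 0$ (a pullback meets a fibre-like exceptional curve in degree $0$, as $L$ maps to a point), and $\wt{C} \bigdot L = 1$ (the strict transform of a smooth curve through $x$ meets $L$ transversally in one point — visible in the chart computation, where $\wt{C} = \{w = 0\}$ in the $(u,w)$ chart meets $L = \{u=0\}$ at a single reduced point), we get $0 = (L + \wt{C})\bigdot L = L\bigdot L + 1$, hence $L \bigdot L = -1$. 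Alternatively one computes directly that $\struct_L(L)$ is the tautological bundle $\struct_{\mathbb{P}^1}(-1)$ from the embedding $L = \PP(T_x X^\vee)$.

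The main obstacle — really the only delicate point — is making the intersection-theoretic claims precise: one must know that intersection numbers on a smooth projective surface are well-defined and computable locally, that $\pi^*$ of a divisor meets $L$ in degree zero (which follows from the projection formula since $\pi_* L$ is a point, hence a cycle of dimension $0 < 1$), and that the strict transform of a smooth curve through $x$ is smooth and meets $L$ transversally. All of these are standard (see Hartshorne, Chapter V), so the proof is essentially a matter of assembling the local chart computation with the projection formula; I would cite the classical references rather than reproduce the foundations of intersection theory on surfaces.
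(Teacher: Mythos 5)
The paper states Proposition~\ref{prop1} as a well-known classical fact and gives no proof at all, so there is no ``paper approach'' to compare against; it is background motivation for the noncommutative results that follow. Your argument is the standard textbook one (two affine charts of $Bl_0(\mathbb{A}^2)$, glue to see $L\cong\PP^1$, then compute $L\bigdot L=-1$ via $\pi^*C=\wt C+L$ and the projection formula), and it is correct. One minor point of rigour worth tightening: the phrase ``the divisor of $\pi^*(u)$ is $L+\wt C$'' should be understood only on a neighbourhood of $L$ where $u$ is defined; this suffices because $L\bigdot D = \deg\bigl(\struct(D)|_L\bigr)$ can be computed locally near the compact curve $L$, or one can instead take a global curve $C\subset X$ smooth at $x$ and passing through $x$ with multiplicity one and argue exactly as you do with $\pi^*C$. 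Your alternative via the normal bundle $\struct_L(L)\cong\struct_{\PP^1}(-1)$ coming from $L=\PP(\mathfrak m_x/\mathfrak m_x^2)$ is equally standard and correct.
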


A celebrated theorem of Castelnuovo says that the properties of $L$ given in  the proposition also  characterise curves that can be contracted to smooth points.

\begin{theorem}[Castelnuovo]\label{CCC}
Let $Y$ be a smooth projective surface, and let $L$ be a curve on $Y$ with $L\cong \mathbb P^1$ and $L \bigdot L=-1$.
Then there are a smooth projective surface $X$ and a birational morphism $\pi:  Y \to X$ so that $L$ is the exceptional 
locus of $\pi$; in fact $Y \cong Bl_x(X)$, where $x = \pi(L)$.
\end{theorem}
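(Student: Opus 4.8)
The plan is to realise the desired contraction $\pi$ as the morphism attached to a well-chosen complete linear system on $Y$, and then to prove by an infinitesimal computation that the image of that morphism is smooth at the point to which $L$ collapses. First I would construct the morphism. By Serre vanishing, fix a very ample divisor $H$ on $Y$ with $H^1(Y,\sO_Y(H))=0$, set $k=H\bigdot L>0$, and put $M=H+kL$. The exact sequences
\[
0\too\sO_Y\bigl(H+(i-1)L\bigr)\too\sO_Y(H+iL)\too\sO_Y(H+iL)|_L\too0 ,
\]
in which $\sO_Y(H+iL)|_L\cong\sO_{\PP^1}(k-i)$ has vanishing $H^1$ for $i\le k+1$, give inductively $H^1(Y,\sO_Y(H+iL))=0$ for $0\le i\le k+1$; in particular $H^0(Y,\sO_Y(H+iL))\to H^0(\PP^1,\sO_{\PP^1}(k-i))$ is onto for $0\le i\le k$. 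From this one checks that both $|M|$ and $|M-L|=|H+(k-1)L|$ are base point free: away from $L$ one multiplies sections of the very ample $H$ by the canonical section of $\sO_Y(kL)$ (resp.\ of $\sO_Y((k-1)L)$), and along $L$ one uses the surjections just found, noting $M|_L\cong\sO_{\PP^1}$ and $(M-L)|_L\cong\sO_{\PP^1}(1)$. The same two devices show that $\phi=\phi_{|M|}\colon Y\to\PP^N$ separates points and tangent vectors on $Y\ssm L$; and $M\bigdot L=(H+kL)\bigdot L=0$ forces $\phi$ to contract $L$ to a point, while base point freeness of $M-L$ forces $\phi(L)$ to be disjoint from $\phi(Y\ssm L)$. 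Setting $X:=\phi(Y)$ and $x:=\phi(L)$, I obtain a birational morphism $\phi\colon Y\to X$ of projective varieties which is an isomorphism over $X\ssm\{x\}$ and contracts $L$ to $x$.

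Next I would reduce to the case that $X$ is normal, by replacing $X$ with the Stein factorisation $\Spec_X(\phi_*\sO_Y)$: this is again projective, is birational to the original $X$ and bijective onto it because $\phi$ has connected fibres, and (since $Y$ is normal) it is normal with $\phi_*\sO_Y=\sO_X$. The crucial step is then to prove $X$ is smooth at $x$. I would first check that the inverse-image ideal $\phi^{-1}(\mf m_x)\sO_Y$ equals $\sO_Y(-L)$: it lies in $\sO_Y(-L)$ because functions vanishing at $x$ pull back to functions vanishing on the reduced curve $L$, and it is no smaller because the extra sections of $M$ cut out $L$ scheme-theoretically, $\sO_Y(M-L)$ being base point free with $(M-L)|_L\cong\sO_{\PP^1}(1)$. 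Hence the $m$-th infinitesimal neighbourhood of $x$ pulls back to the divisor $mL$, so by the theorem on formal functions $\widehat{\sO}_{X,x}\cong\varprojlim_m H^0\bigl(Y,\sO_Y/\sO_Y(-mL)\bigr)$. The graded pieces of this filtration are $\sO_Y(-mL)|_L\cong\sO_{\PP^1}(m)$, which have no higher cohomology, so taking cohomology is exact and yields
\[
\gr_{\mf m_x}\sO_{X,x}\;\cong\;\bigoplus_{m\ge0}H^0\bigl(\PP^1,\sO_{\PP^1}(m)\bigr),
\]
a polynomial ring in two variables; since a noetherian local ring whose associated graded ring is polynomial is regular, $\sO_{X,x}$ is regular. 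Together with the fact that $\phi$ is an isomorphism over $X\ssm\{x\}$, this shows $X$ is a smooth projective surface.

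Finally I would identify $Y$ with $Bl_xX$: since $\phi$ is now a birational morphism of smooth projective surfaces which is not an isomorphism, the structure theory of such morphisms writes it as a composition of $n\ge1$ monoidal transformations, and its exceptional locus then has exactly $n$ irreducible components; as that locus is the irreducible curve $L$, we get $n=1$, so $\phi$ is precisely the blowup of $X$ at $\phi(L)=x$, with $L=\pi^{-1}(x)$. (Equivalently, the universal property of blowing up applied to the invertible ideal $\phi^{-1}(\mf m_x)\sO_Y=\sO_Y(-L)$ gives a factorisation $Y\to Bl_xX\to X$, and the same count of exceptional components shows the first arrow is an isomorphism.) I expect the main obstacle to be the smoothness assertion: producing a contracting morphism is one thing, but getting its image to be \emph{smooth} — not merely normal — at the contracted point is exactly where the hypotheses $L\cong\PP^1$ and $L\bigdot L=-1$ enter decisively, since these are what make the graded pieces $\sO_{\PP^1}(m)$ cohomologically trivial and their sections assemble into a polynomial ring (were $L\bigdot L$ equal to $-2$ one would instead get $\bigoplus_m H^0(\PP^1,\sO_{\PP^1}(2m))$, the coordinate ring of a quadric cone). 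The construction of the morphism and the reduction to the normal case are routine applications of Serre vanishing and Stein factorisation, and the concluding identification is standard surface birational geometry.
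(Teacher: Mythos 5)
The paper does not prove this statement: Castelnuovo's contraction criterion is quoted in the introduction purely as classical background motivating the noncommutative blowdown developed in the body of the paper, so there is no ``paper's own proof'' to compare against. Your argument is a correct rendition of the standard proof (cf.\ Hartshorne, \emph{Algebraic Geometry}, Theorem V.5.7), and the emphasis is placed correctly: building the contracting morphism from $|H+kL|$ and normalising via Stein factorisation are routine, the inverse-image ideal computation $\phi^{-1}(\mf m_x)\sO_Y=\sO_Y(-L)$ together with the theorem on formal functions and the identification $\sO_Y(-mL)|_L\cong\sO_{\PP^1}(m)$ is where the hypotheses $L\cong\PP^1$ and $L\bigdot L=-1$ do the work that produces a \emph{regular} (not merely normal) local ring at $x$, and the concluding comparison with $Bl_xX$ by counting exceptional components is standard. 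One small point worth making explicit in a final write-up: after passing to the Stein factorisation $X'\to X_0\subset\PP^N$, the containment $\phi^{-1}(\mf m_x)\sO_Y\supseteq\sO_Y(-L)$ should be deduced from $\mf m_{x}\supseteq$ (pullback of $\mf m_{x_0}$) together with the base-point freeness of $|M-L|$ on the original image $X_0$, since the linear-system description of the ideal is available only for the embedding into $\PP^N$; this is harmless but easy to elide.
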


The main aim of this paper is to give noncommutative versions of Proposition~\ref{prop1} and Theorem~\ref{CCC}.
These results apply to a class of  algebras known as   elliptic algebras that occur naturally among algebras birational 
to the Sklyanin  algebras and are defined as follows. An \emph{elliptic algebra} is a connected graded domain $R$ containing a 
central $g \in R_1$ so that $R/(g)$ is isomorphic to a \emph{twisted homogeneous coordinate ring} $B(E,\mathcal{M},\tau)$,
 where $E$ is  an elliptic curve, with an ample invertible sheaf $\mathcal{M}$  
 and infinite order automorphism  $\tau$. 
 We say that $E$ is the {\em elliptic curve  associated} to $R$ and define the  \emph{degree} of $R$  to be the degree of the line bundle $\mathcal{M}$.  
 (See 
Section~\ref{ELLIPTIC ALGEBRAS} for more details.)
 For example, the third Veronese ring  $T = S^{(3)}$ of a Sklyanin algebra $S$  is elliptic; the  Veronese ring  is needed to ensure 
 that the central element  has degree one, but this is a fairly harmless change since   $\rqgr S\cong\rqgr T$.  
Likewise, if $S'$ is a 3-dimensional cubic Sklyanin algebra, as discussed in  Example~\ref{sklyanin-defn},
  then $T'= (S')^{(4)}$ is elliptic.  The space $\rqgr S$ should be thought of as a noncommutative
 $\PP^2$, while $\rqgr S'\cong\rqgr T'$ should be thought of as a noncommutative version of $\PP^1 \times \PP^1$.

An appropriate  noncommutative analogue of a monoidal transformation of an elliptic algebra $R$  is   known.  
In more detail,  $R_1/g\kk\cong B(E,\mathcal{M},\tau)_1$ may be identified with global sections of the invertible sheaf $\sM$.  
If $p\in E$  and $ \deg \sM\geq 3$,   the \emph{blowup of $R$ at $p$}  is defined to be   the subalgebra $P=Bl_p(R)$ of $R$ 
generated by those elements of $R_1$ whose images mod $g$ vanish at $p$. By  \cite[Theorem~1.1]{R-Sklyanin},
$P$ is again an elliptic algebra and, moreover, has   properties analogous to those of a (commutative) blowup. 
In particular, it has an analogue of an exceptional line. To be precise,  a graded  $P$-module $L=\bigoplus_{n\in \mathbb{Z}}L_n$
 is a \emph{line module} if $L$ is cyclic with  Hilbert series $\hilb L=\bigoplus_{n\in \mathbb{Z}}(\dim_{\kk}L_n)s^n=1/(1-s)^2$.  
 Then $P=Bl_p(R)$ does indeed have a distinguished line  module $L$, called the {\em exceptional line module} and  
 characterised by the fact that $R/P \cong \bigoplus_{i\geq 1} L[-i]$ as $P$-modules.   Inducting on this
 procedure one can blow up as many as seven points on the noncommutative projective plane $\rqgr T$ (one can even blow up 
 eight points although the definition is more subtle since the ring is no longer generated in degree one \cite{RSSshort}).

We would like to reverse this procedure.  
For a noncommutative version of Castelnuovo's Theorem~\ref{CCC}, we need not only an analogue of a line but also an analogue of self-intersection.
If $\rqgr P$ is a  \emph{smooth} noncommutative scheme in the sense that  the category  $\rqgr P$  has finite homological dimension, 
then  an appropriate notion of  intersection number is
 $$(M\dotms N  )=\sum_{n\geq 0} (-1)^{n+1} \dim_\kk \Ext^n_{\rqgr P}(M,N),$$
    for line modules $M$ and $N$ (see \cite{MS}).  Unfortunately even if   $\rqgr R$ is smooth, if $P = Bl_p(R)$ then the blowup 
    $\rqgr P$ need not be smooth, in which case the 
 self-intersection $(L\dotms L  )$ of a line module $L$ can be undefined. (See Section~\ref{EXAMPLE}, where an example is 
 constructed by blowing up an elliptic algebra twice at the same point.)
So, we use a weaker notion of intersection number, defined as follows.  Let $\grk$ denote the torsion-free rank of a finitely 
generated $\kk[g]$-module.  Then, 
it is not hard to show that $(M\dotms N  )
=\sum_{n\geq 0} (-1)^{n+1}\grk \Ext^n_{P}(M,N)$  (combine Proposition~\ref{prop:0.6} and  Lemma~\ref{lem0.1}, in the 
notation from  the beginning of Section~\ref{BACKGROUND}).  
Moreover,  as is discussed in Sections~\ref{INTERSECTION THEORY} and~\ref{INTLINE},
the simpler sum 
$$(M\bigdot N) =-\grk \Hom_{P} (M,N) + \grk \Ext^1_{P} (M,N)$$  is a satisfactory alternative to $(M\dotms N  )$.

It will actually  be convenient to use the following, still weaker concept.  
Assume that $L$ is a line module over an elliptic algebra 
 $P$ and write $L=P/J$ for the \emph{line ideal $J$}.  We say two graded, locally finite dimensional  
vector spaces  $M$ and $N$ are \emph{numerically equivalent} if they have the same Hilbert series:  
$\hilb M = \hilb N$.  
Then the relevant condition is:
 \begin{equation}\label{Hilb-intersection}\tag{*} \text{For a line module $L = P/J$, the rings $P$ and $\End_{P}(J) $ are numerically equivalent.} 
 \end{equation}
This notion is appropriate, as the next result shows.

\begin{proposition}[Theorems~\ref{thm:one} and \ref{thm:inverse}]
\label{intro-1}
{\rm (1)}  Let $P$ be an elliptic algebra such that $\rqgr P$ has finite homological dimension, and let $L$ be a line module over $P$. Then   $(L\dotms L)=-1$ $\iff$
 $(L\bigdot L)=-1$ $\iff$  $(\ast)$ holds.

{\rm (2)} If $P=Bl_p(R)$ is the blowup of an elliptic algebra $R$, then $(\ast)$ holds for the exceptional line module~$L$.
\end{proposition}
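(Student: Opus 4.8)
The common engine for both parts is the defining short exact sequence $0 \to J \to P \to L \to 0$ of graded right $P$-modules, together with the vanishing $\Hom_P(L,P) = \Hom_P(L,J) = 0$ (a homomorphism from $L = P/J$ into a right ideal of the domain $P$ is killed by $J \neq 0$, hence is $0$).

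\emph{Part (1).} First I would reduce $(\ast)$ to a statement about a single $\Ext$-group. Applying $\Hom_P(-,J)$ to the sequence and using $\Ext^1_P(P,J) = 0$ produces a four-term exact sequence $0 \to J \to \End_P(J) \to \Ext^1_P(L,J) \to 0$, with $J \hookrightarrow \End_P(J)$ the left-multiplication embedding; since $\hilb J = \hilb P - \hilb L$, this identifies $(\ast)$ with the numerical equality $\hilb \Ext^1_P(L,J) = \hilb L$. Next, applying $\Hom_P(L,-)$ to the same sequence relates $\Ext^\bullet_P(L,J)$ to $\Ext^\bullet_P(L,L)$ and $\Ext^\bullet_P(L,P)$. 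Here I would use that an elliptic algebra is AS-Gorenstein of dimension three — inherited from $P/gP \cong B(E,\mathcal M,\tau)$ being AS-Gorenstein of dimension two, with $g$ a central nonzerodivisor of degree one — and that a line module is Cohen--Macaulay over $P$, so that $\Ext^i_P(L,P)$ vanishes for $i \neq 1$ and $\Ext^1_P(L,P)$ is again cyclic of Hilbert series $1/(1-s)^2$ up to shift. The long exact sequence then collapses, giving explicit relations among the Hilbert series (equivalently, by Proposition~\ref{prop:0.6} and Lemma~\ref{lem0.1}, among the $\kk$-dimensions of the corresponding $\Ext$-groups in $\rqgr P$) of $\Hom_P(L,L)$, $\Ext^1_P(L,L)$, $\Ext^1_P(L,P)$, and $\Ext^1_P(L,J)$. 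Finally the smoothness hypothesis enters: $(L\dotms L)$ and $(L\bigdot L)$ are computed by the (partial) Euler characteristics of $\Ext^\bullet_{\rqgr P}(\mathcal L, \mathcal L)$, and finite homological dimension together with Serre duality on the two-dimensional $\rqgr P$ pins down the higher $\Ext$-groups, so that each of $(L\dotms L) = -1$, $(L\bigdot L) = -1$, and $(\ast)$ becomes equivalent to $\Hom_P(L,L) \xrightarrow{\sim} \Ext^1_P(L,J)$. I expect the main obstacle to be the torsion bookkeeping: $L$ and $\Ext^1_P(L,P)$ are not finitely generated over $\kk[g]$, so $\grk$ cannot be applied termwise to these exact sequences, and one must either run the entire argument inside $\rqgr P$ or carefully isolate the $g$-torsion-free parts by hand — this, not the homological algebra, is the crux, and it is why the weak condition $(\ast)$ is the right hypothesis to single out.

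\emph{Part (2).} Since $\rqgr P$ need not have finite homological dimension when $P = Bl_p(R)$ (Section~\ref{EXAMPLE}), Part (1) does not apply, and one must verify $(\ast)$, i.e.\ $\hilb \End_P(J) = \hilb P$, directly from the concrete description of the blowup in \cite{R-Sklyanin}: $P \subseteq R$ with $R/P \cong \bigoplus_{i \geq 1} L[-i]$ as graded $P$-modules and $L = P/J$ for the exceptional line module $L$. The plan is to identify $\End_P(J)$ with the left order $\{q \in Q_{gr}(R) : qJ \subseteq J\}$ of the right ideal $J$ and to show that this order equals $P$. For the inclusion ``$\subseteq R$'' one shows that $J$ is large inside $R$ — concretely that $JR \supseteq R_{\geq k}$ for some $k$, which should follow from the fact that $P$, hence $J$, is generated by sections vanishing at $p$ together with ampleness of $\mathcal M$ — so that $qJ \subseteq J \subseteq R$ forces $qR_{\geq k} \subseteq R$ and hence $q \in R$, using that $R$ is a maximal order. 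For the refinement ``$\subseteq P$'' one reduces modulo $g$ and uses the defining vanishing condition at $p$ on $R/gR \cong B(E,\mathcal M,\tau)$ to conclude $q \in P$. An alternative, possibly cleaner, route is to keep the homological set-up of Part (1) but exploit that $R$ itself is AS-Gorenstein even though $\rqgr P$ is not: restricting scalars along $P \subseteq R$ and using $R/P \cong \bigoplus_i L[-i]$ one computes $\Ext^1_P(L,J)$ directly and reads off $\hilb \Ext^1_P(L,J) = \hilb L$. Either way, the main obstacle is extracting a usable description of the line ideal $J$ and of $\End_P(J)$ from the blowup construction and pushing the order-of-vanishing condition at $p$ through the estimate.
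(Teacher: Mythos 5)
Your reduction of $(\ast)$ to the equality $\hilb \uExt^1_P(L,J) = \hilb L = 1/(1-s)^2$ is correct, but the final step of Part (1) is wrong: you claim that each of the three conditions is equivalent to the natural map $\uHom_P(L,L) \to \uExt^1_P(L,J)$ being an isomorphism, yet $\hilb \uHom_P(L,L) = 1/(1-s)$ (Lemma~\ref{lem:red0}(3)) while the target under $(\ast)$ has Hilbert series $1/(1-s)^2$; these cannot be isomorphic. The correct collapse, from the long exact sequence you describe, is $\uExt^1_P(L,L) = 0$ (the paper's condition (2) in Theorem~\ref{thm:one}), not an isomorphism onto $\uExt^1_P(L,J)$. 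More importantly, your route to controlling the higher $\uExt$-groups is Serre duality on $\rqgr P$, which is neither established nor cited in the paper and would be a substantial lemma on its own; what the paper actually uses is $g$-localization: Lemmas~\ref{lem:Dansmooth} and~\ref{lem:pdL} convert smoothness of $\rqgr P$ into $\pdim_{P^\circ} L^\circ = 1$, so $J^\circ$ is projective and Corollary~\ref{lem0.7} kills the higher ranks; the real hinge of Theorem~\ref{thm:one}(b) is then that projectivity of $J^\circ$ forces the \emph{downward} implication $\hilb\uEnd_P(J)=\hilb P\Rightarrow\uExt^1_P(J,J)=0$, which is exactly where your argument has no leverage. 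You are right that the "torsion bookkeeping" is a concern, but the mechanism is $g$-torsion of $\Ext$-groups over $\kk[g]$, not failure of finite generation.

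Part (2) contains a more serious error: you aim to show $\uEnd_P(J) \subseteq P$ and then conclude equality, but this inclusion is false. Theorem~\ref{thm:inverse}(1) computes $\uEnd_{R(p)}(J) = R(\tau(p))$, which is a \emph{different} subalgebra of $R$ from $P = R(p)$ (since $\tau$ has infinite order, $p \neq \tau(p)$, and $R(p)_1$, $R(\tau(p))_1$ are incomparable codimension-one subspaces of $R_1$). Condition $(\ast)$ asserts only numerical equality of Hilbert series, not equality or containment of the rings. Your plan to reduce mod $g$ and use the vanishing condition at $p$ would in fact reveal the obstruction: $\overline{\uEnd_P(J)} = B(E,\mc M(-\tau(p)),\tau)$, not $B(E,\mc M(-p),\tau) = \overline{P}$. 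The paper's actual argument is a direct computation, using the section calculus $W(r)$ in the Sklyanin algebra from \cite{R-Sklyanin}, establishing $R(p)_1 R_1 = R_1 R(\tau(p))_1$ and then $R(\tau(p))_1 J \subseteq J$ by degree-one generation, so that $R(\tau(p)) \subseteq \uEnd_P(J)$; the reverse Hilbert-series inequality follows from \eqref{eq:homJJ}. Your alternative homological route — computing $\uExt^1_P(L,J)$ directly from $R/P \cong \bigoplus_{i\geq 1} L[-i]$ — is closer in spirit to Lemma~\ref{lem:tilde}(3), but as stated you would still need the relationship between $J$ and the extension $R/P$, which the paper supplies through the identification $\wt{P} = \uHom_P(J,P)P$.
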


Our definition of self-intersection leads to a noncommutative version of Castelnuovo's Theorem~\ref{CCC}, as we next show.
\begin{theorem}[Theorems~\ref{thm:I.1} and~\ref{thm:inverse}]\label{intro-2}

\begin{enumerate} 

\item  Let $P$ be an elliptic algebra with associated elliptic curve $E$ and let $L$ be a line module with $(L\bigdot L)=-1$ 
or, more generally, one that satisfies $(\ast)$.  Then one can blow down the line $L$.

More precisely, there are an elliptic algebra $R=\wt{P} \supseteq P$, again associated to $E$, and a point $p \in E$ so
 that $P \cong Bl_p(R)$, with exceptional line $L$.  

\item Conversely, if $Q$ is an elliptic algebra of degree $\geq 4$, then blowing $Q$ up at a point $p$ of the associated elliptic 
curve $E$ and blowing down the exceptional line of $Bl_p(Q)$ returns the algebra~$Q$.  
\end{enumerate}
\end{theorem}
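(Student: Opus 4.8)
The plan is to realise the blown-down algebra $R=\widetilde P$ explicitly as a graded subalgebra of the graded quotient ring $Q_{gr}(P)$ that contains $P$, and then to recognise it as an elliptic algebra. Write $L=P/J$ for the line ideal $J$. Since $g$ is a central nonzerodivisor of $P$, the first step is to analyse how $g$ acts on $L$: using $(\ast)$ (equivalently $(L\bigdot L)=-1$) together with the structure of line modules over an elliptic algebra, one shows that $g$ is a nonzerodivisor on $L$ of torsion-free $\kk[g]$-rank one, so that $\bar L:=L/gL$ is a point module over $B:=P/gP=B(E,\mathcal M,\tau)$; this is what singles out the point $p\in E$ to be contracted (concretely, $(J+gP)/gP$ is the ideal in $B$ of a point of $E$). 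The same hypothesis $(\ast)$ fixes the target Hilbert series: blowing up a point lowers the degree of an elliptic algebra by one and produces an exceptional line with $R/Bl_p(R)\cong\bigoplus_{i\geq1}L[-i]$, so we must have $\hilb\widetilde P=\hilb P+s/(1-s)^3$, and $(\ast)$ is exactly the numerical condition that makes a subalgebra with this Hilbert series available.

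Next I would construct $\widetilde P$ inside $Q_{gr}(P)$ by adjoining to $P$ the (finitely many) new degree-one elements that the Hilbert-series count demands, produced from the line ideal $J$ by colon and normalizer operations in $Q_{gr}(P)$, and take $\widetilde P$ to be the subalgebra they generate. Because $g$ is central and regular in $Q_{gr}(P)$, it remains so in $\widetilde P$, so $B_R:=\widetilde P/g\widetilde P$ is connected graded with $\hilb\widetilde P=\hilb B_R/(1-s)$. The crux of part~(1) is to identify $B_R\cong B(E,\mathcal N,\tau)$ for an ample invertible sheaf $\mathcal N$ on the \emph{same} curve $E$ with $\deg\mathcal N=\deg\mathcal M+1$ (the twist of $\mathcal M$ by the divisor of $p$): one uses $(\ast)$ to force $B_R$ to have exactly the Hilbert series of such a twisted homogeneous coordinate ring, checks that $B_R$ is a domain (the new generators are $g$-divisible enough and introduce no new relations), and then invokes the rigidity of twisted homogeneous coordinate rings of curves to conclude that $B_R$ is the expected one. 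Granting this, $\widetilde P$ is an elliptic algebra associated to $E$ of degree $\deg P+1$; it is noetherian because $B_R$ is noetherian and $g$ is a central nonzerodivisor, and $\rqgr\widetilde P$ has finite homological dimension whenever $\rqgr P$ does, by comparison with $\rqgr B_R$. Finally, unwinding the definition, $Bl_p(\widetilde P)$ is the subalgebra generated by the degree-one elements of $\widetilde P$ whose images in $B_R$ vanish at $p$, which by construction is the span of $P_1$; since the elliptic algebras in question are generated in degree one, $Bl_p(\widetilde P)=P$, and the Hilbert series identity then forces $\widetilde P/P\cong\bigoplus_{i\geq1}L[-i]$ as a $P$-module, that is, $L$ is the exceptional line module of $P=Bl_p(\widetilde P)$.

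For part~(2), apply part~(1) to $P:=Bl_p(Q)$, which by \cite[Theorem~1.1]{R-Sklyanin} is an elliptic algebra of degree $\deg Q-1\geq3$ associated to $E$, whose exceptional line module $L=P/J$ satisfies $(\ast)$ by Proposition~\ref{intro-1}(2) and satisfies $Q/P\cong\bigoplus_{i\geq1}L[-i]$. Part~(1) produces an elliptic algebra $\widetilde P\supseteq P$ and a point $p'$ with $P\cong Bl_{p'}(\widetilde P)$; since the contracted point is read off from $\bar L=L/gL$, which for the exceptional line of $Bl_p(Q)$ is the point module at $p$, we get $p'=p$. Finally $\widetilde P\cong Q$: both are graded subalgebras of $Q_{gr}(P)=Q_{gr}(Q)$ containing $P$, the explicit description of $P\subseteq Q$ in \cite{R-Sklyanin} shows the new degree-one generators of $\widetilde P$ already lie in $Q$, so $\widetilde P\subseteq Q$, and $\hilb\widetilde P=\hilb P+s/(1-s)^3=\hilb Q$ (from $Q/P\cong\bigoplus_{i\geq1}L[-i]$) forces $\widetilde P=Q$.

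The main obstacle is the middle step of part~(1): controlling the reduction modulo $g$, that is, proving that $\widetilde P/g\widetilde P$ is not merely some connected graded overring of $B(E,\mathcal M,\tau)$ with the right Hilbert series but is genuinely the twisted homogeneous coordinate ring of the \emph{same} elliptic curve. This needs both a $g$-divisibility argument showing $B_R$ is a domain and a rigidity result to the effect that a connected graded domain caught between $B(E,\mathcal M,\tau)$ and its one-point enlargement with the prescribed Hilbert series must be that twisted homogeneous coordinate ring. A secondary difficulty, familiar from the case of blowing up eight points, is ensuring $\widetilde P$ is generated in degree one so that the round trip $P\mapsto Bl_p(\widetilde P)$ closes up exactly; this is what forces the degree hypothesis in part~(2).
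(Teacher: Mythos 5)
Your overall plan is reasonable and the Hilbert-series numerology is correct (indeed $\hilb \wt P = \hilb P + s/(1-s)^3$), but two pivotal steps do not go through as proposed and are handled differently in the paper.

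First, the construction. The paper does not define $\wt P$ as ``the subalgebra generated by'' extra degree-one elements; it defines $\wt P$ as a specific $P$-submodule of $Q_{gr}(P)$, namely $\wt P = \uHom_P(J,P)P$, which is shown in Lemma~\ref{lem:tilde} to equal the sum of \emph{all} finitely generated extensions $N$ of $P$ inside $Q_{gr}(P)$ with $N/P$ a shift of $L$, and to satisfy $\wt P / P \cong \bigoplus_i L[-i]^{\oplus a_i}$ via Proposition~\ref{prop:image}. One then has to \emph{prove} that this $P$-module is multiplicatively closed, and this is exactly where condition $(\ast)$ enters: it forces $\overline{\uEnd_P(J)} = \uEnd_B(\overline J)$ to be a full TCR, which lets one show $\uHom_P(J,P)P = \uEnd_P(J)P$ and then $(\uEnd_P(J)P)^2 = \uEnd_P(J)P$ using the containment $P\cdot \uEnd_P(J) \subseteq \uEnd_P(J)P$. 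If you instead declare $\wt P$ to be a generated subalgebra, you get closure for free but lose all upper control on its Hilbert series, and you then cannot conclude $\wt P/P \cong \bigoplus_{i\geq1}L[-i]$ by the series count — you have assumed what you need to prove.

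Second, the identification of $\wt P/g\wt P$. Your proposal invokes a ``rigidity of twisted homogeneous coordinate rings of curves'' to the effect that a connected graded domain with the right Hilbert series between $B(E,\sM,\tau)$ and its one-point enlargement must be $B(E,\sM(\tau^{-1}p),\tau)$. No such lemma is in the paper, and it is not clearly available. What the paper does is an explicit section-level computation: with $B = B(E,\sM,\tau)$, $B' = B(E,\sM',\tau)$ for $\sM' = \sM(-p+\tau^{-1}p)$ and $B'' = B(E,\sM(\tau^{-1}p),\tau)$, one computes $\sum_{i=0}^n B'_iB_{n-i} = H^0(E, \sM_n(\tau^{-1}(p)+\dots+\tau^{-n}(p))) = B''_n$ term by term using Lemma~\ref{lem:basics for B}(1) and global generation, so $\overline{\wt P}=B'B = B''$ on the nose. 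If you want to keep the rigidity route you would have to state and prove that rigidity statement, which I suspect is roughly as much work as the direct computation.

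Your treatment of part (2) is essentially sound in outline (identify $p'=p$ by looking at $L/Lg$, and then use the Hilbert series to force $\wt P=Q$ once $\wt P\subseteq Q$), and it matches the spirit of the paper's proof of Theorem~\ref{thm:inverse}. But its correctness is downstream of part (1), so the two gaps above need to be filled before part (2) stands.
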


\noindent
{\bf Definition.} The ring $\wt{P}$ from part (1) of the theorem is called the \emph{blowdown} of $P$ at $L$.  
 \medskip
 
The key step in the  proof of part~(1) of the theorem is 
to show that there exists a  right $P$-module $M$ with  $Q_{gr}(P)\supset M\supset P$ for which    
$M/P\cong \bigoplus_{i\geq 1} L[-i]$. One then shows that $M$ is actually a ring with the properties specified by the theorem. 

Elliptic algebras have a number of pleasant properties; for example they are automatically noetherian and 
satisfy the Artin-Schelter Gorenstein and Cohen-Macaulay conditions (see Proposition~\ref{prop:elliptic}). Thus, in particular, these
 conditions hold for the blowdown of an elliptic algebra. More subtly we have an analogue of the smoothness part of Castelnuovo's Theorem~\ref{CCC}.

\begin{theorem}[Corollary~\ref{cor:smooth}]\label{intro-3}
Let $P$ be an elliptic algebra and suppose that $L$ is a line module  
satisfying $(\ast)$, with blowdown $\wt{P}$.
 Assume, moreover, that   $L[g^{-1}]_0$ has finite projective dimension over $P[g^{-1}]_0$.  
 
 Then the  noncommutative scheme
    $  \rqgr \wt{P}$ is smooth if and only if  $\rqgr P$ is smooth.
\end{theorem}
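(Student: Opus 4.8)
The plan is to localise at the central element $g$ and reduce the smoothness of $\rqgr P$ and of $\rqgr\wt{P}$ to the finite global dimension of the two rings $A:=P[g^{-1}]_0$ and $B:=\wt{P}[g^{-1}]_0$. The first thing to establish (or to quote from the earlier development) is the basic principle that, for an elliptic algebra $T$, the noncommutative scheme $\rqgr T$ is smooth if and only if $\gldim T[g^{-1}]_0<\infty$. Indeed the zero locus $V(g)$ inside $\rqgr T$ is $\rqgr(T/gT)\simeq\coh E$, which has homological dimension $1$ since $E$ is a smooth curve; its complement is $\rqgr T[g^{-1}]\simeq\rmod T[g^{-1}]_0$, since $T[g^{-1}]$ is strongly $\ZZ$-graded and has no nonzero finite-dimensional graded module; and a gluing/deformation argument along the divisor $V(g)$ then identifies finite homological dimension of $\rqgr T$ with $\gldim T[g^{-1}]_0<\infty$, the contribution of $V(g)$ itself being harmless because $E$ is smooth. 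Since $P$ is elliptic by hypothesis and $\wt{P}$ is elliptic by Theorems~\ref{thm:I.1} and~\ref{thm:inverse} (both noetherian by Proposition~\ref{prop:elliptic}), it suffices to prove $\gldim A<\infty\iff\gldim B<\infty$.

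Next I would relate $A$ and $B$ using the blowdown. By construction $\wt{P}/P\cong\bigoplus_{i\geq1}L[-i]$ as graded right $P$-modules; localising this at the central regular element $g$ (an exact operation) and passing to degree-zero components (an equivalence, as $P[g^{-1}]$ is strongly graded) yields a short exact sequence of right $A$-modules
\[
0\longrightarrow A\longrightarrow B\longrightarrow\bigoplus_{i\geq1}\overline{L}\longrightarrow0,\qquad\overline{L}:=L[g^{-1}]_0,
\]
in which $\overline{L}$ is finitely generated over $A$ and, by hypothesis, has finite projective dimension over $A$. Geometrically this should exhibit $A\subseteq B$ as a noncommutative affine blowdown: $B$ is a localisation-type overring of $A$ (a flat epimorphism), and $\Spec B$ is the complement in $\Spec A$ of the ``exceptional'' curve, whose coordinate ring as an $A$-module is built from $\overline{L}$.

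With this picture in hand the transfer of finiteness splits into two directions. In the direction ``$\rqgr P$ smooth $\Rightarrow\rqgr\wt{P}$ smooth'' one simply observes that a localisation cannot increase global dimension, so $\gldim A<\infty$ forces $\gldim B<\infty$; the projective-dimension hypothesis is not needed here, in agreement with the classical fact that contracting a $(-1)$-curve on a smooth surface keeps it smooth. The reverse direction is the crux. Assuming $\gldim B<\infty$, I would stratify the simple right $A$-modules: a simple module on which the exceptional locus acts invertibly is a simple $B$-module, hence of projective dimension $\le\gldim B+\pd_A B<\infty$ over $A$ (with $\pd_A B$ finite by the localisation-type structure); a simple module supported on the exceptional locus has finite projective dimension over $A$ because $\pd_A\overline{L}<\infty$ and because the exceptional curve is itself a smooth rational curve, of finite global dimension. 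Since $\gldim A$ is the supremum of the projective dimensions of the simple right $A$-modules, this gives $\gldim A<\infty$, i.e. $\rqgr P$ is smooth.

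The main obstacle is the reverse direction just sketched, and within it the need to make the ``noncommutative affine blowdown'' picture precise: identifying the exceptional curve and its coordinate ring inside $\Spec A$, verifying that $B$ really is a flat epimorphic overring of $A$ with $\pd_A B<\infty$, and checking that finiteness of $\pd_A\overline{L}$ together with smoothness of the exceptional curve controls the homology of $A$ along the locus that is invisible in $B$. This is exactly where the hypothesis ``$L[g^{-1}]_0$ has finite projective dimension over $P[g^{-1}]_0$'' is indispensable: blowing up a ``bad'' point of $E$ can destroy smoothness of $\rqgr P$ even when $\rqgr\wt{P}$ is smooth. A technical nuisance throughout is that $B$ (equivalently $\wt{P}$) is not finitely generated over $A$ (over $P$) — the module $\bigoplus_{i\geq1}\overline{L}$ has generators in unbounded degrees — so the change-of-rings arguments should be carried out on the graded level, or via finitely generated approximations, as elsewhere in the paper.
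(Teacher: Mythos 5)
Your high-level setup is the same as the paper's: pass to $P[g^{-1}]_0$ and $\wt P[g^{-1}]_0$ via Lemma~\ref{lem:Dansmooth} (which is proved directly in the paper, not by a ``gluing'' argument) and use the extension $0\to A\to B\to\bigoplus_{i\geq1}\overline{L}\to 0$. But both of your two main steps have genuine gaps.

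For the direction ``$\rqgr P$ smooth $\Rightarrow\rqgr\wt P$ smooth'' you assert that $B$ is a flat epimorphic overring of $A$ (``a localisation'') and conclude $\gldim B\leq\gldim A$. This claim is not established and the picture behind it is wrong: $A\subset B$ are two orders in the same division ring, and $B$ is not an Ore or universal localisation of $A$. Since $V/U\cong L^{\oplus\JJ}$ with $\pdim_U L=1$, there is no reason for $V$ to be flat over $U$, and the blowdown $\rMod B$ is the \emph{perpendicular subcategory} $L^\perp\subset\rMod A$, not an ``open complement.'' The paper's Proposition~\ref{prop:smooth} proves $\gldim V\leq\gldim U$ by a change-of-rings spectral sequence for the inclusion $\phi:U\to V$ after first showing $L\otimes_U V=0=\Tor_1^U(L,V)$ and $L^\perp\simeq\rMod V$; that argument uses both the projectivity of $J^\circ$ (i.e.\ your $\pdim$ hypothesis) and the vanishing $\uHom_P(L,\wt P)=\uExt^1_P(L,\wt P)=0$ and the analogues for $L^\vee$ — vanishings your sketch never verifies.

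For the reverse direction your ``stratification of simple $A$-modules by support on the exceptional curve'' is not a meaningful notion here: there is no closed subscheme of $\Spec A$ to support things on, and in the degenerate example of Section~\ref{EXAMPLE} the ring $R^\circ$ is actually simple (Proposition~\ref{prop:SC}), so every simple $R^\circ$-module is faithful and the stratification collapses. The paper's Proposition~\ref{prop:smoothconv} instead proves the reverse inequality using $\injdim U<\infty$ to reduce to high-degree $\Ext$'s of Goldie-torsionfree modules, and then the universal extension Lemma~\ref{lem:univ2} to embed such a module $Q$ in a module $N\in L^\perp$ with cokernel a direct sum of copies of $L$, reducing the required vanishing to modules in $L^\perp\simeq\rMod V$ (where $\gldim V<\infty$ applies) and to $L$ itself (which has $\pdim_U L=1$ and $\injdim_U L<\infty$). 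This is the step that genuinely needs new ideas, and the universal-extension device is the missing ingredient in your proposal.
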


Our eventual goal is to classify graded algebras birational to a Sklyanin algebra.
Using the commutative geometry of surfaces as a guide, one would presumably need to classify ``minimal models'' (in the 
appropriate sense) and to show that any reasonable algebra in this class can be blown down to a minimal model.  Clearly, 
the noncommutative versions of $\PP^2$ and  Van den Bergh's quadrics \cite{VdB3}  should be minimal, and in forthcoming 
work we show that this is true   \cite{RSSminimal}.
We do not yet know whether  these are the only minimal models.

We also do not know how to show that any algebra birational to a Sklyanin algebra can be blown down to give a minimal 
model.  In the birational theory of commutative surfaces, this is proved  using the following consequence of Zariski's Main Theorem:

\begin{theorem}[Zariski]  \label{thm:Z}
 Let $X \dra Y$ be a birational map of smooth projective surfaces.
Then there are a smooth projective surface $Z$ and compositions of monoidal transformations $Z \to X$, $Z \to Y$ so that 
\[\xymatrix{
 & Z \ar[dr] \ar[dl] & \\ X \ar@{-->}[rr] && Y}
\]
commutes. 
\end{theorem}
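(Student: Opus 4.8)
The plan is to deduce this classical statement from the structure theory of birational \emph{morphisms} between smooth projective surfaces, by first resolving the indeterminacy of $f$ on a common smooth model. So the argument splits into two parts: produce a smooth projective surface $Z$ admitting birational morphisms onto both $X$ and $Y$ that are compatible with $f$, and then factor each of those two morphisms into monoidal transformations.

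For the first part I would take the closure of the graph. Since $X$ is a smooth surface, $f$ is a morphism on a dense open $U\subseteq X$ whose complement is a finite set of closed points. Let $\Gamma\subseteq X\times Y$ be the closure of the graph of $f|_U$, with projections $p_0:\Gamma\to X$ and $q_0:\Gamma\to Y$. Both are proper (as $X$ and $Y$ are proper over $\kk$) and birational: $p_0$ is an isomorphism over $U$, and $q_0$ is birational because $p_0$ and $f$ are. Applying resolution of singularities for surfaces (Zariski) gives a smooth projective surface $Z$ and a birational morphism $\nu:Z\to\Gamma$; set $p=p_0\circ\nu:Z\to X$ and $q=q_0\circ\nu:Z\to Y$. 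These are birational morphisms of smooth projective surfaces, and since $q_0=f\circ p_0$ as rational maps on $\Gamma$ we have $q=f\circ p$, so the required square commutes, with the dashed edge $f$ along the bottom.

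For the second part I must show that every birational morphism $\pi:Z\to W$ of smooth projective surfaces is a finite composition of monoidal transformations; applying this to $\pi=p$ and $\pi=q$ completes the proof, all intermediate surfaces in such a factorisation being smooth by Proposition~\ref{prop1}. If $\pi$ is an isomorphism there is nothing to prove. Otherwise $\pi^{-1}$ fails to be a morphism at some closed point $w\in W$; by Zariski's Main Theorem the fibre $\pi^{-1}(w)$ is connected, and, not being finite, is a curve. One checks that the inverse image ideal sheaf $\pi^{-1}\mf m_w\cdot\struct_Z$ is invertible — this is the one substantive local point, and it uses both that $Z$ is a smooth surface and that $\pi$ is birational — so the universal property of blowing up factors $\pi$ as $\pi=\sigma_w\circ\pi'$, where $\sigma_w:Bl_w(W)\to W$ is the monoidal transformation at $w$ and $\pi':Z\to Bl_w(W)$ is again a birational morphism of smooth projective surfaces. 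I would then run an induction on a discrete invariant that strictly increases under each monoidal transformation yet is bounded above by its value on $Z$ — for instance the Picard rank $\rho$, since $\rho(Bl_w W)=\rho(W)+1$ while $\rho(W')\le\rho(Z)$ for every birational morphism $Z\to W'$ of smooth projective surfaces (pullback of line bundles being injective). This forces the procedure to halt, exhibiting $\pi$ as a finite composition of monoidal transformations.

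The hard part is this last factorisation — concretely, the invertibility of $\pi^{-1}\mf m_w\cdot\struct_Z$ on the smooth surface $Z$ — which is exactly where Zariski's Main Theorem genuinely enters and is the real content of the theorem. By contrast, the graph-closure/resolution reduction of the first part and the termination count are routine once that ingredient is available.
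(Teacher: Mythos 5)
The paper does not prove Theorem~\ref{thm:Z}: it is quoted as a classical background result (cf.\ Hartshorne, Ch.\ V, \S5, or Shafarevich, Ch.\ IV, \S3.4) to motivate the noncommutative program, and no argument is given. So there is no ``paper proof'' to compare against, and the task is simply to check your sketch.

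Your outline is the standard one and it is structurally sound: take the closure of the graph in $X\times Y$, resolve the resulting surface to a smooth $Z$, and then factor each of the two resulting birational \emph{morphisms} into monoidal transformations, with termination controlled by the Picard rank. All the easy verifications (closure of the graph restricted to $U\times Y$ being exactly the graph of $f|_U$, properness, birationality of the two projections, $\rho(Bl_w W)=\rho(W)+1$, injectivity of $\rho$ under pullback) are fine. The one place where I would push back is the presentation of the central step. You state that ``one checks'' the invertibility of $\pi^{-1}\mf m_w\cdot\struct_Z$ and locate the work there, attributing it to Zariski's Main Theorem; but ZMT in its usual forms gives you the connectedness of $\pi^{-1}(w)$ and the fact that $\pi$ is a local isomorphism where fibres are finite --- it does not, by itself, hand you invertibility of the inverse image ideal. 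That invertibility \emph{is} equivalent (via the universal property) to the factorisation of $\pi$ through $\sigma_w$, so presenting it as ``the one substantive local point'' risks sounding circular. The standard routes to this key lemma are (i) Hartshorne's argument (V.5.1--V.5.3), where one shows $\sigma_w^{-1}\circ\pi$ has no fundamental points by analysing the total transform of a would-be indeterminacy point (this is where ZMT-type arguments genuinely live), or (ii) via Mumford's negative definiteness of the exceptional intersection form plus Castelnuovo's contraction criterion to peel off $(-1)$-curves from the $Z$ side. Either way, a fully rigorous write-up should replace ``one checks'' with a pointer to one of these arguments rather than leaving the impression that it is a short local-algebra computation on a UFD; as it stands the hard content is named but not actually discharged.
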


As yet, there is no noncommutative  analogue of Theorem~\ref{thm:Z} although in  the companion paper \cite{RSS4}, we do prove:

\begin{theorem}
 Let $E$ be the elliptic curve associated to the cubic Sklyanin algebra $S'$, as defined above, and let $r \in E$ be generic.  
 Then there is a Sklyanin algebra $S$   associated to $E$ and points $p, q \in E$ so that
\[ Bl_r((S')^{(4)}) \cong Bl_{p,q}(S^{(3)}).\]
\end{theorem}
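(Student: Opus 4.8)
The plan is to deduce this final theorem from the main results of the paper, especially Theorem~\ref{intro-2}, together with the explicit geometry of blowups of Sklyanin-type algebras. First I would recall the setup: starting from the cubic Sklyanin algebra $S'$ associated to $E$, the Veronese $T' = (S')^{(4)}$ is an elliptic algebra of degree $4$, and for generic $r \in E$ the blowup $Q := Bl_r(T')$ is a well-defined elliptic algebra by \cite[Theorem~1.1]{R-Sklyanin}; its associated elliptic curve is again $E$ and its degree drops appropriately. The goal is then to identify $Q$, up to isomorphism, with an iterated blowup $Bl_{p,q}(S^{(3)})$ of the third Veronese of an ordinary $3$-dimensional Sklyanin algebra $S$ associated to the same $E$, for a suitable choice of points $p, q \in E$.

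The key step is to run the blowdown machinery in the reverse direction. One should show that $Q = Bl_r(T')$ contains a line module $L$ satisfying condition $(\ast)$ — indeed, by Proposition~\ref{intro-1}(2) the exceptional line module of the blowup $Bl_r(T')$ automatically satisfies $(\ast)$. Applying Theorem~\ref{intro-2}(1), one obtains the blowdown $\widetilde{Q} \supseteq Q$, an elliptic algebra associated to $E$, with $Q \cong Bl_{p_1}(\widetilde{Q})$ for some point $p_1 \in E$. The degree of $\widetilde{Q}$ is one more than that of $Q$, and one checks this is the right numerical value to make $\widetilde{Q}$ the second Veronese (or an appropriate Veronese) of a noncommutative quadric surface, i.e. of Van den Bergh's quadric \cite{VdB3} associated to $E$. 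Here I would invoke the birational/numerical classification of elliptic algebras of small degree: an elliptic algebra of the relevant degree associated to $E$, with the correct Hilbert series and point scheme, must be (a Veronese of) one of the known models — a noncommutative $\PP^2$ or a noncommutative $\PP^1 \times \PP^1$. Since $T'$ was a quadric-type object and blowing up once and down once should, by Theorem~\ref{intro-2}(2), return something in the same birational class, careful bookkeeping of degrees and of which Veronese one lands in will pin down $\widetilde{Q}$.

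The remaining work is to bridge from the quadric model back to a $\PP^2$ model. Classically, $\PP^1 \times \PP^1$ and $\PP^2$ are related by blowing up two points on the former and blowing down the strict transform of the ruling, or equivalently by a standard Cremona-type picture; the noncommutative analogue should say that the relevant Veronese of Van den Bergh's quadric, after blowing up two suitable points $p, q \in E$, becomes a blowup of $S^{(3)}$, or conversely that $S^{(3)}$ blown up at two points agrees with the quadric blown up at one point. Concretely, one would exhibit on $\widetilde{Q}$ (the quadric) a point $p_2 \in E$ whose blowup, composed with the earlier blowdown, realizes $Q \cong Bl_{p, q}(S^{(3)})$, using Theorem~\ref{intro-2}(2) to guarantee that blowing up and down are mutually inverse for degree $\geq 4$, and matching up the exceptional line modules. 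Throughout, genericity of $r$ is used to ensure all intermediate blowups are defined (the points stay in general position, so the iterated blowup of \cite{RSSshort} behaves well) and that the line modules involved satisfy $(\ast)$.

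The main obstacle I expect is not the formal application of the blowup/blowdown theorems but the precise identification of the intermediate elliptic algebra $\widetilde{Q}$ with a known noncommutative surface — that is, establishing a noncommutative analogue of the statement ``an elliptic algebra of degree $d$ associated to $E$ with the correct invariants is a Veronese of $\PP^2$ or of a quadric.'' This requires either a classification result (perhaps \cite{RSSminimal} or earlier structure theory) or an explicit construction of the isomorphism via generators and relations, tracking how the central element $g$, the twisted homogeneous coordinate ring $B(E,\mathcal{M},\tau)$ on the boundary, and the automorphism $\tau$ transform under each blowup and blowdown. Verifying that the automorphisms and line bundle degrees match on the nose — so that one genuinely gets $S$ a Sklyanin algebra associated to the \emph{same} $E$ — is where the real content lies, and it is presumably carried out in detail in the companion paper \cite{RSS4}.
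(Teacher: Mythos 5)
This theorem is not actually proved in the present paper: the text states it and defers the proof to the companion paper \cite{RSS4}. So there is no in-paper argument to compare against, but your sketch can still be assessed on its own merits, and it contains a genuine conceptual error.

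You propose to blow down the \emph{exceptional} line module $L$ of $Q = Bl_r(T')$, where $T' = (S')^{(4)}$, and claim that this yields a new degree-$8$ elliptic algebra $\widetilde{Q}$ that you can then identify with a quadric and push further. But by Theorem~\ref{thm:inverse}~(see also Theorem~\ref{intro-2}(2)), blowing down the exceptional line of a blowup simply recovers the algebra you started from: $\widetilde{Q}_L = T'$. Your subsequent step of exhibiting $Q$ as a blowup of $\widetilde{Q}$ at some $p_2$ is then vacuous, since $Q$ \emph{is} $Bl_r(T')$ by construction. You have gone in a circle and never leave the quadric side.

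The commutative picture you invoke tells you what the correct move is. On $Bl_r(\PP^1\times\PP^1)$ there are \emph{three} $(-1)$-curves: the exceptional curve over $r$ and the strict transforms of the two rulings through $r$. Contracting the exceptional curve returns $\PP^1\times\PP^1$; contracting the other two successively yields $\PP^2$, realizing $Bl_r(\PP^1\times\PP^1)\cong Bl_{p,q}(\PP^2)$. The noncommutative argument must therefore locate, inside $Q=Bl_r(T')$, line modules that are \emph{not} the exceptional one but that play the role of the strict transforms of the rulings, verify that they satisfy condition~$(\ast)$ (this is where genericity of $r$ and potentially the intersection calculus of Sections~\ref{INTERSECTION THEORY}--\ref{INTLINE} enter), and blow these down via Theorem~\ref{thm:I.1}. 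After two such blowdowns one reaches a degree-$9$ elliptic algebra, and the real work --- which you do correctly flag at the end --- is identifying that algebra with $S^{(3)}$ for a quadratic Sklyanin algebra $S$ on the same curve $E$; this requires either a classification of degree-$9$ elliptic algebras or an explicit construction. As the paper indicates, these steps are carried out in \cite{RSS4}, but as written your proposal does not reach them because the first blowdown you choose is the trivial one.
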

 In fact this theorem also holds when $S'$ is replaced by any generic noncommutative  quadric surface in the sense of \cite{VdB3}.
 This theorem is a noncommutative version of the isomorphism $Bl_{p,q}(\PP^2) \cong Bl_r(\PP^1\times \PP^1)$ arising from Theorem~\ref{thm:Z}. 
 The birationality of $S$ and $S'$   was first proved by Van den Bergh, with a detailed proof given in \cite{PV}. 
 
The paper is organised as follows.  
In Section~\ref{BACKGROUND} we review background on twisted homogeneous coordinate rings of elliptic curves.  
In Section~\ref{POINTS} we study point modules over such a ring.
In Section~\ref{ELLIPTIC ALGEBRAS} we define elliptic algebras and give their basic properties, and in
 section~\ref{LINES} we define and study line modules over elliptic algebras.  
In Sections~\ref{INTERSECTION THEORY} and~\ref{INTLINE} we develop noncommutative intersection theory and prove part (1) of Proposition~\ref{intro-1}.
In Section~\ref{BLOWDOWN} we prove our main blowing down Theorem~\ref{intro-2}, and in Section~\ref{SMOOTH} we prove Theorem~\ref{intro-3}.
Finally, in Section~\ref{EXAMPLE}  we study the effect of blowing up the same point twice.

\bigskip

\noindent {\bf Acknowledgements: }

We would like to thank Ken Goodearl, Dennis Presotto, and Michel Van den Bergh for useful discussions and comments. 
We also thank the referee of the first version of this article for their careful reading and helpful comments.

Part of this material is based upon work supported by the National Science Foundation under Grant No. 0932078 000, while
 the authors were in residence at the Mathematical Sciences Research Institute (MSRI) in Berkeley, California, during the spring semester of 2013. 
During this visit,  Rogalski was partially supported by NSF grant DMS-1201572, Sierra was supported by the Edinburgh Research
 Partnership in Engineering and Mathematics, while Stafford was partially supported by the Clay Mathematics Institute and Simons 
 Foundation.  We thank  all these institutions for their support.


\section{Basic concepts}\label{BACKGROUND}

In this section we review some basic material, including twisted homogeneous coordinate rings, that will be used frequently and without particular comment throughout the paper.

Throughout we work over an algebraically closed field $\kk$, and rings will   be $\kk$-algebras unless otherwise noted.
Given a noetherian $\mb{N}$-graded $\kk$-algebra $A$, let $\rGr A$ be the category of $\mb{Z}$-graded right $A$-modules, 
with morphisms $ \Hom_{\rGr A}$ preserving degree.
Write  $\rgr A$ for the full subcategory of noetherian modules.  Let $[1]: \rGr A \to \rGr A$ be the \emph{shift functor}: \label{shift-defn}
the autoequivalence sending $M=\bigoplus M_i \to M[1]=\bigoplus M[1]_i $, where $M[1]_n = M_{n+1}$.  For $M, N \in \rGr A$  
  the \emph{graded Hom} groups are
$\uHom_A(M,N) = \bigoplus_{n \in \mb{Z}} \Hom_{\rGr A}(M,N[n])$, with 
derived functors  $\uExt_A(M,N) = \bigoplus_{n \in \mb{Z}} \Ext^i_{\rGr A}(M,N[n])$.
 If $A$ and $M$ are  noetherian, then $\uHom_A(M,N) $ equals 
 the usual ungraded Hom which will always be written $\Hom_{A}(M,N)$. 
Similarly, 
$\uExt^i_A(M,N) = \Ext^i_{A}(M,N)$, see \cite[Proposition 3.1]{AZ}. Finally, set 
  $\uEnd_A(M)=\uHom_A(M,M)$.  We will frequently use the fact that 
\begin{equation}\label{elementary-shift}
\Ext^r_{\rGr A}(M[-n],N)=\Ext^r_{\rGr A}(M,N[n])=\Ext^r_{\rGr A}(M,N)[n],\qquad\text{for any $M,N,n$ and $r$.}
\end{equation}

Let $A=\bigoplus_{n\geq 0}A_n$ be a cg noetherian algebra, and note that $A$ is necessarily \emph{locally finite}
 in the sense that  $\dim_{\kk}A_n<\infty$ for all $n$.
Let $\rtors A$ be the category of modules in $\rgr A$ which 
are finite-dimensional over $\kk$, and let $\rTors A$ be the subcategory of $\rGr A$ consisting of direct limits of finite-dimensional 
modules.  Write  $\rQgr A$ for the quotient category $\rQgr A/\rTors A$, with quotient functor $\pi: \rGr A \to \rQgr A$.
Then $\rqgr A = \rgr A/\rtors A$ is identified with the noetherian objects in $\rQgr A$.
Following \cite{AZ}, the pair $(\rQgr A, \pi(A))$ is called the \emph{noncommutative projective scheme} associated to $A$.
 The autoequivalence $[1]$ of $\rGr A$ induces an autoequivalence, again written $[1]$,  of $\rQgr A$.
 We again have  graded Hom groups $\uHom_{\rQgr A}(\mc{M}, \mc{N}) = 
\bigoplus_{n \in \mb{Z}} \Hom_{\rQgr A}(\mc{M}, \mc{N}[n])$.

We emphasise here that  a graded module is called  \emph{torsionfree} provided it has no finite dimensional submodules.
In contrast a module $M$ (graded or not) over a prime ring $R$ is called \emph{Goldie torsionfree}
\label{tors-defn} if no element of the module is killed by a regular element of the ring.
 
Next we review some important homological conditions.  

\begin{definition}\label{def:gor}
A ring $A$ is called  \emph{Auslander-Gorenstein} if 
\begin{enumerate}
\item[(i)]   $\injdim(A)<\infty$, in the sense that $A$ has finite injective dimension on both left and right; 
\item[(ii)] if $0 \leq p<q$ and  $M$ is a finitely generated $A$-module, then
$\Ext_{A}^p(N,\,A)=0$ for every submodule $N$ of 
$\Ext_{A}^q(M,\,A)$.
\end{enumerate}

Write $\GKdim(M)$ for the Gelfand-Kirillov dimension of an $A$-module $M$, as in \cite{KL}. 
An $R$-module $M$  is called  \emph{$d$-pure} if \label{pure-defn}
$\GKdim N = d=\GKdim M$ for all nonzero submodules $N$ of $M$, and   is \emph{$d$-critical} if 
$\GKdim M/N < d$ for all all nonzero submodules $N$ of $M$. 
Let $A$ be a noetherian Auslander-Gorenstein $\kk$-algebra with $\GKdim(A)<\infty$. 
For an $A$-module $M$, write
 $j(M)= \min\{ r : \Ext^r_{A}(M,A)\not= 0\}$ for the \emph{homological grade} of $M$.
The algebra $A$ is called \emph{Cohen-Macaulay} (or CM), provided 
 that  $j(M)+\GKdim(M)=\GKdim(A)$ holds for every finitely generated
 $A$-module $M$.    The module $M$ is then called 
 \emph{Cohen-Macaulay} (or CM) if 
$\Ext_{A}^r(M,A) = 0$ for all $r \neq j(M)$.
  
Finally, a cg noetherian $\kk$-algebra $A$ is called \emph{Artin-Schelter (AS) Gorenstein} if 
$d=\injdim(A)<\infty$ and 
$\uExt_A^j(\kk, A)  \cong  \delta_{j,d}\kk[\ell] $,
where $\kk = A/A_{\geq 1}$ is the \emph{trivial module}, and $\ell$ is some shift of grading.
\end{definition}

 Let $X, Y$ be $\kk$-schemes. Write  $\Qcoh X$ for  the category of quasi-coherent sheaves 
on $X$, with $\coh X$ the subcategory of coherent sheaves.
Given a morphism of $\kk$-schemes $\phi: X \to Y$ and   $\mc{F}\in \Qcoh Y$, we write  $\mc{F}^{\phi}$ for the pullback $\phi^*(\mc{F})$.
If $\mc{L}$ is an invertible sheaf on $X$, and $\tau \in \Aut_{\kk}(X)$ is a $\kk$-automorphism, one defines
 the \emph{TCR or twisted homogeneous coordinate ring}\label{TCR-defn}
 $B(X, \mc{L}, \tau)= 
\bigoplus_{n \geq 0} H^0(X, \mc{L}_n)$, where $\mc{L}_n = \mc{L} \otimes \mc{L}^{\tau} \otimes \dots \otimes \mc{L}^{\tau^{n-1}}$.
This is a $\mb{N}$-graded $\kk$-algebra with multiplication defined as follows:  for $x \in B_m, y \in B_n$, then
$x \star y = \mu(x \otimes y^{\tau^m})$, where $\mu:  H^0(E, \mc{L}_m) \otimes H^0(E, \mc{L}_n^{\tau^m}) \to B_{n+m} = H^0(E, \mc{L}_{n+m})$ is the obvious   map.

In this paper we are primarily concerned with the TCR
 $B(E, \mc{L}, \tau)$ of an    elliptic curve $E$.   
In this case, the following result is well-known:

\begin{lemma}\label{lem:basics for B}
 Let $E$ be an elliptic curve over $\kk$ and $\sL, \sM$ be invertible sheaves on $E$ of degree $\geq 2$.  
\begin{enumerate}\item 
The natural map 
\[ \mu: H^0(E, \sL)\otimes H^0(E, \sM) \to H^0(E, \sL \otimes \sM)\]
is surjective unless $\sL \cong \sM$ has degree 2, in which case $\dim \coker \mu = 1$.
\item   Let  $\tau\in\Aut_{\kk}(X)$ have infinite order.  
Then $B = B(E, \mc{L}, \tau)$ is generated as an algebra in degree~$1$. 
\item If $B$ is as in (2), then $B$ is a noetherian domain which is Auslander-Gorenstein, CM, and AS-Gorenstein, with 
$\operatorname{injdim} B_B = 2$ and $\uExt^j_B(\kk, B) = \delta_{2j} \kk$.  
\item If $B$ is as in (2), the map  $\mc{F} \mapsto \pi( \bigoplus_{n \geq 0} H^0(E, \mc{F} \otimes \mc{L}_n))$ 
defines   an equivalence of categories 
$\Qcoh E \to \rQgr B$.

\end{enumerate}
\end{lemma}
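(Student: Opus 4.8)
The plan is to treat part~(1) as the one substantive input and to obtain~(2)--(4) from it together with the general theory of twisted homogeneous coordinate rings. So I would first prove~(1). Since $\deg\sL\ge2$, $\sL$ is globally generated and carries a base-point-free pencil $V\subseteq H^0(E,\sL)$ with $\dim_\kk V=2$, giving a Koszul sequence $0\to\sL^{-1}\to V\otimes\struct_E\to\sL\to0$. Tensoring with $\sM$ and taking cohomology yields
\[
V\otimes H^0(E,\sM)\longrightarrow H^0(E,\sL\otimes\sM)\longrightarrow H^1(E,\sM\otimes\sL^{-1}).
\]
If $\sL\not\cong\sM$ then $\sM\otimes\sL^{-1}$ is a nonzero sheaf of degree $\ge0$ on a genus-one curve, hence has vanishing $H^1$, and already $V\otimes H^0(E,\sM)$ — so \emph{a fortiori} $H^0(E,\sL)\otimes H^0(E,\sM)$ — surjects onto $H^0(E,\sL\otimes\sM)$. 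If $\sL\cong\sM$, the same sequence (with $\sM\otimes\sL^{-1}\cong\struct_E$ and $H^1(E,\sL)=0$ since $\deg\sL\ge1$) shows that the cokernel of $V\otimes H^0(E,\sL)\to H^0(E,\sL^{\otimes2})$ is exactly $H^1(E,\struct_E)\cong\kk$, whence $\dim_\kk\coker\mu\le1$, with equality when $\deg\sL=2$ because then $H^0(E,\sL)=V$. It remains to see that $\mu$ is onto when $\sL\cong\sM$ has degree $\ge3$; this is the classical projective normality of an elliptic normal curve, which I would cite (or else derive by observing that two sufficiently general base-point-free pencils in $H^0(E,\sL)$ have distinct codimension-one images in $H^0(E,\sL^{\otimes2})$ and hence together span it).

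For~(2), the key remark is that an infinite-order automorphism $\tau$ of $E$ is necessarily translation by a non-torsion point. Writing $\tau=t_b\circ\sigma$ with $\sigma$ fixing the origin and $k=\operatorname{ord}\sigma$, one computes $\tau^{k}=t_{N(b)}$ with $N=1+\sigma+\dots+\sigma^{k-1}\in\End(E)$, and since $(1-\sigma)N=1-\sigma^{k}=0$ while $\End(E)$ is a domain, $N=0$ whenever $\sigma\ne\id$; thus $\sigma\ne\id$ would make $\tau$ torsion. Consequently $\tau^*\sN\not\cong\sN$ for every $\sN$ of positive degree, since $\tau^*\struct(D)\cong\struct(D)$ forces $(\deg D)\,b\equiv0$. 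Generation in degree one now follows by induction: modulo the pullback isomorphism, $B_1\cdot B_{n-1}=B_n$ is the image of the map of part~(1) for the pair $\sL$ (degree $\ge2$) and $\sL_{n-1}^{\tau}$ (degree $(n-1)\deg\sL\ge2$), and the exceptional case of~(1) cannot occur — it would require $\deg\sL=2$, $n=2$, and $\sL\cong\sL^{\tau}$, contradicting the previous sentence.

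For~(3) and~(4) I would appeal to the Artin--Van den Bergh theory of twisted homogeneous coordinate rings: on a smooth projective curve every invertible sheaf of positive degree is $\tau$-ample, so $B$ is noetherian and $\mc F\mapsto\pi\big(\bigoplus_{n\ge0}H^0(E,\mc F\otimes\sL_n)\big)$ is an equivalence $\Qcoh E\xrightarrow{\ \sim\ }\rQgr B$, which is~(4). That $B$ is a domain follows from its embedding into the twisted Laurent ring $\kk(E)[t,t^{-1};\tau]$. The Auslander--Gorenstein, Cohen--Macaulay and AS-Gorenstein properties, with $\injdim B_B=\dim E+1=2$, are the standard homological facts about twisted homogeneous coordinate rings of smooth projective curves; and since Riemann--Roch gives $\hilb B=\sum_{n\ge0}h^0(E,\sL_n)\,s^n=\big(1+(d-2)s+s^2\big)/(1-s)^2$ with $d=\deg\sL$, which satisfies $\hilb B(s^{-1})=\hilb B(s)$, the Gorenstein parameter is $0$, i.e.\ $\uExt^j_B(\kk,B)\cong\delta_{j,2}\,\kk$.

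The sole genuine obstacle lies inside part~(1): the exact cokernel computation when $\sL\cong\sM$, namely surjectivity of $\mu$ in the degree-$\ge3$ case and the clean isolation of the single exceptional degree-$2$ case. Parts~(2)--(4) are then short deductions or citations.
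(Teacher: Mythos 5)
Your proposal is correct but takes a genuinely different route from the paper's in the place where it matters. The paper disposes of parts (1) and (2) by a single citation to \cite[Lemma~3.1]{R-Sklyanin}, then cites \cite{AV} for the noetherian property and (4) and \cite{Lev1992} for the Auslander-Gorenstein, CM and AS-Gorenstein claims; you instead give self-contained proofs of (1) and (2) and cite for (3) and (4). For (1), your base-point-free-pencil Koszul argument cleanly isolates $H^1(E,\sM\otimes\sL^{-1})$ as the obstruction and handles the $\sL\not\cong\sM$ case and the degree-$2$ exceptional case completely; the residual diagonal case $\sL\cong\sM$, $\deg\ge3$ you propose to cite as projective normality of elliptic normal curves, which is fine (your parenthetical ``two general pencils'' alternative is not fully justified as stated, but you offer it only as a backup). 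For (2), the observation that an infinite-order automorphism of an elliptic curve is necessarily translation by a non-torsion point (because $\End(E)$ is a domain, so $\sigma\ne\id$ forces $N=1+\sigma+\cdots+\sigma^{k-1}=0$ and hence $\tau^k=\id$) is a nice structural fact that buys you $\tau^*\sN\not\cong\sN$ for all $\sN$ of positive degree, and with it the exceptional case of (1) can never arise in the induction $B_1B_{n-1}=B_n$. This is more illuminating than the paper's citation because it exhibits exactly where the infinite-order hypothesis enters. For (3) your Hilbert-series palindromy $\hilb B(s^{-1})=\hilb B(s)$ is only a heuristic for the Gorenstein parameter being zero rather than a proof, but since you are anyway citing the Auslander-Gorenstein/CM/AS-Gorenstein package from the standard TCR theory (as the paper does, via Levasseur), this does not constitute a gap. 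Part (4) matches the paper's citation to \cite{AV}.
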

\begin{proof}
Parts (1) and (2) are \cite[Lemma 3.1]{R-Sklyanin}.  That $B$ is a domain follows immediately from the definition, and the noetherian property 
is \cite[Theorem~1.4]{AV}, while part (4) is  \cite[Theorem~1.3]{AV}.
  The remaining homological properties follow from \cite[Theorems~6.3 and 6.6]{Lev1992}.    (Levasseur assumes that $\deg \mc{L} \geq 3$, but the proof only uses that $\sL$ is ample).
\end{proof}

\begin{notation}\label{section-functor} 
The quotient functor $\pi: \rGr B \to \rQgr B$ has a right adjoint $\omega: \rQgr B \to \rGr B$ called 
the section functor, which may be described more explicitly as follows.   
If $M = \bigoplus_{n \geq 0} H^0(E, \mc{F} \otimes \mc{L}_n)$ for a coherent sheaf $\mc{F}$, then 
$\omega \pi(M) = \bigoplus_{n \in \mb{Z}} H^0(E, \mc{F} \otimes \mc{L}_n)$, where we define 
$\mc{L}_n$ for $n < 0$ by $\mc{L}_n = (\mc{L}^{\tau^{n}} \otimes \dots \otimes \mc{L}^{\tau^{-1}})^{-1}$.
We say that a graded $B$-module $M$ is \emph{saturated} if it is in the image of the section functor $\omega$.  By\cite[(2.2.3)]{AZ}, this is 
equivalent to $\uExt^1_B(\kk, M) = 0$.  
\end{notation}
 
Given an $\mb{N}$-graded noetherian domain $A$, the localisation  of $A$  at the set of  nonzero homogeneous elements exists 
and is called the \emph{graded quotient ring} $Q = Q_{\gr}(A)$ of $A$. \label{quot-defn}
 Given noetherian graded $A$-submodules $M,N$ 
of $Q$, we identify $\uHom_A(M,N) $ with $  \{x \in Q : xM \subseteq N \}$.  In particular,   
 $\uHom_A(M,A)$ is identified with $M^*  = \{ x \in Q : xM \subseteq A \}$ and  $M$ is   \emph{reflexive} if $M=M^{**}$.
If $A=B = B(E, \mc{L}, \tau)$ as in Lemma~\ref{lem:basics for B}, then $Q_{gr}(B) \cong \kk(E)[t, t^{-1}; \tau]$ where 
$\kk(E)$ is the function field of $E$ with the induced action of  $\tau$.  We sometimes fix an isomorphism 
$Q_{\gr}(B) \cong Q = \kk(E)[t, t^{-1}; \tau]$, and write $B$ as the explicit subalgebra 
$B = \bigoplus_{n \geq 0} H^0(E, \mc{L}_n) t^n$ of $Q$, where each $H^0(E, \mc{L}_n)$ is then given a fixed embedding
 into $\kk(E)$.  The following result will be useful in calculating Homs between $B$-submodules of $Q$.

\begin{lemma}
\label{lem:uhomB}
Let $B = B(E, \mc{L}, \tau) = \bigoplus_{n \geq 0} H^0(E, \mc{L}_n) t^n \subseteq Q = \kk(E)[t, t^{-1}; \tau]$ for an elliptic 
curve $E$ over $\kk$, with invertible sheaf $\mc{L}$ of degree $\geq 2$ and infinite order automorphism $
\tau\in  \Aut_{\kk}(E)$.   Let $\mc{F}$ and $\mc{G}$ be invertible $\mc{O}_E$-subsheaves of the constant sheaf   $\kk(E)$,  
and let $M = \bigoplus_{n \in \mb{Z}} H^0(E, \mc{F} \otimes \mc{L}_n)t^n$, $N = \bigoplus_{n \in \mb{Z}} H^0(E, \mc{G} \otimes \mc{L}_n)t^n$
 be saturated $B$-submodules of $Q$.  Then 
\[
\uHom_B(M,N) = \bigoplus_{n \in \mb{Z}} H^0(E, (\mc{F}^{\tau^{n}})^{-1} \otimes \mc{G} \otimes \mc{L}_n)t^n \subseteq Q.
\]
\end{lemma}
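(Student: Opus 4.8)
The plan is to compute both sides as graded subspaces of $Q = \kk(E)[t,t^{-1};\tau]$ and check they agree in each degree $n$. Since $M$ and $N$ are $B$-submodules of $Q$, the identification at the end of Section~\ref{BACKGROUND} gives $\uHom_B(M,N) = \{x \in Q : xM \subseteq N\}$, and this set is graded: an element $x = ft^n$ with $f \in \kk(E)$ lies in $\uHom_B(M,N)$ iff $ft^n \cdot M_m \subseteq N_{m+n}$ for all $m$. So I would first reduce to the following claim: for fixed $n$, the degree-$n$ component of $\uHom_B(M,N)$ equals $H^0(E, (\mc{F}^{\tau^n})^{-1}\otimes \mc{G}\otimes \mc{L}_n)t^n$.

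First I would unwind what $ft^n \cdot M_m \subseteq N_{m+n}$ says sheaf-theoretically. We have $M_m = H^0(E, \mc{F}\otimes\mc{L}_m)t^m$ and $N_{m+n} = H^0(E,\mc{G}\otimes\mc{L}_{m+n})t^{m+n}$, and the multiplication rule in $Q$ twists the left factor of $M_m$ by $\tau^n$ when moving $t^n$ past it — more precisely, $(ft^n)(h t^m) = f\cdot h^{\tau^n} t^{m+n}$ where $h^{\tau^n}$ denotes the $\tau^n$-translate in $\kk(E)$. Thus the condition is: $f \cdot (H^0(E,\mc{F}\otimes\mc{L}_m))^{\tau^n} \subseteq H^0(E, \mc{G}\otimes\mc{L}_{m+n})$ for all $m$. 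Now $(H^0(\mc{F}\otimes\mc{L}_m))^{\tau^n} = H^0(E, (\mc{F}\otimes\mc{L}_m)^{\tau^n}) = H^0(E, \mc{F}^{\tau^n}\otimes \mc{L}_m^{\tau^n})$, and since $\mc{L}_{m+n} = \mc{L}_n \otimes \mc{L}_m^{\tau^n}$ (directly from the definition $\mc{L}_k = \mc{L}\otimes\cdots\otimes\mc{L}^{\tau^{k-1}}$), the target is $H^0(E, \mc{G}\otimes\mc{L}_n\otimes\mc{L}_m^{\tau^n})$. So multiplication by $f$ must send $H^0(E,\mc{F}^{\tau^n}\otimes\mc{L}_m^{\tau^n})$ into $H^0(E,(\mc{F}^{\tau^n})^{-1}\otimes\mc{G}\otimes\mc{L}_n\otimes(\mc{F}^{\tau^n}\otimes\mc{L}_m^{\tau^n}))$ for all $m$; equivalently, writing $\mc{H} = (\mc{F}^{\tau^n})^{-1}\otimes\mc{G}\otimes\mc{L}_n$ (an invertible subsheaf of $\kk(E)$), the condition becomes $f\cdot H^0(E, \mc{N}_m) \subseteq H^0(E, \mc{H}\otimes\mc{N}_m)$ for all $m$, where $\mc{N}_m := \mc{F}^{\tau^n}\otimes\mc{L}_m^{\tau^n}$ is a family of invertible sheaves whose degrees tend to $+\infty$.

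The containment ``$\supseteq$'' of the lemma is then the easy direction: if $f \in H^0(E,\mc{H})$, i.e. $(f) + \mc{H} \geq 0$ as divisors (interpreting $\mc{H}\subseteq\kk(E)$), then $f\cdot H^0(\mc{N}_m)\subseteq H^0(\mc{H}\otimes\mc{N}_m)$ is immediate from additivity of divisors of sections. For ``$\subseteq$'', the point is to show that if $f\in\kk(E)$ satisfies $f\cdot H^0(E,\mc{N}_m)\subseteq H^0(E,\mc{H}\otimes\mc{N}_m)$ for all $m$ (or even for a single sufficiently positive $m$), then $f\in H^0(E,\mc{H})$. I expect this to be the main obstacle, and the natural tool is ampleness: choose $m$ large enough (using $\deg\mc{L}\geq 2$ and infinite order of $\tau$, so $\deg\mc{L}_m\to\infty$, hence $\deg\mc{N}_m\to\infty$) that $\mc{N}_m$ is globally generated — in fact very ample. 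Then at any point $q\in E$ one can find $s\in H^0(\mc{N}_m)$ with $s(q)\neq 0$, so the divisor of $fs$ being $\geq -(\mc{H}\otimes\mc{N}_m)$ forces the local condition $\mathrm{ord}_q(f) \geq -\mathrm{ord}_q(\mc{H})$ at that $q$; ranging over all $q$ gives $(f) \geq -\mc{H}$, i.e. $f\in H^0(E,\mc{H})$. Actually, global generation alone suffices for this pointwise argument provided $\deg\mc{N}_m\geq 1$ at every relevant point, which holds once $\deg\mc{N}_m$ is large. Combining the two directions gives the degree-$n$ component of $\uHom_B(M,N)$ equal to $H^0(E,\mc{H})t^n = H^0(E, (\mc{F}^{\tau^n})^{-1}\otimes\mc{G}\otimes\mc{L}_n)t^n$, and summing over $n$ completes the proof. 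One should also double-check that the displayed right-hand side is genuinely inside $Q$ — this is automatic since each $\mc{F}^{\tau^n}, \mc{G}, \mc{L}_n$ is realized as a subsheaf of the constant sheaf $\kk(E)$ (the $\mc{L}_n$ via the chosen embeddings, extended to negative $n$ as in Notation~\ref{section-functor}), so their tensor product is again such a subsheaf.
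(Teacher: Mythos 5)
Your proposal is correct, and it takes a genuinely different (and arguably cleaner) route than the paper's proof. Both arguments reduce the problem to the sheaf-level inclusions $\mc{H}_r \otimes (\mc{F}\otimes\mc{L}_m)^{\tau^r}\subseteq \mc{G}\otimes\mc{L}_{m+r}$, using that the degrees $\deg\mc{L}_m\to\infty$. The paper's proof proceeds in two stages: it first shows $\uHom_B(M,N)\subseteq X\subseteq\uHom_B(M_{\geq n_0},N)$, working with the sheaf $\mc{H}_r$ generated globally by $H_r$ and invoking $\tau$-ampleness so that $M_n$ generates $\mc{F}\otimes\mc{L}_n$ for $n\geq n_0$; it then closes the gap between $\uHom_B(M_{\geq n_0},N)$ and $\uHom_B(M,N)$ by using the saturation of $N$ (if $\theta M_{\geq n_0}\subseteq N$, saturation of $N$ forces $\theta M\subseteq N$). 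Your approach instead works degree by degree and replaces the sheaf-generation argument with a pointwise valuation argument at a single sufficiently large $m$: global generation of $\mc{N}_m=\mc{F}^{\tau^n}\otimes\mc{L}_m^{\tau^n}$ gives, at each closed point $q$, a section not vanishing there, which pins down $\mathrm{ord}_q(f)\geq -\,\mathrm{ord}_q(\mc{H})$. This neatly sidesteps the need to invoke saturation as a separate step, since it directly characterizes degree-$n$ elements of $\uHom_B(M,N)$ rather than passing through $\uHom_B(M_{\geq n_0},N)$; what the hypothesis that $M,N$ be saturated actually buys you is that their graded pieces are given by $H^0$ in \emph{all} degrees $n\in\ZZ$, which you use when unwinding $ft^n M_m\subseteq N_{m+n}$. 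Your ``easy'' containment ($\supseteq$) really is immediate from the additivity of divisors of rational sections, valid for every $m\in\ZZ$, so no issue there either. Both proofs need $\deg\mc{L}\geq 2$ only to guarantee $\deg(\mc{F}\otimes\mc{L}_m)\to\infty$ (and hence global generation of the relevant sheaves for large $m$); your observation that global generation suffices, rather than very ampleness, is accurate.
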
 

\begin{proof}   This is similar to the proof of   \cite[Lemma 6.14(i)]{RSSlong}, but since we use the result frequently we give the details.

 Write $ H= \uHom_B(M,N)$, and $X= \bigoplus_{n \in \mb{Z}} H^0(E, (\mc{F}^{\tau^{n}})^{-1} \otimes \mc{G} \otimes \mc{L}_n)t^n $
 both of which   can be identified with   subspaces of $Q$.
  For each $n$,  
let $\mc{H}_n$ be the subsheaf of the constant sheaf  $\kk(E)$ generated by   $H_n t^{-n} \subseteq \kk(E)$. 
Let $\mc{M}_n  =  \mc{F}  \otimes  \mathcal{L}_n$; thus $M_n = H^0(E, \mc{M}_n)$, and $M_n$ generates 
the sheaf $\mc{M}_n$, for $n\gg0$, say for $n \geq n_0$, because $\mc{L}$ is $\tau$-ample by \cite[Proposition 1.5]{AV}.  
Similarly, write $\mc{N}_n  =  \mc{G}  \otimes  \mathcal{L}_n$  for all $n$.

For $n \geq n_0$ and $r \geq 0$, the equation $H_r M_n \subseteq N_{n+r}$ forces 
$\mc{H}_r \mc{M}_n^{\tau^r} \subseteq \mc{N}_{n+r}$ and so
\[ 
  \mc{H}_r \otimes \Bigl( \mc{F} \otimes \mathcal L_n \Bigr)^{\tau^r}  
\   \subseteq  \        \mc{G} \otimes \mathcal L_{n+r}.
\]
Equivalently
$\mc{H}_r \subseteq  (\mc{F}^{\tau^{r}})^{-1} \otimes \mc{G} \otimes \mc{L}_r$  and so  
 $H\subseteq \bigoplus H^0(E, \mc{H}_r)t^r \subseteq X$.
Another calculation shows that 
$\big( (\mc{F}^{\tau^{r}})^{-1} \otimes \mc{G}\otimes \mc{L}_r  \bigr) \, \mc{M}_n^{\tau^r} = \mc{N}_{n+r}$ 
for $r, n \geq 0$ and taking sections for $n \geq n_0$ shows that 
$ X \subseteq \uHom_B(M_{\geq n_0}, N)$.

To complete the proof we need to prove that $H =  \uHom_B(M_{\geq n_0}, N)$.    Clearly, $H \subseteq \uHom_B(M_{\geq n_0}, N)$.  
However, if 
$\theta\in \uHom_B(M_{\geq n_0}, N)_t$ for some $t$, then we may consider $\theta$ as an element of $Q$. 
We see that  $Z=(\theta M+N)$ is  a $B$-submodule of $Q$ such that $Z B_{\geq n_0}\subseteq N$.
Since $N$ is saturated, this forces $Z\subseteq N$ and  $\theta \in H$, as desired.
\end{proof}
 
\begin{notation}\label{loc-fin}
Let $A$ be connected graded.   A $\mb{Z}$-graded $A$-module $M$ is 
 \emph{left} respectively \emph{right bounded} if $M_n = 0$ for $n \ll 0$, respectively  for $n \gg 0$.  Obviously right bounded modules are in $\rTors A$, and finitely generated 
graded modules $M$ are left bounded.  Importantly, if $A$ is noetherian and $M$ and $N$ are finitely generated graded $A$-modules, 
then by considering a free resolution of $M$, each $\uExt^i_A(M,N)$ is   left bounded and locally finite.
 If $M$ is locally finite, the \emph{Hilbert series}\label{hilb-defn} of $M$ 
is the formal Laurent series $\hilb M=h_M(s)=\sum_{n \in \mb{Z}} \dim_{\kk} M_n s^n \in \ZZ((s))$.  
Note that we use the  notations $\hilb M$ and $h_M$ interchangeably.
Given two Hilbert series $g(s) = \sum a_n s^n $ and $ h(s) = \sum b_n s^n$ we write $g(s) \leq h(s)$ 
 if $a_n \leq b_n$ for all $n \in \mb{Z}$.
 \end{notation}
 

\section{Point modules}\label{POINTS}

In this section we study some homological properties of point modules over a  twisted homogeneous coordinate ring. 
Throughout the section, we fix an elliptic curve $E$,  an automorphism  $\tau\in  \Aut_{\kk}(E)$ of infinite order and 
an invertible sheaf $\mc{M}$ with $\deg \mc{M}\geq 3$, although many of the results hold more generally. Corresponding
 to this data, set $B = B(E, \mc{M}, \tau)$. Points of $E$ will always mean closed points.

\begin{definition}\label{point-defn}
Let $A$ be a cg $\kk$-algebra that is generated in degree one. Then 
  a \emph{point module} over $A$   is 
a graded cyclic module $M$ with Hilbert series $h_M(s) = 1/(1-s)$.  If $M$ is an  $A$-point module, then 
there is a graded isomorphism $M \cong A/I$ for a unique right ideal $I$, called
a (right) \emph{point ideal}.
Now let $B = B(E, \mc{M}, \tau)$ be a TCR as defined in the last section;  thus $B$ is generated in degree $1$ by Lemma~\ref{lem:basics for B}. 
By  Lemma~\ref{lem:basics for B}(4),  the isomorphism classes of $B$-point modules  are in one-to-one correspondence 
with the closed points of $E$; explicitly, if $p \in E$ with skyscraper sheaf $\mc{O}_p$ and  ideal sheaf  $\mc{I}_p$, 
then $p \in E$ corresponds to the point module $M_p = \bigoplus_{n \geq 0} H^0(E, \mc{O}_p \otimes \mc{M}_n)$, with   point ideal
$I_p = \bigoplus_{n \geq 0} H^0(E, \mc{I}_p \otimes \mc{M}_n) \subseteq B$.  It is easy to see that $I_p$ is a saturated right ideal in the sense of Notation~\ref{section-functor}.

When considering shifts of point modules, the following formula will be useful.  
\begin{equation}
\label{eq:truncshift}   M_p[n]_{\geq 0} \cong M_{\tau^n p}
 \qquad \text{for any } p\in E\ \text{and } n \geq 0
\end{equation}
(see, for example, \cite[Lemma~4.8(1)]{RSSshort}).
In particular, $(M_p)_{\geq n} \cong (M_{\tau^np})[-n] $ for all  $p\in E$ and $n\geq 0$. 

\end{definition}

\begin{remark}\label{leftpoint-remark}
We   occasionally  work with left point modules over $B = B(E, \mc{M}, \tau)$.  Of course there are left-sided versions of 
all of the results above; in particular, the left $B$-point modules are again  in bijection with the points of $E$.  In this case the equivalence of 
categories $\Qcoh E \to B \lQgr$ is induced by the functor 
$\mc{F} \mapsto \bigoplus_{n \geq 0} H^0(E, \mc{F}^{\tau^{n-1}} \otimes \mc{M}_n)$.  In particular, 
the    left point module corresponding to $p\in E$ is 
$M^{\ell}_p = \bigoplus_{n \geq 0}H^0(E, (\mc{O}_p)^{\tau^{n-1}} \otimes \mc{M}_n)$. 
Moreover,  $M^{\ell}_p \cong B/J_p$ for the 
\emph{left point ideal} 
$J_p = \bigoplus_{n \geq 0}  H^0(E, (\mc{I}_p)^{\tau^{n-1}} \otimes \mc{M}_n)$.  Note that the correspondence is set up so that if $I_p$ is the right 
point ideal corresponding to $p$, and $J_p$ the left point ideal, then $(I_p)_1 = (J_p)_1 = H^0(E, \mc{I}_p \otimes \mc{M})$.
We also have  the following analogue of \eqref{eq:truncshift}:
\beq\label{leftpoint}
(M^{\ell}_q)[n]_{\geq 0} \cong M^{\ell}_{\tau^{-n}q} 
\qquad \text{for any } q\in E\ \text{and } n \geq 0.
\eeq
\end{remark}

\begin{lemma}\label{lem:Bdual}
 Let $B = B(E, \sM, \tau)$ as above with  a  right  point module  $M=M_p$.   
Then $M$ is CM and 
 $\uExt^1_B(M_p, B)[1]\cong M^\ell_{\tau^{-2}(p)} $ is a  left  point module.  
 The analogous result holds for left point modules.
 \end{lemma}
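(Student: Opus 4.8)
The plan is to compute $\uExt^\bullet_B(M_p,B)$ directly from a projective resolution of $M_p$ obtained from the geometry. First I would recall that, under the equivalence $\Qcoh E \to \rQgr B$ of Lemma~\ref{lem:basics for B}(4), the point module $M_p$ corresponds to the skyscraper sheaf $\mc{O}_p$, and its point ideal $I_p$ is saturated with $\pi(I_p)$ corresponding to the ideal sheaf $\mc{I}_p$. From the structure sequence $0 \to \mc{I}_p \to \mc{O}_E \to \mc{O}_p \to 0$ on $E$ and the fact that $\mc{I}_p$ is an invertible sheaf of degree $-1$ (hence has $H^0 = 0$ but $I_p$ is still saturated and generated in large degree), one gets the exact sequence of $B$-modules $0 \to I_p \to B \to M_p \to 0$, at least up to a finite-dimensional discrepancy in low degrees. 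Since $M_p$ has $\GKdim 1$ and $B$ is CM of $\GKdim 2$ with $\injdim B_B = 2$, the homological grade of $M_p$ is $j(M_p) = 1$, which gives $\uExt^0_B(M_p,B) = 0$ and $\uExt^{\geq 2}_B(M_p,B) = 0$ by the CM property for $M_p$ — once I check $M_p$ is CM. To see $M_p$ is CM: it is $1$-pure (any nonzero submodule of a point module is a shifted point module, hence also $\GKdim 1$), and over an Auslander-Gorenstein CM ring, pure modules of pure grade are CM, or one argues directly via the resolution.

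The heart of the computation is then $\uExt^1_B(M_p,B)$, obtained by applying $\uHom_B(-,B)$ to $0 \to I_p \to B \to M_p \to 0$: this yields $\uExt^1_B(M_p,B) \cong \uHom_B(I_p,B)/B = I_p^*/B$ (using $\uExt^1_B(B,B)=0$ and $\uHom_B(M_p,B)=0$). Now I would compute $I_p^* = \uHom_B(I_p,B)$ using Lemma~\ref{lem:uhomB}: writing $I_p = \bigoplus_{n} H^0(E,\mc{I}_p \otimes \mc{M}_n)t^n$ as a saturated submodule of $Q = \kk(E)[t,t^{-1};\tau]$, Lemma~\ref{lem:uhomB} (with $\mc{F} = \mc{I}_p$, $\mc{G} = \mc{O}_E$) gives
\[
\uHom_B(I_p,B) = \bigoplus_{n \in \mb{Z}} H^0\bigl(E,\ (\mc{I}_p^{\tau^n})^{-1} \otimes \mc{M}_n\bigr)t^n.
\]
Since $\mc{I}_p^{\tau^n} = \mc{I}_{\tau^{-n}p}$, we have $(\mc{I}_p^{\tau^n})^{-1} \cong \mc{O}_E(\tau^{-n}p)$, so the degree-$n$ piece is $H^0(E, \mc{O}_E(\tau^{-n}p) \otimes \mc{M}_n)$.

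Finally, I would identify the quotient $I_p^*/B$ with a left point module by recognising the cokernel sheaf-theoretically. The inclusion $B \hookrightarrow I_p^*$ corresponds, in degree $n$, to $H^0(E,\mc{M}_n) \hookrightarrow H^0(E,\mc{O}_E(\tau^{-n}p)\otimes\mc{M}_n)$, whose cokernel is (a twist of) the skyscraper at $\tau^{-n}p$ — more precisely it is governed by $H^0$ of $0 \to \mc{M}_n \to \mc{O}_E(\tau^{-n}p)\otimes\mc{M}_n \to \mc{O}_{\tau^{-n}p} \otimes (\cdots) \to 0$. Matching the degrees with Remark~\ref{leftpoint-remark}'s formula for left point modules $M^\ell_q = \bigoplus_{n\geq 0} H^0(E,(\mc{O}_q)^{\tau^{n-1}}\otimes\mc{M}_n)$ and its shift rule \eqref{leftpoint}, and accounting for the overall shift by $[1]$, I expect the bookkeeping to produce exactly $M^\ell_{\tau^{-2}(p)}$; the shift by $\tau^{-2}$ should fall out of comparing the left-module twisting convention ($\mc{O}_q$ appears twisted by $\tau^{n-1}$) against the right-hand side, together with the $[1]$-shift. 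The left-sided statement follows by the symmetric argument using the left-module version of Lemma~\ref{lem:uhomB} and Remark~\ref{leftpoint-remark}.

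The main obstacle I anticipate is the \emph{precise} identification of the shift: tracking the $\tau$-twists through Lemma~\ref{lem:uhomB}, through the left-module parametrisation in Remark~\ref{leftpoint-remark}, and through the $[1]$ at the end, to land on $\tau^{-2}(p)$ rather than $\tau^{-1}(p)$ or $\tau^{-3}(p)$. A clean way to pin this down without index errors is to test the computation against the known self-duality/Serre-duality behaviour on $E$ (the dualising sheaf $\omega_E \cong \mc{O}_E$ contributes nothing, but the automorphism $\tau$ shifts by exactly the amount appearing in $Q_{gr}(B) = \kk(E)[t,t^{-1};\tau]$), or simply to compute Hilbert series first to confirm one has a shifted left point module and then fix the point by examining a single low degree.
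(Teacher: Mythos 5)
Your plan matches the paper's proof almost step for step: apply $\uHom_B(-,B)$ to $0 \to I_p \to B \to M_p \to 0$, compute $I_p^* = \uHom_B(I_p,B)$ via Lemma~\ref{lem:uhomB} to get $\bigoplus_{n}H^0(E,\sM_n(\tau^{-n}(p)))$, observe that $I_p^*/B$ is a (shifted) point module, and match it against Remark~\ref{leftpoint-remark} and \eqref{leftpoint} to pin down the point $\tau^{-2}(p)$. The one place you are vaguer than the paper is the CM claim for $M_p$: your invoked principle ``pure modules of pure grade over an Auslander-Gorenstein CM ring are CM'' is not true in that generality, and the paper instead uses the Gorenstein double-Ext spectral sequence of Levasseur/Bj\"ork together with the $1$-criticality of $M_p$ (purity alone gives $\uExt^0_B(M_p,B)=0$, but killing $\uExt^2_B(M_p,B)$ needs the $E^{22}$-term argument, i.e.\ that $M_p$ has no nonzero finite-dimensional submodules); your fallback ``or one argues directly via the resolution'' covers this, but it is the step you should actually spell out.
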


\begin{proof} We freely use the properties of $B$ given by   Lemma~\ref{lem:basics for B}.
Set $E^{pq}(M)=\uExt_B^p(\uExt_B^q(M,B),B)$. 

 We first show that $M$ is CM, which will be a routine consequence of the spectral sequence 
\begin{equation}\label{spectral-eq}
E_2^{p,-q}=E^{pq}(M) \Rightarrow \mathbb{H}^{p-q}(M)=\begin{cases} M & \text{ if } p=q\\
0& \text{ otherwise,} \end{cases}
\end{equation}
as described   in \cite[Theorem~2.2(a)]{Lev1992} or \cite[\S I.1]{Bj}. 
Note that, as $B$ is generated in degree one, every proper factor module of $M$ is finite dimensional, and so $M$ is 1-critical.  
Now $B$ is Auslander-Gorenstein of  injective dimension 2.
Therefore, by   the 1-criticality of $M$ and \cite[Theorem~2.4(b)]{Lev1992}, and  in the notation of   that result,
  $F^j(M)=0$ for $j>1$. Hence   $M=E^{11}(M)$ but $E^{22}(M)=0$ 
  by \cite[Theorem~2.2(b)]{Lev1992}. In particular, $\uExt^2_B(M,B)=0$  by the Gorenstein property while $\uExt^0_B(M,B)=0$ since $B$ is a domain.  Thus $M$ is indeed CM.

It is almost immediate from \eqref{spectral-eq} that $\uExt^1_B(M, B)[1]$ is a point module, but as we need to identify the corresponding point, we take a different approach.  Write $M=M_p=B/I_p$. Applying $\uHom_B( -, B)$ to the exact sequence $0 \to I_p \to B \to M_p \to 0$  shows that 
$\uExt^1_B(M_p, B) \cong I_p^*/B$.  
By Lemma~\ref{lem:uhomB}   
\[
I_p^* \cong \bigoplus_{n\geq 0}H^0(E, (\mc{I}_p^{\tau^n})^{-1} \otimes \sM_n) = 
\bigoplus_{n\geq 0}H^0(E, \sM_n(\tau^{-n}(p))),
\]
and so  $I_p^*/B$ has Hilbert series $s/(1-s)$.  It follows from Lemma~\ref{lem:basics for B} 
that $B (I_p^*)_1 = (I_p^*)_{\geq 1}$, and  so $I_p^*/B$ is cyclic. In other words,   
 $\uExt^1_B(M_p, B)[1]$ is a point module. The fact that this point module is indeed
 $  M^\ell_{\tau^{-2}(p)}$ now follows from Remark~\ref{leftpoint-remark} and \eqref{leftpoint}. 
The result for left modules is left to the reader. \end{proof}
 
We next  want to compute the $\uExt$ groups between point modules over $B = B(E, \mc{M}, \tau)$. As the next result shows, in $\rqgr B$ this  
follows easily from the equivalence $\rqgr B \simeq \coh E$.  
When there is no chance of confusion, given  $M \in \rgr B$, the object $\pi(M)\in \rqgr B$  will also be written 
 as $M$.

\begin{lemma}
\label{lem:extqgr} Let $B = B(E, \mc{M}, \tau)$ as before.
Then
$ \uExt^m_{\rqgr B}(M_p,\,M_{q}) = 0$ for $m\geq 2$ and 
\[
\uHom_{\rqgr B}(M_p, M_q) \cong \uExt^1_{\rqgr B}(M_p,M_q) \cong \begin{cases}  0 & \text{ for } p, q\ \text{on distinct orbits} \\
\kk[-j] & \text{ if } p = \tau^j(q),  \text{ for }  j \in \mb{Z}. \end{cases}
\]
\end{lemma}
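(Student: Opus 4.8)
The plan is to reduce the whole computation to Ext groups between skyscraper sheaves on the elliptic curve $E$, via the equivalence $\coh E \simeq \rqgr B$ of Lemma~\ref{lem:basics for B}(4). Under that equivalence the point module $M_p$ is, by its very definition (Definition~\ref{point-defn}), the image of the skyscraper sheaf $\mc{O}_p$. Since an equivalence of abelian categories induces isomorphisms on all Ext groups, we obtain
\[
\Ext^m_{\rqgr B}(M_p, M_r) \;\cong\; \Ext^m_{\coh E}(\mc{O}_p, \mc{O}_r) \qquad (m \geq 0)
\]
for all closed points $p, r \in E$. This is not yet a statement about \emph{graded} Ext, because the functor of Lemma~\ref{lem:basics for B}(4) is only an equivalence of abelian, not graded, categories. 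The missing ingredient is the identification $M_q[n] \cong M_{\tau^n q}$ in $\rqgr B$, valid for all $n \in \ZZ$: for $n \geq 0$ this is \eqref{eq:truncshift} (after applying $\pi$, which kills the bounded discrepancy between $M_q[n]$ and its non-negative truncation), and for $n < 0$ it follows by applying the shift $[-n]$ to the instance $M_{\tau^n q}[-n] \cong M_q$. Combining the two, the degree-$n$ homogeneous component of $\uExt^m_{\rqgr B}(M_p, M_q)$ is $\Ext^m_{\coh E}(\mc{O}_p, \mc{O}_{\tau^n q})$.

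Next I would record the (standard) Ext computation for skyscrapers on the smooth projective curve $E$. Since $\coh E$ has homological dimension $1$, we have $\Ext^m_{\coh E}(-,-) = 0$ for every $m \geq 2$; substituting into the identification of the previous paragraph already yields $\uExt^m_{\rqgr B}(M_p, M_q) = 0$ for $m \geq 2$, which is the first claim. For $m = 0$ one has $\Hom_{\coh E}(\mc{O}_p, \mc{O}_r) = \kk$ if $p = r$ and $0$ otherwise. For $m = 1$, Serre duality on $E$ together with $\omega_E \cong \mc{O}_E$ gives $\Ext^1_{\coh E}(\mc{O}_p, \mc{O}_r) \cong \Hom_{\coh E}(\mc{O}_r, \mc{O}_p)^{*}$, which is again $\kk$ if $p = r$ and $0$ otherwise. (Alternatively, one can argue directly from the locally free resolution $0 \to \mc{O}_E(-p) \to \mc{O}_E \to \mc{O}_p \to 0$, applying $\shHom(-,\mc{O}_r)$ and taking cohomology.) In particular $\Hom_{\coh E}(\mc{O}_p, \mc{O}_r) \cong \Ext^1_{\coh E}(\mc{O}_p, \mc{O}_r)$ for all $p,r$.

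Finally I would assemble the conclusion. If $p$ and $q$ lie on distinct $\tau$-orbits, then $\tau^n q \neq p$ for every $n \in \ZZ$, so every homogeneous component of $\uHom_{\rqgr B}(M_p, M_q)$ and of $\uExt^1_{\rqgr B}(M_p, M_q)$ vanishes, giving $0$ in both cases. If instead $p = \tau^j(q)$, then the condition $\tau^n q = p$ singles out a unique $n$, namely $n = j$: indeed, $\tau$ having infinite order acts on $E$ with only infinite orbits (a suitable power of $\tau$ is translation by a non-torsion point), so $\tau^{n-j}q = q$ forces $n = j$. Hence both $\uHom_{\rqgr B}(M_p, M_q)$ and $\uExt^1_{\rqgr B}(M_p, M_q)$ are one-dimensional and concentrated in degree $j$, i.e.\ isomorphic to $\kk[-j]$. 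The only step that needs genuine care is the grading bookkeeping in the first paragraph — matching the shift $[1]$ on $\rqgr B$ with the $\tau$-twist on $\coh E$ via \eqref{eq:truncshift} — since the equivalence of Lemma~\ref{lem:basics for B}(4) is not graded; the rest is a direct translation of well-known facts about coherent sheaves on an elliptic curve.
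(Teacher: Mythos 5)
Your proof is correct and follows essentially the same route as the paper's: translate to $\coh E$ via Lemma~\ref{lem:basics for B}(4), use \eqref{eq:truncshift} to handle the grading, and then invoke the standard Ext computations for skyscraper sheaves on a smooth curve. The only differences are cosmetic — you justify the $\Ext^1$ computation by Serre duality (or a Koszul resolution) where the paper says "working locally," and you are slightly more explicit about why $\tau$ of infinite order has only infinite orbits, which the paper takes for granted.
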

\noindent
{\bf Remark:} This   result and its proof also hold when  $\deg\mc{M}=2$.

\begin{proof} 
As noted in Definition~\ref{point-defn}, the point module
 $M_p$ corresponds to the skyscraper sheaf $\mathcal{O}_p$ at $p$.  Thus, by \eqref{eq:truncshift},
\[
\uExt^m_{\rqgr B}(M_p,\,M_q) = \bigoplus_{n \in \ZZ} \Ext^m_{\rqgr B}(M_p,\,M_q[n]) =  
\bigoplus_{n \in \ZZ} \Ext_{E}^m(\mathcal{O}_p, \, \mathcal{O}_{\tau^{n}(q)})
\]
 for all $m$.   
 Since $E$ is a smooth curve, $\Ext_{E}^m(\mathcal{O}_p,\, \mathcal{O}_r)= 0$ for 
   any closed point $r\in E $ and  
$m \geq 2$.
On the other hand, working locally  gives 
\[
\Hom_{E}(\mathcal{O}_p,\, \mathcal{O}_r) = \Ext_{E}^1(\mathcal{O}_p,\, \mathcal{O}_r) = \begin{cases} 0  & 
 \text{ if }  p \neq r \\ \kk &  \text{ if }  p=r. \end{cases}
\]
Now apply \eqref{elementary-shift}.
\end{proof}

We next want to   prove the analogue of Lemma~\ref{lem:extqgr} for homomorphisms in $\rgr B$, for which we need several  elementary observations.

\begin{lemma}\label{sub-ext}  Let $q\in E$. Then the following hold.
\begin{enumerate}
 \item The only   torsionfree extensions of $M_q$ by finite dimensional graded $B$-modules
are the shifted  point modules $M_{\tau^{-n}(q)}[n]$ for $n\geq 0$.  
\item $\uExt^1_B(\kk[-n],M_q)=\kk[n+1]$ for all $n \in \ZZ$.
\item For all $p,q\in E$, one has $\uExt^1_B(M_p, M_q)_{-1} \neq 0$.
\item $\Ext^1_{\rGr B}(M_p,M_p)\not=0.$
\end{enumerate}
\end{lemma}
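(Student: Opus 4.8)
The four parts are all elementary facts about point modules over $B = B(E,\mc{M},\tau)$, and the natural strategy is to leverage the equivalence $\rqgr B \simeq \coh E$ (Lemma~\ref{lem:basics for B}(4)) together with the shift formula \eqref{eq:truncshift} and the duality computation in Lemma~\ref{lem:Bdual}. For part~(1): a torsionfree extension $0 \to F \to N \to M_q \to 0$ with $F$ finite-dimensional forces $N$ to be cyclic (since $M_q$ is, and $F$ is in the radical in high enough degree) with Hilbert series $\hilb N = \hilb F + 1/(1-s)$; torsionfreeness means $N$ has no finite-dimensional submodule, so $F = N_{<0}$ — wait, more carefully: the only way to glue a finite-dimensional module below $M_q$ without creating torsion is to have $N_{\geq 0} \cong M_q$ and $N$ generated in its lowest degree $-n$, which by \eqref{eq:truncshift} (in the form $(M_q)_{\geq n}\cong (M_{\tau^n q})[-n]$, run backwards) identifies $N$ with $M_{\tau^{-n}(q)}[n]$. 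I would phrase this by taking the lowest nonzero degree $-n$ of $N$, noting $N$ is cyclic generated there with $\hilb N = s^{-n}/(1-s)$, hence $N$ is a point module shifted by $[n]$, and then \eqref{eq:truncshift} pins down the point as $\tau^{-n}(q)$.

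For part~(2): apply $\uHom_B(-, M_q)$ — no wait, apply $\uHom_B(\kk[-n], -)$ — to a minimal presentation of $\kk[-n]$, or more simply compute directly. Since $\kk = B/B_{\geq 1}$, use the exact sequence $0 \to B_{\geq 1}[-n] \to B[-n] \to \kk[-n] \to 0$. Applying $\uHom_B(-, M_q)$ gives $\uExt^1_B(\kk[-n], M_q) \cong \coker(\uHom_B(B[-n],M_q) \to \uHom_B(B_{\geq 1}[-n], M_q))$ modulo the $\uExt^1$ of $B[-n]$, which vanishes. Now $\uHom_B(B[-n], M_q) = M_q[n]$ and $\uHom_B(B_{\geq 1}[-n], M_q)$: since $B_{\geq 1}[-n]$ is generated in degree $n+1$... actually the cleanest route is that $\uExt^1_B(\kk, M_q)$ measures failure of saturation, and $M_q = M_p$ is saturated, so $\uExt^1_B(\kk, M_q)_{j} \neq 0$ exactly where the section functor adds something — but $M_q$ truncated has $\dim = 1$ in each degree $\geq 0$ and $0$ below, and its saturation is itself, so one expects $\uExt^1_B(\kk, M_q) = \kk$ concentrated in the degree recording the "hole" at degree $-1$; shifting gives $\uExt^1_B(\kk[-n], M_q) = \uExt^1_B(\kk, M_q)[n] = \kk[n+1]$. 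I would do the direct computation with the two-term resolution to get the shift right; the key input is that $\Hom_B(B_{\geq 1}, M_q) = (M_q)_{\geq 0}$-with-a-degree-$(-1)$-piece-added, i.e. equals $\omega\pi(M_q)_{\geq \text{appropriate}}$, giving the cokernel $\kk$ in degree $-1$.

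For part~(3): this follows from (1) and (2). A nonzero class in $\uExt^1_B(M_p, M_q)_{-1}$ corresponds to a non-split extension $0 \to M_q[-1] \to N \to M_p \to 0$; such $N$ should be a torsionfree extension realizing $M_p$ on top of $M_q[-1]$, and by the structure theory one such non-split $N$ exists — concretely take $N$ to be built from the surjection $B \to M_p$ and the fact that a point ideal $I_p$ in degree $1$ maps onto $(M_q)_0$ shifted. Alternatively and more robustly, apply $\uHom_B(-, M_q)$ to $0 \to I_p \to B \to M_p \to 0$: since $\uHom_B(B, M_q) = M_q$ has $\dim 1$ in degrees $\geq 0$ and $\uHom_B(I_p, M_q)$ picks up one extra degree (degree $-1$) because $I_p$ is generated in degree $1$ and $(I_p)_1 \to (M_q)_0$ by some nonzero map shifted down, the long exact sequence gives $\uExt^1_B(M_p,M_q)_{-1} \cong \coker(\cdots)_{-1} \neq 0$. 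For part~(4): specialize $q = p$ in part~(3), which gives $\uExt^1_B(M_p,M_p)_{-1} \neq 0$, hence $\Ext^1_{\rGr B}(M_p, M_p) = \uExt^1_B(M_p,M_p)_0 \supseteq$ — no, $\Ext^1_{\rGr B}(M_p,M_p) = \uExt^1_B(M_p,M_p)_0$; so instead I get a nonzero degree-$(-1)$ class, which lives in $\Ext^1_{\rGr B}(M_p, M_p[-1]) \cong \Ext^1_{\rGr B}(M_p[1], M_p)$, not literally $\Ext^1_{\rGr B}(M_p,M_p)$. To land in degree $0$: I would instead observe that the self-extension $0 \to M_p \to ? \to M_p \to 0$ corresponding to the nontrivial gluing exists because of the periodicity $M_p[1]_{\geq 0} \cong M_{\tau p}$ combined with... actually the honest statement is that Lemma~\ref{lem:extqgr} gives $\uExt^1_{\rqgr B}(M_p, M_p) = \kk[0]$ (taking $j=0$ in the $p = \tau^j q$ case), i.e. a nonzero class in $\rqgr$-degree $0$; pulling this back along $\pi$ via the adjunction $\Hom_{\rGr B}(M_p, -) \Rightarrow \Hom_{\rqgr B}$ and using that $M_p$ is finitely generated and $M_p = \omega\pi(M_p)_{\geq 0}$-ish, one transfers the nonzero $\rqgr$-class to a nonzero element of $\Ext^1_{\rGr B}(M_p, M_p)$.

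The main obstacle I anticipate is part~(4) / the degree-bookkeeping in parts (2)--(3): making sure the extension class I produce genuinely sits in the correct internal degree (degree $0$ for (4)) rather than being killed or shifted, and in particular confirming that the $\rqgr$-level $\Ext^1$ of Lemma~\ref{lem:extqgr} does lift to a nonzero $\rGr$-level class — this requires knowing that the comparison map $\Ext^1_{\rGr B}(M_p,M_p) \to \Ext^1_{\rqgr B}(\pi M_p, \pi M_p)$ is nonzero, which follows from the five-term exact sequence relating $\rGr$ and $\rqgr$ Ext groups together with the fact that $M_p$ has no finite-dimensional sub or (relevant) quotient issues. Everything else is routine manipulation of Hilbert series and the sheaf dictionary.
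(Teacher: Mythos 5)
The conceptual plan for parts (1)--(3) is workable (part (1) is essentially the paper's argument, and your $\uHom_B(I_p,M_q)$ computation for (3) is a valid alternative to the paper's explicit cyclic extension $B/J$, though you should check that the nonzero linear functional $(I_p)_1 \to (M_q)_0$ really extends to a $B$-module map, e.g.\ by passing through $\rqgr B$ and truncating). But part (4) has a genuine gap. The issue is that the ``five-term exact sequence'' you invoke controls the \emph{kernel}, not the \emph{image}, of the comparison map $\Ext^1_{\rGr B}(M_p,M_p)\to \Ext^1_{\rqgr B}(\pi M_p,\pi M_p)$. In the exact sequence \eqref{eq:extMpq}, the term $\lim_n \uExt^1_B(M_p/(M_p)_{\geq n},M_p)$ is what enters the kernel side; showing it lives in negative degrees does give injectivity of the degree-$0$ comparison, but surjectivity onto the one-dimensional target $\Ext^1_{\rqgr B}(\pi M_p,\pi M_p)$ is instead governed by the next term, $\lim_n \uExt^2_B(M_p/(M_p)_{\geq n},M_p)$, and there is no reason visible from ``$M_p$ has no finite-dimensional submodule'' that this vanishes in degree $0$. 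Without that vanishing, the comparison map could a priori be zero in degree $0$, and your argument does not rule this out.

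The paper avoids this delicacy by giving a \emph{constructive} lift: it applies the section functor $\omega$ to the nonsplit $\rqgr B$-extension $0\to \pi(M_p)\to\mc F\to\pi(M_p)\to 0$, uses the vanishing $\uExt^1_{\rqgr B}(\pi(B),\pi(M_p))=0$ (skyscraper sheaves have no higher cohomology) to see that $\omega$ carries this to a short exact sequence of graded $B$-modules, and then truncates at degree $\geq 0$ using $\omega\pi(M_p)_{\geq 0}=M_p$ to obtain a genuine nonsplit extension $0\to M_p\to \omega(\mc F)_{\geq 0}\to M_p\to 0$. This is the step you would need to add: rather than trying to prove abstractly that the comparison map is nonzero, exhibit the preimage by showing the obstruction ($R^1\omega$ evaluated on the piece you care about) vanishes.
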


\begin{proof}
(1) Note first that, by \eqref{eq:truncshift}, the module $X=M_{\tau^{-n}(q)}[n]$ satisfies 
$X_{\geq 0} = \big((M_{\tau^{-n}(q)})_{\geq n}\big)[n]=M_q,$ and so $X$ is an extension of the required form.  
Conversely, any such extension is necessarily a 1-critical module, and so uniqueness follows, for example, from \cite[Corollary~3.7(1)]{RSSlong}. 

(2) Since any non-trivial extension of $M_q$ by a shift of $\kk$ is necessarily torsionfree, it follows from (1) that the only   such extension is 
$ 0 \to M_q \to M_{\tau^{-1} q}[1] \to \kk[1] \to 0. $ Thus $\uExt^1_B(\kk[-n], M_q)=\kk[n+1].$

(3) Consider 
$B/J$, where $J$ is the right ideal $\bigoplus_{n \geq 0} H^0(E, \mc{M}_n(-p-\tau^{-1}(q)))$. 
Then the point ideal $I_p = \bigoplus_{n \geq 0} H^0(E, \mc{M}_n(-p))$ contains $ J$, with 
 $I_p/J \cong (M_{\tau^{-1}(q)})_{\geq 1} \cong M_q[-1]$.  
It is clear that this extension of $I_p/J$ by $B/I_p\cong M_p$  is nonsplit, since $B/J$ is cyclic.

(4)  By Lemma~\ref{lem:basics for B}(4), 
the nonsplit extension of $\mc{O}_p$ by itself in $\coh E$  
gives a nonsplit extension $0 \to \pi(M_p) \to \mc{F} \to \pi(M_p) \to 0$ in $\rqgr B$, for 
some $\mc{F} \in \rqgr B$.   Note that the section functor $\omega$ from Notation~\ref{section-functor} 
satisfies $\omega(\pi(N)) = \uHom_{\rqgr B}(\pi(B), \pi(N))$.  In particular, applying $\omega$ 
to our exact sequence gives an exact sequence 
\begin{equation}\label{3.8.1}
0 \to \omega(\pi(M_p)) \to \omega(\mc{F}) \to \omega(\pi(M_p)) \to \uExt^1_{\rqgr B}(\pi(B), \pi(M_p)) \to \dots
\end{equation}
Now 
\[
\uExt^1_{\rqgr B}(\pi(B), \pi(M_p)) = \bigoplus_m \Ext^1_{\rqgr B}(\pi(B), \pi(M_p[m])) = 
\bigoplus_m \Ext^1_{\coh E}(\mc{O}_E, \mc{O}_{\tau^m(p)}) 
= 0,
\]
using the equivalence of categories between $\rqgr B$ and $\coh E$ and the fact that sheaves with zero-dimensional support 
have vanishing higher cohomology.
However,
 $\omega(\pi(M_p))_{\geq 0} = M_p$ by construction and so 
\eqref{3.8.1} becomes the exact sequence $0 \to M_p \to \omega(\mc{F})_{\geq 0} \to M_p \to 0$. 
This is nonsplit since applying $\pi$ yields the original nonsplit extension.    
\end{proof}

\begin{proposition}\label{prop:Hompt} 
Let $p,q\in E$.  Then
\[ \tag{1}   \uHom_B(M_p, M_q) = \begin{cases} \kk[-j] & \text{if $p = \tau^j q$ for some $j \geq 0$} \\
0 & \text{otherwise,}  \end{cases} \]
while 
\[ \tag{2} {}\qquad \ \ \  \uExt_B^1(M_p, M_q) = \begin{cases} \kk[1] \oplus \kk[-j] & \text{if $p = \tau^j q$ 
for some $j \geq 0$} \\
\kk[1] & \text{otherwise.}  \end{cases} \]
\end{proposition}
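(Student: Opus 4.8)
The plan is to deduce the statement about $\uHom$ and $\uExt^1$ in $\rGr B$ from the already-established computation in $\rqgr B$ (Lemma~\ref{lem:extqgr}), by carefully tracking the discrepancy between graded modules and their images in the quotient category. Since $M_p$ and $M_q$ are finitely generated, the long exact sequence relating $\uExt_B$ and $\uExt_{\rqgr B}$ comes from the standard comparison: for noetherian $M, N$ there is an exact sequence built from $\uExt^i_B(M,N)$, $\uExt^i_{\rqgr B}(\pi M, \pi N)$, and the local cohomology terms $\underline{\lim}\, \uExt^i_B(M/M_{\geq n}, N)$, or equivalently one uses that $\pi$ is exact with right adjoint $\omega$ and compares $N$ with $\omega\pi N$. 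Here $M_q$ is saturated in the relevant sense? No — one must check: $\uExt^1_B(\kk, M_q) = \kk[1] \neq 0$ by Lemma~\ref{sub-ext}(2), so $M_q$ is \emph{not} saturated, and $\omega\pi(M_q)_{\geq 0} = M_q$ while $\omega\pi(M_q)$ picks up exactly one extra copy of $\kk$ in degree $-1$ (this is the content of the non-split extension $0 \to M_q \to M_{\tau^{-1}q}[1] \to \kk[1]\to 0$).

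Concretely, I would argue as follows. First, $\uHom_B(M_p, M_q) = \uHom_{\rGr B}(M_p, \omega\pi M_q) $? That is not quite right either since $M_q$ is not saturated; instead use that $\uHom_B(M_p, M_q) \hookrightarrow \uHom_{\rqgr B}(\pi M_p, \pi M_q)$ with cokernel controlled by $\uHom_B(M_p, \omega\pi(M_q)/M_q)$ and $\uExt^1$-type torsion terms. Since $M_p$ is generated in degree $0$ and $\omega\pi(M_q)/M_q$ is concentrated in degree $-1$ (a copy of $\kk[1]$), any hom from $M_p$ into it must kill the degree-$0$ generator, hence is zero; so $\uHom_B(M_p, M_q) = \uHom_{\rqgr B}(\pi M_p, \pi M_q)$, which by Lemma~\ref{lem:extqgr} is $\kk[-j]$ if $p = \tau^j q$ with $j \in \ZZ$, and $0$ otherwise. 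To cut the range down to $j \geq 0$: a nonzero graded homomorphism $M_p \to M_q[n]$ of degree zero sends the degree-$0$ generator of $M_p$ somewhere in $(M_q[n])_0 = (M_q)_n$, which is zero for $n<0$; so in fact only $j = -n \geq 0$ survives. This gives part (1).

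For part (2), apply $\uHom_B(M_p, -)$ to the non-split sequence $0 \to M_q \to M_{\tau^{-1}q}[1] \to \kk[1] \to 0$ from Lemma~\ref{sub-ext}(1),(2). Using $\uHom_B(M_p, \kk[1]) = \kk[1]$ (the generator of $M_p$ maps to the single copy of $\kk$), $\uExt^1_B(M_p, \kk[1])$, and the values of $\uHom_B(M_p, M_{\tau^{-1}q}[1])$ and $\uHom_B(M_p, M_q)$ from part (1), the connecting map and a dimension count should pin down $\uExt^1_B(M_p, M_q)$. Alternatively — and I think more cleanly — apply $\uHom_B(-, M_q)$ to $0 \to I_p \to B \to M_p \to 0$, giving $\uExt^1_B(M_p, M_q) = \coker\big(\uHom_B(B, M_q) \to \uHom_B(I_p, M_q)\big)$ modulo a contribution from $\uHom_B(M_p,M_q)$ already computed, together with the fact that $\uExt^1_B(B, M_q) = 0$ since $B$ is free; then $\uHom_B(I_p, M_q)$ can be computed from the identification of $I_p$ as a saturated submodule of $B$ (Definition~\ref{point-defn}) via Lemma~\ref{lem:uhomB}-style arguments, or from the sequence $0 \to \kk \to (I_p)[1]\text{-ish} \dots$. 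Either route reduces everything to the $\rqgr$-computation plus the size of the non-saturation defect, which is exactly one copy of $\kk$ in degree $-1$, accounting for the universal summand $\kk[1]$ in $\uExt^1_B(M_p, M_q)$ and the extra $\kk[-j]$ when $p$ and $q$ share an orbit.

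The main obstacle will be bookkeeping the precise degrees and confirming that the connecting homomorphism in the long exact sequence is zero (so that the $\uExt^1$ genuinely splits as the direct sum $\kk[1] \oplus \kk[-j]$ rather than forming a nonsplit extension of graded vector spaces — which is automatic once one knows the Hilbert series, since graded vector spaces are semisimple, so really the only issue is computing dimensions in each degree). Concretely the one thing to verify carefully is that $\uExt^1_B(M_p, M_q)_{-1}$ is exactly one-dimensional and $\uExt^1_B(M_p, M_q)_{j}$ contributes only when $p=\tau^{-j}q$; the lower bound $\uExt^1_B(M_p,M_q)_{-1}\neq 0$ is already Lemma~\ref{sub-ext}(3), and Lemma~\ref{lem:extqgr} together with the vanishing of $\uExt^{\geq 2}_{\rqgr B}$ caps the total size, so the count should close up without difficulty.
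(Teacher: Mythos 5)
Your overall strategy — deduce the $\rgr B$ answer from the $\rqgr B$ computation of Lemma~\ref{lem:extqgr} by quantifying the discrepancy — is the same strategy the paper uses, and your treatment of part (1) is essentially correct and essentially matches the paper's (the decisive observation being that $\uHom_B(M_p,M_q)_j\neq 0$ iff $(M_q)_{\geq j}\cong M_p[-j]$, i.e.\ $j\geq 0$ and $p=\tau^j q$). But there are two genuine problems with part (2).

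First, you assert that $\omega\pi(M_q)/M_q$ is a single copy of $\kk$ in degree $-1$, identifying $\omega\pi(M_q)$ with $M_{\tau^{-1}q}[1]$ via the non-split extension in Lemma~\ref{sub-ext}(1). That identification is wrong. By the formula in Notation~\ref{section-functor}, $\omega\pi(M_q)=\bigoplus_{n\in\ZZ}H^0(E,\mathcal{O}_q\otimes\mathcal{M}_n)$, which has Hilbert series $\sum_{n\in\ZZ}s^n$; so the saturation defect $\omega\pi(M_q)/M_q$ is infinite-dimensional, with a one-dimensional piece in \emph{every} negative degree. Equivalently, $M_{\tau^{-1}q}[1]$ is itself non-saturated, by the same Lemma~\ref{sub-ext}(2) argument applied one step lower. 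Any dimension count that proceeds from ``the defect is one $\kk$'' is therefore unreliable and will need to be redone. (Separately, this target-saturation route also needs justification that $\uExt^1_B(M_p,\omega\pi M_q)\cong\uExt^1_{\rqgr B}(\pi M_p,\pi M_q)$, which is not automatic from adjunction alone.)

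Second, your route (a) for part (2) is circular: applying $\uHom_B(M_p,-)$ to $0\to M_q\to M_{\tau^{-1}q}[1]\to\kk[1]\to 0$ produces a long exact sequence whose unknown middle term is $\uExt^1_B(M_p,M_{\tau^{-1}q}[1])$, which is just another instance of the thing you are computing. One could try to iterate and take a limit as $\tau^{-n}q$ marches off, but that is essentially manufacturing by hand the Artin--Zhang colimit, and you would still lack the input that actually closes the computation. Route (b) is left at the level of a gesture; $M_q$ is not a submodule of $Q_{\gr}(B)$, so Lemma~\ref{lem:uhomB} does not apply directly to $\uHom_B(I_p,M_q)$, and no replacement computation is given.

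What the paper actually does — and what your sketch is missing — is the Artin--Zhang comparison via truncations of the \emph{source}: by \cite[Proposition 7.2]{AZ}, $\uExt^i_{\rqgr B}(M_p,M_q)=\varinjlim_n\uExt^i_B\bigl((M_p)_{\geq n},M_q\bigr)$, and the long exact sequence from $0\to (M_p)_{\geq n}\to M_p\to M_p/(M_p)_{\geq n}\to 0$ inserts the correction term $\varinjlim_n\uExt^1_B\bigl(M_p/(M_p)_{\geq n},M_q\bigr)$. The two quantitative inputs that make the dimension count close are precisely the ones absent from your write-up: (i) an induction using $\uExt^1_B(\kk[-n],M_q)=\kk[n+1]$ shows that the correction term vanishes in degrees $\geq 0$ and is exactly one-dimensional in degree $-1$; (ii) since $I_p$ is generated in degree one (this is where $\deg\mathcal{M}\geq 3$ is used), the free resolution $\cdots\to\bigoplus B[-1]\to B\to M_p\to 0$ forces $\uExt^1_B(M_p,M_q)_n=0$ for $n\leq -2$. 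Combining (i) and (ii) pins down the image of the connecting map to be concentrated in degree $-1$ and of dimension $\leq 1$, after which the case analysis using Lemma~\ref{lem:extqgr}, Lemma~\ref{sub-ext}(3),(4), and part (1) yields the stated answer. Your lower bound via Lemma~\ref{sub-ext}(3) and upper bound via Lemma~\ref{lem:extqgr} are the right ingredients, but without (i) and (ii) they do not ``cap the total size''; there is no bound on negative degrees without (ii).
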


\begin{proof} (1)
Using \eqref{eq:truncshift}, we have $\uHom_B(M_p,M_q)_j \neq 0$ $\iff $ $M_p[-j]  \cong (M_q)_{\geq j} \cong (M_{\tau^j q})[-j]$.
Clearly this happens if and only if $j \geq 0$ and $p = \tau^j q$; in particular it can happen for at most one value of $j$.

(2) Now consider   the  Ext groups.  Note that $\uExt^i_{\rqgr B} (M_p,M_q)= 
\lim_{n\to \infty} \uExt^i_{\rgr B}\bigl((M_p)_{\geq n},\,M_q)$, by \cite[Proposition~7.2]{AZ}.
 Thus the exact sequences 
 $0\to (M_p)_{\geq n} \to M_p\to M/(M_p)_{\geq n}\to 0$ induce the exact sequence 
\begin{multline}
\label{eq:extMpq}
 0 \too \uHom_B(M_p, M_q) \too  \uHom_{\rqgr B} (M_p,M_q )
  \overset{\alpha}{\too} \lim_{n \to \infty} \uExt^1_B(M_p/(M_p)_{\geq n}, M_q) \\ 
\overset{\beta}{\too} \uExt^1_B(M_p, M_q) \too \uExt^1_{\rqgr B} (M_p,M_q ) \too \dots\quad
\end{multline} 

We claim that  
$X=\lim_{n \to \infty} \uExt^1_B(M_p/(M_p)_{\geq n}, M_q)$ is  zero  in degrees $\geq 0$
while $\dim_{\kk} X_{-1} = 1$.
To see this,  apply  $\uHom_B( -, M_q)$  to  the exact sequence
\[ 0 \to \kk[-n] \too M_p/(M_p)_{\geq n+1} \too M_p/(M_p)_{\geq n} \too 0.\]
This gives
\[ 0\too  \uExt^1_B(M_p/(M_p)_{\geq n}, M_q) \too \uExt^1_B(M_p/(M_p)_{\geq n+1}, M_q) 
\ \overset{\phi}{\too}  \uExt^1_B(\kk[-n], M_q)\too \dots.\]
Since $ \uExt^1_B(\kk[-n], M_q) = \kk[n+1]$ by Lemma~\ref{sub-ext}, it follows  from this sequence
 and induction on $n$  that $\uExt^1_B(M_p/(M_p)_{\geq n}, M_q)$ lives 
in negative degrees, and that $\dim_{\kk} \uExt^1_B(M_p/(M_p)_{\geq n}, M_q)_{-1} = 1$ 
for all $n$. The  claim follows.

Next, write $M_p = B/I_p$, where $I_p = \bigoplus_{n \geq 0} H^0(E, \mc{I}_p \otimes \mc{M}_n)$.   
Since $\deg( \mc{I}_p \otimes \mc{M}) \geq 2$,   it   follows from Lemma~\ref{lem:basics for B}(1)
that $I_p  $ is generated in degree one.
Thus the graded free resolution of $M_p$ begins 
\[
 \dots \to \bigoplus_{i=1}^m B[-1] \too B \too M_p \too 0.
\]
Using this to calculate $\uExt^1_B(M_p, M_q)$  shows that $\uExt^1_B(M_p, M_q)_n = 0$ for $n \leq -2$.

Combining this with the conclusion of the previous paragraph shows
 that, in \eqref{eq:extMpq},  $\im \beta$ is concentrated entirely in degree $-1$, and is either $0$ or $\kk[1]$.
 In fact we can be more precise. By comparing Lemma~\ref{lem:extqgr} with Part (1), it follows that $\alpha=0$ except when $p=\tau^j(q)$ for some 
$j<0$ and in the latter case $\text{Im}(\alpha) = \kk[-j]$. 
We conclude that 
\begin{equation}\label{early-conc}
 \im\beta  =\begin{cases}   0 & \text{ \ if $p=\tau^{-1}(q)$}\\
 \kk[1] & \text{ \ otherwise.}\end{cases}\end{equation}

To complete the proof, we consider several cases.   
First, if $p,q$ lie on different orbits then Lemma~\ref{lem:extqgr} implies that 
  $ \uExt_{\rqgr B}(M_p, M_q) = 0$.   
Thus,   \eqref{early-conc} shows that 
 $  \uExt^1_B(M_p, M_q) =\im\beta \cong \kk[1]$.

Next, suppose that  $p = \tau^{-1}(q)$. Then,  $\beta = 0$ by \eqref{early-conc} 
and so  in this case   Lemma~\ref{lem:extqgr} implies that 
$\uExt^1_B(M_p, M_q)\hookrightarrow \uExt^1_{\rqgr B}(M_p, M_p) \cong \kk[1]$. 
Since $\kk[1] \hookrightarrow \uExt^1_B(M_p, M_q)$ by Lemma~\ref{sub-ext}(3), we conclude that
 $\uExt^1_B(M_p, M_q) \cong \kk[1]$, as required.

It remains to consider the case when $p = \tau^j(q)$ for some $-1\not=j \in \mb{Z}$.  
Comparing \eqref{early-conc} with Lemma~\ref{lem:extqgr}   gives an exact sequence 
\[
0 \too \kk[1] \too \uExt^1_B(M_p, M_q) \too \kk[-j] \too \dots.
\]
If $j \leq -2$, we have shown above  that $\uExt^1_B(M_p, M_q)_j = 0$. Hence $\uExt^1_B(M_p, M_q) \cong \kk[1]$.
If  $j \geq 0$, then we must show that $\uExt^1_B(M_p, M_q) \cong \kk[1] \oplus \kk[-j]$, for which it suffices to 
show that $0\not= \uExt^1_B(M_p, M_q)_j = \uExt^1_B(M_p, M_q[j])_0$.    From the exact sequence 
\[
0 = \Hom_{\rgr B}(M_p, M_q[j]/M_q[j]_{\geq 0}) \too \Ext^1_{\rgr B}(M_p, M_q[j]_{\geq 0}) \too \Ext^1_{\rgr B}(M_p, M_q[j])
\]
it suffices to show that  $\uExt^1_B(M_p, M_q[j]_{\geq 0})_0\not=0$ or, equivalently by \eqref{eq:truncshift}, that 
  $\uExt^1_B(M_p, M_p)_0 \neq 0$. In other 
words, we can reduce to the case $j = 0$, where the result is just Lemma~\ref{sub-ext}(4).    \end{proof}


\section{Elliptic algebras}\label{ELLIPTIC ALGEBRAS}

In this section, we define elliptic algebras, which are the main objects  of interest in this paper, and 
describe some of their more basic properties.

\begin{definition}\label{elliptic-defn}
A connected $\mb{N}$-graded algebra $R$ is called an \emph{elliptic algebra} if there 
is a central nonzerodivisor $ g \in R_1$ such that 
$R/gR \cong B(E, \mc{M}, \tau)$ for some elliptic curve $E$, invertible sheaf $\mc{M}$, and infinite order automorphism 
$\tau$.  We call $\deg \mc{M}$ the \emph{degree} of the elliptic algebra.  
{\it In this paper, we will always 
assume that an elliptic algebra has degree at least $3$ unless otherwise stated. }  
\end{definition}

\begin{example}\label{sklyanin-defn} Some of the most important elliptic algebras arise from the (quadratic) Sklyanin algebra
$$S=Skl(a,b,c)=\kk\{x_1,x_2,x_3\}/(ax_ix_{i+1}+bx_{i+1}x_i+cx_{i+2}^2 : i\in \ZZ_3),$$
where $(a,b,c)\in \mathbb{P}^2\smallsetminus \mathcal{S}$ for a (known) finite set $\mathcal S$.
Here, $S$ contains a canonical central element $g\in S_3$ such that $S/gS\cong B(E,\mathcal{L},\sigma)$ for an elliptic curve $E$. 
In this paper we restrict attention to the case when   $|\sigma|=\infty$ since    the 3-Veronese ring $T=S^{(3)}$ is then  an elliptic algebra, with 
$T/gT \cong B(E, \mc{M}, \tau)$  for  $\mc{M} = \mc{L} \otimes \mc{L}^{\sigma} \otimes \mc{L}^{\sigma^2}$ and $\tau = \sigma^3$.

Another, related example of an elliptic algebra can be obtained by taking the fourth Veronese ring $T'=(S')^{(4)}$ of the cubic 
Sklyanin algebra $S'$
from \cite{ATV1990}. More generally, there are  the (second Veroneses of the) quadrics  constructed in \cite{VdB3}. As discussed in
 \cite[Example~8.5]{SV}, these algebras can all be written as factors of a certain 4-dimensional Sklyanin algebra, although as they will not be needed 
explicitly in this paper we will omit the definitions.
\end{example}

An elliptic algebra $R$ automatically has a number of  good properties, basically because the same properties hold for 
the factor ring $B = R/Rg$.  
Before stating the result we need one more definition. Given a noetherian cg $\kk$-algebra $A$, regard  $\kk=A/A_{\geq 1}$
 as a right $A$-module. Then $A$ satisfies \emph{the  
Artin-Zhang $\chi$-condition} (on  the right)  provided $\dim_{\kk}\uExt^j_A(\kk,M)<\infty$ for all $M\in \rgr R$ and all $j \geq 0$.

\begin{proposition}\label{prop:elliptic}
Let $R$ be an elliptic algebra with $B=R/gR$.  Then both $R$ and $B$ are noetherian domains   generated in degree $1$. 
 In addition, $R$ and $B$ are  Auslander-Gorenstein, CM,   AS-Gorenstein, and   satisfy the Artin-Zhang $\chi$-condition.
\end{proposition}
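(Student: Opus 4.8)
The strategy is to deduce all the stated properties of $R$ from the corresponding properties of $B = R/gR$, which are collected in Lemma~\ref{lem:basics for B}, using the fact that $g$ is a central nonzerodivisor of degree $1$ and that $R$ is $g$-torsionfree with $R/gR = B$ a connected graded noetherian domain. First I would record the elementary consequences: since $B = R/gR$ is a domain and $g$ is central, standard facts about central extensions (or a direct degree argument using that $R$ is $\NN$-graded with $R_0 = \kk$) show that $R$ itself is a domain. Likewise, if $b_1, \dots, b_n$ are degree-one elements of $B$ generating $B$ as an algebra (Lemma~\ref{lem:basics for B}(2)), then lifting them to $R_1$ together with $g$ gives degree-one generators of $R$; a graded Nakayama argument then shows $R$ is generated in degree $1$. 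For the noetherian property, I would invoke the fact that if $R/gR$ is right (and left) noetherian and $g$ is a central nonzerodivisor, then $R$ is noetherian — this is a standard filtered/lifting argument (lift a generating set of an ideal of $R/gR$, use that $\bigcap_n g^n R = 0$ since $R$ is connected graded, hence $g$-adically separated).

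Next I would handle the homological conditions, which is where the real content lies. The key mechanism is that for a central nonzerodivisor $g$ of degree $1$, there is a short exact sequence $0 \to R[-1] \xrightarrow{\cdot g} R \to B \to 0$ of $R$-bimodules, yielding the change-of-rings long exact sequences relating $\uExt_R$ and $\uExt_B$, and also the Rees-ring / associated-graded principle: a finite injective resolution of $B$ over $B$ lifts to bound $\injdim(R)$ by $\injdim_B(B) + 1 = 3$. Concretely, from $0 \to R[-1] \to R \to B \to 0$ and $\injdim(B_B) = 2$ one gets $\injdim(R_R) \le 3$, and symmetrically on the left; this gives part (i) of Auslander-Gorenstein. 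For the Auslander condition (ii), the standard approach is Levasseur's: the Auslander-Gorenstein property of $R$ follows from that of $R/gR = B$ when $g$ is a central nonzerodivisor — this is precisely \cite[Theorem and its corollaries]{Lev1992} on lifting the Auslander condition through a central hypersurface section (combined with $\GKdim R = \GKdim B + 1 < \infty$). Once $R$ is Auslander-Gorenstein with finite GK-dimension, the Cohen-Macaulay property for $R$ similarly lifts from that of $B$: for a finitely generated $R$-module $M$, one relates $j_R(M)$ and $\GKdim_R(M)$ to the corresponding invariants of $M/gM$ or $M$ as a $B$-module (using that $\GKdim$ is exact on the $g$-sequence and that $j$ shifts by exactly $1$ under the hypersurface section), so $j_R(M) + \GKdim_R(M) = \GKdim_R(R) = \GKdim_B(B) + 1$.

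For AS-Gorenstein, I would compute $\uExt^j_R(\kk, R)$ using the sequence $0 \to R[-1] \to R \to B \to 0$: the associated long exact sequence, together with $\uExt^j_B(\kk, B) = \delta_{2j}\,\kk$ (Lemma~\ref{lem:basics for B}(3)) and the fact that $g$ acts as zero on $\kk$, shows $\uExt^j_R(\kk, R) = \delta_{3j}\,\kk[\ell]$ for an appropriate shift $\ell$ (the degree shift picks up one extra unit from the $[-1]$ twist on $g$). Finally, for the $\chi$-condition: $B$ satisfies $\chi$ because $\rqgr B \simeq \coh E$ (Lemma~\ref{lem:basics for B}(4)) and $\chi$ holds for commutative projective schemes; then $\chi$ lifts along the central nonzerodivisor $g$ by the standard argument (e.g. \cite[Theorem~8.8]{AZ} or a direct dévissage: $\uExt^j_R(\kk, M)$ is controlled by $\uExt^j_R(\kk, M/gM)$ and $\uExt^j_R(\kk, gM)$, and one reduces to modules killed by $g$, i.e. $B$-modules). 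The main obstacle I anticipate is assembling the correct citations and being careful with the grading shifts in the AS-Gorenstein computation — the homological substance is entirely a matter of "central hypersurface section" lifting theorems, so no genuinely new argument is required, but one must verify that the hypotheses of Levasseur's lifting results (finite GK-dimension, connected graded, $g$ central regular of degree $1$) are all in force, which they are.
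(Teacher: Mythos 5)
Your plan is correct in outline, but it takes a genuinely different route from the paper's proof, which is essentially a one-line citation: the paper quotes \cite[Lemma~2.2]{R-Sklyanin} for the properties of $B$, \cite[Theorem~6.3]{R-Sklyanin} for the noetherian, Auslander--Gorenstein, CM and $\chi$ properties of $R$, and then invokes \cite[Theorem~6.3]{Lev1992} (a general result: a noetherian cg algebra of finite GK-dimension that is Auslander--Gorenstein and CM is automatically AS-Gorenstein) to get the AS-Gorenstein property from the others. You instead reconstruct the ``lifting through a central regular element of degree one'' arguments from scratch, which is precisely what \cite{R-Sklyanin} does behind that citation, so the substance matches; the tradeoff is that your version is self-contained but several pages longer.

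One step in your plan as written has a real gap: in computing $\uExt^j_R(\kk,R)$ from the long exact sequence attached to $0 \to R[-1] \xrightarrow{g} R \to B \to 0$, you plug in $\uExt^j_B(\kk,B)=\delta_{2j}\kk$ as if it were $\uExt^j_R(\kk,B)$. These are not equal. Because $B$ has projective dimension $1$ over $R$, the change-of-rings (Rees) spectral sequence $\uExt^p_B(\kk, \uExt^q_R(B,B)) \Rightarrow \uExt^{p+q}_R(\kk,B)$ has two nonzero rows, with $\uExt^0_R(B,B)=B$ and $\uExt^1_R(B,B)\cong B[1]$, and it yields $\uExt^2_R(\kk,B)\cong\kk$, $\uExt^3_R(\kk,B)\cong\kk[1]$, and all other $\uExt^j_R(\kk,B)=0$. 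Substituting $\delta_{2j}\kk$ naively would lose the degree-3 contribution and makes the conclusion $\uExt^j_R(\kk,R)=\delta_{3j}\kk[\ell]$ not actually follow. The computation is fixable: once you know from Auslander--Gorenstein + CM that $\uExt^j_R(\kk,R)=0$ for $j\ne 3$ (since $j(\kk)=\GKdim R=3$), the long exact sequence collapses to $0\to\kk\to N[-1]\xrightarrow{g} N\to\kk[1]\to 0$ with $N=\uExt^3_R(\kk,R)$, and a short grading argument on the top and bottom degrees of the finite-dimensional, $g$-nilpotent module $N$ then forces $N\cong\kk[1]$. But this is exactly the part the paper avoids by appealing to Levasseur, and your sketch as stated skips the change-of-rings step. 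Either route is fine; just be aware that ``$g$ acts as zero on $\kk$'' does not by itself allow you to replace $\uExt^j_R(\kk,B)$ with $\uExt^j_B(\kk,B)$.
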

\begin{proof}  By  Lemma~\ref{lem:basics for B},  $R/gR$ and hence $R$ are generated in degree one.  
Now the noetherian, Auslander-Gorenstein and CM properties as well as the $\chi$ condition hold for 
$B$ by \cite[Lemma~2.2]{R-Sklyanin} and for $R$ by \cite[Theorem~6.3]{R-Sklyanin}.  The proofs of these results also
easily imply that $\GKdim(B) = 2$ and $\GKdim(R) = 3$, so that by  \cite[Theorem~6.3]{Lev1992}, 
both $B$ and $R$ are also AS-Gorenstein.
\end{proof}

\begin{notation}\label{div-defn}
Let $R$ be an elliptic algebra with  factor ring $B = R/gR$.  
For a graded $R$-module $M$ or, indeed a  $\kk[g]$-module $M$, its \emph{$g$-torsion submodule} is $t(M) = \{ m \in M : g^nm = 0\ \text{for }\ n \gg 1 \}$. Then  
$M$ is \emph{$g$-torsion} if $M = t(M)$ and \emph{$g$-torsionfree} if $t(M) = 0$.   
Write $  R_{(g)}$ for the homogeneous localisation of $R$ at the completely prime ideal $gR$; thus $R_{(g)} = R\mathcal{C}^{-1}$,
 for $\mathcal{C}$ the set of homogeneous elements in $R\smallsetminus gR$.   As in \cite[Notation~2.5]{RSSlong},  $R_{(g)}/g R_{(g)} \cong Q_{gr}(B) \cong \kk(E)[t, t^{-1}; \tau]$.
We say that a $\kk[g]$-submodule $M \subseteq R_{(g)}$ is \emph{$g$-divisible} 
\label{g-div}  when $M \cap g R_{(g)} = Mg$.  It is easy to see that $M$ 
is $g$-divisible if and only if $R_{(g)}/M$ is $g$-torsionfree.  The fact that $R/gR\cong B$ is a domain forces $R$ itself to be $g$-divisible. 
When $M$ is $g$-divisible, 
 we  can and will identify $M/Mg$ with 
  \[\overline{M} = (M + g R_{(g)})/ g R_{(g)}\subseteq  R_{(g)}/g R_{(g)}.\]
\end{notation}
 
The following  properties of graded modules over an elliptic algebra will be useful.
 
\begin{lemma}\label{lem:ideal}
Let $R$ be an elliptic algebra.
\begin{enumerate}
\item   If $M, N \subseteq R_{(g)}$ are  $g$-divisible graded 
$R$-submodules, then $\uHom_R(M,N) \subseteq R_{(g)}$ is also $g$-divisible.
\item If $M$ is a $g$-torsionfree finitely generated graded right $R$-module with $\GKdim M \leq 1$, then $I = \Ann_R(M)$ is a nonzero graded
ideal of $R$ with $\GKdim R/I \leq 1$.
\end{enumerate}
\end{lemma}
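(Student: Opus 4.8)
The plan is to prove the two parts essentially independently, each by reducing to standard facts about the twisted homogeneous coordinate ring $B = R/gR$ and the localisation $R_{(g)}$, whose quotient by $g$ is the graded division ring $Q_{gr}(B) \cong \kk(E)[t,t^{-1};\tau]$.

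For part (1): Let $H = \uHom_R(M,N)$, viewed inside $R_{(g)}$ via the identification $\uHom_R(M,N) = \{x \in R_{(g)} : xM \subseteq N\}$ (note both $M$ and $N$ are $R$-submodules of $R_{(g)}$, which is a graded ring, so this makes sense, working on whichever side is appropriate; I'll treat $M,N$ as right modules and $H$ as sitting on the left). I need to show $H \cap gR_{(g)} = Hg$. The inclusion $Hg \subseteq H \cap gR_{(g)}$ is clear since $g$ is central. For the reverse: suppose $x \in H$ and $x \in gR_{(g)}$, so $x = gy$ for some $y \in R_{(g)}$ (here I use that $R_{(g)}$ is $g$-divisible, indeed $g$ is a nonzerodivisor and $R_{(g)}/gR_{(g)}$ is a domain). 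I must show $y \in H$, i.e. $yM \subseteq N$. For any $m \in M$ we have $gym = xm \in N$, so $ym \in (gR_{(g)} \cap N)$ — wait, more precisely $gym \in N$ and $ym \in R_{(g)}$, and I want $ym \in N$. This is exactly the statement that $N$ is $g$-divisible (as a subset of $R_{(g)}$): if $gz \in N$ with $z \in R_{(g)}$ then $z \in N$, because $N \cap gR_{(g)} = Ng$ means $gz = n g$ for some $n \in N$, hence $g(z-n) = 0$, hence $z = n \in N$ since $g$ is a nonzerodivisor on $R_{(g)}$. So $ym \in N$ for all $m$, giving $y \in H$ and $x = gy \in Hg$. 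Thus $H$ is $g$-divisible. The only subtlety is making sure the left/right conventions are consistent and that $g$ being central makes all the module identifications legitimate, but this is routine.

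For part (2): We are given a $g$-torsionfree finitely generated graded right $R$-module $M$ with $\GKdim M \le 1$, and want $I = \Ann_R(M)$ to be nonzero graded with $\GKdim R/I \le 1$. That $I$ is graded is automatic since $M$ is graded and finitely generated. For $I \ne 0$: consider $M/Mg$, a module over $\bar R = R/gR = B$; it is still $g$-torsionfree as an $R$-picture means... actually the cleaner route is: since $M$ is $g$-torsionfree and finitely generated, $\GKdim M = \GKdim M/Mg + 1$ by the standard $g$-torsionfree/Hilbert-series argument (the exact sequence $0 \to M[-1] \xrightarrow{g} M \to M/Mg \to 0$), so $\GKdim_B(M/Mg) \le 0$, i.e. $M/Mg$ is finite-dimensional. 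Then $\Ann_B(M/Mg)$ is a cofinite graded ideal of $B$, hence nonzero, hence contains $B_{\ge n}$ for some $n$ (as $B$ is a domain generated in degree one, any nonzero graded ideal is cofinite); lifting, there are homogeneous elements of $R$ of every large degree that multiply $M$ into $Mg$. Iterating, $M \cdot (R_{\ge n}) \subseteq Mg^k$ for suitable exponents... hmm, this needs a little care. Perhaps cleaner: pick a nonzero homogeneous $a \in B$ with $Ma \subseteq Mg$ (possible since $\Ann_B(M/Mg) \ne 0$; lift $a$ to $\tilde a \in R$). Then $M\tilde a \subseteq Mg$, so $M\tilde a^k \subseteq Mg^k$ for all $k$ — wait, I need $\tilde a$ and $g$ to interact, but $g$ is central so $M\tilde a^2 \subseteq Mg\tilde a = M\tilde a g \subseteq Mg \cdot g = Mg^2$, fine. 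Then $\bigcap_k Mg^k = 0$? Not obviously for a general module, but $M$ finitely generated over noetherian $R$ with $g$ in the Jacobson-radical-like graded position: actually $\bigcap Mg^k \subseteq M_{\ge n}$ for all $n$ by degree count since $g$ raises degree, so it's zero. Hence $\tilde a \in \Ann... $ no — rather: the left annihilator trick. Let me instead argue via GKdim directly: $R/I$ embeds in $\End_R(M)^{op}$-ish... Actually the slickest: $I = \Ann_R(M) \supseteq \Ann_R(M) $; we have $M$ faithful over $R/I$ and finitely generated, so $\GKdim R/I \le \GKdim M \le 1$ (a ring acting faithfully on a module has GKdim at most that of the module, for f.g. modules over f.g. algebras — this is a standard inequality, e.g. from \cite{KL}). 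For $I \ne 0$: if $I = 0$ then $R$ acts faithfully on $M$, so $\GKdim R \le \GKdim M \le 1$, contradicting $\GKdim R = 3$ from Proposition~\ref{prop:elliptic}. Done.

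**The main obstacle.** The genuinely fiddly point is part (1): keeping the left-versus-right module structures straight while identifying $\uHom$ with a subset of $R_{(g)}$, and verifying that "$g$-divisible as a $\kk[g]$-submodule of $R_{(g)}$" transfers correctly through the Hom. Part (2)'s only real content is the faithful-action GKdim inequality $\GKdim R/I \le \GKdim M$ together with $\GKdim R = 3$; both are available, so that part should be short. I would therefore spend the bulk of the write-up on part (1), being careful about sides, and dispatch part (2) in a couple of lines.
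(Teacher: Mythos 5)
Your argument for part (2) is correct and is in fact a cleaner, more self-contained route than the paper's. The paper deduces that $M/Mg$ is finite dimensional (using $g$-torsionfreeness), passes to the localised module $M^{\circ}$ over $R^{\circ}$, produces a nonzero annihilator there, and then invokes a cited lemma from an earlier paper to conclude $\GKdim R/I\le 1$. Your approach --- note that $M$ is a finitely generated faithful $R/I$-module, so $\GKdim R/I\le\GKdim M\le 1$ by the standard inequality in \cite{KL}, and then $I\ne0$ follows because $\GKdim R=3$ --- gets there with no localisation at all; it does not even need the $g$-torsionfree hypothesis for this particular conclusion. The meandering earlier in your write-up should be pruned, but the final two-line argument stands.

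In part (1), however, there is a genuine gap: you treat the containment $\uHom_R(M,N)\subseteq R_{(g)}$ as a definition, but it is part of the conclusion. By the paper's conventions (Section~\ref{BACKGROUND}), $\uHom_R(M,N)$ for noetherian submodules $M,N\subseteq Q_{\gr}(R)$ is identified with $\{x\in Q_{\gr}(R):xM\subseteq N\}$, and there is work to do to see this lies in $R_{(g)}$ --- indeed the property ``$g$-divisible'' is only defined for $\kk[g]$-submodules of $R_{(g)}$, so you cannot even state the conclusion without first establishing the containment. The fix is short: write $x=yg^{-n}$ with $y\in R_{(g)}$, $n\ge0$ minimal, suppose $n>0$; then $yM\subseteq g^nN\subseteq gR_{(g)}$, so $\overline{y}\cdot\overline{m}=0$ in the domain $R_{(g)}/gR_{(g)}\cong Q_{\gr}(B)$ for every $m\in M$, where $\overline{y}\ne0$; this forces $M\subseteq gR_{(g)}$, i.e.\ $M=Mg$, which contradicts graded Nakayama since $M$ is a nonzero finitely generated left-bounded module. (Note this step, like the identification of $\uHom_R(M,N)$ inside $Q_{\gr}(R)$, implicitly requires $M$ to be noetherian; this hypothesis is present whenever the lemma is invoked but is suppressed in the statement.) Once that containment is in hand, your computation showing $H\cap gR_{(g)}\subseteq Hg$ is correct and is essentially the argument one would write out; the paper itself only cites a reference for this part.
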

\begin{proof} (1)  See \cite[Lemma~2.12(2)]{RSSlong}. 

(2) As $M/Mg$ is finite dimensional,   $M^\circ = M[g^{-1}]_0$ is a finite dimensional module over $R^\circ=R[g^{-1}]_0$. 
Hence $J=\Ann_{R^\circ}(M^\circ)\not=0$.   Therefore,  $0\not=\widehat{J}=\bigcup_{n} \{a\in R_n  : ag^{-n}\in J\}$ and certainly 
  $\widehat{J}\subseteq \Ann_RM$; whence $I=\Ann_RM\not=0.$  As $M$ is $g$-torsionfree, 
$I\not\subseteq gR$. Hence, by   \cite[Lemma~2.15(3)]{RSSlong},
$\GKdim R/I \leq 1$. 
\end{proof}

\begin{lemma} \label{lem:reflexive}
Let $R$ be an elliptic algebra with $R/Rg = B$.
\begin{enumerate}
\item  Let $J\in \rgr R$ with $J \subseteq Q = Q_{gr}(R)$.  Then $J^{**}$ is the unique largest $R$-submodule 
of $Q$ such that $\GKdim J^{**}/J \leq 1$.   In particular, $J$ is reflexive if and only if $Q_{gr}(R)/J$ is $2$-pure.  
  
\item  Let    $J \subseteq R_{(g)}$ be a finitely generated, $g$-divisible graded right $R$-submodule.  If  
$\overline{J}=J/Jg$ is saturated as a $B$-module, then $J$ is reflexive as a right $R$-module.
\end{enumerate}
\end{lemma}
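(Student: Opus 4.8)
The plan is to deduce both parts from the reflexivity dictionary over $R$, which by Proposition~\ref{prop:elliptic} is a noetherian Auslander--Gorenstein, CM domain with $\GKdim R = \injdim R = 3$ and graded quotient ring $Q$. Throughout I may assume $J \neq 0$, the case $J = 0$ being trivial. Note that since $J \subseteq Q$ is torsionfree over the domain $R$, the biduality map $J \to J^{**}$ is simply the inclusion $J \subseteq J^{**} \subseteq Q$, in particular injective, and $j(J) = 0$.

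For (1) I would first bound $\GKdim J^{**}/J$ using the biduality spectral sequence $E_2^{p,-q} = \uExt^p_R(\uExt^q_R(J,R),R) \Rightarrow J$, the analogue for $R$ of \eqref{spectral-eq}. The Auslander condition together with the CM property forces $\GKdim \uExt^p_R(N,R) \le 3 - p$ for every finitely generated $R$-module $N$, so every bidual term $E_2^{p,-q}$ with $p \ge 2$ has $\GKdim \le 1$ (and those with $p \ge 4$ vanish). Since $j(J) = 0$ the edge term $E_2^{0,0}$ equals $J^{**}$, and the surviving subquotient $E_\infty^{0,0}$ is the image of the injective edge map $J \to J^{**}$, hence equals $J$; thus $J^{**}/J = E_2^{0,0}/E_\infty^{0,0}$ is assembled from subquotients of the terms $E_2^{p,-q}$ with $p \ge 2$ into which the outgoing differentials from $(0,0)$ land, giving $\GKdim J^{**}/J \le 1$. (This is essentially \cite[Lemma~2.12]{RSSlong}.) For the maximality, given $J \subseteq N \subseteq Q$ with $\GKdim N/J \le 1$ I reduce to $N$ finitely generated; then the CM property gives $j(N/J) \ge 2$, so $\uHom_R(N/J,R) = \uExt^1_R(N/J,R) = 0$, and applying $\uHom_R(-,R)$ to $0 \to J \to N \to N/J \to 0$ yields $N^* = J^*$ (as subsets of $Q$ the natural map is an inclusion, hence an equality), whence $N^{**} = J^{**}$ and $N \subseteq N^{**} = J^{**}$. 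Finally $Q/J$ is a torsion $R$-module, so $\GKdim Q/J \le 2$; combining the two facts, $J = J^{**}$ if and only if $Q/J$ has no nonzero finitely generated submodule of $\GKdim \le 1$, i.e.\ if and only if $Q/J$ is $2$-pure (the reverse implication holding because $J^{**}/J \subseteq Q/J$ has $\GKdim \le 1$).

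For (2), by (1) it suffices to show $J = J^{**}$. Since $R \subseteq R_{(g)}$ is $g$-divisible, two applications of Lemma~\ref{lem:ideal}(1) give that $J^* = \uHom_R(J,R) \subseteq R_{(g)}$ and then $J^{**} = \uHom_R(J^*,R) \subseteq R_{(g)}$ are $g$-divisible finitely generated graded right $R$-modules. Put $X = J^{**}/J$. Because $J$ is $g$-divisible, $R_{(g)}/J$ is $g$-torsionfree, hence so is its submodule $X$, so $g$ is a central nonzerodivisor on $X$; as $\GKdim X \le 1$ by (1), this forces $X/gX$ to be finite dimensional (e.g.\ by the Hilbert series identity $h_{X/gX} = (1-s)h_X$). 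Applying the Snake Lemma to $0 \to J \to J^{**} \to X \to 0$ with multiplication by $g$ (injective on all three $g$-torsionfree terms) gives a short exact sequence $0 \to \overline{J} \to \overline{J^{**}} \to X/gX \to 0$, where $\overline{J}, \overline{J^{**}} \subseteq R_{(g)}/gR_{(g)} \cong Q_{gr}(B)$ are as in Notation~\ref{div-defn}; in particular $\overline{J^{**}}/\overline{J}$ is finite dimensional. Now $Q_{gr}(B)$ has no nonzero finite dimensional $B$-submodule, so $\overline{J^{**}}$ is torsionfree and the adjunction unit $\overline{J^{**}} \to \omega\pi(\overline{J^{**}})$ is injective; since $\overline{J^{**}}/\overline{J}$ is finite dimensional, $\pi(\overline{J^{**}}) = \pi(\overline{J})$ in $\rqgr B$, so $\omega\pi(\overline{J^{**}}) = \omega\pi(\overline{J}) = \overline{J}$, the last equality because $\overline{J}$ is saturated. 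Hence $\overline{J^{**}} = \overline{J}$, i.e.\ $X/gX = 0$; as $X$ is finitely generated and left bounded and $\deg g = 1 > 0$, graded Nakayama gives $X = 0$, so $J = J^{**}$ and $J$ is reflexive.

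The step I expect to require the most care is the spectral-sequence estimate $\GKdim J^{**}/J \le 1$ in part (1): one must track precisely which $\uExt^p_R(\uExt^q_R(J,R),R)$ the differentials emanating from the $(0,0)$-position can reach, and invoke the Auslander/CM dimension bounds at exactly the right spots — although if the corresponding statement is recorded in \cite{RSSlong} one could simply quote it. The remaining ingredients (the Hom computation for maximality in (1), and in (2) the deduction that $\overline{J^{**}} = \overline{J}$ from $\overline{J^{**}}/\overline{J}$ finite dimensional and $\overline{J}$ saturated, which rests only on $\pi\omega \simeq \mathrm{id}$ and torsionfreeness of $Q_{gr}(B)$) are routine.
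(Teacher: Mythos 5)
Your proof is correct, though part (2) takes a genuinely different route from the paper's, and one step deserves a more careful justification.

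For part (1) the paper simply cites Levasseur; your spectral-sequence argument is a correct unwinding of that reference and matches the standard proof of that fact.

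For part (2) the paper argues by contradiction: it assumes $J \subsetneq N \subseteq R_{(g)}$ with $\GKdim N/J \leq 1$, invokes Lemma~\ref{lem:ideal}(2) to find a graded ideal $I$ with $NI \subseteq J$ and $\GKdim R/I \leq 1$, passes to the $g$-divisible closure $\wh{N}$, and uses the fact that every nonzero ideal of $B$ has finite codimension to deduce $\dim_\kk \overline{\wh N}/\overline{J} < \infty$, contradicting saturation of $\overline{J}$. You instead argue directly with $J^{**}$: you observe that $J^{**}$ is already $g$-divisible and finitely generated (two applications of Lemma~\ref{lem:ideal}(1), so no closure $\wh{N}$ is needed), show $X = J^{**}/J$ has $X/Xg$ finite-dimensional, reduce modulo $g$ by the snake lemma, and then let the section functor $\omega$ do the work via $\overline{J^{**}} \subseteq \omega\pi(\overline{J^{**}}) = \omega\pi(\overline{J}) = \overline{J}$, followed by graded Nakayama. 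This is a cleaner logical structure (no proof by contradiction, no need to take a $g$-divisible hull) and puts the saturation hypothesis directly to work through $\pi\omega\simeq\mathrm{id}$.

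One small caveat: the parenthetical justification that $h_{X/gX} = (1-s)h_X$ forces $X/gX$ finite-dimensional is not airtight as written, because $\GKdim X \leq 1$ alone does not immediately bound the Hilbert function for an arbitrary noetherian cg algebra. The step is nonetheless correct here: by Lemma~\ref{lem:ideal}(2), $X$ (being $g$-torsionfree, finitely generated and of $\GKdim \leq 1$) is killed by a graded ideal $I$ with $\GKdim R/I \leq 1$, so $\dim_\kk X_n$ is bounded; and since $g$ is a nonzerodivisor on $X$ the Hilbert function is non-decreasing, hence eventually constant, from which $X/gX$ finite-dimensional follows. Alternatively one can argue exactly as the paper does, that $\overline{I}$ is a nonzero ideal of $B$ of finite codimension annihilating $X/Xg$. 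Either way, the gap is purely expository and easily closed; also note that $J^*$ is a \emph{left} $R$-module rather than a right one, a harmless slip in your statement.
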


\begin{proof}
(1) This follows from the CM property; see, for example, \cite[(4.6.6) and Remark~5.8(4)]{Lev1992}.

(2) We know that $J$ is reflexive if and only if $Q_{gr}(R)/J$ is $2$-pure.   This is equivalent to $R_{(g)}/J$ 
being $2$-pure, since $Q_{gr}(R)/R_{(g)} = \bigcup_{n \geq 1} g^{-n}R_{(g)}/R_{(g)}$   is $2$-pure.

  If  $J$ is   not reflexive, 
  there exists a finitely generated module  $J \subsetneq N \subseteq R_{(g)}$ with   $\GKdim(N/J) \leq 1$.  By 
  Lemma~\ref{lem:ideal}(2) and the fact that $R_{(g)}/J$ is $g$-torsionfree, 
  $NI \subseteq J$ for some graded ideal $I$ of $R$ with $\GKdim R/I \leq 1$.
  Let $\wh{N} = \{ y \in R_{(g)} : yg^n \in N,\ \text{some}\ n \geq 0 \}$.  Then  $\wh{N}$ is   $g$-divisible, 
with $\wh{N}I \subseteq J$.   Since $\wh{N} \subseteq \uHom_R(I,J)$, clearly $\wh{N}$ is left bounded.
Since $J \subsetneq \wh{N}$ and both are left bounded and $g$-divisible, 
we must have $\overline{J} \subsetneq \overline{\wh{N}}$; otherwise  $J/Jg = \wh{N}/\wh{N}g$ and 
the graded Nakayama lemma would  imply that $J=\wh{N}$.
  Moreover, $\overline{\wh{N}} \overline{I} \subseteq \overline{J}$.
But $\dim_\kk  B/\overline{I} < \infty$, since all nonzero ideals of $B$ have finite codimension (see, for example,
\cite[Lemma~4.4]{AS}).
Hence $\dim_{\kk}  \overline{\wh{N}}/ \overline{J}<\infty$, showing $\overline{J}$ is not saturated, a contradiction.
\end{proof}

The next few lemmas provide useful homological properties for modules over  an elliptic algebra.
First however we prove an elementary result that will be used several times.

\begin{lemma}\label{lem:barhom1}
Let $H$ be a locally finite, left bounded, graded $\kk[g]$-module. If the multiplication map $\timesg$ has a finite dimensional kernel on $H$, 
then the $g$-torsion submodule of $H$ is also finite dimensional.
\end{lemma}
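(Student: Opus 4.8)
The statement concerns a locally finite, left bounded graded $\kk[g]$-module $H$ on which multiplication by $g$ has finite dimensional kernel, and the claim is that the $g$-torsion submodule $t(H)$ is then finite dimensional. Since $H$ is left bounded and locally finite, each graded piece $H_n$ is finite dimensional and $H_n = 0$ for $n \ll 0$. The plan is to exploit these two finiteness conditions together with the hypothesis on $\Ker(\timesg)$ to control the torsion.

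\textbf{Key steps.} First I would set $K = \Ker(\timesg|_H)$, which is by hypothesis a finite dimensional graded subspace; since $K$ is graded and finite dimensional, it is concentrated in finitely many degrees, say $K \subseteq \bigoplus_{n \leq N} H_n$ for some $N$ (combined with left boundedness this puts $K$ in a bounded range of degrees). Next, observe that the $g$-torsion submodule $t(H)$ is a union of the ascending chain $K = \Ann_H(g) \subseteq \Ann_H(g^2) \subseteq \cdots$, where $\Ann_H(g^i) = \{ h \in H : g^i h = 0\}$. The main step is to show this chain stabilises, and in fact that $\Ann_H(g^i) \subseteq \bigoplus_{n \leq N} H_n$ for \emph{all} $i$, by induction on $i$: if $h \in \Ann_H(g^{i+1})$ is homogeneous of degree $d$, then $gh \in \Ann_H(g^i)$ has degree $d+1$, so by the inductive hypothesis either $gh = 0$ (so $h \in K$, degree $\leq N$) or $d + 1 \leq N$, i.e. $d \leq N-1 < N$. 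Hence every homogeneous element of $t(H)$ lives in degrees $\leq N$. Since $H$ is left bounded, degrees $\leq N$ comprise only finitely many graded pieces, each finite dimensional, so $t(H) \subseteq \bigoplus_{n_0 \leq n \leq N} H_n$ is finite dimensional.

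\textbf{Main obstacle.} The argument is genuinely short and the only thing to be careful about is the base of the induction and the bookkeeping of degrees: one must use that $K$ is finite dimensional \emph{as a graded module}, hence supported in a bounded interval of degrees, rather than merely finite dimensional in the abstract. Once one fixes an integer $N$ with $K \subseteq \bigoplus_{n \leq N} H_n$ and an integer $n_0$ with $H_n = 0$ for $n < n_0$, the inductive claim ``$\Ann_H(g^i) \subseteq \bigoplus_{n \leq N} H_n$ for all $i \geq 1$'' goes through cleanly, and one concludes $t(H) = \bigcup_i \Ann_H(g^i) \subseteq \bigoplus_{n_0 \leq n \leq N} H_n$, which is finite dimensional. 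I do not anticipate any real difficulty; the lemma is an elementary graded-module observation whose point is precisely that left boundedness upgrades a kernel-finiteness hypothesis to a torsion-finiteness conclusion.
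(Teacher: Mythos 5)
Your proof is correct and takes essentially the same approach as the paper's: both arguments show that the bounded-degree support of $\Ann_H(g)$ propagates to the entire $g$-torsion submodule, and then invoke left boundedness and local finiteness to conclude. The only cosmetic difference is that you run an induction on $i$ over $\Ann_H(g^i)$, whereas the paper picks, for a nonzero torsion element $x$, the minimal $n$ with $g^n x = 0$ and notes $0 \neq g^{n-1}x \in \Ann_H(g)$, which bounds $\deg x$ in one step.
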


\begin{proof}
By hypothesis,  $\Ann_H(g)$ is finite-dimensional, say contained in degrees $\leq d$.  Now if $0 \neq x \in H$ is $g$-torsion, 
then pick  $n \geq 1$  minimal with $g^n x = 0$; thus  $0 \neq g^{n-1}x \in \Ann_H(g)$.  It follows that $\deg x\leq d - n+1$.
In particular, the $g$-torsion submodule of $H$ is entirely contained in degrees $\leq d$,  and so is finite dimensional as $H$ is 
left bounded and locally finite.  \end{proof}

\begin{lemma}
\label{lem:extRextB}
Let $R$ be an elliptic algebra, with $B = R/gR$.  Suppose that $M \in \rGr R$ is $g$-torsionfree and 
that $N \in \rGr B$.
Then,   for all $i \geq 0$, one has 
 $\uExt^i_R(M,N) \cong \uExt^i_B(M/Mg, N)$  and 
\[  \uExt^i_{\rQgr R}(\pi(M), \pi(N)) \cong \uExt^i_{\rQgr B}(\pi(M/Mg), \pi(N)).\]
\end{lemma}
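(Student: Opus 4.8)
The plan is to reduce the statement about $R$-modules to one about $B$-modules via a standard change-of-rings argument, using crucially that $g$ is central and $M$ is $g$-torsionfree. First I would recall the change-of-rings spectral sequence for the ring surjection $R \to B = R/gR$: for a right $R$-module $M$ and a right $B$-module $N$,
\[
\uExt^p_B\bigl(\Tor^R_q(M,B), N\bigr) \Longrightarrow \uExt^{p+q}_R(M,N).
\]
Since $M$ is $g$-torsionfree and $g$ is a central nonzerodivisor, $M$ is flat over $\kk[g]$ (or: multiplication by $g$ is injective on $M$), so from the free resolution $0 \to R[-1] \xrightarrow{\timesg} R \to B \to 0$ of $B$ as a left $R$-module one computes $\Tor^R_0(M,B) = M/Mg$ and $\Tor^R_q(M,B) = 0$ for $q \geq 1$. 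Hence the spectral sequence degenerates, yielding $\uExt^i_R(M,N) \cong \uExt^i_B(M/Mg, N)$ for all $i$. One should double-check the grading shifts work out — the resolution above is by graded free modules with the displayed shift, and since $g \in R_1$ the induced identifications are degree-preserving — but this is routine.

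**The $\rQgr$ statement.** For the second isomorphism, the cleanest route is to combine the first part with the comparison between $\uExt$ in $\rGr$ and in $\rQgr$. By \cite[Proposition~7.2]{AZ} (already invoked in the proof of Proposition~\ref{prop:Hompt}), $\uExt^i_{\rQgr R}(\pi(M), \pi(N)) = \varinjlim_n \uExt^i_R(M_{\geq n}, N)$, and similarly over $B$. Now $M_{\geq n}$ is again $g$-torsionfree and finitely generated, so the first part of the lemma applies to give $\uExt^i_R(M_{\geq n}, N) \cong \uExt^i_B(M_{\geq n}/M_{\geq n}g, N)$. The remaining point is to identify the direct system $\{M_{\geq n}/M_{\geq n}g\}_n$ over $B$ with $\{(M/Mg)_{\geq n}\}_n$ up to the tails that vanish under $\pi$: since $g$ is central of degree $1$ and $M$ is $g$-torsionfree, $M_{\geq n}/M_{\geq n}g$ and $(M/Mg)_{\geq n}$ agree in all large degrees (they can differ only by finite-dimensional pieces coming from $M_{n-1}g \cap M_{\geq n}$), hence define the same pro-object and the same limit. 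Taking $\varinjlim$ then gives $\uExt^i_{\rQgr R}(\pi(M),\pi(N)) \cong \varinjlim_n \uExt^i_B((M/Mg)_{\geq n}, N) \cong \uExt^i_{\rQgr B}(\pi(M/Mg), \pi(N))$, as desired.

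**Alternative for the second part.** If one prefers to avoid the direct-limit bookkeeping, an alternative is to note that $\pi: \rGr R \to \rQgr R$ sends the free resolution step $0 \to R[-1] \xrightarrow{\timesg} R \to B \to 0$ to a corresponding short exact sequence in $\rQgr R$, and that $\pi(N)$, being (the image of) a $B$-module, is annihilated by $g$; then the same degeneration argument runs inside the quotient categories, using that $\rQgr B$ is a full subcategory of $\rQgr R$ (the objects killed by $g$) with compatible $\uExt$ groups. Either way, the substantive input is the vanishing of higher $\Tor$, which is immediate from $g$-torsionfreeness.

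**Main obstacle.** I expect the only genuinely delicate point to be the identification of the two direct systems (or the compatibility of $\uExt$ between $\rQgr B$ and $\rQgr R$) in the second displayed isomorphism — keeping track of which finite-dimensional tails get discarded and checking the grading shifts are consistent. The first isomorphism is essentially formal once one observes $\Tor^R_{\geq 1}(M,B) = 0$.
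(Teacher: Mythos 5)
The paper does not actually give a proof here: it writes only ``Both parts are essentially the same easy exercise; cf.\ [VdB, Proposition~5.1.2(1)]'' and leaves the details to the reader with a pointer to Van den Bergh's memoir. Your argument is therefore filling a gap the paper deliberately leaves, and it is correct: the change-of-rings spectral sequence $\uExt^p_B(\Tor^R_q(M,B),N)\Rightarrow\uExt^{p+q}_R(M,N)$ together with the vanishing $\Tor^R_{\geq 1}(M,B)=0$ (from $g$-torsionfreeness and the length-one resolution $0\to R[-1]\xrightarrow{\cdot g}R\to B\to 0$) is exactly the standard route, and it is a tool the paper uses elsewhere (it is invoked, citing Rotman, in the proof of Proposition~\ref{prop:smooth}). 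Your reduction of the $\rQgr$ statement to the $\rGr$ statement via \cite[Proposition~7.2]{AZ}, with the two truncation systems $\{M_{\geq n}/M_{\geq n}g\}$ and $\{(M/Mg)_{\geq n}\}$ interleaving because they differ in each index by the single finite-dimensional graded piece $M_{n-1}g$, is also sound.

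One caution on your ``alternative'' at the end: the claim that $\rQgr B$ sits inside $\rQgr R$ ``with compatible $\uExt$ groups'' is not automatic and should not be used as stated. A Serre subcategory agrees with the ambient category on $\Hom$ and (when closed under extensions) on $\Ext^1$, but higher $\Ext$ groups in the subcategory and in the ambient category need not coincide in general; moreover $\pi(M)$ is not even an object of $\rQgr B$, since $M$ is $g$-torsionfree. So the alternative sketch, if pursued, would need a genuine argument (e.g.\ via the degeneration applied to a projective resolution of $M$, whose reduction mod $g$ resolves $M/Mg$, followed by a comparison of the two direct-limit formulas). Your primary route via the direct limits is the one to trust, and it is correct once the interleaving of the two systems is spelled out; the only bookkeeping point is the one you already flagged.
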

\begin{proof}
Both parts are essentially the same easy exercise; cf. \cite[Proposition~5.1.2(1)]{VdB-blowups}.  \end{proof}
 
\begin{lemma}\label{lem:barhom}
Let $R$ be an elliptic algebra with $R/Rg = B = B(E, \mc{M}, \tau)$. Let $I$ and $J$ be non-zero $g$-divisible, reflexive finitely generated graded right $R$-submodules 
of $R_{(g)}$.  Then 
\begin{enumerate}
\item The natural inclusion $ \overline{\uHom_R(I,J)} \subseteq \uHom_{B}(\overline{I}, \overline{J})$ has 
a finite-dimensional cokernel.
\item  The $g$-torsion subspace of $\uExt^1_R(I,J)$ is finite-dimensional over $\kk$.
\end{enumerate}
\end{lemma}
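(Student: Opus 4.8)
The plan is to establish part~(1) first, by a direct argument with finitely generated $B$-submodules of $Q_{gr}(B)$, and then to deduce part~(2) from part~(1) together with a change-of-rings long exact sequence and Lemma~\ref{lem:barhom1}. Throughout write $H=\uHom_R(I,J)$, identified with $\{x\in Q_{gr}(R):xI\subseteq J\}$. First I would check that $H\neq 0$: since $Q_{gr}(R)$ is a graded division ring and $I$ is a nonzero finitely generated $R$-submodule, clearing denominators gives a nonzero homogeneous $c\in R$ with $cI\subseteq R$, and then $0\neq dc\in H$ for any nonzero homogeneous $d\in J$. Moreover $H$ is a finitely generated $R$-module (it embeds in $\uHom_R(F_0,J)\cong J^{\oplus a}$ for a finite free presentation $F_1\to F_0\to I\to 0$, and $R$ is noetherian), and $H\subseteq R_{(g)}$ is $g$-divisible by Lemma~\ref{lem:ideal}(1). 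Hence $\overline H=H/Hg$ is a finitely generated $B$-module, and it is nonzero: otherwise $H\subseteq gR_{(g)}$, so $g$-divisibility forces $H=Hg=Hg^2=\cdots$, whence $H=0$ since $H$ is bounded below (the same argument shows $\overline I,\overline J\neq 0$). Reducing homomorphisms modulo $g$ gives the natural inclusion $\overline H\hookrightarrow \uHom_B(\overline I,\overline J)=:K$, which is injective since its kernel is $H\cap gR_{(g)}=Hg$. Here $K=\{x\in Q_{gr}(B):x\overline I\subseteq\overline J\}$ is finitely generated over the noetherian ring $B$.

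The crux of part~(1) is that $K/\overline H$ is finite-dimensional. It is a finitely generated $B$-module; fix a nonzero $h\in\overline H\subseteq Q_{gr}(B)$ and module generators $k_1,\dots,k_m\in K\subseteq Q_{gr}(B)$ of $K$. Since $Q_{gr}(B)$ is a graded division ring and $B$ is (right) Ore, for each $j$ there is a nonzero homogeneous $b_j\in B$ with $h^{-1}k_jb_j\in B$, i.e.\ $k_jb_j\in hB\subseteq\overline H$. Thus every generator of $K/\overline H$ is killed on the right by a nonzero element of the domain $B$, so the right annihilator $\mf a$ of $K/\overline H$ in $B$ contains $\bigcap_j b_jB\neq 0$ and is therefore a nonzero right ideal. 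Then $B\mf a$ is a two-sided ideal, still nonzero and still annihilating $K/\overline H$ (for $x\in K/\overline H$, $b\in B$, $a\in\mf a$ one has $x(ba)=(xb)a=0$). Since every nonzero ideal of $B$ has finite codimension \cite[Lemma~4.4]{AS}, $K/\overline H$ is a finitely generated module over the finite-dimensional algebra $B/B\mf a$, hence $\dim_\kk K/\overline H<\infty$. This proves~(1).

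For part~(2), apply $\uHom_R(I,-)$ to $0\to J[-1]\xrightarrow{\ g\ }J\to\overline J\to 0$. Multiplication by $g$ is injective on $H=\uHom_R(I,J)\subseteq Q_{gr}(R)$, and Lemma~\ref{lem:extRextB} identifies $\uHom_R(I,\overline J)\cong\uHom_B(\overline I,\overline J)=K$; using the $g$-divisibility of $H$, the long exact sequence collapses to the short exact sequence
\[0\to\overline H\to K\to \Ann_{\uExt^1_R(I,J)}(g)[-1]\to 0,\]
in which the first map is precisely the natural inclusion of part~(1). Hence $\Ann_{\uExt^1_R(I,J)}(g)\cong (K/\overline H)[1]$ is finite-dimensional by~(1). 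Since $\uExt^1_R(I,J)$ is bounded below and locally finite ($R$ noetherian, $I,J$ finitely generated; Notation~\ref{loc-fin}), Lemma~\ref{lem:barhom1} applied with its ``$H$'' taken to be $\uExt^1_R(I,J)$ shows that the $g$-torsion submodule $t(\uExt^1_R(I,J))$ is finite-dimensional, proving~(2).

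The main obstacle is exactly the second paragraph: everything else is routine homological bookkeeping over the pair $(R,B)$, but upgrading ``$K/\overline H$ has rank zero over $B$'' to ``$K/\overline H$ is finite-dimensional over $\kk$'' genuinely requires both that $B$ is a domain (to convert a nonzero one-sided annihilator into a nonzero two-sided one) and the special feature of $B=B(E,\mc{M},\tau)$ that it has no proper cyclic quotient module of infinite dimension. (Note that reflexivity of $I$ and $J$ does not seem to be needed for this lemma, only $g$-divisibility and finite generation; I would double-check whether any of the cited ingredients secretly uses it.)
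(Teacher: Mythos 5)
Your reduction of part (2) to part (1) is exactly the paper's argument (apply $\uHom_R(I,-)$ to $0\to J[-1]\xrightarrow{\,g\,}J\to\overline J\to 0$, identify the cokernel of the first vertical map as $K/\overline H$, then invoke Lemma~\ref{lem:barhom1}), so that half is fine. Your proof of part (1), however, has a genuine gap at its central step.

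The claim ``every generator of $K/\overline H$ is killed on the right by a nonzero element, so the right annihilator $\mf a$ of $K/\overline H$ contains $\bigcap_j b_jB\neq 0$'' does not follow. If $\bar k_jb_j=0$ and $a=b_jc_j\in\bigcap_j b_jB$, a general element of $K/\overline H$ has the form $\sum_j\bar k_jd_j$, and $\bigl(\sum_j\bar k_jd_j\bigr)a=\sum_j\bar k_j(d_ja)$; there is no reason for $d_ja$ to land in $b_jB$, so this need not vanish. What you actually produce is that each generator, and hence each element, of $K/\overline H$ has a nonzero \emph{elementwise} right annihilator, i.e.\ $K/\overline H$ is a Goldie torsion module. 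Over a noncommutative noetherian domain that is far weaker than having a nonzero two-sided annihilator: indeed over $B=B(E,\mc{M},\tau)$ itself, every point module $M_p=B/I_p$ is a finitely generated cyclic Goldie torsion module with $\Ann_B(M_p)=0$ (a section of a line bundle has finitely many zeros, while $\{\tau^{-m}(p):m\geq 0\}$ is infinite). So the passage from ``each generator Goldie torsion'' to ``nonzero annihilator'' fails on precisely the kind of module that could a priori arise as a subquotient of $K/\overline H$, and the step is circular in spirit: you would already need to know $K/\overline H$ is finite-dimensional to conclude it has a nonzero annihilator.

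The paper closes this gap with genuinely different, homological input: after reducing to $I,J\subseteq R$, it proves $\dim_\kk\uExt^2_B(B/\overline I,\overline J)<\infty$. This is where the structure of $B$ enters essentially, through the Artin--Zhang $\chi$-condition ($\dim_\kk\uExt^2_B(\kk,\overline J)<\infty$), the Cohen--Macaulayness of point modules ($\uExt^2_B(M_p,B)=0$, Lemma~\ref{lem:Bdual}), and the finiteness of $\uExt^1_B(M_p,M_q)$ from Proposition~\ref{prop:Hompt}, combined with the fact that all $B$-modules of $\GKdim\leq 1$ are filtered by point modules and finite-dimensional modules. Finiteness of $\coker\beta$ then falls out of the long exact Ext sequence by a stabilisation argument. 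Any correct proof of (1) will need ingredients of this strength; your approach as written cannot be patched by a purely ring-theoretic/Ore argument.

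As a minor aside to your final remark: the paper's proof does invoke reflexivity (via Lemma~\ref{lem:reflexive}, to know $R/I$ and $R/J$ are $2$-pure or zero), but you are right that this is not visibly load-bearing in the remainder of that paragraph.
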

\begin{proof}
(1) 
The proof is a variant of \cite[Prop. 6.12]{RSSlong}.   First,  replacing $I$ and $J$ by $xI$ and $yJ$, for some    
homogeneous  elements $x, y \in R \setminus gR$, we can assume without loss that 
  $I, J \subseteq R$.    Note that   $R/I$ and $ R/J$  are  
   $g$-torsionfree modules  and hence,  by Lemma~\ref{lem:reflexive},    are either  2-pure or $0$.

By Lemma~\ref{lem:extRextB}  we may 
 identify $\uExt^i_R(R/I,\overline{J}) = \uExt^i_B(B/\overline{I}, \overline{J})$. Thus, applying $\uHom_R(R/I,-)$ to the sequence 
 $0\to J\overset{ \timesg }{\too}  J\to \overline{J}\to 0$ gives the exact sequence
\begin{gather}
\label{eq:barhom}
\uHom_B(B/\overline{I}, \overline{J}) \too \uExt^1_R(R/I, J)[-1] \overset{\timesg}{\too} \uExt^1_R(R/I, J) \overset{\beta}{\too} \uExt^1_B(B/\overline{I}, \overline{J})  \\
\nonumber \overset{\alpha}{\too} \uExt^2_R(R/I, J)[-1] \overset{\timesg}{\too} \uExt^2_R(R/I, J) \too \uExt^2_B(B/\overline{I}, \overline{J}) \too \dots 
\end{gather}
Moreover, $\uHom_B(B/\overline{I}, \overline{J}) = 0$ since $B$ is a domain.

We claim that $\dim_{\kk} \uExt^2_B(B/\overline{I}, \overline{J}) < \infty$.  Indeed, since $B$ is a domain with $\GKdim(B) = 2$, \cite[Proposition~5.1(e)]{KL}   
 implies that     $\GKdim B/\overline{I} \leq 1$   and so $B/\overline{I}$
has a finite filtration  by point modules and finite-dimensional modules.  By Proposition~\ref{prop:elliptic}
$B$ satisfies the Artin-Zhang $\chi$-condition and so $\dim_{\kk} \uExt^2_B(\kk, \overline{J}) < \infty$.
Thus in order to prove the claim,  using the usual long exact sequences in cohomology, it suffices  to show that 
$\dim_{\kk} \uExt^2_B(M_p, \overline{J}) < \infty$ for a point module $M_p$.  
Consider the exact sequence
\[
\dots \to \uExt^1_{B}(M_p, B/\overline{J}) \to 
\uExt^2_{B}(M_p, \overline{J}) \to \uExt^2_{B}(M_p,  B) \to \dots
\]
Here,  $\uExt^2_B(M_p, B) = 0$  by Lemma~\ref{lem:Bdual}. Also $B/\overline{J}$ is again
filtered by point modules and finite-dimensional modules.  Obviously $\dim_{\kk} \uExt^1_B(M_p, \kk) < \infty$, while 
 $\dim_{\kk} \uExt^1_B(M_p, M_q) < \infty$ 
for a point module $M_q$ by Proposition~\ref{prop:Hompt}.    Thus $\dim_{\kk} \uExt^1_{B}(M_p, B/\overline{J}) < \infty$ 
and  so $\dim_{\kk} \uExt^2_{B}(M_p, \overline{J}) < \infty$, as  claimed.

Equation \ref{eq:barhom} now shows that if $N = \uExt^2_R(R/I, J)$, then the map 
$N_{n-1} \overset{\timesg}{\too} N_n$ is surjective for $n \gg 0$.  Since $N$ is left bounded and locally finite (see Notation~\ref{loc-fin})  this forces 
$\dim_{\kk} N_n = d$ for some constant $d$ and any $n \gg 0$. Hence 
$N_{n-1} \overset{\timesg}{\too} N_n$ 
is an isomorphism for $n \gg 0$ and so $\text{Coker}(\beta) \cong \text{Im}(\alpha)$ is finite-dimensional.       Identifying $\uExt^1_R(R/I, J) \cong \uHom_R(I,J)/J$ and 
$\uExt^1_{B}(B/\overline{I}, \overline{J}) \cong \uHom_B(\overline{I}, \overline{J})/\overline{J}$, 
this means that the natural map $ \uHom_R(I,J)/J \to \uHom_B(\overline{I}, \overline{J})/\overline{J}$
  has a finite-dimensional cokernel.
As in the proof of \cite[Prop 6.12]{RSSlong}, it 
easily follows   that the natural map
$ \uHom_R(I,J) \to \uHom_B(\overline{I}, \overline{J})  $ also   has a finite-dimensional cokernel.

(2)
   From the exact sequence
\[
0 \too \uHom_R(I, J)[-1] \overset{\timesg}{\too} \uHom_R(I, J) \too \uHom_B(\overline{I}, \overline{J}) \too \uExt^1_R(I,J)[-1] \overset{\timesg}{\too} \uExt^1_R(I,J)   
\]
and part (1), the  map  $\timesg$  on $H= \uExt^1_R(I,J)$ has finite-dimensional kernel.  Now apply   Lemma~\ref{lem:barhom1}.  \end{proof}


\section{Line modules}\label{LINES}

The main aim of this paper is to create an algebraic analogue of contracting lines of self intersection $-1$. To this end,
 in this section we discuss line modules---the appropriate analogues of lines---while in the next section we discuss their 
 intersection theory.  Throughout the section, we fix an   elliptic algebra $R$ of degree $\geq 3$ 
with $R/Rg = B = B(E, \mc{M}, \tau)$.

\begin{definition}\label{line-defn}
A (\emph{right}) \emph{line module} over the elliptic algebra $R$ is a cyclic   graded  $R$-module $L\in\rgr R$ with 
Hilbert series $\hilb L = 1/(1-s)^2$.  Because $\Hom_{\rgr R}(R, L) = \kk$ for such a module $L$,  there is a unique 
right ideal $J$ of $R$  with $L \cong R/J$.  We refer to $J$ as the {\em line ideal of $L$}.
\end{definition}

\begin{lemma}\label{lem:basics}
Let $L$ be a right  line module over the elliptic algebra $R$.  Then  
$L$ is $g$-torsionfree and $2$-critical, and $L/Lg$  is a point module.  
\end{lemma}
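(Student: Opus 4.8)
The plan is to prove the three assertions—$g$-torsionfreeness, $2$-criticality, and that $L/Lg$ is a point module—essentially in that order, using the structure of the elliptic algebra and the results on $B$ already established.

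First I would analyse the $g$-action on $L$. Since $g \in R_1$ is central and $L$ is cyclic with $\hilb L = 1/(1-s)^2$, multiplication by $g$ gives a graded map $L[-1] \overset{\timesg}{\too} L$, whose cokernel is a cyclic $B = R/gR$-module $\bar{L} = L/Lg$. Because $\dim_\kk L_n = n+1$, one computes $\hilb \bar{L} \geq 1/(1-s)$ with equality if and only if $\timesg$ is injective on $L$, i.e. $L$ is $g$-torsionfree; more precisely $\hilb\bar{L} = \hilb L - s\,\hilb L + s\,\hilb(\ker \timesg)$. So I would argue that $\bar{L}$, being a cyclic $B$-module, must surject onto some $B$-point module $M_p$ (since $B$ is a domain of $\GKdim 2$ generated in degree $1$, a cyclic module of $\GKdim \leq 1$ has a point-module quotient — or more directly, $\hilb \bar L \geq 1/(1-s)$ already forces $\GKdim \bar L \geq 1$, and a cyclic $B$-module has $\GKdim \leq \GKdim B = 2$). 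To pin down that $\bar L$ is \emph{exactly} a point module, hence that $L$ is $g$-torsionfree, the cleanest route is a GK-dimension count: $L$ is a $g$-torsionfree-or-not finitely generated graded module with $\hilb L = 1/(1-s)^2$, so $\GKdim L = 2$; if $L$ had nonzero $g$-torsion submodule $t(L)$, then $t(L)$ is a nonzero graded submodule and $L/t(L)$ is $g$-torsionfree, and one derives a contradiction with the tight Hilbert series $1/(1-s)^2$ by noting that $g$-torsion submodules of noetherian $R$-modules are finitely generated hence left-bounded, forcing $\hilb L$ to strictly exceed $(1/(1-s))\cdot\hilb(L/gL) $ in a way incompatible with $1/(1-s)^2$ unless $t(L)=0$ — I expect this bookkeeping to be the main technical obstacle, and it may be cleaner to first establish $2$-criticality and deduce torsionfreeness from it.

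Indeed, for $2$-criticality the key point is that $L$ is cyclic, so every proper graded factor $L/N$ (with $N \neq 0$) is again cyclic with Hilbert series strictly dominated by $1/(1-s)^2$ coefficientwise; since a cyclic graded $R$-module of $\GKdim 2$ has Hilbert series growing like $n$, while $\hilb(L/N) < 1/(1-s)^2$ means its coefficients are eventually bounded by $n$ and for infinitely many $n$ strictly smaller, standard GK-dimension estimates (Bergman's gap theorem / \cite[Proposition~5.1]{KL}-type arguments, or directly the fact that a graded subspace of a $1/(1-s)^2$-series with a strict drop has dimension sequence of sub-linear growth on a subsequence and hence $\GKdim \leq 1$) give $\GKdim L/N \leq 1 < 2 = \GKdim L$. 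This is exactly $2$-criticality. Given $2$-criticality, $L$ is in particular $2$-pure, so any nonzero submodule has $\GKdim 2$; since $g$-torsion submodules of finitely generated graded modules have $\GKdim \leq \GKdim B = 2$ but are annihilated by a power of $g$ hence are $B$-modules of — wait, they could still have $\GKdim 2$. So instead: $t(L)$ is a submodule with $\GKdim t(L) < \GKdim L$ would contradict purity unless $t(L) = 0$; and $\GKdim t(L) < 2$ because $t(L)$, being $g$-torsion and finitely generated, embeds in a finite direct sum of shifts of $B$-modules with the property that... hmm. The clean statement I would use: a finitely generated $g$-torsion graded $R$-module $T$ satisfies $Tg^k = 0$ for some $k$, so $T$ is a module over $R/g^kR$ which has $\GKdim = \GKdim B = 2$; but $T$ also cannot be all of $L$ since $L/Lg = \bar L \neq 0$. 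If $\GKdim t(L) = 2$ then by $2$-purity $t(L)$ is "large" and $L/t(L)$ — a $g$-torsionfree module — has $\hilb$ strictly smaller, but $\GKdim(L/t(L))$ could still be $2$... This is precisely why I flagged the torsionfreeness bookkeeping as the obstacle. The resolution: use that $L/Lg$ being cyclic forces $\hilb(L/Lg) \leq 1/(1-s)$ (cyclic $B$-modules have Hilbert series at most that of $B$... no, $B$ grows linearly too). Correct approach: $L$ cyclic of $\GKdim 2$ means $\GKdim(L/Lg) \leq 1$, and combined with $\hilb(L/Lg) \geq$ (something with nonneg coefficients and, from cyclicity, $\dim(L/Lg)_n \geq 1$ for all $n$ in the support), $L/Lg$ is a $1$-dimensional cyclic $B$-module; since the only such with $\dim \geq 1$ in every nonneg degree and $\GKdim \leq 1$, generated in degree $0$, that maps onto it... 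I would invoke that $L/Lg$ is $g$-torsionfree over $B$ hence (by $B$'s structure, Lemma~\ref{lem:basics for B}) a submodule of a sum of point modules, and a cyclic such with $\dim_\kk(L/Lg)_0 = 1$ is a point module — pinning down $\hilb(L/Lg) = 1/(1-s)$, and hence $\ker(\timesg\ \text{on}\ L) = 0$, i.e. $L$ is $g$-torsionfree, all at once.

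Finally, once $L$ is $g$-torsionfree we have $\hilb \bar L = (1-s)\hilb L = 1/(1-s)$, $\bar L$ is cyclic over $B$, hence $\bar L$ is a $B$-point module by definition. I would also remark that $2$-criticality can alternatively be read off from: $L/Lg$ is a point module (hence $1$-critical over $B$, as noted in the proof of Lemma~\ref{lem:Bdual}), $g$ is central, so any nonzero $N \subseteq L$ has $N \not\subseteq Lg^k$ for some $k$ (as $L$ is $g$-torsionfree, $\bigcap Lg^k = 0$), giving a nonzero map $N \to L/Lg = $ point module, so $\GKdim L/N' < 2$ for the relevant quotient, and chase this to conclude $\GKdim L/N \leq 1$; but I expect the direct cyclicity-plus-Hilbert-series argument above to be the shortest and that is what I would write up.
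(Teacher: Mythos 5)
The paper's own ``proof'' is a one-line citation to \cite[Lemma~8.9]{R-Sklyanin}, so you were in effect being asked to reprove that external result from scratch; the question is only whether your reconstruction is sound. It isn't, because the step you declare you would actually write up---the ``direct cyclicity-plus-Hilbert-series argument'' for $2$-criticality---is wrong. You claim that if $N \neq 0$ then the cyclic factor $L/N$ has Hilbert series dominated by $1/(1-s)^2$ with a strict drop, and that this forces $\GKdim L/N \leq 1$. There is no such implication: a graded module with $\dim M_0 = 1$ and $\dim M_n = n$ for $n\geq 1$ has Hilbert series strictly below $1/(1-s)^2$ in every positive degree yet still has $\GKdim M = 2$. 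Bergman's gap theorem constrains the GK-dimensions of finitely generated \emph{algebras} in the interval $(1,2)$, not the growth of modules, and the \cite[Proposition~5.1]{KL}-style results you allude to apply to quotients of domains by two-sided ideals, which is a different situation. So your main line of argument collapses at its key step.

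Nearby claims made in passing have the same character. That ``$L$ cyclic of $\GKdim 2$ means $\GKdim(L/Lg)\leq 1$'' is not automatic before you know $g$ is a nonzerodivisor on $L$ (if $Lg = 0$ one would have $\GKdim(L/Lg)=2$, and nothing you have said a priori rules out $\GKdim(L/Lg)=2$ in general), and ``a cyclic $B$-module of $\GKdim \leq 1$ generated in degree~$0$ is a point module'' is also false: $B/(I_p \cap I_q)$ for $p\neq q$ is cyclic of GK-dimension $1$ with $\pi$-length $2$. You do, to your credit, sketch the correct structural route in your closing remark---first establish $g$-torsionfreeness so that $L/Lg$ is a point module, then use that the point module is $1$-critical over $B$ together with the $g$-adic filtration $L \supseteq Lg \supseteq Lg^2 \supseteq\cdots$ (with $\bigcap_k Lg^k = 0$ by left-boundedness) to propagate criticality up to $L$---but you then explicitly say you would discard it in favor of the Hilbert-series argument. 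You should invert that choice: the criticality must come from the module structure of $L$ relative to $B$, as in Lemma~\ref{lem:exths} of this paper, not from numerics alone, and the $g$-torsionfreeness itself needs a genuine argument (e.g.\ showing any $g$-torsion submodule of the cyclic module $L$ would have to contain the generator or be annihilated into a left-bounded contradiction) rather than the unsupported GK-dimension inequality you lean on.
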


 \begin{proof} 
This follows from \cite[Lemma~8.9]{R-Sklyanin}.
\end{proof}

 Recall from Lemma~\ref{lem:basics for B} the equivalence of categories $\Qcoh E \to \rQgr B$.  Since the simple 
objects in $\Qcoh E$ are the skyscraper sheaves $\mc{O}_p$ for points $p \in E$, following Definition~\ref{point-defn} the 
simple objects in $\rqgr B$ are the images $\pi(M_p)$ of the point modules $M_p$, and so these 
are also parametrised by closed  points $p\in E$. By a slight abuse of notation we will often write $\pi(M_p)=\mc{O}_p$ to emphasise the correspondence.

\begin{definition}\label{divisor-defn}
Let $M$ be a right   line module or, more generally, a finitely generated $g$-torsionfree right $R$-module with $\GKdim M = 2$.  Then
    $\GKdim M/Mg = 1$, and so  $\pi(M/Mg)\in \rqgr B$ has finite length.  
Thus $\pi(M/Mg)$ has a filtration with simple factors 
$\mc{O}_{p_1}, \dots, \mc{O}_{p_n}$, for some $p_i\in E$.  
We  define  the \emph{divisor} of   $M$ to be $\Div M = p_1 + \dots + p_n$.  
In particular, if $L$ is a line module then $L/Lg \cong M_p$ and $\Div L = p$  for some point $p\in E$.
The analogous notation will be used for left modules.  \end{definition}

\begin{lemma}
\label{lem:exths}   
Let $M\in\rgr R$ be  $g$-torsionfree  and assume   that $M/Mg$ 
has a filtration with shifted point module factors $\{ M_{p_i}[m_i] : 1 \leq i \leq d \}$.  
\begin{enumerate}
\item $M$ is $2$-pure and CM, with  $\Div M = \sum \tau^{m_i}(p_i)$ and 
$\hilb M = \sum_{i=1}^d s^{-m_i}/(1-s)^2$.
\item Let $N = \uExt^1_R(M, R)$.  Then $N\in R\lgr$  and is  $g$-torsionfree, 
$2$-pure, and CM. 
Moreover, $N/gN$ has a finite filtration with shifted left point module factors 
$\{ M^{\ell}_{\tau^{-2}(p_i)}[-m_i - 1]  :  1 \leq i \leq d \}$.
In particular, $\Div N = \sum \tau^{m_i-1}(p_i) $, and $\hilb N = \sum_{i=1}^d s^{m_i+1}/(1-s)^2$.
\end{enumerate}
\end{lemma}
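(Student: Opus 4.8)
The two parts of Lemma~\ref{lem:exths} should be proved together, with part (1) extracted from a filtration argument and part (2) following by dualizing, using Lemma~\ref{lem:Bdual} and Lemma~\ref{lem:extRextB} as the main engines. I would argue by induction on the length $d$ of the given filtration of $M/Mg$.

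\emph{Step 1 (reduce part (1) to the properties of $M/Mg$).} Since $B$ is a domain of $\GKdim 2$ which is Auslander-Gorenstein and CM, and $M/Mg$ has a finite filtration by shifted point modules (each of $\GKdim 1$), the module $M/Mg$ is $1$-pure over $B$ and has $\hilb(M/Mg) = \sum_i s^{-m_i}/(1-s)$ by additivity of Hilbert series and \eqref{elementary-shift}. Because $M$ is $g$-torsionfree, the sequence $0 \to M[-1] \xrightarrow{\timesg} M \to M/Mg \to 0$ is exact, giving $\hilb M = \hilb(M/Mg)/(1-s) = \sum_i s^{-m_i}/(1-s)^2$, which in particular shows $\GKdim M = 2$. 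Purity: any nonzero submodule $N \subseteq M$ is again $g$-torsionfree, and $N/Ng \hookrightarrow M/Mg$ after possibly clearing $g$-torsion in the quotient, so $\GKdim(N/Ng) = 1$ unless $N \cap Mg$ is large; a short argument with the $g$-filtration shows $\GKdim N = 2$, so $M$ is $2$-pure. The CM property then follows from Proposition~\ref{prop:elliptic}: $R$ is Auslander-Gorenstein and CM of $\GKdim 3$, so $j(M) = 3 - \GKdim M = 1$, and $2$-purity forces $M$ to be CM (this is the standard fact that a pure module over an Auslander-Gorenstein CM ring is CM — cf. the argument in Lemma~\ref{lem:Bdual}, or \cite[Theorem~2.2]{Lev1992}). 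For $\Div M$, apply the equivalence $\rqgr B \simeq \coh E$: the factor $M_{p_i}[m_i]$ truncates via \eqref{eq:truncshift} to $M_{\tau^{m_i}(p_i)}$, so $\pi(M/Mg)$ has composition factors $\mc{O}_{\tau^{m_i}(p_i)}$ and $\Div M = \sum \tau^{m_i}(p_i)$ by definition.

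\emph{Step 2 (part (2) — identify $N$ and its reduction mod $g$).} Since $M$ is CM with $j(M)=1$, we have $\uExt^i_R(M,R) = 0$ for $i \neq 1$, so $N = \uExt^1_R(M,R)$ is the only nonvanishing dual; it is a left $R$-module, finitely generated and graded. The key computation is to determine $N/gN$. Apply $\uHom_R(M,-)$ to $0 \to R[-1] \xrightarrow{\timesg} R \to B \to 0$: using $\uExt^i_R(M,R)=0$ for $i\neq 1$ this collapses to a short exact sequence $0 \to N[-1] \xrightarrow{\timesg} N \to \uExt^1_R(M,B) \to 0$ together with $\uExt^0_R(M,B) = 0 = \uExt^2_R(M,B)$ (the $\uExt^2$ vanishing needs a separate check; it follows because $\uExt^2_R(M,B) \cong \uExt^2_B(M/Mg, B)$ by Lemma~\ref{lem:extRextB}, and this vanishes by the argument in Lemma~\ref{lem:Bdual} applied to each point-module factor). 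Thus $N$ is $g$-torsionfree and $N/gN \cong \uExt^1_R(M,B) \cong \uExt^1_B(M/Mg, B)$ by Lemma~\ref{lem:extRextB}. Now run the long exact sequence of $\uExt^\bullet_B(-,B)$ along the point-module filtration of $M/Mg$: since $\uExt^0_B(M_{p_i}[m_i],B) = 0 = \uExt^2_B(M_{p_i}[m_i],B)$ (Lemma~\ref{lem:Bdual}), the functor $\uExt^1_B(-,B)$ is exact on this filtration, so $N/gN$ acquires a finite filtration with factors $\uExt^1_B(M_{p_i}[m_i], B) = \uExt^1_B(M_{p_i},B)[-m_i]$. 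By Lemma~\ref{lem:Bdual}, $\uExt^1_B(M_{p_i},B) \cong M^\ell_{\tau^{-2}(p_i)}[-1]$, giving factors $M^\ell_{\tau^{-2}(p_i)}[-m_i-1]$ as claimed. (One should note the filtration order may be reversed by the contravariant $\uExt^1$, but this does not affect the multiset of factors.)

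\emph{Step 3 (conclude the remaining assertions of part (2)).} Given that $N/gN$ has the stated left point-module filtration, the left-module analogue of Step 1 — using the left-sided versions of Lemma~\ref{lem:Bdual}, Proposition~\ref{prop:elliptic}, and \eqref{leftpoint} — shows $N$ is $2$-pure, CM, and $g$-torsionfree. For $\Div N$: by the left analogue of \eqref{eq:truncshift}, namely \eqref{leftpoint}, the factor $M^\ell_{\tau^{-2}(p_i)}[-m_i-1]$ truncates to $M^\ell_{\tau^{m_i+1}\tau^{-2}(p_i)} = M^\ell_{\tau^{m_i-1}(p_i)}$, so $\Div N = \sum \tau^{m_i-1}(p_i)$. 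Finally $\hilb N = \hilb(N/gN)/(1-s) = \bigl(\sum_i s^{m_i+1}/(1-s)\bigr)/(1-s) = \sum_i s^{m_i+1}/(1-s)^2$, using $g$-torsionfreeness of $N$ and the shift $[-m_i-1]$ contributing $s^{m_i+1}$ to the Hilbert series. This completes the proof.

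\emph{Expected main obstacle.} The delicate point is the vanishing $\uExt^2_R(M,B) = 0$ (equivalently $\uExt^2_B(M/Mg,B)=0$), which is what makes the $g$-reduction sequences collapse cleanly and makes $\uExt^1_B(-,B)$ exact on the point-module filtration. For a single point module this is Lemma~\ref{lem:Bdual}, but for a module merely \emph{filtered} by shifted point modules one must check the higher Ext still vanishes — this uses the long exact sequence plus the fact that $\uExt^2_B$ of \emph{every} shifted point module vanishes, so no obstruction accumulates. A secondary subtlety is keeping track of the direction of the filtration and the shifts under the contravariant functor, and correctly invoking the left-sided analogues of all the $B$-module facts from Section~\ref{POINTS}; these are routine but require care to state precisely.
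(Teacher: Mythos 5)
Your proposal has the right ingredients but a real logical gap in Step~1.

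You claim that ``$2$-purity forces $M$ to be CM (this is the standard fact that a pure module over an Auslander-Gorenstein CM ring is CM).'' This is not a fact. Purity is equivalent to the grade of every nonzero submodule being the same as that of $M$; it does not control the higher $\Ext$-groups. For a counterexample in the commutative setting, take $A=\kk[x,y,z,w]$ and $M = A/\bigl((x,y)\cap(z,w)\bigr)$ (two $2$-planes meeting at a point): $M$ is $2$-pure (its only associated primes are $(x,y)$ and $(z,w)$, both of height $2$), but it is not CM since it has depth $1$ at the origin. The reference to Lemma~\ref{lem:Bdual} does not rescue this either — that argument uses $1$-\emph{criticality} over a ring of injective dimension $2$, both of which are stronger than what you have available: here $M$ is merely $2$-pure (not critical) over a ring of injective dimension $3$. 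And the purity argument itself is only asserted (``a short argument with the $g$-filtration shows $\GKdim N = 2$''), without addressing the genuine subtlety that a low-dimensional submodule $H$ might be entirely contained in $Mg$; the paper handles this by passing to $H(n)=\{f\in M : fg^n\in H\}$ and using noetherianity to reduce to $H\not\subseteq Mg$, after which $H/(H\cap Mg)$ is a nonzero finite-dimensional submodule of $M/Mg$, contradicting $1$-purity of $M/Mg$.

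The irony is that you already have the correct CM argument sitting inside your Step~2, just in the wrong place. The collapse of the long exact sequence
\[
\cdots\too \uExt^{j-1}_B(M/Mg,B)\too \uExt^j_R(M,R)[-1]\overset{\timesg}{\too}\uExt^j_R(M,R)\too\uExt^j_B(M/Mg,B)\too\cdots
\]
should be driven not by an a priori knowledge that $\uExt^j_R(M,R)=0$ for $j\ne 1$, but by the vanishing $\uExt^j_B(M/Mg,B)=0$ for $j\ne 1$ (which you correctly derive from Lemma~\ref{lem:Bdual} and the filtration). That vanishing forces $\timesg$ on $\uExt^j_R(M,R)$ to be surjective for $j\ne 1$, and since each $\uExt^j_R(M,R)$ is left bounded and locally finite, graded Nakayama then gives $\uExt^j_R(M,R)=0$. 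This simultaneously proves $M$ is CM and collapses the sequence to the short exact sequence needed in Step~2 — no appeal to purity is needed. Once this reordering is made, the rest of your argument (the filtration of $N/gN$ by left point modules via $\uExt^1_B(M_{p_i},B)\cong M^\ell_{\tau^{-2}(p_i)}[-1]$, the careful note about the order of filtration factors being reversed, and the Hilbert series and divisor computations using \eqref{leftpoint}) matches the paper and is correct.
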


\begin{proof} 
(1)  If   $M$ is not $2$-pure, then it has a submodule $H$ with $\GKdim H<2$.
Necessarily $\GKdim H = 1$ since finite-dimensional modules are $g$-torsion. Also, 
as  $M$ is $g$-torsionfree,    $H(n)=\{f \in M: fg^n\in H\}$  satisfies $\GKdim H(n)=1$ for any $n$. Thus,
after  replacing $H$ by some such $H(n)$ we can assume that $H\not\subseteq Mg$. In this case, 
$\dim_\kk H/Hg<\infty$ and hence  $(H+Mg)/Mg \cong H/H\cap Mg$ is a   nonzero,  finite-dimensional submodule of $M/Mg$, contrary to assumption.  Thus $M$ is $2$-pure.

 Recall from  Lemma~\ref{lem:extRextB} that  $\uExt^j_R(M, B) \cong \uExt^j_B(M/Mg, B)$. 
Thus, applying $\uHom_R(M,-)$ to  the exact sequence $0 \to R[-1] \stackrel{g}{\to} R \to B \to 0$, gives the long exact sequence 
\begin{equation}\label{eq:exths} \cdots\too
\uExt^{j-1}_B(M/Mg, B) \too  \uExt^j_R(M, R)[-1] \ \overset{\timesg}{\too} \ \uExt^j_R(M, R) \too
\uExt^j_B(M/Mg, B)  \too  \cdots\end{equation}

Given  a point module $M_p$, then  Lemma~\ref{lem:Bdual} implies that 
$
\uExt^j_B(M_p, B)=0 $ for $j\not=1$. 
Since $M/Mg$ is filtered by point modules, standard long exact sequences show  that 
$\uExt^j_B(M/Mg, B) = 0$ for $j \neq 1$.   
In particular if $j \not=1$, then  
the map $\timesg $ in \eqref{eq:exths}  is surjective.   Since 
$\uExt^j_R(M,R)$ is left bounded, this can only happen   if $\uExt^j_R(M,R) = 0$.  Thus $M$ is Cohen-Macaulay.   

 The computations of 
 $\hilb M$ and   $\Div M$ follow routinely from the hypotheses, using \eqref{eq:truncshift}.

(2)   By the proof of part (1), the    sequence \eqref{eq:exths} collapses to give 
the short exact sequence 
\[ \ 
0 \too  \uExt^1_R(M, R)[-1]   \ \overset{\timesg}{\too} \
 \uExt^1_R(M, R) \too \uExt^1_B(M/Mg, B) \too 0. 
\]
Hence $N =  \uExt^1_R(M, R)$ is a $g$-torsionfree left $R$-module with $ N/gN \cong \uExt^1_B(M/Mg, B)$.  Clearly $N$ is finitely generated.   
Given  a point module $M_p$, then  Lemma~\ref{lem:Bdual} implies that 
$
\uExt^1_B(M_p, B) \cong   M^{\ell}_{\tau^{-2}(p)}[-1] .$
 Since $M/Mg$ is filtered by the $\{ M_{p_i}[m_i] \}$, it follows 
  that $N/gN$  is filtered by the left point modules $\{ M^{\ell}_{\tau^{-2}(p_i)}[-m_i -1] \}$.   
     The values of $\hilb N$ and $\Div N$
follow, as they did for $M$,  but using \eqref{leftpoint} in place of \eqref{eq:truncshift}.
Similarly, $N$ is $2$-pure and CM by the  arguments  from part (1).  
\end{proof}

We now consider some properties of line modules. 

\begin{lemma}\label{lem:red0}
Let $L, L'$ be right line modules over an elliptic algebra $R$, with $\Div L = p$ and $\Div L' = {p'}$. 
\begin{enumerate}
\item If $p \neq \tau^j(p')$ for any $j \geq 0$, then $\uHom_R(L,L') = 0$.
\item If $p = \tau^j(p')$ for some $j \geq 0$, then either $\uHom_R(L,L') = 0$ or else $\hilb \uHom_R(L, L') = s^j/(1-s)$. 
\item $\uEnd_R(L) \cong \kk[g]$; in particular $\hilb \uEnd_R(L) = 1/(1-s)$.
\end{enumerate}
\end{lemma}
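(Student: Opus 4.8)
The plan is to exploit the factor ring $B=R/gR$ and the results on point modules from Section~\ref{POINTS}, passing information from $R$-modules to $B$-modules via Lemma~\ref{lem:extRextB} and the $g$-divisibility/reflexivity machinery of Section~\ref{ELLIPTIC ALGEBRAS}. Since a line module $L$ is $g$-torsionfree and $2$-critical with $L/Lg\cong M_p$ by Lemma~\ref{lem:basics}, one can try to compute $\uHom_R(L,L')$ by reducing mod $g$. Concretely, write $L=R/J$ and $L'=R/J'$ with $J,J'$ line ideals; one should first check that $J,J'$ may be taken $g$-divisible and reflexive (a line ideal has $R/J$ $2$-pure, so it is reflexive by Lemma~\ref{lem:reflexive}(1), and one argues $g$-divisibility from the fact that $L$ is $g$-torsionfree). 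Then $\overline{J}=J/Jg$ is a point ideal in $B$, so $\overline{L}:=\overline{R}/\overline{J}=B/\overline{J}\cong M_p$, and similarly $\overline{L'}\cong M_{p'}$.

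Next I would run the long exact sequence obtained by applying $\uHom_R(L,-)$ to $0\to L'[-1]\xrightarrow{\timesg}L'\to \overline{L'}\to 0$, combined with $\uExt^i_R(L,\overline{L'})\cong\uExt^i_B(\overline{L},\overline{L'})=\uExt^i_B(M_p,M_{p'})$ from Lemma~\ref{lem:extRextB}. Proposition~\ref{prop:Hompt} then controls the right-hand terms completely: $\uHom_B(M_p,M_{p'})$ is $0$ unless $p=\tau^jp'$ with $j\geq0$, in which case it is $\kk[-j]$, and $\uExt^1_B(M_p,M_{p'})$ is $\kk[1]$ or $\kk[1]\oplus\kk[-j]$. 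From the sequence
\[
0\to \uHom_R(L,L')[-1]\xrightarrow{\timesg}\uHom_R(L,L')\to \uHom_B(M_p,M_{p'})\to \uExt^1_R(L,L')[-1]\xrightarrow{\timesg}\cdots
\]
one reads off: in case (1), $p\neq\tau^j p'$ for all $j\geq 0$, so $\uHom_B(M_p,M_{p'})=0$, forcing the multiplication map $\timesg$ on $H=\uHom_R(L,L')$ to be surjective; since $H$ is left bounded (Notation~\ref{loc-fin}) this gives $H=0$, proving (1). In case (2), $H/Hg$ embeds in $\uHom_B(M_p,M_{p'})\cong\kk[-j]$, so $\timesg$ on $H$ has at most $1$-dimensional cokernel, and $H$ is a $g$-divisible (by Lemma~\ref{lem:ideal}(1), since $J,J'\subseteq R_{(g)}$ are $g$-divisible) torsionfree $\kk[g]$-module; a $g$-divisible $\kk[g]$-submodule of the form $\uHom_R(I,J)$ with one-dimensional reduction mod $g$ is free of rank one over $\kk[g]$ concentrated starting in degree $j$, hence has Hilbert series $s^j/(1-s)$, and otherwise $H/Hg=0$ gives $H=0$ by graded Nakayama. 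For (3), take $L'=L$, $j=0$: the above gives either $\uEnd_R(L)=0$—impossible as $\id\in\uEnd_R(L)$—or $\hilb\uEnd_R(L)=1/(1-s)$; then $\kk[g]\subseteq\uEnd_R(L)$ (as $g$ is central) already has this Hilbert series, so equality holds and $\uEnd_R(L)=\kk[g]$.

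The main obstacle I anticipate is the bookkeeping in case (2): showing that $\uHom_R(L,L')$, once known to have at most $1$-dimensional reduction mod $g$, is actually a free rank-one $\kk[g]$-module with Hilbert series exactly $s^j/(1-s)$ rather than something smaller or $g$-torsion. The clean way is to note $\uHom_R(L,L')$ is $g$-divisible (Lemma~\ref{lem:ideal}(1), after identifying $L,L'$ with $g$-divisible submodules of $R_{(g)}$ via their reflexive line ideals — this identification step needs care since $L$ is a factor, not a submodule, so one works with $\uHom_R(J',J)/J$-style presentations as in the proof of Lemma~\ref{lem:barhom}), so $R_{(g)}/\uHom_R(L,L')$ is $g$-torsionfree; a $g$-divisible $\kk[g]$-module that is left bounded, locally finite, and has $1$-dimensional reduction mod $g$ in a single degree $j$ is forced to be $\kk[g]\cdot s^j$. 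A secondary subtlety is ruling out the "$\uHom_R(L,L')=0$" alternative in specific geometric situations — but the lemma as stated allows that alternative, so this need not be resolved here. Finally, one should double-check the edge behaviour when $\deg\mathcal M=2$ does not intrude, since $\deg\mathcal M\geq 3$ is a standing hypothesis, so Lemma~\ref{lem:basics for B}(1) surjectivity is available throughout.
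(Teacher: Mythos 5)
Your proof is correct and follows the paper's own line of attack: apply $\uHom_R(L,-)$ to the $\timesg$-sequence $0 \to L'[-1] \to L' \to M_{p'} \to 0$, identify $\uHom_R(L, M_{p'}) \cong \uHom_B(M_p, M_{p'})$ via Lemma~\ref{lem:extRextB}, and feed Proposition~\ref{prop:Hompt} into the resulting long exact sequence. Parts (1) and (3) are as in the paper. One remark on part (2): you detour through $g$-divisibility of $\uHom_R(L,L')$ via Lemma~\ref{lem:ideal}(1), which does not literally apply since $\uHom_R(L,L')$ is a Hom between \emph{factor} modules, not between $g$-divisible submodules of $R_{(g)}$. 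You notice this yourself and propose to route it through line ideals, but the detour is unnecessary: the same long exact sequence already shows that $\timesg$ is injective on $H = \uHom_R(L,L')$ with cokernel embedding into $\uHom_B(M_p,M_{p'}) \cong \kk[-j]$. A $g$-torsionfree, left-bounded, locally finite graded $\kk[g]$-module whose $\timesg$-cokernel is at most one-dimensional and concentrated in degree $j$ either vanishes or has Hilbert series $s^j/(1-s)$, and this direct read-off is what the paper does.
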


\begin{proof}
By definition, $L/Lg = M_p$ and $L'/L'g = M_{p'}$.  
By Lemma~\ref{lem:extRextB},   $\uExt^i_R(L, M_{p'}) \cong \uExt^i_B(M_p, M_{p'})$
for all $i \geq 0$.  
Applying $\uHom_R( L, -)$ to the short exact sequence $0 \to L'g \to L' \to M_{p'} \to 0$ gives 
\begin{equation}\label{eq:red0}
0 \too \uHom_R(L,L')[-1] \overset{\timesg}{\too} \uHom_R(L, L') \overset{\gamma}{\too}   
\uHom_B(M_p, M_{p'}) \too \cdots .
\end{equation}

(1)  In this case,   Proposition~\ref{prop:Hompt} implies that  $\uHom_B(M_p, M_{p'}) = 0$ and so $\timesg$ is 
surjective in \eqref{eq:red0}. Since   $\uHom_R(L,L')$ is left bounded, this forces $\uHom_R(L,L') = 0$.

(2) Here, Proposition~\ref{prop:Hompt} implies that $\uHom_B(M_p, M_{p'}) = \kk[-j]$.  Thus if $\uHom_R(L,L') \neq 0$ 
then $\gamma$  is surjective  in \eqref{eq:red0}. As $\uHom_R(L,L')$ is left bounded, this forces $\hilb \uHom_R(L, L') = s^j(1-s)^{-1}$.

(3) As there is a natural graded inclusion $\kk[g] \hra \underline{\End}_R(L)$, part (2) implies that  $\hilb \uHom_R(L, L) = (1-s)^{-1}$ and hence that $\uEnd_R(L) = \kk[g]$.
\end{proof}

\begin{lemma}\label{lem:Rdual}  Let $R$ be an elliptic algebra, with $R/Rg = B = B(E, \mc{M}, \tau)$.
Let $L$ and $L'$ be right line modules over $R$.  
\begin{enumerate}

\item    $L$ is CM, and 
$L^{\vee} = \underline{Ext}^1_R(L, R)[1]$ is a left   line module.  
\item Under the natural morphism, $L=L^{\vee \vee}$. 
 Further, if $J$ is the line ideal of $L$, then 
$J=J_1R$ is $g$-divisible, CM  and reflexive, while $\overline{J}=J/Jg$ is saturated.
\item Up to isomorphism,  there is a unique non-split exact sequence $ 0 \to R \to M \to L[-1] \to 0$.  
Explicitly, if   $L^{\vee} = R/J^{\vee}$, then $M = (J^{\vee})^*$.  This $M$ is $g$-divisible, CM and reflexive.
\item For $j=0,1$   one has $\uExt^j_R(L', L) \cong \uExt^j_R(L^{\vee}, (L')^{\vee})$ as graded vector spaces.   
\item   $\uExt^1_R(\kk, L) = 0$.
\end{enumerate}
\end{lemma}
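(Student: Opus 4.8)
\textbf{Plan for proving Lemma~\ref{lem:Rdual}.}

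The statement bundles together five assertions about line modules over an elliptic algebra $R$, and the natural strategy is to prove them in the order (1), (2), (3), (4), (5), since each builds on the previous ones. For part (1), I would start from Lemma~\ref{lem:exths}: a right line module $L$ is $g$-torsionfree with $L/Lg \cong M_p$ a single point module, so Lemma~\ref{lem:exths}(1) already gives that $L$ is $2$-pure and CM, and Lemma~\ref{lem:exths}(2) identifies $N = \uExt^1_R(L,R)$ as a $g$-torsionfree, $2$-pure, CM \emph{left} $R$-module with $N/gN \cong \uExt^1_B(M_p, B) \cong M^\ell_{\tau^{-2}(p)}[-1]$ (using Lemma~\ref{lem:Bdual}). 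Then $L^\vee = N[1]$ has $L^\vee/gL^\vee \cong M^\ell_{\tau^{-2}(p)}$, a genuine left point module, and $\hilb L^\vee = s^{-1+1}/(1-s)^2 \cdot$ (appropriate shift) $= 1/(1-s)^2$ from the Hilbert series formula in Lemma~\ref{lem:exths}(2). A cyclicity check — that $L^\vee$ is generated in degree $0$ — follows because $L^\vee/gL^\vee$ is cyclic and $L^\vee$ is $g$-torsionfree and left bounded, so the graded Nakayama lemma applies; hence $L^\vee$ is a left line module.

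For part (2), reflexivity of line modules and of their line ideals: since $L = R/J$ and $L$ is $g$-torsionfree, $J$ is $g$-divisible; since $L/Lg = M_p$ and $(I_p)$-type ideals in $B$ are saturated (Definition~\ref{point-defn}), one checks $\overline J = J/Jg$ is the saturated $B$-module $I_p$ (here one must verify the image of $J$ modulo $g$ really is the point ideal, using that $L/Lg$ is the point module). Then Lemma~\ref{lem:reflexive}(2) gives that $J$ is reflexive as a right $R$-module, and $\overline J$ saturated. That $J = J_1 R$ is generated in degree $1$ follows from Lemma~\ref{lem:basics for B}(1) applied to $\overline J = \bigoplus H^0(\mc I_p \otimes \mc M_n)$, which is generated in degree $1$ since $\deg(\mc I_p \otimes \mc M) \geq 2$, combined with $g$-divisibility and Nakayama. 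The double-dual statement $L = L^{\vee\vee}$ should come from applying the construction of (1) twice: $L^\vee$ is a left line module, so $(L^\vee)^\vee = \uExt^1_R(L^\vee, R)[1]$ is a right line module, and one needs the natural biduality map $L \to L^{\vee\vee}$ to be an isomorphism. This can be extracted from the CM property via the spectral sequence $E^{pq}(L) \Rightarrow L$ exactly as in the proof of Lemma~\ref{lem:Bdual} (with the roles of $B$ replaced by $R$, $\injdim = 3$, $j(L) = 1$): the CM condition forces all terms except $E^{11}(L) = L$ to vanish, yielding $L \cong \uExt^1_R(\uExt^1_R(L,R), R) = L^{\vee\vee}$ naturally.

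For part (3), the existence and uniqueness of the non-split extension $0 \to R \to M \to L[-1] \to 0$ amounts to computing $\uExt^1_R(L[-1], R) = \uExt^1_R(L,R)[1] = L^\vee$, which by part (1) is a left line module; the extensions of $L[-1]$ by $R$ as right modules are classified by $\uExt^1_R(L[-1],R)$ viewed appropriately, and the "unique non-split" claim requires identifying the relevant ${}\Ext$ group as one-dimensional in the right degree — here I would use $\uEnd_R(L) = \kk[g]$ from Lemma~\ref{lem:red0}(3) to pin down $\Ext^1_{\rGr R}(L[-1], R)_0$. The explicit identification $M = (J^\vee)^*$, where $L^\vee = R/J^\vee$, comes from dualising $0 \to J^\vee \to R \to L^\vee \to 0$: applying $\uHom_R(-, R)$ gives $0 \to R \to (J^\vee)^* \to \uExt^1_R(L^\vee, R) \to 0$ and $\uExt^1_R(L^\vee,R) = (L^\vee)^\vee[-1] = L[-1]$ by part (2). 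That $M$ is $g$-divisible, CM, and reflexive follows from Lemma~\ref{lem:ideal}(1) (for $g$-divisibility of the Hom $(J^\vee)^* = \uHom_R(J^\vee, R)$), Lemma~\ref{lem:reflexive}, and the CM two-out-of-three from the extension sequence (since $R$ and $L[-1]$ are both CM of the appropriate grades).

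Part (4), the duality $\uExt^j_R(L', L) \cong \uExt^j_R(L^\vee, (L')^\vee)$ for $j = 0,1$, is the standard "${}\Ext$-duality for CM modules": since $L, L'$ are CM of homological grade $1$ over the Auslander-Gorenstein ring $R$, the functor $(-)^\vee = \uExt^1_R(-,R)[1]$ is a duality on the category of such modules (it is exact on short exact sequences of grade-$1$ CM modules with CM kernel, by the Auslander condition and a dimension-shift argument), and it carries a projective resolution input to an injective-type resolution; comparing $\uExt^j_R(L', L)$ computed one way versus $\uExt^j_R(L^\vee, (L')^\vee)$ the other way gives the isomorphism for the low degrees $j = 0, 1$ where the bookkeeping stays clean. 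Finally part (5), $\uExt^1_R(\kk, L) = 0$: apply $\uHom_R(\kk, -)$ to $0 \to L[-1] \xrightarrow{g} L \to M_p \to 0$; since $g \in R_1$ acts as zero on $\kk$, the connecting maps degenerate, giving $\uExt^1_R(\kk, L) \hookrightarrow \uHom_R(\kk, M_p) \oplus (\text{shift of } \uExt^1_R(\kk, L))$, and one concludes by combining $\uHom_R(\kk, M_p) = \kk$-in-one-degree with the fact that $\uExt^1_R(\kk, L)$ is left bounded and locally finite — actually cleaner is to use $\uExt^1_R(\kk, L) = \uExt^1_B(\kk, L/Lg) = \uExt^1_B(\kk, M_p)$ via Lemma~\ref{lem:extRextB} and then observe $M_p$ is saturated as a $B$-module (Definition~\ref{point-defn}), so $\uExt^1_B(\kk, M_p) = 0$ by Notation~\ref{section-functor}.

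\textbf{Main obstacle.} I expect the technical heart to be part (2) — specifically, getting the biduality map $L \to L^{\vee\vee}$ to be an \emph{isomorphism} (not just the existence of an abstract isomorphism $L \cong L^{\vee\vee}$), and making sure the two-sided bookkeeping in the CM spectral sequence over $R$ (left versus right modules, the shift $[1]$ in the definition of $(-)^\vee$) is consistent. Part (4) carries a similar risk: asserting that $(-)^\vee$ is a genuine duality requires knowing it is exact on the relevant subcategory, which in turn leans on the Auslander-Gorenstein condition and a careful grade/dimension-shift argument; restricting to $j \in \{0,1\}$ is what keeps this manageable, and I would lean on the fact (from Lemma~\ref{lem:exths}) that $L^\vee$, $(L')^\vee$ are again grade-$1$ CM modules so that no higher ${}\Ext$ terms intrude.
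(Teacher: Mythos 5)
Your overall framework — part (1) from Lemma~\ref{lem:exths} plus Nakayama, part (2) via $g$-divisibility of $J$, identification $\overline J = I_p$, Lemma~\ref{lem:reflexive}(2), and the Gorenstein spectral sequence, part (3) by dualising $0\to J^\vee\to R\to L^\vee\to 0$, and part (4) via the CM-duality $(-)^\vee$ — tracks the paper closely, and on the whole you have the right plan. But two pieces need correcting.

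Part (5) is the genuine gap. Your ``cleaner'' route applies Lemma~\ref{lem:extRextB} to $\uExt^1_R(\kk,L)$, but that lemma requires the \emph{first} argument to be $g$-torsionfree and the second to be a $B$-module; here $\kk=R/R_{\geq 1}$ is annihilated by $g\in R_1$ (so it is $g$-torsion), and $L$ is not a $B$-module, so the lemma simply does not apply, and the claimed identity $\uExt^1_R(\kk,L)\cong\uExt^1_B(\kk,L/Lg)$ is not justified. Your first sketch is closer but contains a wrong computation: $\uHom_R(\kk,M_p)=\uHom_B(\kk,M_p)=\Soc(M_p)=0$, not $\kk$ in one degree, because the point module $M_p$ is $1$-critical and hence has no nonzero finite-dimensional submodule. (With the correct value $\uHom_R(\kk,M_p)=0$, your long exact sequence from $0\to L[-1]\xrightarrow{g}L\to M_p\to 0$ together with the vanishing of $\timesg$ on $\uExt^\ast_R(\kk,-)$ actually \emph{does} give the result, so the argument is salvageable — but as written it lands on the wrong inputs.) The paper's own argument is different again: it applies $\uHom_R(\kk,-)$ to the sequence $0\to R\to (J^\vee)^\ast\to L[-1]\to 0$ from part (3), then uses $\uExt^1_R(\kk,(J^\vee)^\ast)=0$ (reflexivity/saturation of $(J^\vee)^\ast$, via Notation~\ref{section-functor}) and $\uExt^2_R(\kk,R)=0$ (AS-Gorenstein of dimension $3$).

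Part (3): the phrase ``CM two-out-of-three from the extension sequence'' is misleading, because in $0\to R\to M\to L[-1]\to 0$ the outer terms have different homological grades ($j(R)=0$ but $j(L)=1$), so there is no two-out-of-three principle. The long exact sequence gives $\uExt^1_R(M,R)=\coker(R\to L^\vee)$, and surjectivity of that boundary map is not automatic — it requires, beyond CM-ness of $R$ and $L$, that the extension class is nonzero in degree $0$ together with the cyclicity of $L^\vee$ (from part (1)). The paper instead proves CM-ness of $M$ cleanly by observing that $J^\vee$ is CM (the left-sided analogue of part (2)) and applying the Gorenstein spectral sequence to $J^\vee$, giving $\uExt^i_R(M,R)=E^{i0}(J^\vee)=0$ for $i>0$. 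Either route works, but you should either spell out the surjectivity argument or switch to the spectral-sequence computation; as stated the claim has a gap.

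One further small point in part (3): the one-dimensionality of $\Ext^1_{\rGr R}(L[-1],R)$ comes directly from $\uExt^1_R(L,R)[1]=L^\vee$ being a line module generated in degree $0$, which is part (1); invoking $\uEnd_R(L)=\kk[g]$ from Lemma~\ref{lem:red0}(3) is not the natural source and does not obviously give the claim.
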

 
\begin{proof}
(1)  Write $L/Lg=M_p$ for some $p\in E$. Then 
 Lemma~\ref{lem:exths}   shows that  
 $L$ is CM, and that $N = \uExt^1_R(M_p,R)$
  has Hilbert series $s/(1-s)^2$, with 
$N/gN \cong M^{\ell}_{\tau^{-2}(p)}$.  In particular, since $N/gN$ is cyclic, $N$ is cyclic by the graded Nakayama lemma, 
and since $L^{\vee} = N[1]$ has the Hilbert series of a line module, it is a left line module.

(2) Mimicking the notation from  Lemma~\ref{lem:Bdual},  set 
$E^{ij}(N) = \uExt^i_R(\uExt_R^j(N, R), R)$ for a graded $R$-module $N$.
We first note that the  natural morphism, $L\to E^{11}(L) = L^{\vee \vee}$
 is obtained as follows: applying $\uHom(-, R)$ to $0 \to R \to J^* \to  \uExt^i_R(L, R) \to 0$
gives the exact sequence  $0 \to J^{**} \to R \to E^{11}(L)$.
Since $J^{**} \supseteq J$ this induces a homomorphism   from $L = R/J $ to $ R/J^{**} \subseteq E^{11}(L)$.

By  Lemma~\ref{lem:exths}(2)  $L^{\vee}$ is CM. Thus  the Gorenstein spectral sequence 
\eqref{spectral-eq} collapses to show that the natural  morphism $L\to   L^{\vee \vee}$
is an isomorphism.    Since $L = R/J$ is $g$-torsionfree, and $R$ is $g$-divisible, $J$ must be $g$-divisible.
Since $\overline{R}/\overline{J} \cong M_p$, necessarily $\overline{J} = \bigoplus_{n \geq 0} H^0(E, \mc{M}_n(-p))$.  
In particular, $\overline{J}$ is saturated, and so $J$ is reflexive by Lemma~\ref{lem:reflexive}(2).   
Similarly, $\overline{J}$ is generated in degree one by Lemma~\ref{lem:basics for B}(1) and hence $J$ is generated in degree one by the graded Nakayama lemma.
 Since $L = R/J$ is CM 
by part (1),  it follows routinely  that $\uExt^i_R(J,R) = 0$ for $i \geq 1$ and hence that $J$ is CM.

 (3) We have an exact sequence
  $0 \to R \to (J^{\vee})^* \to L^{\vee \vee}[-1] \to 0.$  
Since $(J^{\vee})^*$ is contained in the graded quotient ring $Q_{\operatorname{gr}}(R)$, the inclusion $R \to (J^{\vee})^*$ is 
 essential; in particular, the exact sequence is nonsplit.
 Now  $L^{\vee \vee} = L$ by part (2) and  $\Ext^1_{\rgr R}(L[-1], R) = \kk$, by part (1).  
Thus up to isomorphism there is a unique nonsplit degree 1 extension $M$ of $R$ by $L$, 
and it is given by $M = (J^{\vee})^*$.  Since $J^{\vee}$ is $g$-divisible by part (2),   $M = \uHom_R(J^{\vee}, R)$ is $g$-divisible by Lemma~\ref{lem:ideal}.
Finally, as $J^\vee$ is CM by the left-sided analogue of part (2),  
the  spectral sequence \eqref{spectral-eq} collapses for 
$J^\vee$ and shows that $\uExt^i_R(M,R)= E^{i0}(J^\vee)= 0 $ for $i>0$. In other words, $M$ is CM.
 
(4) We begin with $\uExt^1$.  
 Define a map $(-)^\vee:  \uExt^1_R(L',L) \to \uExt^1_R(L^\vee,(L')^\vee)$ as follows: let
\[ \mathsf{E}:\ \  0\too  L \ \overset{\alpha}{\too}\ M\ \overset{\beta}{\too}\ L'[j] \too 0\]
be an exact sequence corresponding to an element of $\uExt^1_R(L',L)_{-j}$.
Applying $\uHom_R(-,R)$ to $\mathsf{E}$ and using the fact that  $L$ and  $L'$ are CM gives
the dual extension
\[ \mathsf{E}^\vee:\ \  0\too   (L')^\vee[-j] \ \overset{\beta^\vee}{\too}  \uExt^1_R(M,R)[1] 
\ \overset{\alpha^\vee}{\too}  L^\vee \too 0.\]
We leave to the reader the verification using Baer sums \cite[Theorem~III.2.1]{MacLane} that $(-)^{\vee}$ is a linear transformation.

The double dual of $\mathsf{E}$ is
\[ \mathsf{E}^{\vee \vee}:\ \  0\too  L \ \overset{\alpha^{\vee \vee}}{\too}\ E^{11}(M)\ \overset{\beta^{\vee \vee}}{\too}\ L'[j] \too 0.\]

Since $L$ and $L'$ are CM, so is $M$ and hence  \eqref{spectral-eq} implies that  $M \cong E^{11}(M)$.
The functoriality of \eqref{spectral-eq}   
 then ensures that $\alpha^{\vee\vee} = \alpha$ and $\beta^{\vee\vee} = \beta$, whence $\mathsf{E}^{\vee\vee}\cong \mathsf{E}$.

Thus $(-)^{\vee}$ induces linear maps $f: \uExt^1_R(L', L) \to \uExt^1_R(L^{\vee}, (L')^{\vee})$ and 
$g: \uExt^1_R(L^{\vee}, (L')^{\vee}) \to \uExt^1_R(L', L)$ such that $g \circ f$ is the identity.   The same argument 
starting with the sequence $\mathsf{E}^{\vee}$ shows that $f \circ g$ is the identity.  Thus 
  $(-)^\vee$ is   an isomorphism of graded vector spaces.
   
In order to prove the result for  $\uHom$, suppose that $0\not=f\in \uHom_R(L',L)$. Then  $L$ and $L'$ are   
 GK-critical by Lemma~\ref{lem:basics}  and hence $f$ is an injection. Now applying $\uExt^1(-,R) $  
produces a  monomorphism $f^\vee \in \uHom_R(L^\vee, (L')^\vee)$ and  so the map $f\mapsto f^\vee$ defines  an injection 
$\uHom(L',L)\hookrightarrow \uHom(L^\vee, (L')^\vee)$. The fact that this is an isomorphism then follows by applying 
parts~(1, 2).

(5) The exact sequence $ 0 \to R\to (J^\vee)^* \to L[-1]\to 0$ from part (3) induces 
the exact sequence
\[ \uExt_R^1(\kk, (J^\vee)^*) \to \uExt_R^1(\kk, L)[-1] \to \uExt_R^2(\kk, R). \]
As $R$ is AS-Gorenstein of dimension 3 by Proposition~\ref{prop:elliptic}, the last term is zero.   
The first term is 0 by reflexivity of $(J^\vee)^* $, and the result follows.
\end{proof}


\section{Intersection theory}\label{INTERSECTION THEORY}

There is a general notion of intersection product on a noncommutative scheme, due to Mori and Smith \cite{MS},
that reduces to the usual definition for a commutative scheme but is more convenient when working in a noncommutative 
setting.  In this section we  give several alternative formul\ae\ for
  the intersection  product of line modules over  elliptic algebras. One drawback of the definition is that it is not always 
  defined for schemes of infinite homological dimension, so we also give a variant that is always defined.

\begin{definition}\label{intersect-ms}   
Let $R$ be a  connected noetherian $\mb{N}$-graded algebra. Then the
\emph{intersection number} of $\sM,\sN\in \rqgr R$  is defined  to be 
$$(\sM\dotms \sN) =\sum (-1)^{n+1}\dim_{\kk} \Ext^n_{\rqgr R}(\sM,\,\sN),$$
where we declare that the intersection is \emph{undefined} if   infinitely many terms are non-zero. 

Given $M,N\in \rgr R$, we define $(M\dotms N) = (\pi(M)\dotms \pi(N))$, although as above  
since the category will be clear from the context we will usually write $M$ for the image  in $\rqgr R$ of $M\in \rgr R$.  
\end{definition}

 \begin{notation}\label{grk-defn}
Given an   elliptic algebra $R$,  set $R^{\circ} = R[g^{-1}]_0$. Similarly, for   $M \in \rGr R$, set  
$M^{\circ} = M[g^{-1}]_0\in R^\circ\text{-Mod}$.  
   Since $R[g^{-1}]$ is strongly graded,
  $R[g^{-1}] \cong R^{\circ} \otimes_\kk \kk[g, g^{-1}]$, and  
there is an equivalence of categories $F: \rMod R^{\circ} \to \rGr R[g^{-1}]$, given by 
$F(N) = N \otimes_\kk \kk[g, g^{-1}]$. 

Finally, write $\grk M=\grk_{\kk[g]}M$ for the torsionfree rank of a $\kk[g]$-module $M$.
\end{notation}

\begin{lemma}\label{lem0.1} Let $R$ be an elliptic algebra, with
 $M,N\in\rgr R$.
 Then, for each $m\geq 0$,  $\uExt_{\rqgr R}^m(M,\,N)$ is a right $\kk[g]$-module with 
 $$\uExt_{\rqgr R}^m(M,\,N)\otimes_{\kk[g]}\kk[g,g^{-1}]  
\ \cong\ \uExt_{ R}^m(M,N)\otimes_{\kk[g]}\kk[g,g^{-1}]\ \cong\  \Ext_{R^\circ}^m(M^\circ,\,N^\circ) \otimes_\kk \kk[g, g^{-1}].$$
In particular, 
$
\grk \uExt^m_{\rqgr R}(M,N) = \grk \uExt^m_{R}(M,N) = \dim_\kk \Ext_{R^{\circ}}^m(M^{\circ}, N^{\circ}).
$
\end{lemma}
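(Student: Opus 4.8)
The plan is to exploit the fact that $R[g^{-1}]$ is strongly graded, so that passing to degree-zero parts is an equivalence, together with the standard comparison between $\uExt$ over $R$ and over $\rqgr R$. First I would establish that each $\uExt^m_{\rqgr R}(M,N)$ and $\uExt^m_R(M,N)$ carries a natural right $\kk[g]$-module structure: this comes from the fact that $g \in R_1$ is central, so multiplication by $g$ induces a degree-shifting endomorphism of $M$ (and of $N$), hence of each graded Ext group, and centrality makes the two actions (via $M$ and via $N$) agree up to the usual sign/shift bookkeeping. Concretely, $\uExt^m_R(M,N)$ is a graded module over $\uEnd_R(R) \supseteq \kk[g]$, and likewise in $\rqgr R$.

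The second and main step is the localization at $g$. Since $\kk[g,g^{-1}]$ is flat over $\kk[g]$, we have $\uExt^m_R(M,N) \otimes_{\kk[g]} \kk[g,g^{-1}] \cong \uExt^m_{R[g^{-1}]}(M[g^{-1}], N[g^{-1}])$ — here one uses that $R$ is noetherian and $M$ finitely generated, so a finite graded free resolution computes Ext and localization commutes with the finitely many terms. (For the $\rqgr$ version one uses in addition that $\uExt^m_{\rqgr R}(M,N) = \varinjlim_n \uExt^m_R(M_{\geq n}, N)$ by \cite[Proposition 7.2]{AZ}, that $g$-localization commutes with the direct limit, and that $M_{\geq n}[g^{-1}] = M[g^{-1}]$, so the two localized groups coincide.) Then, because $R[g^{-1}] \cong R^{\circ} \otimes_\kk \kk[g,g^{-1}]$ is strongly graded, the functor $F: \rMod R^{\circ} \to \rGr R[g^{-1}]$, $N \mapsto N \otimes_\kk \kk[g,g^{-1}]$, is an equivalence of categories with quasi-inverse $(-)_0 = (-)[g^{-1}]_0$; an equivalence of abelian categories carries Ext groups to Ext groups, giving
\[
\uExt^m_{R[g^{-1}]}(M[g^{-1}], N[g^{-1}]) \cong \Ext^m_{R^{\circ}}(M^{\circ}, N^{\circ}) \otimes_\kk \kk[g,g^{-1}]
\]
as graded $\kk[g,g^{-1}]$-modules, where the right side is pure of "slope $0$" (i.e. the underlying $\kk[g]$-module is free). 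Chaining the isomorphisms gives the first displayed formula.

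For the final "in particular" clause: if $V$ is any finitely generated $\kk[g]$-module, then $\grk_{\kk[g]} V = \dim_{\kk(g)} V \otimes_{\kk[g]} \kk(g) = \dim_{\kk(g)} V \otimes_{\kk[g]} \kk[g,g^{-1}] \otimes_{\kk[g,g^{-1}]} \kk(g)$, so the torsion-free rank is unchanged by inverting $g$; applying this to $V = \uExt^m_{\rqgr R}(M,N)$ and to $V = \uExt^m_R(M,N)$ and reading off the rank of $\Ext^m_{R^{\circ}}(M^{\circ}, N^{\circ}) \otimes_\kk \kk[g,g^{-1}]$ (which is $\dim_\kk \Ext^m_{R^{\circ}}(M^{\circ}, N^{\circ})$, since that Ext group is a finite-dimensional $\kk$-vector space as $R^\circ$ is noetherian and $M^\circ, N^\circ$ are finitely generated and even finite-dimensional) yields the equalities of ranks. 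I expect the only genuinely delicate point to be the bookkeeping showing the $\kk[g]$-actions coming from the source and target arguments of $\uExt$ agree (so that "$\grk$" is unambiguous) and that the comparison map $\uExt^m_{\rqgr R} \to$ localized $\uExt^m_R$ is $\kk[g]$-linear and an isomorphism after inverting $g$; once strong gradedness is invoked everything else is formal.
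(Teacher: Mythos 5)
Your proof is correct, and it reaches the same conclusion via a slightly different route for the first isomorphism. The paper invokes Proposition~\ref{prop:elliptic} (the $\chi$-condition) and \cite[Corollary~7.3(2)]{AZ} to get a comparison map $\theta:\uExt^m_R(M,N)\to\uExt^m_{\rqgr R}(M,N)$ with right-bounded (hence $g$-torsion) kernel and cokernel, whereas you go directly through the direct-limit description $\uExt^m_{\rqgr R}(M,N)=\varinjlim_n\uExt^m_R(M_{\geq n},N)$ from \cite[Proposition~7.2]{AZ} and observe that since each $M/M_{\geq n}$ is right bounded, the localised direct system is constant. Both routes use the same source and the same endgame (strong gradedness of $R[g^{-1}]$, equivalence with $\rmod R^\circ$); yours is marginally more elementary in that it sidesteps the $\chi$-condition. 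One genuine, though inessential, error: you assert that $M^\circ$ and $N^\circ$ are finite-dimensional over $\kk$. They are not in general --- $R^\circ=R[g^{-1}]_0$ itself is an infinite-dimensional $\kk$-algebra, and for a line module $L$ the module $L^\circ=\varinjlim L_n$ (with maps $\timesg$) is infinite-dimensional. What is true, and what you actually need, is only that $M^\circ,N^\circ$ are finitely generated over the noetherian ring $R^\circ$; the identity $\grk V=\dim_{\kk(g)}V\otimes_{\kk[g]}\kk(g)$ then gives the final clause without any finiteness hypothesis on $\Ext^m_{R^\circ}(M^\circ,N^\circ)$ itself.
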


\begin{proof} The first assertion follows from the fact that $g$ is central.

By  Proposition~\ref{prop:elliptic} and 
\cite[Corollary~7.3(2)]{AZ}, there is a  map $\theta: \uExt^m_R(M,N) \to \uExt^m_{\rqgr R}(M,N)$ with 
right bounded kernel and cokernel.  As $g$ is central,  $\theta$ is a map of $\kk[g]$-modules, and  so the 
kernel and cokernel  of $\theta$ are $g$-torsion. This proves the  first isomorphism  in the display.

Next, using that $\Ext$ commutes with central localisation, we calculate that 
\[ 
\uExt_{R}^m(M,\, N)\otimes_{\kk[g]}\kk[g,g^{-1}]  \cong
\uExt_{R[g^{-1}]}^m(M[g^{-1}],\,N[g^{-1}]) \cong 
\Ext_{R^\circ}^m(M^\circ,\,N^\circ) \otimes_\kk \kk[g, g^{-1}],
 \]
where the final isomorphism  uses 
the  equivalence of categories $\rgr R[g^{-1}] \simeq \text{mod-}R^\circ$.  This gives the second isomorphism
in the display, from which  
the final equation is an easy consequence.
\end{proof}

We now consider in more detail the homological properties of line modules over the elliptic algebra $R$.

\begin{lemma}\label{lem0.3}
Let $L$ and $L'$ be line modules over the elliptic algebra $R$, with point factors
 $L/Lg = M_p$ and $L'/L'g = M_{p'}$.  Then there is a long exact sequence 
\begin{gather}
\label{eq:longex}
0 \too \uHom_{\rqgr R}(L,L')[-1] \overset{\timesg}{\too} \uHom_{\rqgr R}(L,L') \too  \FF  \too
 \uExt^1_{\rqgr R}(L,L')[-1] 
  \\ 
\nonumber 
\qquad\qquad \overset{\timesg}{\too} \uExt^1_{\rqgr R}(L,L')  \too \FF \too \uExt^2_{\rqgr R}(L,L')[-1] \overset{\timesg}{\too} \uExt^2_{\rqgr R}(L,L') \too 0,
\end{gather}
where $\FF \cong \kk[-j]$ if $p = \tau^j(p')$ for some $j \in \mb{Z}$, and $\FF = 0$ if $p$ and $q$ lie on different orbits.

For $m\geq 3 $,  multiplication by $g$ induces isomorphisms  $\uExt^m_{\rqgr R}(L,L')[-1]\cong \uExt^m_{\rqgr R}(L,L')$.
 
\end{lemma}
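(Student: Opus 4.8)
The plan is to obtain \eqref{eq:longex} by applying $\uHom_{\rqgr R}(\pi(L),-)$ to the short exact sequence expressing multiplication by $g$ on $L'$, and then to feed in the computation of $\uExt$ between point modules from Section~\ref{POINTS}. By Lemma~\ref{lem:basics} the module $L'$ is $g$-torsionfree, so multiplication by $g$ gives a short exact sequence $0 \to L'[-1] \overset{\timesg}{\too} L' \to L'/L'g \to 0$ in $\rgr R$, and by hypothesis $L'/L'g = M_{p'}$. Applying the exact quotient functor $\pi$ gives the corresponding short exact sequence in $\rqgr R$, and then applying $\uHom_{\rqgr R}(\pi(L),-)$ produces a long exact sequence which begins $0 \to \uHom_{\rqgr R}(L,L'[-1]) \too \cdots$ and continues
\[
\cdots \too \uExt^i_{\rqgr R}(L,L'[-1]) \too \uExt^i_{\rqgr R}(L,L') \too \uExt^i_{\rqgr R}(L,M_{p'}) \too \uExt^{i+1}_{\rqgr R}(L,L'[-1]) \too \cdots .
\]
Using the shift identity $\uExt^i_{\rqgr R}(L,L'[-1]) = \uExt^i_{\rqgr R}(L,L')[-1]$ (the analogue of \eqref{elementary-shift} in $\rQgr R$, valid since $[1]$ is an autoequivalence), and the fact that, $g$ being central, the map induced by $\timesg\colon L'[-1]\to L'$ is precisely multiplication by $g$, this rewrites so that the maps $\uExt^i_{\rqgr R}(L,L')[-1] \to \uExt^i_{\rqgr R}(L,L')$ in the sequence are $\timesg$.

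Next I would identify the ``point'' terms. Since $L$ is $g$-torsionfree (Lemma~\ref{lem:basics}) and $M_{p'}\in\rGr B$, Lemma~\ref{lem:extRextB} gives $\uExt^i_{\rqgr R}(\pi(L),\pi(M_{p'})) \cong \uExt^i_{\rqgr B}(\pi(L/Lg),\pi(M_{p'})) = \uExt^i_{\rqgr B}(\pi(M_p),\pi(M_{p'}))$ for all $i$. By Lemma~\ref{lem:extqgr} this vanishes for $i\geq 2$, while for $i=0$ and $i=1$ it is isomorphic to $\FF$, where $\FF\cong\kk[-j]$ if $p=\tau^j(p')$ for some $j\in\ZZ$ and $\FF=0$ if $p$ and $p'$ lie on different $\tau$-orbits. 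Substituting $\FF$ for $\uExt^0_{\rqgr R}(L,M_{p'})$ and $\uExt^1_{\rqgr R}(L,M_{p'})$, and using $\uExt^i_{\rqgr R}(L,M_{p'})=0$ for $i\geq 2$ to truncate the long exact sequence after the term $\uExt^2_{\rqgr R}(L,L')\to\uExt^2_{\rqgr R}(L,M_{p'})=0$, yields exactly \eqref{eq:longex}. The same vanishing input, plugged in for two consecutive indices, splits off for each $m\geq 3$ the exact block $0 \to \uExt^m_{\rqgr R}(L,L')[-1] \overset{\timesg}{\too} \uExt^m_{\rqgr R}(L,L') \to 0$, which is the final assertion.

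There is essentially no serious obstacle here: the lemma is a formal consequence of Lemmas~\ref{lem:extRextB} and~\ref{lem:extqgr} together with the long exact sequence of $\uHom_{\rqgr R}(\pi(L),-)$. The only point needing a moment's care is verifying that the connecting maps of the long exact sequence are genuinely multiplication by $g$ rather than merely some maps; this follows from the centrality of $g$ and the naturality of the long exact sequence applied to the self-map $\timesg$ of the defining short exact sequence.
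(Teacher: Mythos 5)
Your proof is correct and follows essentially the same route as the paper's: apply $\uHom_{\rqgr R}(L,-)$ to the multiplication-by-$g$ short exact sequence on $L'$, identify $\uExt^i_{\rqgr R}(L,M_{p'})$ with $\uExt^i_{\rqgr B}(M_p,M_{p'})$ via Lemma~\ref{lem:extRextB}, and plug in the computation from Lemma~\ref{lem:extqgr}. Your additional remark verifying that the induced maps really are $\timesg$ (via centrality of $g$) is a careful touch that the paper leaves implicit.
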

 
\begin{proof}  Applying $\uHom_{\rqgr R}(L,-) $
to $0\too L'[-1]\overset{\timesg}{\too} L'\too M_{p'}\too 0$ gives the long exact sequence
\begin{equation}\label{eq:0.4}
\cdots\to \uExt_{\rqgr R}^{n-1}(L,\,M_{p'}) \to  \uExt^n_{\rqgr R}(L,L'g) \to \uExt_{\rqgr R}^n(L,L') \to \uExt_{\rqgr R}^n(L,\,M_{p'}) \to  \cdots 
\end{equation}
The lemma now follows by using Lemma~\ref{lem:extRextB} to identify 
$\uExt_{\rqgr R}^n(L,\,M_{p'}) = \uExt_{\rqgr B}^n(M_p, M_{p'})$
and then applying Lemma~\ref{lem:extqgr}. 
    \end{proof}

\begin{proposition}\label{prop:0.6}  Let $R$ be an elliptic algebra and let
 $L, L' \in\rgr R $ be line modules,  with $p = \Div L$, $p' = \Div L'$.  Assume that  $(L \dotms L')$ is defined.
 \begin{enumerate}
\item  
$(L\dotms L'[m]) = (L\dotms L')$  for all $m \in \mb{Z}$.

\item  $(L\dotms L' )  =\sum (-1)^{n+1} \grk \,\uExt^n_{\rqgr R}(L,L')  
= \sum (-1)^{n+1}\dim_{\kk}\Ext^n_{R^\circ}(L^\circ,(L')^\circ).
$
\end{enumerate}\end{proposition}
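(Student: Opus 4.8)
\textbf{Proof proposal for Proposition~\ref{prop:0.6}.}

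The plan is to deduce both parts from the long exact sequence of Lemma~\ref{lem0.3} together with the rank computation of Lemma~\ref{lem0.1}, using the standard device that the alternating sum of ranks (or dimensions) along an exact sequence vanishes. For part (2), the cleanest route is to note that the hypothesis ``$(L\dotms L')$ is defined'' means that $\uExt^n_{\rqgr R}(L,L')$ (as a graded vector space, summed over all internal degrees) is finite-dimensional for all $n$ and vanishes for $n$ large. I would first observe that, because each $\Ext^n_{\rqgr R}(L,L')$ is then finite-dimensional over $\kk$, the graded module $\uExt^n_{\rqgr R}(L,L')$ is a finitely generated \emph{torsion} $\kk[g]$-module; hence $\grk\uExt^n_{\rqgr R}(L,L') = 0$. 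On the other hand, by Lemma~\ref{lem0.1}, $\grk\uExt^n_{\rqgr R}(L,L') = \dim_\kk\Ext^n_{R^\circ}(L^\circ,(L')^\circ)$. So I must be careful: the middle and right expressions in (2) are asserting a \emph{nonzero} quantity in general, while the naive reading of the left would force them to be zero. The resolution is that when $(L\dotms L')$ is defined, the individual $\uExt^n_{\rqgr R}(L,L')$ need not be finite-dimensional over $\kk$ — rather it is the alternating sum of $\dim_\kk\Ext^n_{\rqgr R}(L,L')$ that is required to have only finitely many nonzero terms, with each term finite. Let me restart with the correct interpretation.

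\textbf{Revised plan.} Here ``$(L\dotms L')$ defined'' means $\dim_\kk\Ext^n_{\rqgr R}(L,L') < \infty$ for all $n$ and $=0$ for $n\gg 0$; here $\Ext^n_{\rqgr R}$ denotes the ungraded Ext, i.e.\ $\sum_j$ of the graded pieces. First I would prove (1): shifting $L'$ to $L'[m]$ replaces $\uExt^n_{\rqgr R}(L,L')$ by its shift $\uExt^n_{\rqgr R}(L,L')[m]$ by \eqref{elementary-shift}, which changes neither $\dim_\kk\Ext^n_{\rqgr R}(L,L'[m])$ (the total dimension is shift-invariant) nor whether it is finite; hence $(L\dotms L'[m]) = (L\dotms L')$ directly from Definition~\ref{intersect-ms}. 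Next, for (2), I feed the long exact sequence \eqref{eq:longex} of Lemma~\ref{lem0.3} into the $\grk$ functor. The key point is that $\grk$ is additive on short exact sequences of finitely generated $\kk[g]$-modules, and that multiplication by $g$ on a finitely generated $\kk[g]$-module $V$ fits in $0\to V[-1]\xrightarrow{g} V\to V/Vg\to 0$ with $\grk(V[-1])=\grk(V)$, so $\grk(V/Vg)=0$ whenever $V$ is finitely generated — but more usefully, in the exact sequence \eqref{eq:longex} the terms $\FF$ are finite-dimensional, hence $\grk$-zero, and the $\timesg$ maps therefore become $\grk$-isomorphisms onto their images. Taking $\grk$ of \eqref{eq:longex}, all the finite-dimensional $\FF$ terms contribute $0$, and the remaining exact sequence tells us that $\grk\uExt^n_{\rqgr R}(L,L')[-1]\to\grk\uExt^n_{\rqgr R}(L,L')$ is forced, giving the alternating-sum identity $\sum_n(-1)^{n+1}\grk\uExt^n_{\rqgr R}(L,L') = \sum_n(-1)^{n+1}\dim_\kk\Ext^n_{\rqgr R}(L,L')$. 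Actually the slicker statement: for each fixed internal degree the alternating sum of $\dim_\kk$ along \eqref{eq:longex} is zero, and summing a geometric-type telescoping over internal degrees recovers the rank; I would present whichever bookkeeping is shortest. Then Lemma~\ref{lem0.1} identifies $\grk\uExt^n_{\rqgr R}(L,L')$ with $\dim_\kk\Ext^n_{R^\circ}(L^\circ,(L')^\circ)$, giving the third expression, and I must check this latter sum has only finitely many nonzero terms — which follows since $R^\circ$ has finite global dimension (it is a localization related to $\kk(E)$-algebras; this should already be implicit, or follow from $\grk\uExt^n=0$ for $n\gg0$ forced by \eqref{eq:longex} and Lemma~\ref{lem0.3}'s stabilization statement for $m\geq 3$).

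\textbf{Main obstacle.} The genuinely delicate point is the bookkeeping that converts the long exact sequence \eqref{eq:longex}, which mixes $\timesg$ maps and finite-dimensional $\FF$-terms, into the clean alternating-sum identity — in particular making precise that $(L\dotms L')$ being defined (a condition on total $\kk$-dimensions) is equivalent to, and computes the same number as, the alternating sum of $\grk$'s. The cleanest formulation I expect to use: apply the Euler-characteristic (alternating sum of $\dim_\kk$) to \eqref{eq:longex} internal-degree by internal-degree to get $\sum_n(-1)^{n+1}\dim_\kk\Ext^n_{\rqgr R}(L,L')_k$ is independent of $k$ up to the $\FF$-corrections, then observe that for $k\gg0$ multiplication by $g$ is an isomorphism on each $\uExt^n_{\rqgr R}(L,L')$ (since these are finitely generated $\kk[g]$-modules, using Lemma~\ref{lem0.3}'s stabilization for $m\geq 3$ and the hypothesis that the intersection is defined for $m\leq 2$), so that $\dim_\kk\Ext^n_{\rqgr R}(L,L')_k = \grk\uExt^n_{\rqgr R}(L,L')$ for $k\gg 0$; the alternating sum over $n$ at such a $k$ is then simultaneously $\sum(-1)^{n+1}\grk\uExt^n_{\rqgr R}(L,L')$ and, by \eqref{eq:longex} being exact, equal to $(L\dotms L')$. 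Then Lemma~\ref{lem0.1} closes the loop. Everything else is routine.
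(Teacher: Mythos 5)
You have misread the definition of the Mori--Smith intersection number, and this invalidates your treatment of part (1). In Definition~\ref{intersect-ms}, $\Ext^n_{\rqgr R}(\sM,\sN)$ is the $\Ext$ in the quotient \emph{category} $\rqgr R$, whose morphisms are degree-preserving; concretely it is the degree-zero graded piece $\underline{\Ext}^n_{\rqgr R}(\sM,\sN)_0$, which is automatically finite-dimensional for noetherian objects. It is \emph{not} the full graded group $\underline{\Ext}^n_{\rqgr R}(\sM,\sN)$, which is typically an infinite-dimensional $\kk[g]$-module (e.g.\ $\uHom_{\rqgr R}(L,L)$ has positive rank over $\kk[g]$). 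If one tried your ``total graded'' reading, each term of the alternating sum could be infinite and the definition would make no sense; and if one then insisted all terms be finite, every $\grk$ would be zero and (2) would degenerate to $0=0$, contradicting the subsequent use of these numbers in Sections 7--9. You noticed exactly this tension in your false start, but resolved it in the wrong direction.

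Under the correct reading, your claim that (1) ``follows directly from the definition by shift-invariance'' is false: $\Ext^n_{\rqgr R}(L,L'[m])$ is a \emph{different} graded piece of $\uExt^n_{\rqgr R}(L,L')$ for each $m$, so the individual terms do change with $m$ and something must be proved. The paper's argument is precisely the degree-by-degree bookkeeping you sketch in your ``Main obstacle'' paragraph: restrict \eqref{eq:longex} to a fixed internal degree $j$, take the alternating sum of $\dim_\kk$'s (the $\FF$-terms cancel), and conclude $(L\dotms L'[j-1]) = (L\dotms L'[j])$, then induct. So you do have the right tool in hand; you just apply it to (2) instead of realizing it is exactly what (1) needs.

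For (2) the idea in your ``Main obstacle'' paragraph is essentially the paper's, and I would have accepted a cleaned-up version of it. The one precision worth flagging: rather than working at ``$k\gg 0$'' you should exploit part (1) to shift $L'$ so that $p=\tau^j(p')$ becomes $j=0$, i.e.\ work with $L'[j]$; then Lemma~\ref{lem0.3} puts each $N=\uExt^m_{\rqgr R}(L,L'[j])$ into $0\to K\to N[-1]\xrightarrow{\timesg} N\to K'\to 0$ with $K,K'$ concentrated in degree $0$, from which $N_{\geq 0}\cong\kk[g]^{\oplus r}$ and $\dim_\kk N_0=r=\grk N$ follow cleanly, and the Euler characteristic identity drops out in degree $0$. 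Your first attempted justification (``applying $\grk$ to the long exact sequence'') genuinely does not work: $\grk$ of the exact sequence yields the tautology $\sum(-1)^n\grk(\uExt^n[-1])=\sum(-1)^n\grk(\uExt^n)$, not the desired identity between $\grk$'s and the degree-zero Euler characteristic. You recognize this and retreat to the degree-by-degree argument, which is the right move.
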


\begin{proof} 
(1)  Restrict the morphisms in  Lemma~\ref{lem0.3}  to some degree $j$ and 
 take the alternating sum of the dimensions of the  resulting vector spaces in these equations. 
 Since the contributions from $\FF$ cancel, this gives 
\[
\sum_{n \geq 0}  (-1)^{n+1} \dim_\kk \uExt^n_{\rqgr R}(L,L'[-1])_j = \sum_{n \geq 0}  (-1)^{n+1} \dim_\kk \uExt^n_{\rqgr R}(L,L')_j.
\]
Thus, by \eqref{elementary-shift}, 
$(L\dotms L'[j-1]) = (L\dotms L'[j])$ and, by induction,
$(L\dotms L'[m]) = (L\dotms L')$ for all $m \in \mb{Z}$.

(2)  By  Lemma~\ref{lem0.1}, we only need to prove that $(L\dotms L')   =\sum (-1)^{n+1} \grk \,\uExt^n_{\rqgr R}(L,L')$.  Suppose first that  $p = \tau^j(p')$ for some 
$j \in \mb{Z}$. Then $(L\dotms  L'[j]) = (L\dotms L')$ by part (1), and 
\[
\sum (-1)^{n+1} \grk \,\uExt^n_{\rqgr R}(L,L') = \sum (-1)^{n+1} \grk \,\uExt^n_{\rqgr R}(L,L'[j])
\]
is obvious since  shifting  does not affect the rank.  Thus it suffices to prove that 
\begin{equation}\label{prop:0.65}
(L\dotms  L'[j]) = \sum (-1)^{n+1} \grk \,\uExt^n_{\rqgr R}(L,L'[j]).
\end{equation}

So, consider $N = \uExt^m_{\rqgr R}(L, L'[j]) =\uExt^m_{\rqgr R}(L, L')[j]$, for some $m\geq 0$.
Then after shifting by $[j]$, Lemma~\ref{lem0.3} shows that $N$ fits into an exact sequence 
$0 \to K \to N[-1] \overset{\timesg}{\to} N \to K' \to 0$ where $K$ and $K'$   can be zero or $ \kk$, depending on the choice of $m$.  
 Thus the kernel of  $\timesg$   on $N$  is contained in $N_{-1}$ and so $N_{\geq 0}$ is $g$-torsionfree.  
 Similarly, since $K'$ is concentrated in degree $0$,   $\timesg$ gives 
an isomorphism $N_n \overset{\sim}{\too} N_{n+1}$ for all $n \geq 0$.  Since $N$ is locally finite, 
it follows that $N_{\geq 0} \cong \kk[g]^{\oplus r}$ for some $r \geq 0$.  
In particular, $\dim_\kk N_0 = r = \grk N $ and so \eqref{prop:0.65} follows.  

If $p$ and $p'$ lie on distinct orbits then the same argument works, since now Lemma~\ref{lem0.3} implies that $N= \uExt^m_{\rqgr R}(L, L')
\cong N[-1]$ for each $m$. 
\end{proof}

The projective dimension of an $R$-module $L$ will be written $\pd_R(L)$.\label{pd-defn} 
We make the following easy observation.

\begin{lemma}\label{lem:pdL}
Let $R$ be an elliptic algebra and let $L$ be a line module with $\pd_{R^\circ}(L^\circ) < \infty$.
Then $\pd_{R^\circ}(L^\circ)=1$.
\end{lemma}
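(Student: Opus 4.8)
The plan is to apply the exact functor $(-)[g^{-1}]_0$ to the defining sequence $0\to J\to R\to L\to 0$, where $J$ is the line ideal of $L$, obtaining a short exact sequence $0\to J^\circ\to R^\circ\to L^\circ\to 0$ of $R^\circ$-modules (with $J^\circ\subseteq R^\circ$ an ideal and $L^\circ\cong R^\circ/J^\circ$). The upper bound $\pd_{R^\circ}(L^\circ)\le 1$ will follow once we know $J^\circ$ is projective, and the lower bound $\pd_{R^\circ}(L^\circ)\ge 1$ will follow because $R^\circ$ is a domain.

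For the lower bound I would first note that $R^\circ\subseteq R[g^{-1}]$ is a domain, being the degree-$0$ part of the localisation of the domain $R$ at the central nonzerodivisor $g$, and that $L^\circ\ne 0$ and $J^\circ\ne 0$: indeed $L$ and $J$ are nonzero and $g$-torsionfree (the first by Lemma~\ref{lem:basics}, the second as a submodule of $R$), so they remain nonzero after inverting $g$ and passing to degree $0$. If we had $\pd_{R^\circ}(L^\circ)=0$, then $L^\circ$ would be projective, the sequence $0\to J^\circ\to R^\circ\to L^\circ\to 0$ would split, and $R^\circ$ would be a nontrivial direct sum of right $R^\circ$-modules, contradicting the fact that a domain is indecomposable as a right module over itself.

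For the upper bound I would first record that $\uExt^i_R(J,R)=0$ for all $i\ge 1$; this is part of Lemma~\ref{lem:Rdual}(2), and also follows directly by applying $\uHom_R(-,R)$ to $0\to J\to R\to L\to 0$ and using that $L$ is CM with $j(L)=1$. By Lemma~\ref{lem0.1} (with $M=J$, $N=R$, both in $\rgr R$) this yields $\Ext^i_{R^\circ}(J^\circ,R^\circ)=0$ for all $i\ge 1$. Moreover $\pd_{R^\circ}(J^\circ)<\infty$, since $\pd_{R^\circ}(L^\circ)<\infty$ and $R^\circ$ is projective. I would then invoke the elementary homological fact: over a noetherian ring $A$, a finitely generated module $M$ with $\pd_A M<\infty$ and $\Ext^i_A(M,A)=0$ for all $i\ge 1$ is projective (induct on $\pd_A M$: take $0\to N\to F\to M\to 0$ with $F$ finitely generated free; the long exact sequence gives $\Ext^i_A(N,A)=0$ for $i\ge 1$, so $N$ is projective by induction, and then the sequence splits since $\Ext^1_A(M,N)$ is a direct summand of $\Ext^1_A(M,A^{\oplus k})=0$, whence $M$ is a summand of $F$). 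Applying this with $A=R^\circ$ and $M=J^\circ$ shows $J^\circ$ is projective, so $0\to J^\circ\to R^\circ\to L^\circ\to 0$ is a projective resolution of length $1$ and $\pd_{R^\circ}(L^\circ)\le 1$. Together with the lower bound this gives $\pd_{R^\circ}(L^\circ)=1$.

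There is no serious obstacle here; the two substantive inputs (that $R^\circ$ is a domain, and that $\Ext^{\ge1}_{R}(J,R)=0$ together with finiteness of projective dimension forces $J^\circ$ to be projective) are both straightforward. The only point needing a moment's care is verifying that the localisation functor $(-)[g^{-1}]_0$ behaves as claimed — exactness, noetherianness of $R^\circ$, and nonvanishing of $L^\circ$ and $J^\circ$ — which all follow from $g$ being a central nonzerodivisor, the relevant modules being $g$-torsionfree, and the equivalence $\rMod R^\circ\simeq\rGr R[g^{-1}]$ used in Lemma~\ref{lem0.1}.
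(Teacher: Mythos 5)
Your proof is correct and uses essentially the same ingredients as the paper's: the Cohen--Macaulay property of the line module (giving $\uExt^n_R(L,R)=0$ for $n\neq 1$, equivalently $\uExt^{\geq 1}_R(J,R)=0$), Lemma~\ref{lem0.1} to transfer this to $R^\circ$, and the standard homological fact relating vanishing of $\Ext^i(-,A)$ to finiteness of projective dimension. The paper's version is marginally more direct: it applies $\Ext^n_{R^\circ}(L^\circ,R^\circ)=0$ for $n\neq 1$ directly to $L^\circ$ and invokes the fact that $\Ext^{\pd M}_A(M,A)\neq 0$ for a finitely generated module $M$ of finite projective dimension, which pins down $\pd_{R^\circ}(L^\circ)=1$ in one step, whereas you argue separately for the upper bound (via projectivity of $J^\circ$) and the lower bound (via indecomposability of the domain $R^\circ$). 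Both are valid; the difference is cosmetic.
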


\begin{proof}
 By Lemma~\ref{lem:Rdual}(1),  $\uExt^n_{R}(L,R) =0$ for $n\not=1$ and so
 Lemma~\ref{lem0.1}  implies that     $\Ext^n_{  R^\circ}(L^{\circ}, R^\circ)=0$ for $n\not=1$.
 If $m=\pd_{R^\circ}(L^\circ) < \infty$ then it is easy  to see 
that $\Ext^m_{  R^\circ}(L^\circ,\,R^\circ)\not=0$, and it follows that $m=1$. 
\end{proof}

\begin{corollary}\label{lem0.7} Let $R$ be an elliptic algebra with line modules 
 $L$ and $L' $. Assume that  either $L^\circ$ or $(L')^{\circ}$ has finite projective dimension. Then
 $$\begin{array}{rl} 
 (L\dotms L') \  &=    \grk \uExt^1_{\rqgr R}(L,\,L')  - \grk \uHom_{\rqgr R}(L,\,L')\\
\noalign{\vskip 6pt} 
 &=\   \grk \uExt^1_{R}(L,\,L')  - \grk \uHom_{R}(L,\,L') \\
\noalign{\vskip 6pt}
 &=\  \dim_{\kk}\Ext^1_{R^\circ}(L^\circ,\,(L')^\circ)   -  \dim_{\kk}\Hom^1_{R^\circ}(L^\circ,\,(L')^\circ) .
 \end{array}$$
 \end{corollary}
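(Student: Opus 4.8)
The plan is to derive the corollary directly from Proposition~\ref{prop:0.6}(2) together with the vanishing of higher Ext coming from finite projective dimension. First I would observe that, by Lemma~\ref{lem:pdL}, if $L^\circ$ has finite projective dimension over $R^\circ$ then in fact $\pd_{R^\circ}(L^\circ) = 1$, and hence $\Ext^n_{R^\circ}(L^\circ, (L')^\circ) = 0$ for all $n \geq 2$. (If instead it is $(L')^\circ$ that has finite projective dimension, one runs the symmetric argument: either use that $R^\circ$ then has finite global dimension on the relevant modules, or — more cleanly — invoke the duality $\uExt^j_R(L',L) \cong \uExt^j_R(L^\vee, (L')^\vee)$ of Lemma~\ref{lem:Rdual}(4) for $j=0,1$ together with Lemma~\ref{lem0.1}; I would need to check that $(L')^\circ$ having finite projective dimension forces $(L^\vee)^\circ$ or the dual data into the finite-pd situation, so that the $\Ext^n$ for $n\geq 2$ vanish after passing to $R^\circ$.)

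Granting that $\Ext^n_{R^\circ}(L^\circ,(L')^\circ) = 0$ for $n\geq 2$, the alternating sum in Proposition~\ref{prop:0.6}(2),
\[
(L\dotms L') = \sum_{n\geq 0}(-1)^{n+1}\dim_\kk \Ext^n_{R^\circ}(L^\circ,(L')^\circ),
\]
collapses to the two terms $n=0,1$, giving
\[
(L\dotms L') = \dim_\kk\Ext^1_{R^\circ}(L^\circ,(L')^\circ) - \dim_\kk\Hom_{R^\circ}(L^\circ,(L')^\circ).
\]
This is the third displayed equality. The first two equalities then follow immediately from Lemma~\ref{lem0.1}, which identifies $\grk\uExt^m_{\rqgr R}(L,L') = \grk\uExt^m_R(L,L') = \dim_\kk\Ext^m_{R^\circ}(L^\circ,(L')^\circ)$ for each $m$; one also needs the same collapse (only $m=0,1$ survive) for the $\rqgr R$ and $R$ versions, which is immediate since these ranks agree with the $R^\circ$-dimensions that we have just shown vanish for $m\geq 2$.

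The main obstacle I anticipate is handling the hypothesis symmetrically: Lemma~\ref{lem:pdL} is phrased for $L^\circ$, so the case where only $(L')^\circ$ has finite projective dimension needs a short extra argument to conclude $\Ext^n_{R^\circ}(L^\circ,(L')^\circ)=0$ for $n\geq 2$. The cleanest route is probably to note that $R^\circ$ is (two-sided) noetherian and that $(L')^\circ$ having finite projective dimension, combined with the fact (from Lemma~\ref{lem:Rdual}(1) via Lemma~\ref{lem0.1}) that $\Ext^n_{R^\circ}((L')^\circ, R^\circ)$ is concentrated in degree $n=1$, lets one run the Lemma~\ref{lem:pdL} argument for $(L')^\circ$ as well, giving $\pd_{R^\circ}((L')^\circ)=1$; then a standard dimension-shift / Ext-duality computation over the noetherian ring $R^\circ$ yields $\Ext^n_{R^\circ}(L^\circ,(L')^\circ)=0$ for $n\geq 2$. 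Everything else is bookkeeping with the identifications already established in Lemmas~\ref{lem0.1} and~\ref{lem:pdL} and Proposition~\ref{prop:0.6}.
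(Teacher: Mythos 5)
Your proposal is correct and follows essentially the paper's argument: the first case is exactly as in the paper, and your final paragraph for the second case identifies the right ingredients (Lemma~\ref{lem:pdL} gives $\pd_{R^\circ}((L')^\circ)=1$, so the line ideal $(J')^\circ$ is projective; Lemma~\ref{lem:Rdual}(1) with Lemma~\ref{lem0.1} gives $\Ext^i_{R^\circ}(L^\circ,R^\circ)=0$ for $i\geq 2$; then the long exact sequence from $0\to (J')^\circ\to R^\circ\to (L')^\circ\to 0$ plus $(J')^\circ$ being a direct summand of a free module kills $\Ext^i_{R^\circ}(L^\circ,(L')^\circ)$ for $i\geq 2$). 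The one detour you float via Lemma~\ref{lem:Rdual}(4) would not by itself give the needed vanishing, since that duality is only stated for $j=0,1$; the dimension-shift route you settle on is the one the paper uses.
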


 \begin{proof}  
 If $L^\circ$ has finite projective dimension, by Lemma~\ref{lem:pdL} we have $\pd_{R^\circ}(L^\circ)=1$.
 Thus, by Lemma~\ref{lem0.1}, for $i \geq 2$ we 
have $0 = \Ext^i_{  R^{\circ}}(L^{\circ}, (L')^{\circ}) = \grk \uExt^i_{\rqgr R}(L,L') = \grk \uExt^i_R(L,L')$ 
and the result follows from 
Proposition~\ref{prop:0.6} and Lemma~\ref{lem0.1}.   

If instead $(L')^{\circ}$ has finite projective dimension, then again  $(L')^{\circ}$ has 
projective dimension $1$, which forces its line ideal  $(J')^{\circ}$ to be projective. Lemmas~\ref{lem:Rdual}(1) and~\ref{lem0.1} 
still imply that
$\Ext^i_{R^{\circ}}(L^{\circ}, R^{\circ}) = 0$ for $i \geq 2$. Thus, as $(J')^{\circ}$ is a direct summand of a free module,  
$\Ext^i_{R^{\circ}}(L^{\circ}, (J')^{\circ}) = 0$ for $i \geq 2$ as well.
Then $\Ext^i_{  R^{\circ}}(L^{\circ}, (L')^{\circ}) \cong \Ext^{i+1}_{\  R^{\circ}}(L^{\circ}, (J')^{\circ}) = 0$ for $i \geq 2$, and again 
the result follows from Lemma~\ref{lem0.1} and Proposition~\ref{prop:0.6}.
\end{proof}

Unfortunately, when a (localised) line module $L^\circ$ has infinite projective dimension, the corollary can  fail and, 
indeed, $(L\dotms L)$ can even  be undefined; see  Corollary~\ref{eg-cor2} for an example of this phenomenon.  
However,  the higher Ext groups are not really relevant to our applications  of self-intersection and so we  can define  a modified intersection number  using just the first two terms in the alternating sum.

\begin{definition}\label{our-intersection}
Let $L$ and $L'$ be line modules over an  elliptic algebra $R$.  Then   define 
\[ (L \bigdot L') = - \grk \uHom_R(L, L') + \grk \uExt^1_R(L, L').\]
\end{definition}

Corollary~\ref{lem0.7} shows that  $(L\bigdot L') =  (L \dotms  L')$  provided one of  $(L)^{\circ}$ or $(L')^{\circ}$ has finite projective dimension.  
In general, the geometric interpretation of $(L\bigdot L')$ is more obscure, but as we see in Section~\ref{INTLINE}, our definition still correlates nicely 
with several other useful properties of the lines $L$ and $L'$.

In fact, the main examples we consider in the last part of the paper are elliptic algebras $R$ whose corresponding noncommutative projective schemes
 are \emph{smooth} in the sense that $\rqgr R$ has finite homological dimension.  
A geometric way of thinking about $R^\circ$ is to note that  the noncommutative 
scheme $\rqgr R$ has a closed subscheme
$\rqgr B \simeq \coh E$ which is a smooth elliptic curve. The category $\rmod R^{\circ}$ then represents the open complement of $E$.  Thus the following result is natural.

\begin{lemma}\label{lem:Dansmooth}
Let $R$ be an elliptic algebra.   
Then $\rqgr R$ is smooth if and only if $R^{\circ}$ has finite global dimension.
\end{lemma}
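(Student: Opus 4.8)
The plan is to prove the two implications separately; the forward one is immediate from Lemma~\ref{lem0.1}, and the converse is a dévissage reducing everything to the smoothness of $E$ and the hypothesis on $R^{\circ}$.

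For ``$\rqgr R$ smooth $\Rightarrow R^{\circ}$ has finite global dimension'', suppose $\rqgr R$ has homological dimension $D<\infty$. Any finitely generated $R^{\circ}$-modules $V,W$ lift to finitely generated graded $R$-modules $M,N$ with $M^{\circ}\cong V$ and $N^{\circ}\cong W$: under the equivalence $F$ of Notation~\ref{grk-defn}, $F(V)$ is a finitely generated graded $R[g^{-1}]$-module, hence of the form $M[g^{-1}]$ with $M$ finitely generated graded over $R$, and then $M^{\circ}=M[g^{-1}]_{0}=F(V)_{0}=V$. Now Lemma~\ref{lem0.1} gives $\dim_{\kk}\Ext^{i}_{R^{\circ}}(V,W)=\grk\uExt^{i}_{\rqgr R}(M,N)=0$ for all $i>D$; since $R^{\circ}$ is noetherian (it is the degree-zero part of the strongly graded noetherian ring $R[g^{-1}]$), this forces $\gldim R^{\circ}\le D$.

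For the converse, set $d=\gldim R^{\circ}<\infty$ and recall that $\rqgr B\simeq\coh E$ has homological dimension $\le 1$, as $E$ is a smooth curve (Lemma~\ref{lem:basics for B}(4)). Every finitely generated graded $R$-module $M$ has a finite filtration whose factors are each $g$-torsionfree or a $B$-module: split off the $g$-torsion submodule $t(M)$ (annihilated by some $g^{m}$) and filter it $g$-adically, noting $M/t(M)$ is $g$-torsionfree. Since $\pi$ is exact and the projective dimension of an extension is bounded by the maximum of those of its subquotients, it suffices to bound $\uExt^{i}_{\rqgr R}(M,N)$ when $M,N$ are each $g$-torsionfree or a $B$-module --- four cases. (i) $M$ $g$-torsionfree, $N$ a $B$-module: Lemma~\ref{lem:extRextB} identifies $\uExt^{i}_{\rqgr R}(M,N)$ with $\uExt^{i}_{\rqgr B}(M/Mg,N)$, which is zero for $i\ge 2$ by smoothness of $\coh E$. (ii) $M,N$ both $B$-modules: the Cartan--Eilenberg change-of-rings spectral sequence for $R\twoheadrightarrow B$ collapses to a long exact sequence (as $B$ has $R$-projective dimension one); transporting it to $\rqgr$ via $\uExt^{i}_{\rqgr A}(M,N)=\varinjlim_{k}\uExt^{i}_{A}(M_{\ge k},N)$ for $A\in\{R,B\}$ \cite[Proposition~7.2]{AZ} relates $\uExt^{\bullet}_{\rqgr R}(M,N)$ to $\uExt^{\bullet}_{\rqgr B}(M,N)$, and smoothness of $\coh E$ then gives vanishing for $i\ge 3$. (iii) $M$ a $B$-module, $N$ $g$-torsionfree: multiplication by $g$ acts as $0$ on $\uExt^{\bullet}_{\rqgr R}(M,-)$, so the sequence $0\to N[-1]\xrightarrow{g}N\to N/Ng\to 0$ reduces this to case (ii), giving vanishing for $i\ge 3$. (iv) $M,N$ both $g$-torsionfree: Lemma~\ref{lem0.1} shows $\uExt^{i}_{\rqgr R}(M,N)$ has torsionfree rank $\dim_{\kk}\Ext^{i}_{R^{\circ}}(M^{\circ},N^{\circ})=0$ for $i>d$, hence is a $g$-torsion $\kk[g]$-module; and applying $\uHom_{\rqgr R}(M,-)$ to $0\to N[-1]\xrightarrow{g}N\to N/Ng\to 0$, then using case (i) to show $\uExt^{i}_{\rqgr R}(M,N/Ng)=0$ for $i\ge 2$, shows that multiplication by $g$ is \emph{bijective} on $\uExt^{i}_{\rqgr R}(M,N)$ for $i\ge 3$; since a $g$-torsion module on which $g$ acts invertibly is zero, $\uExt^{i}_{\rqgr R}(M,N)=0$ for $i>\max(d,2)$. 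Feeding these bounds back through the filtrations --- at each step the middle Ext group is sandwiched between two that vanish in the same range, so the induction does not depend on the filtration length --- gives that $\rqgr R$ has homological dimension $\le\max(d,2)<\infty$.

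The step I expect to be the crux is case (iv): Lemma~\ref{lem0.1} alone only tells us the relevant $\uExt^{i}_{\rqgr R}(M,N)$ are $g$-torsion, not that they vanish, and the argument genuinely needs the extra fact that multiplication by $g$ is an isomorphism on them --- which is exactly where the smoothness of the elliptic curve $E$ is used --- in order to conclude. The other point requiring care is that the conclusion is a \emph{uniform} homological-dimension bound, not merely finite projective dimension object by object; this comes out because the $g$-torsionfree and $B$-module building blocks have projective dimensions bounded independently of the module being resolved.
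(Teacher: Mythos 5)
Your argument is correct, and the underlying mechanism is the same as the paper's, but you package it differently.  The paper proves the converse by contrapositive: assuming $\rqgr R$ has infinite homological dimension, it picks $M,N\in\rgr R$ with $\uExt^{s}_{\rqgr R}(M,N)\neq 0$ in arbitrarily high degree, reduces (by taking syzygies) to the case where both $M$ and $N$ are $g$-torsionfree, and then observes --- exactly as in your case (iv) --- that Lemma~\ref{lem:extRextB} and smoothness of $\coh E$ make $\timesg$ \emph{injective} on $\uExt^{i}_{\rqgr R}(M,N)$ for $i\geq 3$, so these nonzero groups are $g$-torsionfree and survive under Lemma~\ref{lem0.1} to give nonzero $\Ext^{s}_{R^{\circ}}(M^{\circ},N^{\circ})$.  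Your d\'evissage is the direct version of the same core argument.  It costs more casework --- your cases (ii) and (iii) are bypassed by the paper's up-front syzygy reduction --- but it buys the explicit uniform bound $\gldim(\rqgr R)\leq\max(d,2)$ and, more usefully, makes fully visible a reduction the paper handles tersely with the word ``similarly'': the second argument $N$ cannot be made $g$-torsionfree by the same syzygy trick used for $M$ (replacing $N$ by a second syzygy shifts $\Ext$ in the wrong direction); what is actually needed is precisely your case (i), namely that $\uExt^{i}_{\rqgr R}(M,T)=0$ for $i\geq 2$ when $M$ is $g$-torsionfree and $T$ is a $B$-module, which then lets one strip off $t(N)$.  (Note the paper only needs injectivity of $\timesg$; you upgrade to bijectivity for $i\geq 3$ because you want vanishing rather than nonvanishing, and that upgrade is also fine, coming from the same long exact sequence together with case (i).)
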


\begin{proof} If $\rqgr R$ is smooth then $R^{\circ}$ has finite global dimension by Lemma~\ref{lem0.1}, so suppose that $\rqgr R$ has infinite homological dimension. 
Thus, for any  $t\geq 3$  there exist $M,N\in \rgr R$ 
such that $\uExt^s_{\rqgr R}(M,N)\not=0$ for some $s\geq t$. By taking the beginning of a free resolution 
$0\to K\to F \to M\to 0$ and replacing $M$ by $K$ we can assume, possibly after increasing $s$,  that $M$ is
$g$-torsionfree. 
Similarly we may assume that $N$ is $g$-torsionfree.

Set $B=R/gR$. Then   $\rqgr B \cong \coh E$ has homological dimension $1$. Thus 
  Lemma~\ref{lem:extRextB} implies that 
$\uExt^i_{\rqgr R}(M,N/Ng) \cong \uExt^i_{\rqgr B}(M/Mg, N/gN) = 0$  for  $i  \geq 2$. Using
cohomology arising from the   exact sequence 
$0 \too N[-1] \overset{\timesg}{\too} N \too N/Ng \too 0$  it follows  that  $\timesg$ is injective on 
 $\uExt^i_{\rqgr R}(M,N)$ for all $i\geq 3$. In other words, $\uExt^s_{\rqgr R}(M,N)$ is $g$-torsionfree
 (and non-zero). 
 Finally,  by Lemma~\ref{lem0.1}  this implies that $\Ext^s_{R^\circ}(M^\circ,N^\circ)\not=0$. Since $t$ was arbitrary,  it follows that $R^\circ$ has infinite global dimension.
 \end{proof}

\section{Intersections of lines}\label{INTLINE}

 Fix an   elliptic algebra $R$ with $R/gR = B(E, \mc{M}, \tau)$, and 
let $L = R/J$ and $L' = R/J'$ be two right 
$R$- line modules, possibly isomorphic, with $\Div L = p$ and $\Div L' = p'$.
 In this section, we study alternative characterisations of the intersection number 
 $(L \bigdot L')$, as defined in Definition~\ref{our-intersection}.  
 In particular, we show that $(L\bigdot L)=-1$ 
 if and only if $\uExt_R^1(L,L)=0$, and give similar conditions for when $(L\bigdot L')=0.$ 

We begin with a number of useful observations.
First, consider the exact sequences
 \[ 0 \to \uHom_R(L,L') \to \uHom_R(R, L') \to \uHom_R(J, L') \to \uExt^1_R(L,L') \to 0,\]
 and
 \[ 0 \to \uHom_R(J,J') \to \uHom_R(J, R) \to \uHom_R(J, L') \to \uExt^1_R(J, J') \to 0.\ \]
(The second  sequence is exact because $J$ is CM, by Lemma~\ref{lem:Rdual}.)  
We know that $\hilb \uHom_R(J, R) = \hilb R + s/(1-s)^2$ by 
the left-sided version of 
Lemma~\ref{lem:Rdual}(3),   
and of 
course $\uHom_R(R, L') = L'$ has Hilbert series $1/(1-s)^2$.  Thus we obtain the useful equation:
\begin{multline} \label{eq:main}
 \hilb \uExt^1_R(L,L')-\hilb \uHom_R(L, L') = \hilb \uHom_R(J,L') - \frac{1}{(1-s)^2} \\
= \hilb \uExt^1_R(J,J') + h_R - \frac{1}{(1-s)} - \hilb \uHom_R(J,J').
\end{multline}
The power series on the left of \eqref{eq:main} will recur often, so we define:
\begin{equation}\label{XLL-defn}
 X(L,L') = \hilb \uExt^1_R(L,L')-\hilb \uHom_R(L, L').
 \end{equation} 

Next,   by applying $\uHom_R(L, -)$ to the short exact sequence $0 \to L'[-1] \overset{\timesg}{\to} L' \to M_{p'} \to 0$, 
and using Lemma~\ref{lem:extRextB}  one obtains  the  exact  sequence:
\begin{multline}\label{eq:extLL} 0 \too \uHom_R(L,L')[-1] \overset{\timesg}{\too} \uHom_R(L, L') \too   
\uHom_B(M_p, M_{p'})  \\ \too \uExt^1_R(L, L')[-1] \overset{\timesg}{\too} \uExt^1_R(L, L') \overset{\delta}{\too} \uExt^1_B(M_p, M_{p'}). 
\end{multline}
This gives  the Hilbert series equation
\begin{equation}
\label{eq:EH}
X(L, L')   = \frac{C - H}{(1-s)}  \qquad \text{ for}\ H = \hilb \uHom_B(M_p, M_{p'}) \  \ \text{and} \  \ 
C = \hilb \im \delta.
\end{equation}
The possibilities for $H, C$  and $E = \hilb \uExt^1_B(M_p, M_{p'})$\label{HEC-defn}
  are quite limited:  by Proposition~\ref{prop:Hompt}, if  
$p = \tau^j(p')$ for some $j \geq 0$, then $H = s^j$ and $E = s^{-1} + s^j$; otherwise $H = 0$ and $E = s^{-1}$.  In 
any case $0 \leq C \leq E$.

Note that  \eqref{eq:extLL} implies that $\uHom_R(L,L')$ is $g$-torsionfree. 
Moreover,  by Proposition~\ref{prop:Hompt}(1) the  map $\timesg$ on $\uExt^1_R(L,L')$  
has a finite-dimensional kernel.  By
Lemma~\ref{lem:barhom1}, 
this implies that the $g$-torsion submodule of $\uExt^1_R(L,L')$ is  also finite-dimensional.  
Since  $\dim_\kk  \im \delta<\infty$  
and $\uExt^1_R(L,L')$ is left bounded, \eqref{eq:extLL} implies that $\dim_{\kk} \uExt^1_R(L,L')_n$ is constant 
for $n \gg 0$.  
Thus $\grk \uExt^1_R(L,L')=\dim_\kk \uExt^1_R(L,L')_n$ for all $n \gg 0$.
Obviously the analogous result holds 
for $\uHom_R(L, L')$, and combined  with \eqref{eq:EH} this implies that 
\begin{align}
\label{eq:int-hs}
(L \bigdot L')  & = \grk \uExt^1_R(L,L') - \grk \uHom_R(L,L')  \\ 
\nonumber & = \dim_\kk \uExt^1_R(L,L')_n - \dim_\kk \uHom_R(L,L')_n, \ \text{ for } n \gg 0 \\
\nonumber & = \text{the sum of the coefficients of}\ C - H.
\end{align}
We may think of  $X(L,L') $ as a refined version of $(L \bigdot L')$, since $(L \bigdot L') = (1-s) X(L,L')|_{s=1}$ by \eqref{eq:EH} and \eqref{eq:int-hs}.  

As a consequence of these calculations, we can already see that the intersection number of two lines on an elliptic algebra lies in a 
quite limited range.  If $p = \tau^j(p')$ for some $j \geq 0$, then $H = s^j$ and $C \leq s^{-1} + s^j$, while otherwise 
$H = 0$ and $C \leq s^{-1}$.  From \eqref{eq:int-hs} we conclude that 
\begin{equation}\label{intersection-range}
(L \bigdot L') \in \{-1,0,1\},
\end{equation}
 where the value $-1$ can only occur  if $p = \tau^j(p')$ for some $j \geq 0$.  In Lemma~\ref{lem:new}  we will refine this observation to 
 show that in fact $(L \bigdot L') = -1$ forces   $L $ to be isomorphic to $L'$.

Next, since $J$ and $J'$ are $g$-divisible by Lemma~\ref{lem:Rdual}(2),  we may use Lemma~\ref{lem:extRextB} 
to get  the exact sequence
\begin{equation}
\label{eq:JJ}
0 \too \uHom_R(J, J')[-1] \overset{\timesg}{\too} \uHom_R(J, J') \too \uHom_B(\bbar{J}, \bbar{J'}) \overset{\alpha}{\too} 
 \uExt^1_R(J, J')[-1] \overset{\timesg}{\too} \uExt^1_R(J, J') \too \cdots .
\end{equation}
The calculation of the Hilbert series of $\uHom_B(\bbar{J}, \bbar{J'})$ is straightforward.  Since $\bbar{J} = \bigoplus_{n \geq 0} H^0(E, \sM_n(-p))$ 
and  similarly for $J'$,  
by Lemma~\ref{lem:uhomB} we get 
$\uHom_B(\bbar{J}, \bbar{J'}) = \bigoplus_{n\geq 0} H^0(E, \sM_n(-p' + \tau^{-n}(p))). $
Thus 
\beq\label{epsilon}
\hilb \uHom_B(\bbar{J}, \bbar{J'}) = \hilb B - 1 + \epsilon_{p,p'} \qquad \text{where}\ \epsilon_{p,p'} = \begin{cases} 1 & p = p' \\ 0 & p \neq p' \end{cases}. 
\eeq 
From \eqref{eq:JJ} and Lemma~\ref{lem:ideal}(1) 
we have 
$\bbar{\uHom_R(J, J')} \subseteq \uHom_B(\bbar{J}, \bbar{J'})$ with cokernel isomorphic to $\im \alpha$.  Thus 
we conclude that 
\begin{equation}
\label{eq:homJJ}
\hilb \uHom_R(J, J')   \ = \ \frac{\hilb B - 1 + \epsilon_{p, p'} - \hilb \im \alpha}{(1-s) }
\  =\  \hilb R  +  \frac{(\epsilon_{p, p'}-1)}{(1-s)} - \frac{(\hilb \im \alpha)}{(1-s)}.
\end{equation}

We next want to characterise when the  various intersection numbers   occur in 
\eqref{intersection-range}.
We begin with  $(L\bigdot L)$ for a single line ideal $L$  and  are mostly interested in when 
$(L \bigdot L)=-1$.

\begin{theorem}\label{thm:one}
Let $L = R/J$ be a  line module of an elliptic algebra $R$.  
Consider the following conditions:
\begin{enumerate}

\item $\uExt^1_R(J,J) = 0$.
\item $\uExt^1_R(L,L) = 0$.
\item $\hilb \uEnd_R(J) =\hilb R$.
\item$\hilb \uHom_R(J, L)  =  s(1-s)^{-2}$.
\item $(L \bigdot L) = -1$.
\end{enumerate}
Then:
\begin{itemize}
\item[(a)] $(1)$ $\iff$ $(2)$ $\iff$ $(4)$ $\iff$ $(5)$ $\Rightarrow$ $(3)$.  
\item[(b)]  If $J^\circ$ is projective then  all five are equivalent.  
\end{itemize}
\end{theorem}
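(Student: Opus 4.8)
The plan is to specialise everything to $L'=L$ and push through the Hilbert-series bookkeeping assembled in Section~\ref{INTLINE}. Here $p=p'=\Div L$, so in the notation of that section $\epsilon_{p,p}=1$, $H=\hilb\uHom_B(M_p,M_p)=1$, and by Proposition~\ref{prop:Hompt} the space $\uExt^1_B(M_p,M_p)$ is one-dimensional in each of degrees $-1$ and $0$ and zero in all other degrees; moreover $\uEnd_R(L)=\kk[g]$ by Lemma~\ref{lem:red0}(3), so $\hilb\uHom_R(L,L)=1/(1-s)$.

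First I would read off the formal equivalences. Specialising \eqref{eq:main} to $L'=L$ and inserting $\hilb\uHom_R(L,L)=1/(1-s)$ gives the two identities
\[
\hilb\uHom_R(J,L)=\frac{s}{(1-s)^2}+\hilb\uExt^1_R(L,L)
\qquad\text{and}\qquad
\hilb\uExt^1_R(L,L)=\hilb\uExt^1_R(J,J)+h_R-\hilb\uEnd_R(J).
\]
The first gives $(2)\iff(4)$ outright. Combining the second with \eqref{eq:homJJ}, which (since $\epsilon_{p,p}=1$) reads $\hilb\uEnd_R(J)=h_R-(1-s)^{-1}\hilb\im\alpha$ for the map $\alpha$ of \eqref{eq:JJ}, yields
\[
\hilb\uExt^1_R(L,L)=\hilb\uExt^1_R(J,J)+\frac{\hilb\im\alpha}{1-s}.
\]
All three series have non-negative coefficients and $\im\alpha\subseteq\uExt^1_R(J,J)[-1]$ by \eqref{eq:JJ}; hence $\uExt^1_R(L,L)=0$ forces $\uExt^1_R(J,J)=0$, while conversely $\uExt^1_R(J,J)=0$ forces $\im\alpha=0$ and thus $\uExt^1_R(L,L)=0$. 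This is $(1)\iff(2)$. Finally, when $(1)$ holds, \eqref{eq:homJJ} collapses to $\hilb\uEnd_R(J)=h_R$, which is $(3)$; so $(1)\Rightarrow(3)$.

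Next I would prove $(2)\iff(5)$, the one step with a little content. By \eqref{eq:int-hs}, $(L\bigdot L)$ equals the sum of the coefficients of $C-H=C-1$, where $C=\hilb\im\delta$ for $\delta$ as in \eqref{eq:extLL}; as $\im\delta$ is a graded subspace of $\uExt^1_B(M_p,M_p)$ we get $C\in\{0,s^{-1},1,s^{-1}+1\}$, so $(L\bigdot L)=-1\iff\im\delta=0$. To tie this to $(2)$, specialise \eqref{eq:extLL} to $L'=L$: its leading map is multiplication by $g$ on $\uHom_R(L,L)=\kk[g]$, whose cokernel is $\kk$ concentrated in degree $0$; this cokernel embeds into $\uHom_B(M_p,M_p)=\kk$, hence maps onto it, so the connecting homomorphism $\uHom_B(M_p,M_p)\to\uExt^1_R(L,L)[-1]$ vanishes. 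Therefore multiplication by $g$ is injective on $\uExt^1_R(L,L)$ and $\im\delta\cong\uExt^1_R(L,L)/g\,\uExt^1_R(L,L)$; since $\uExt^1_R(L,L)$ is left bounded, the graded Nakayama lemma gives $\im\delta=0\iff\uExt^1_R(L,L)=0$. Together with the previous paragraph this proves part~(a).

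For part~(b), suppose $J^\circ$ is projective. Then $\Ext^1_{R^\circ}(J^\circ,J^\circ)=0$, so $\grk\uExt^1_R(J,J)=0$ by Lemma~\ref{lem0.1}; because $g$ has degree $1$, a graded $\kk[g]$-module of rank $0$ is $g$-torsion, so $\uExt^1_R(J,J)$ is $g$-torsion. If $(3)$ holds, then \eqref{eq:homJJ} forces $\im\alpha=0$, whence multiplication by $g$ is injective on $\uExt^1_R(J,J)$ by \eqref{eq:JJ}, i.e. $\uExt^1_R(J,J)$ is $g$-torsionfree; a module that is both $g$-torsion and $g$-torsionfree is $0$, which is $(1)$. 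Thus $(3)\Rightarrow(1)$ under this hypothesis, and combined with (a) all five conditions are equivalent. I expect the only genuinely non-formal point to be the vanishing of the connecting map in \eqref{eq:extLL}, which works precisely because the dimensions of $\uEnd_R(L)=\kk[g]$ and $\uHom_B(M_p,M_p)=\kk$ match exactly; everything else is arithmetic of Hilbert series and of $\kk[g]$-torsion.
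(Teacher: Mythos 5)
Your proof is correct and follows essentially the same route as the paper's: the same Hilbert-series identities \eqref{eq:main}, \eqref{eq:homJJ}, \eqref{eq:JJ}, \eqref{eq:extLL}, \eqref{eq:int-hs} from Section~\ref{INTLINE}, specialised to $L'=L$. The only cosmetic difference is in the $(2)\Leftrightarrow(5)$ step, where you note that $\timesg$ is automatically injective on $\uExt^1_R(L,L)$ and invoke graded Nakayama on its cokernel, whereas the paper deduces from $C=0$ that $\timesg$ is surjective and then uses left-boundedness — dual phrasings of the same observation.
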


\begin{proof}  (a)
By Lemma~\ref{lem:red0}(2),   $\hilb \uHom_R(L,L) = (1-s)^{-1}$.   
Thus adding $ (1-s)^{-1}$ to \eqref{eq:main} we obtain
\beq
\label{eq:main2}
 \hilb \uExt^1_R(L,L)  = \hilb \uHom_R(J,L) - \frac{s}{(1-s)^2}= \hilb \uExt^1_R(J,J) + [h_R  - \hilb \uHom_R(J,J)].
\eeq
This shows immediately that $(2)$ $\iff$ $(4)$.  The final term $[h_R  - \hilb \uHom_R(J,J)]$ of \eqref{eq:main2}
  is non-negative by \eqref{eq:homJJ}.  
Since $\hilb \uExt^1_R(J,J)$ is obviously non-negative, it follows  that $(2)$ implies both 
$(1)$ and $(3)$.

Now if $(1)$ holds, then $\alpha = 0$ in \eqref{eq:JJ} and so \eqref{eq:homJJ} immediately implies $(3)$.  But \eqref{eq:main2} again shows that $(1)$ 
and $(3)$ together imply $(2)$.

If $(2)$ holds, then since $\uHom_R(L,L) = \kk[g]$  by Lemma~\ref{lem:red0}, we certainly have $$(L \bigdot L) = \grk \uExt^1_R(L,L) - \grk \uHom_R(L,L) = -1$$
and  $(5)$  holds.  Conversely, if $(5)$ holds then using \eqref{eq:int-hs} we  see that 
$H = 1$ and $C = 0$ in \eqref{eq:EH}.  Thus the map $\uExt^1_R(L, L)[-1] \overset{\timesg}{\too} \uExt^1_R(L, L)$ is surjective, and since 
$\uExt^1_R(L,L')$ is left bounded it must therefore be  zero.   
Thus $(2)$ holds, which completes the proof of (a).  

 (b) Suppose that  $J^\circ$ is projective and that $(3)$ holds.  From \eqref{eq:homJJ} we conclude that $\alpha =0$ in \eqref{eq:JJ} and so  $\uExt^1_R(J, J) $ is $g$-torsionfree.
But   $\uExt^1_{R^\circ}(J^\circ,J^{\circ}) = 0$ and so  $\uExt^1_R(J, J)$ is   $g$-torsion by Lemma~\ref{lem0.1}.
Thus $\uExt^1_R(J,J) = 0$ and  $(1)$ holds.  
\end{proof}

\begin{remark}\label{smooth-thm:one} (1) Suppose that  $R^\circ$ has finite global dimension, or, equivalently by Lemma~\ref{lem:Dansmooth}, 
that $\rqgr R$ is smooth.  
Then $J^\circ$ is automatically projective by Lemma~\ref{lem:pdL} and so part (b) of the theorem applies.  

(2) We  know of no example of a line module $L$ of an elliptic algebra $R$, with $\rqgr R$ smooth, for which  $(L\bigdot L) \not= -1$
 and we actually conjecture that none exist.   
 For a (non-smooth) example with   $(L\bigdot L)\not=-1$, see Corollary~\ref{eg-cor1}.
\end{remark}

In the rest of the section, we  consider the intersection number  of two distinct line modules. Although these results will not be directly 
relevant for this paper, they will be useful in \cite{RSS4}, and as the proofs are similar to that of Theorem~\ref{thm:one}, it is appropriate to  include them here.  

The following easy lemma will be used a number of times.
 
\begin{lemma}
\label{lem:homJ}
Let $R$ be  an  elliptic  algebra  with two line modules  $L = R/J$ and $L' = R/J'$. 
 Then 
$\uHom_R(J,J')_0 \subseteq R_0 = \kk$.
Moreover $\uHom_R(J,J')_0  = \kk$  $\iff$  $J = J'$ $\iff$  $L \cong L'$.
\end{lemma}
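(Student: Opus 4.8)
The plan is to work inside the graded quotient ring $Q = Q_{gr}(R)$, where by the conventions of Section~\ref{BACKGROUND} we identify $\uHom_R(J,J')$ with $\{x \in Q : xJ \subseteq J'\}$, and similarly $\uHom_R(J,R) = J^*$. First I would pin down degree $0$: since $J \subseteq R$ with $R/J = L$ a line module, we have $J_0 = 0$ and (because $J = J_1 R$ is generated in degree one by Lemma~\ref{lem:Rdual}(2)) $J_{\geq 1}$ generates $J$. Any $x \in \uHom_R(J,J')_0$ satisfies $xJ \subseteq J' \subseteq R$, so in particular $x J_1 \subseteq R_1$; since $J_1 \neq 0$ and $x \in Q_0$, this forces $x \in R_0 = \kk$ — here I would use that $R$ is a domain, so multiplication by a nonzero element of $J_1$ is injective, and that $J^* \cap Q_0 \subseteq R^{**}_0 = R_0$ by reflexivity of $R$ (or simply that $x J_1 \subseteq R_1$ with $J_1 \ni f \neq 0$ gives $xf \in R_1$, whence $x = (xf)f^{-1} \in Q_0$ lies in $R$ because $R$ is $2$-pure / reflexive, so $x \in R_0 = \kk$). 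This establishes $\uHom_R(J,J')_0 \subseteq \kk$.

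For the equivalences: the implications $J = J' \Rightarrow L \cong L'$ and $J = J' \Rightarrow \uHom_R(J,J')_0 = \kk$ (take $x = 1$) are immediate. For $L \cong L' \Rightarrow J = J'$: a graded isomorphism $R/J \xrightarrow{\sim} R/J'$ sends the generator $1 + J$ to a unit multiple of $1 + J'$ (both modules are cyclic with one-dimensional degree-zero part, by $\Hom_{\rgr R}(R,L) = \kk$ from Definition~\ref{line-defn}), hence has the same kernel, so $J = J'$. The real content is $\uHom_R(J,J')_0 = \kk \Rightarrow J = J'$: if $1 \in \uHom_R(J,J')_0$, i.e.\ the inclusion $J \hookrightarrow R$ factors through $J' \hookrightarrow R$, then $J \subseteq J'$, giving a graded surjection $L = R/J \twoheadrightarrow R/J' = L'$. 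Since $\hilb L = \hilb L' = 1/(1-s)^2$, this surjection is an isomorphism, so $J = J'$.

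The one point requiring a little care — and the step I expect to be the main obstacle — is the very first claim, that an element $x \in Q_0$ with $xJ_1 \subseteq R_1$ actually lies in $R_0$ rather than merely in $Q_0$. I would handle this by noting that $xJ \subseteq R$ means $x \in \uHom_R(J,R) = J^*$, and then $x$ lies in the degree-$0$ part of $J^*$; since $J$ is reflexive (Lemma~\ref{lem:Rdual}(2)), one has $J^{**} = J$, but more directly $J^* \subseteq (J_1 R)^* = R J_1^{-1}\cdots$ — cleaner is: pick $0 \neq f \in J_1$; then $xf \in R_1$ and $x = (xf) f^{-1}$, and since $R$ is a domain with $R/gR$ a domain, $R_0 = \kk$ is algebraically closed in $Q_0$ in the relevant sense, or simply observe $x J \subseteq R$ and $J R = J$ has $\GKdim R/J = 2$, so $x \in \uHom_R(J,R) = J^*$, and $(J^*)_0 \subseteq R_0 = \kk$ because $J^*/R \cong \uExt^1_R(L,R)[0]$-type is concentrated in positive degrees by the left-sided Lemma~\ref{lem:Rdual}(3) (which gives $\hilb \uHom_R(J,R) = \hilb R + s/(1-s)^2$, so $(J^*)_0 = R_0 = \kk$). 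This last computation is the crispest route and avoids any ambiguity.
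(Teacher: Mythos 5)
Your proof is correct and, once you settle on the final approach, it is essentially the same as the paper's: identify $\uHom_R(J,J') \subseteq \uHom_R(J,R) = J^*$ and use the left-sided version of Lemma~\ref{lem:Rdual}(3), which gives $\hilb J^* = \hilb R + s/(1-s)^2$ and hence $(J^*)_0 = R_0 = \kk$; the equivalences then follow by comparing Hilbert series of $J$ and $J'$. The earlier digressions in your plan (attempting to deduce $x \in R_0$ from $xJ_1 \subseteq R_1$ via ``$R$ is $2$-pure / reflexive'') are unnecessary and not quite a valid on their own, but you correctly discard them in favour of the crisp Hilbert-series argument, which is exactly what the paper does.
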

\begin{proof}
Certainly $\uHom_R(J,J')\subseteq \uHom_R(J, R) = J^*$.  By the left-handed   
analogue of Lemma~\ref{lem:Rdual}(3), $(J^*)_0 = R_0 = \kk$.
Furthermore, if $J = J'$ then $\kk \subseteq \uHom_R(J, J')_0$ and so $\uHom_R(J,J')_0 = \kk$.  Conversely, 
if $\kk \subseteq \uHom_R(J,J')$ then $J \subseteq J'$ and so $J = J'$ since $J$ and $J'$ have the same Hilbert series.
The last equivalence is immediate since a line module is determined up to isomorphism by its line ideal.
\end{proof}

We next refine \eqref{intersection-range}  by showing that there 
are only two possible values of   $(L \bigdot L')$ for 
 non isomorphic lines.

\begin{lemma}\label{lem:new}
Let $L\not\cong L'$ be line modules over an elliptic algebra $R$.
Then
 $X=X(L,L')$ is equal to  either 
$0$, $s^{-1}(1-s)^{-1}$,  or $s^{-1} + \dots +s^{j-1}$ for some $j \geq 0$.   In particular, $(L \bigdot L')\in \{0,1\}$.
\end{lemma}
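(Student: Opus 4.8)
The plan is to play the two Hilbert series expressions for $X=X(L,L')$ off against each other. On the one hand \eqref{eq:EH} gives $X=(C-H)/(1-s)$, where $0\le C\le E$, and by Proposition~\ref{prop:Hompt} the series $H=\hilb\uHom_B(M_p,M_{p'})$ and $E=\hilb\uExt^1_B(M_p,M_{p'})$ are known exactly: $H=s^{j}$ and $E=s^{-1}+s^{j}$ when $p=\tau^{j}(p')$ for some $j\ge 0$, and $H=0$, $E=s^{-1}$ otherwise. Since the monomials $s^{-1}$ and $s^{j}$ are always distinct, $0\le C\le E$ leaves only finitely many candidates for $C$ — namely $C\in\{0,s^{-1}\}$ in the second case, and $C=c_1s^{-1}+c_2s^{j}$ with $c_1,c_2\in\{0,1\}$ in the first — and hence finitely many candidates for $X$. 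Reading these off, every candidate is one of the three asserted forms \emph{except} the choice $C=0$ in the case $p=\tau^{j}(p')$, which would give $X=-s^{j}/(1-s)$, a Laurent series with strictly negative coefficients.

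So the real content is to show that $X$ has non-negative coefficients, which I would extract from the ``$J$-side'' formula obtained by feeding \eqref{eq:homJJ} into \eqref{eq:main}: this yields $X=\hilb\uExt^1_R(J,J')+\bigl(a(s)-\epsilon_{p,p'}\bigr)/(1-s)$, where $a(s)=\hilb\im\alpha$ is the Hilbert series of the cokernel of the inclusion $\overline{\uHom_R(J,J')}\subseteq\uHom_B(\overline J,\overline{J'})$ appearing in \eqref{eq:JJ}; this cokernel is finite-dimensional by Lemma~\ref{lem:barhom}(1) (using Lemma~\ref{lem:ideal}(1)). The key computation is the constant term of $a(s)$. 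Because $L\not\cong L'$, Lemma~\ref{lem:homJ} gives $\uHom_R(J,J')_0=0$, hence $\overline{\uHom_R(J,J')}_0=0$; and by the description of $\uHom_B(\overline J,\overline{J'})$ recorded around \eqref{epsilon}, that module is supported in degrees $\ge 0$ with $\dim_\kk\uHom_B(\overline J,\overline{J'})_0=\epsilon_{p,p'}$. Therefore $a(s)$ is supported in degrees $\ge 0$ with constant term exactly $\epsilon_{p,p'}$, so $a(s)-\epsilon_{p,p'}$ has non-negative coefficients, and hence so does $(a(s)-\epsilon_{p,p'})/(1-s)$; adding the manifestly non-negative $\hilb\uExt^1_R(J,J')$ gives $X\ge 0$. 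This rules out the bad candidate and leaves exactly $X\in\{0,\;s^{-1}(1-s)^{-1},\;(s^{-1}-s^{j})(1-s)^{-1}\}$, and $(s^{-1}-s^{j})(1-s)^{-1}=s^{-1}+\dots+s^{j-1}$.

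For the final assertion I would simply specialise: by \eqref{eq:EH} together with \eqref{eq:int-hs} one has $(L\bigdot L')=(1-s)X|_{s=1}$, which evaluates to $0$ on $X=0$, to $s^{-1}|_{s=1}=1$ on $X=s^{-1}(1-s)^{-1}$, and to $(s^{-1}-s^{j})|_{s=1}=0$ on the third form; hence $(L\bigdot L')\in\{0,1\}$.

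The step I expect to be the main obstacle is the non-negativity $X\ge 0$: the case bookkeeping is mechanical, but excluding $X=-s^{j}/(1-s)$ genuinely relies on pinning down the constant term of $a(s)$, which in turn rests on the input $\uHom_R(J,J')_0=0$ for $L\not\cong L'$ (Lemma~\ref{lem:homJ}) and the precise degree-zero behaviour of $\uHom_B(\overline J,\overline{J'})$. (For the subcase $p=p'$ one could instead argue directly from \eqref{eq:extLL} that $\uHom_R(L,L')_0=0$ forces $\kk\hookrightarrow\uExt^1_R(L,L')_{-1}$, whereas $C=0$ would force $\uExt^1_R(L,L')_{-1}=0$ since $\uExt^1_R(L,L')$ is left bounded and supported in degrees $\ge -1$; but the non-negativity argument dispatches every case at once.)
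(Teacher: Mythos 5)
Your proof is correct and follows essentially the same route as the paper's: bound the candidates for $C$ via Proposition~\ref{prop:Hompt} and \eqref{eq:EH}, then eliminate the one bad candidate ($C=0$ when $H\neq 0$) by showing $X\geq 0$ using Lemma~\ref{lem:homJ} together with \eqref{eq:homJJ} and \eqref{eq:main}. The only cosmetic difference is that you make the non-negativity explicit as a statement about the constant term of $\hilb\im\alpha$, while the paper packages the identical input as the inequality $\hilb\uHom_R(J,J')\leq h_R-(1-s)^{-1}$.
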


\begin{proof}
Adopt the notation of \eqref{eq:EH}.  
If 
$H=0$ then \eqref{eq:EH} and Proposition~\ref{prop:Hompt} imply that $C \leq s^{-1}$ and $X$ is either 0 or $s^{-1}(1-s)^{-1}$.

So suppose that $H \neq 0$. Then by Proposition~\ref{prop:Hompt}(1),  $H = s^j$ for some $j > 0$ and this happens precisely when $p=\Div L =\tau^j (p')$ for $p'=\Div L'$.   
By Proposition~\ref{prop:Hompt}(2) and \eqref{eq:extLL},  $C\leq E = s^{-1} + s^j$.
Let $J, J'$ be, respectively, the line ideals of $L$ and $L'$ and note that $J \neq J'$ since $L \not\cong L'$.
Thus $\uHom_R(J,J')_0 = 0$ by Lemma~\ref{lem:homJ}.  It follows 
from \eqref{eq:homJJ} that $\hilb \uHom_R(J,J') \leq h_R - (1-s)^{-1}$.
From \eqref{eq:main}, $X \geq \hilb \uExt^1_R(J,J') \geq 0$.
This forces $ C \neq 0$, and so $C$ is one of $s^j$, $s^{-1}$ or $s^{-1} + s^j$.
Now use \eqref{eq:EH} again to get the desired possibilities for $X$.  

The possibilities for $(L \bigdot L')$ are an immediate consequence.  
\end{proof}

\begin{lemma}\label{lem:Extfinite}
Let $R$ be an elliptic algebra.  Let $J, K$ be finitely generated $g$-divisible reflexive right $R$-submodules 
of $R_{(g)}$. Assume  that either 
\begin{itemize}\item[(a)] $J^\circ$ is projective; or 
\item[(b)] $J$ is CM and $K^\circ$ is projective.  \end{itemize}
Then $\uExt^1_R(J,K)$ is finite-dimensional.
\end{lemma}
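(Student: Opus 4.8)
The plan is to split the desired finite-dimensionality of $\uExt^1_R(J,K)$ into two halves: showing that its $g$-torsion part is finite-dimensional, and showing that $\uExt^1_R(J,K)$ is entirely $g$-torsion. We may assume $J,K\neq 0$, since otherwise $\uExt^1_R(J,K)=0$ and there is nothing to prove. Then $J$ and $K$ are non-zero $g$-divisible reflexive finitely generated graded right $R$-submodules of $R_{(g)}$, so Lemma~\ref{lem:barhom}(2) shows that the $g$-torsion subspace of $\uExt^1_R(J,K)$ is finite-dimensional over $\kk$. Hence it remains to prove that $\uExt^1_R(J,K)$ coincides with its $g$-torsion subspace, equivalently that $\uExt^1_R(J,K)\otimes_{\kk[g]}\kk[g,g^{-1}]=0$. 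By Lemma~\ref{lem0.1} this localisation is isomorphic to $\Ext^1_{R^\circ}(J^\circ,K^\circ)\otimes_\kk\kk[g,g^{-1}]$, so everything reduces to showing $\Ext^1_{R^\circ}(J^\circ,K^\circ)=0$ under each of the two hypotheses.

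In case (a) this is immediate: $J^\circ$ is projective over $R^\circ$, so $\Ext^1_{R^\circ}(J^\circ,-)$ vanishes identically. In case (b), $K^\circ$ is a finitely generated projective $R^\circ$-module and hence a direct summand of some free module $(R^\circ)^{\oplus n}$; thus $\Ext^1_{R^\circ}(J^\circ,K^\circ)$ is a direct summand of $\Ext^1_{R^\circ}(J^\circ,R^\circ)^{\oplus n}$, and it suffices to prove $\Ext^1_{R^\circ}(J^\circ,R^\circ)=0$. Applying Lemma~\ref{lem0.1} once more, $\dim_\kk\Ext^1_{R^\circ}(J^\circ,R^\circ)=\grk\uExt^1_R(J,R)$, so it is enough to check $\uExt^1_R(J,R)=0$. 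But $J$ is nonzero and reflexive, so $\uHom_R(J,R)=J^*\neq 0$ (if $J^*=0$ then $J=J^{**}=0$), whence $j(J)=0$ in the notation of Definition~\ref{def:gor}; since $J$ is CM by hypothesis, $\uExt^r_R(J,R)=0$ for all $r\neq 0$, and in particular $\uExt^1_R(J,R)=0$, as required.

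There is no serious obstacle here: the content is entirely in setting up the reductions, the two crucial inputs being Lemma~\ref{lem:barhom}(2) and the localisation identity of Lemma~\ref{lem0.1}, after which each case is elementary homological algebra over $R^\circ$. The one point to handle with care is that $\uExt^1_R(J,K)$ need not be finitely generated over $\kk[g]$, so ``$\grk=0$'' must be interpreted via the localisation $\otimes_{\kk[g]}\kk[g,g^{-1}]$ rather than via a structure theorem; phrasing the vanishing statements through the central localisation at $g$ throughout avoids any difficulty. One should also confirm, as above, that reflexivity of the nonzero module $J$ forces $j(J)=0$, which is what lets the Cohen-Macaulay hypothesis kill $\uExt^1_R(J,R)$ in case (b).
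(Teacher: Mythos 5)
Your proof is correct and follows essentially the same route as the paper's: split into showing the $g$-torsion part of $\uExt^1_R(J,K)$ is finite-dimensional via Lemma~\ref{lem:barhom}(2), then reduce via Lemma~\ref{lem0.1} to showing $\Ext^1_{R^\circ}(J^\circ,K^\circ)=0$, handled trivially in case (a) and via a direct summand argument together with the CM property of $J$ in case (b). The only cosmetic difference is that you spell out why $j(J)=0$ (via $J^*\neq 0$), which the paper leaves implicit.
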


\begin{proof} 
Consider    $H = \uExt^1_R(J,K)$.  By Lemma~\ref{lem:barhom}(2), the $g$-torsion subspace of $H$ 
is finite dimensional.     
It therefore  suffices to prove that $H$ is $g$-torsion.  By Lemma~\ref{lem0.1}, it is then enough to show 
that  $\Ext^1_{R^\circ} (J^\circ, K^\circ)=0$.  This is trivial if $J^\circ$ is projective.  If  
$K^\circ$ is projective, then $\Ext^1_{R^\circ}(J^\circ, K^\circ)$ is a direct summand of a sum of 
copies of  $\Ext^1_{R^\circ}(J^\circ, R^\circ)$.  
As $J$ is CM,    Lemma~\ref{lem0.1} again implies that $\Ext^1_{R^\circ}(J^\circ, R^\circ)=0$
and hence   $\Ext^1_{R^\circ} (J^\circ, K^\circ)=0$.
\end{proof}

We next  characterise distinct  lines with $(L \bigdot L') =1$.

\begin{theorem}\label{thm:three}
Let $L = R/J$, $L' = R/J'$ be   line modules over an elliptic algebra $R$, with $L \not\cong L'$.  
Consider the following conditions:
\begin{enumerate}
\item $\hilb \uExt^1_R(J,J') = s^{-1} + 1$.
\item $X(L,L') =  s^{-1}(1-s)^{-1}$.
\item $\hilb R - \hilb \uHom_R(J,J') = (1+s)(1-s)^{-1}$. 
\item $\hilb \uHom_R(J, L')  =  s^{-1}(1-s)^{-2}$.
\item $(L \bigdot L') = 1$.
\end{enumerate}
Then:
\begin{itemize}
\item[(a)] $(1)$ $\Rightarrow$ $(2)$ $\iff$ $(4)$ $\iff$ $(5)$, while   $(1)$ $\Rightarrow$ $(3)$.  
\item[(b)]  If either $J^\circ$ is projective with $(L \bigdot L) = -1$, or $(J')^\circ$ is projective
with  $(L'\bigdot L') = -1$, then  all five conditions are equivalent.  
\end{itemize}
\end{theorem}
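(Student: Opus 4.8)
The plan is to follow the template of Theorem~\ref{thm:one}, combining the Hilbert-series identities \eqref{eq:main}, \eqref{eq:EH}, \eqref{eq:int-hs}, \eqref{eq:homJJ} with the exact sequences \eqref{eq:extLL} and \eqref{eq:JJ}, and the two constraints that are special to \emph{distinct} line modules: $\uHom_R(J,J')_0 = 0$ (Lemma~\ref{lem:homJ}) and the short list of possible values of $X(L,L')$ furnished by Lemma~\ref{lem:new}.

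For part (a), I would argue as follows. The equivalence $(2)\iff(4)$ is immediate from the definition of $X(L,L')$ and the first equality in \eqref{eq:main}, together with the identity $s^{-1}(1-s)^{-1}+(1-s)^{-2}=s^{-1}(1-s)^{-2}$. The equivalence $(2)\iff(5)$ comes from Lemma~\ref{lem:new}: the only possible values of $X=X(L,L')$ are $0$, $s^{-1}(1-s)^{-1}$, and a polynomial, while by \eqref{eq:EH}--\eqref{eq:int-hs} the integer $(L\bigdot L')$ is the sum of the coefficients of $(1-s)X$, which equals $1$ exactly when $X=s^{-1}(1-s)^{-1}$. For $(1)\Rightarrow(3)$ and $(1)\Rightarrow(2)$, I would assume $\hilb\uExt^1_R(J,J')=s^{-1}+1$ and read off $\im\alpha$ in \eqref{eq:JJ}: since $\im\alpha=\ker(\timesg\text{ on }\uExt^1_R(J,J')[-1])$, its degree-$0$ part is all of $\uHom_B(\overline{J},\overline{J'})_0=\epsilon_{p,p'}\kk$ (because $\overline{\uHom_R(J,J')}_0=\uHom_R(J,J')_0=0$ by Lemma~\ref{lem:homJ}), and its degree-$1$ part is one-dimensional because $\uExt^1_R(J,J')_1=0$; hence $\hilb\im\alpha=\epsilon_{p,p'}+s$. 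Substituting into \eqref{eq:homJJ} gives (3), and then \eqref{eq:main} gives (2).

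For part (b), I would first treat the hypothesis that $J^\circ$ is projective. Then $\uExt^1_R(J,J')$ is finite-dimensional by Lemma~\ref{lem:Extfinite}(a), hence a finite direct sum of shifted cyclic $\kk[g]$-modules $\kk[g]/(g^{n_i})[-d_i]$; and exactly as in the proof of Proposition~\ref{prop:Hompt}, degree-$1$ generation of $J$ forces $\uExt^1_R(J,J')_n=0$ for $n\le -2$, so $d_i\ge -1$ for all $i$. A formal computation from \eqref{eq:JJ}, \eqref{eq:homJJ} and \eqref{eq:main} shows that under either (2) or (3) the bottom degrees satisfy $\sum_i s^{d_i}=s^{-1}+\epsilon_{p,p'}$; combined with $d_i\ge -1$ this already yields $\hilb\uExt^1_R(J,J')=s^{-1}+1$ \emph{as soon as} the lengths $n_i$ are known to be minimal. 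When (3) holds, \eqref{eq:homJJ} pins down $\hilb\im\alpha=\epsilon_{p,p'}+s$, which via the top-degree description of $\im\alpha$ forces the $n_i$ minimal and hence gives (1) using only $J^\circ$ projective. When only (2) is assumed, the $n_i$ are a priori unconstrained, and this is where $(L\bigdot L)=-1$ enters: by Theorem~\ref{thm:one} it is equivalent to $\uExt^1_R(J,J)=0$, equivalently $\uEnd_R(J)=R$, and I would use this rigidity, together with the $R^\circ$-projectivity, to bound $\uExt^1_R(J,J')$ (by comparing it with $\uExt^1_R(J,J)$ via the exact sequences for $0\to J'\to R\to L'\to 0$ and $0\to J\to R\to L\to 0$) and rule out long $\kk[g]$-blocks, giving $(2)\Rightarrow(1)$. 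The second hypothesis, that $(J')^\circ$ is projective and $(L'\bigdot L')=-1$, I would reduce to the first via the duality isomorphism $\uExt^j_R(L,L')\cong\uExt^j_R((L')^\vee,L^\vee)$ of Lemma~\ref{lem:Rdual}(4), which also identifies $X(L,L')$ with $X((L')^\vee,L^\vee)$ and $(L\bigdot L')$ with $((L')^\vee\bigdot L^\vee)$.

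The main obstacle is exactly this last rigidity step in (b): ruling out ``long Jordan blocks'' in the finite-dimensional $\kk[g]$-module $\uExt^1_R(J,J')$ when only (2) is assumed. Everything up to determining the bottom degrees of its $\kk[g]$-summands is bookkeeping with the Hilbert-series identities already in hand; the content lies in converting $\uExt^1_R(J,J)=0$ and the projectivity of $J^\circ$ into an upper bound on $\uExt^1_R(J,J')$ that confines it to degrees $\{-1,0\}$. I expect the cleanest route is to descend to $\rmod R^\circ$ via Corollary~\ref{lem0.7} and Lemma~\ref{lem0.1}, where $(L\bigdot L)=-1$ becomes $\Ext^1_{R^\circ}(L^\circ,L^\circ)=0$, $J^\circ$ is projective, and $(L\bigdot L')=1$ becomes a statement about $\dim_\kk\Ext^1_{R^\circ}(L^\circ,(L')^\circ)$, and then to feed the degree information recorded above back into the graded module over $R$.
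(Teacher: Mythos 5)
Your part (a) is essentially the paper's argument and is correct, including the determination $\hilb\im\alpha=\epsilon_{p,p'}+s$ in $(1)\Rightarrow(3)$ from the $g$-torsion of $\uExt^1_R(J,J')$ together with Lemma~\ref{lem:homJ} and the constraint that $\im\alpha$ is supported in degrees $\geq 0$.

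For part (b), your route to $(3)\Rightarrow(1)$ is needlessly indirect and, as stated, does not close: ``the top-degree description of $\im\alpha$ forces the $n_i$ minimal'' is not a determination, because $\im\alpha$ only records the degrees $d_i+n_i$ of the socle elements of each $\kk[g]$-block and leaves the block lengths $n_i$ free. The cleaner path (and the paper's) is: once $\dim_\kk\uExt^1_R(J,J')<\infty$ by Lemma~\ref{lem:Extfinite} (which applies under either half of the hypothesis in (b) since the line ideal $J$ is CM by Lemma~\ref{lem:Rdual}), condition (3) fed into \eqref{eq:main} and \eqref{eq:int-hs} gives $(L\bigdot L')=1$, hence (2); then (2) and (3) substituted back into \eqref{eq:main} pin $\hilb\uExt^1_R(J,J')$ to $s^{-1}+1$ directly, with no discussion of block structure.

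The genuine gap in your proposal is exactly where you flag one, namely the rigidity step $(2)\Rightarrow(3)$. You propose descending to $\rmod R^\circ$ and bounding $\Ext^1_{R^\circ}(J^\circ,(J')^\circ)$ using $\Ext^1_{R^\circ}(J^\circ,J^\circ)=0$; but this passage discards the graded information that is actually doing the work, you give no mechanism for the bound, and I do not see how to recover the graded conclusion from the $R^\circ$-data alone. The paper's argument is different and stays graded. Under (2), equations \eqref{eq:main} and \eqref{eq:homJJ} force $\dim_\kk\im\alpha=1+\epsilon_{p,p'}$; suppose $\hilb\im\alpha\neq\epsilon_{p,p'}+s$. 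Since $\im\alpha$ is concentrated in degrees $\geq 0$ with $\dim(\im\alpha)_0=\epsilon_{p,p'}$, the extra dimension lies in some degree $\geq 2$, so $\overline{\uHom_R(J,J')}$ contains all of $\uHom_B(\overline J,\overline{J'})_1$. Now the hypothesis $(L\bigdot L)=-1$ enters via Theorem~\ref{thm:one}: it makes $\overline{\uEnd_R(J)}=\uEnd_B(\overline J)$, which is the full TCR $B'=B(E,\sM(-p),\tau)$, and $\overline{\uHom_R(J,J')}$ is a right $B'$-module; since $\uHom_B(\overline J,\overline{J'})_{\geq 1}$ is generated in degree one over $B'$ by Lemma~\ref{lem:uhomB}, it follows that $\overline{\uHom_R(J,J')}=\uHom_B(\overline J,\overline{J'})_{\geq 1}$, contradicting a nonzero cokernel in degree $\geq 2$. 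Under the second half of the hypothesis in (b) the identical argument runs on the left using $\overline{\uEnd_R(J')}$, so the duality reduction you propose is not needed either. This degree-one generation over the TCR $\overline{\uEnd_R(J)}$ is the content missing from your sketch.
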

\begin{proof}
(a) 
Once again \eqref{eq:main}  implies that $(2)$ $\iff$ $(4)$,  while 
Lemma~\ref{lem:new} gives $(2)$ $\iff$ 
$(5)$.  

 $(1)$ $\Rightarrow$ $(3)$. Suppose that $(1)$ holds and consider the part of the exact sequence 
\eqref{eq:JJ} given by 
\[  \uHom_R(J,J') \too 
\uHom_B(\overline{J}, \overline{J'}) \overset{\alpha}{\too} \uExt^1_R(J,J')[-1] \overset{\timesg}{\too} \uExt^1_R(J,J').
\]
By assumption $\uExt^1_R(J,J')=\kk+\kk[1]$ is $g$-torsion, and so  the kernel of   $\timesg$ 
 on $\uExt^1_R(J,J')$ contains at least the highest degree piece  $\uExt^1_R(J,J')_0$.
Thus $\hilb\im \alpha$ equals either $s$ or $1+s$.

Let $p = \Div L$ and $p' = \Div L'$.  If $p\neq p'  $, then $\uHom_B(\overline{J}, \overline{J'})_0 = 0$ 
and so $\hilb \im \alpha = s$.  If $p = p'$ then $\uHom_B(\overline{J}, \overline{J'})_0 = \kk$, whereas 
$\uHom_R(J, J')_0 = 0$ by Lemma~\ref{lem:homJ}.  Thus $\hilb \im \alpha = 1+s$.  
In either case,  $\hilb \im \alpha = \epsilon_{p,p'}+s$, in the notation of  \eqref{epsilon}.  
Thus  $(3)$ follows from \eqref{eq:homJJ}.

 $(1)$ $\Rightarrow$ $(2)$.  If $(1)$ and therefore $(3)$ hold, then  $(2)$ follows from \eqref{eq:main}.

\medskip
(b)  $(3)$ $\Rightarrow$ $(1)$.   Suppose  that  $(3)$ holds.
Recall that $J$ and $J'$ are CM, $g$-divisible, and reflexive, by Lemma~\ref{lem:Rdual}(2).   Thus the assumption in (b) ensures that
 the hypothesis of Lemma~\ref{lem:Extfinite} holds.  Then  $\dim_\kk \uExt^1_R(J,J')<\infty$ by that lemma,  and so \eqref{eq:main} implies that 
$\dim_\kk \uExt^1_R(L, L')_n  - \dim_\kk \uHom_R(L, L')_n = 1$ for $n \gg 0$. Hence $(L \bigdot  L') = 1$ 
by \eqref{eq:int-hs}.  Thus 
$(3)$ implies $(5)$ and hence  $(2)$.  But $(2)$ and $(3)$ together force 
$\hilb \uExt^1_R(J,J') = s^{-1} + 1$ 
by \eqref{eq:main}, and so $(1)$ holds.   

 $(2)$ $\Rightarrow$ $(3)$.  Finally, assume that  $(2)$ holds. Then, as $\dim_\kk \uExt^1_R(J,J')<\infty$,   \eqref{eq:main} and  \eqref{eq:homJJ} together imply that 
$\dim_\kk \im \alpha = 1+\epsilon_{p,p'}$.  We claim that  this 
forces $\hilb \im \alpha = \epsilon_{p,p'} + s$.  
First note that $\dim (\im \alpha)_0 = \epsilon_{p,p'}$. So if $\hilb \im \alpha \neq \epsilon_{p,p'}+s$,
   the cokernel of $\uHom_R(J,J') \to \uHom_B(\overline{J}, \overline{J'})$ is one-dimensional in 
 some degree  $> 1$.   Thus 
  $\overline{\uHom_R(J,J')}\supseteq \uHom_B(\overline{J}, \overline{J'})_1 = H^0(E, \sM_n(-p' + \tau^{-1}(p)))$, and 
it is also an $\Big(\overline{\uEnd_R(J', J')},\,\, \overline{\uEnd_R(J,J)}\Big)$-bimodule.  By hypothesis, either
 $L$ or  $L'$ satisfies the equivalent conditions 
in Theorem~\ref{thm:one}; say it is $L$.  Then 
$\overline{\uEnd_R(J,J)} = \uEnd_B(\overline{J}, \overline{J})$
 is a full TCR, say   $B'=B(E, \mc{M}(-p), \tau)$.  By Lemma~\ref{lem:uhomB},
  $\uHom_B(\overline{J}, \overline{J'})_{\geq 1} $ is generated in degree $1$ as a right $B'$-module and 
  thus  $\overline{\uHom_R(J,J')} = \uHom_B(\overline{J}, \overline{J'})_{\geq 1}$, a contradiction.  Similarly,
   if $\overline{\uEnd_R(J', J')}$ is a full TCR we get a contradiction by viewing  $\uHom_B(\overline{J}, \overline{J'})_{\geq 1}$ as a left module.  
This proves the claim that  $\hilb \im \alpha = \epsilon_{p,p'}+s$ which,  by \eqref{eq:homJJ},  implies $(3)$.
\end{proof}

Finally, we characterise lines $L$ and $ L'$ with $(L \bigdot L') = 0$, although here we need to assume   that
 $\Div L \neq \Div L'$.

\begin{theorem}\label{thm:two}
Let $L = R/J$ and $L' = R/J'$ be   line modules over an elliptic algebra $R$ with divisors 
$\Div L = p\not= p'= \Div L'$.   Consider the following conditions.
\begin{enumerate}
\item $\uExt^1_R(J,J') = 0 $.
\item $\hilb \uExt^1_R(L,L')=\hilb  \uHom_R(L, L')$.
\item $\hilb R - \hilb \uHom_R(J,J') = (1-s)^{-1}$.  
\item $\hilb \uHom_R(J, L')  = (1-s)^{-2}$.
\item $(L \bigdot L') = 0$.
\end{enumerate}
Then:
\begin{itemize}
\item[(a)]$(1)$ $\iff$ $(2)$ $\iff$ $(4)$ $\iff$ $(5)$ $\Rightarrow$ $(3)$.  
\item[(b)]  If $J^\circ$ or $(J')^\circ$ is projective then  all five are equivalent.  
\end{itemize}
\end{theorem}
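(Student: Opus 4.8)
The plan is to run the same Hilbert-series bookkeeping that proves Theorems~\ref{thm:one} and~\ref{thm:three}, exploiting that the hypothesis $p\neq p'$ forces $\epsilon_{p,p'}=0$ in \eqref{epsilon}. The equivalence $(2)\iff(4)$ is immediate from \eqref{eq:main}, whose left-hand side is $X(L,L')$ and whose middle term is $\hilb\uHom_R(J,L')-(1-s)^{-2}$; so $(2)$ and $(4)$ both say $X(L,L')=0$. Next, substituting \eqref{eq:homJJ} (with $\epsilon_{p,p'}=0$) into the right-hand equality of \eqref{eq:main} gives
\[
 X(L,L') \;=\; \hilb\uExt^1_R(J,J')\;+\;\frac{\hilb(\im\alpha)}{1-s},
\]
where $\alpha$ is the connecting map of \eqref{eq:JJ}; this is a sum of two power series with non-negative coefficients, so $X(L,L')=0$ iff $\uExt^1_R(J,J')=0$ and $\im\alpha=0$. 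Since $(1)$ trivially forces $\im\alpha=0$ (it lands in $\uExt^1_R(J,J')[-1]=0$), we get $(1)\iff(2)$ at once; and feeding $\im\alpha=0$ back into \eqref{eq:homJJ} gives $(3)$. So far this establishes $(1)\iff(2)\iff(4)\Rightarrow(3)$.

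To bring in $(5)$ I would pass to $g$-ranks. The exact sequences \eqref{eq:extLL} and \eqref{eq:JJ}, together with Proposition~\ref{prop:Hompt}, Lemma~\ref{lem:barhom} and Lemma~\ref{lem:barhom1} (and the right-boundedness of $\uExt^1_B(\bbar J,\bbar{J'})$, which follows from the $\chi$-condition of Proposition~\ref{prop:elliptic} together with Lemma~\ref{lem:basics for B} since $\bbar J,\bbar{J'}$ correspond to line bundles on $E$), show that $\uHom_R(L,L')$, $\uExt^1_R(L,L')$ and $\uExt^1_R(J,J')$ all have eventually constant Hilbert series and that $\im\alpha$ is finite-dimensional. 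Taking the eventual value of both sides of the displayed identity, i.e.\ combining it with \eqref{eq:int-hs}, then yields
\[
 (L\bigdot L') \;=\; \grk\uExt^1_R(J,J')\;+\;\dim_{\kk}\im\alpha .
\]
Hence $(L\bigdot L')=0$ forces both $\grk\uExt^1_R(J,J')=0$, i.e.\ $\uExt^1_R(J,J')$ is $g$-torsion, and $\im\alpha=0$; but by \eqref{eq:JJ}, $\im\alpha=0$ says exactly that $g$ acts injectively on $\uExt^1_R(J,J')$, and a $g$-torsion module on which $g$ is injective is zero. Thus $(5)\Rightarrow(1)$, while $(1)\Rightarrow(5)$ is clear from the same identity; this finishes part~(a).

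For part~(b) the only extra input is Lemma~\ref{lem:Extfinite}: since $J$ and $J'$ are CM, $g$-divisible and reflexive by Lemma~\ref{lem:Rdual}(2), projectivity of $J^\circ$ or $(J')^\circ$ makes $\uExt^1_R(J,J')$ finite-dimensional. Then, assuming $(3)$ — which by \eqref{eq:homJJ} is equivalent to $\im\alpha=0$ — \eqref{eq:JJ} again gives that $g$ acts injectively on $\uExt^1_R(J,J')$; but a graded, finite-dimensional module on which the degree-one element $g$ acts injectively must vanish, so $(1)$ holds and all five conditions coincide. The point to watch, and the reason part~(b) needs a projectivity hypothesis at all, is precisely this last upgrade: without finite-dimensionality, condition $(3)$ (equivalently, $g$ injective on $\uExt^1_R(J,J')$) is strictly weaker than $(1)$, since $\uExt^1_R(J,J')$ could be a nonzero $g$-torsionfree module. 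Everything else is routine manipulation of the exact sequences \eqref{eq:main}, \eqref{eq:EH}, \eqref{eq:homJJ}, \eqref{eq:extLL} and \eqref{eq:JJ}.
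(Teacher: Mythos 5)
Your argument is correct and follows essentially the same Hilbert-series bookkeeping as the paper's proof: both substitute \eqref{eq:homJJ} (with $\epsilon_{p,p'}=0$) into \eqref{eq:main} to write $X(L,L')=\hilb\uExt^1_R(J,J')+\hilb(\im\alpha)/(1-s)$ as a sum of nonnegative series, and both use \eqref{eq:JJ} to translate $\im\alpha=0$ into $g$-injectivity on $\uExt^1_R(J,J')$. The one cosmetic difference is in $(5)\Rightarrow(1)$: you first establish a priori that $\uExt^1_R(J,J')$ has eventually constant Hilbert series (needing the extra observation that $\uExt^1_B(\bbar J,\bbar{J'})$ is right bounded) so as to write the clean numerical identity $(L\bigdot L')=\grk\uExt^1_R(J,J')+\dim_\kk\im\alpha$; the paper instead deduces directly from $(5)$ that both summands of $X(L,L')$ are polynomials, getting $\dim_\kk\uExt^1_R(J,J')<\infty$ and $\alpha=0$ without needing the eventual-constancy step. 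Both are valid, and your formulation costs a little more work up front for a slightly tidier identity; your treatment of part (b) matches the paper's.
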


\begin{proof}
The proof of this result is very similar to the proof of Theorem~\ref{thm:one}.  Recall \eqref{eq:main}:
\[ X(L,L') \ = \  \hilb \uHom_R(J,L') - \frac{1}{(1-s)^2} 
\ = \  \hilb \uExt^1_R(J,J') +\mathscr{H} , \]  
where $\mathscr{H} = h_R -  (1-s)^{-1} - \hilb \uHom_R(J,J').$
In this case, since $p \neq p'$, \eqref{eq:homJJ} shows that $\mathscr{H}\geq 0$.  Thus just as in 
the proof of Theorem~\ref{thm:one},  $(2)$ $\iff$ $(4)$ is immediate, and $(2)$ implies $(1)$ and $(3)$.
Again, by \eqref{eq:homJJ}, $(3)$ is equivalent to $\alpha = 0$ in \eqref{eq:JJ}, and so $(1)$ implies $(3)$; 
thus $(1)$ implies $(2)$   by \eqref{eq:main}.

If $(2)$ holds, so $X(L,L') = 0$, then certainly $(L \bigdot L') = 0$, by \eqref{eq:int-hs}, and so  $(5)$ holds.

Finally, if $(5)$ holds, then \eqref{eq:int-hs} implies that  
$\dim_\kk \uExt^1_R(L,L')_n - \dim_\kk \uHom_R(L, L')_n = 0$ for 
all $n \gg 0$. This  shows that,  in \eqref{eq:main}, the non-negative Hilbert series
   $\mathscr{H}$  and $\hilb \uExt^1_R(J,J')$
   must be polynomials in $s$.  But \eqref{eq:homJJ} shows that $\mathscr{H}$ is either $0$ or a  
    multiple of $1/(1-s)$.  Thus  $\mathscr{H}=0$, and so $(3)$ holds.  
    As we already saw, $(3)$ implies that $\alpha = 0$ in \eqref{eq:JJ}, which in turn implies that $Z=\uExt^1_R(J,J')$ is $g$-torsionfree. 
    But as we have already shown that  $\dim_\kk Z<\infty$, 
     this forces $Z = 0$.  Thus $(5)$ implies $(1)$.
This finishes the proof of (a).

  (b) In this case,   Lemma~\ref{lem:Rdual}(2) and Lemma~\ref{lem:Extfinite}  
imply that  $Z=\uExt^1_R(J, J')$ is finite-dimensional. 
    Since we just saw  that $(3)$ implies that $Z$ is $g$-torsionfree,  $(3) \Rightarrow (1)$ holds  in this case.
\end{proof}

 
\section{Blowing down elliptic algebras and their modules}\label{BLOWDOWN}

As mentioned in the introduction  (see Proposition~\ref{prop1} and Theorem~\ref{CCC})
two fundamental and inverse constructions in birational geometry  are the concepts of  blowing up a closed point $p$ on  
 a smooth projective surface  and, conversely,    blowing   down lines of self-intersection $-1$. In the noncommutative universe
  one again has a notion of blowing up, coming from \cite{VdB-blowups} and \cite{R-Sklyanin} and at least if the set-up is
   ``smooth enough'' then one obtains an exceptional line module $L$ with $(L\bigdot L)=-1$. In this section we prove 
one of the main results of the paper by giving a noncommutative analogue of Castelnuovo's Theorem: if $R$ is an elliptic algebra
 with a line module $L$ satisfying $(L\bigdot L)=-1$, then one can blow $L$ down to get a second elliptic algebra. Moreover, this 
 operation is the inverse of blowing up. In fact we can do a little better and also obtain a result that works without smoothness assumptions, 
 by  replacing the requirement that $(L\bigdot L)=-1$ by the assumption that the corresponding line ideal $J$ satisfies 
$\hilb \uEnd_R(J)=\hilb R$. The details are given in Theorems~\ref{thm:I.1} and \ref{thm:inverse}.

We will need the following technical result about images of direct sums of a single line module.

\begin{proposition}
\label{prop:image}
Let $L$ be a right line module over an elliptic algebra   $R$.  Let $\II$ be an index set, 
and for each $i \in \II$ 
choose an integer $a_i \in \mb{Z}$.  
 Assume $\{a_i : i\in\II\}$ is bounded below.  
Let $A\subseteq \LL = \bigoplus_{i \in \II} L[-a_i]$
be  any submodule  such that $N = \LL/A$ is GK-2 pure.  Then there is $\JJ \subseteq \II$ 
such that $\LL$ is an internal  direct sum  $\LL = A \oplus \bigoplus_{i \in \JJ} L[-a_i]$.  In particular,
 $N \cong \bigoplus_{i \in \JJ} L[-a_i]$.  
\end{proposition}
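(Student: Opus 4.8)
The plan is to reduce the statement, using the fact that $L$ is $2$-critical, to an extraction problem for direct sums of line modules, and then to solve that by passing modulo $g$, where the equivalence $\rqgr B\simeq\coh E$ turns everything into a semisimple picture. Write $p=\Div L$, so that $L/Lg\cong M_p$ is a point module (Lemma~\ref{lem:basics}); after a global shift I may assume $a_i\ge 0$ for all $i$. The first observation is a dichotomy: for each $i$, the submodule $L[-a_i]\cap A$ of $L[-a_i]$ is either $0$ or all of $L[-a_i]$. Indeed $L[-a_i]$ is $2$-critical (Lemma~\ref{lem:basics}), so $L[-a_i]/(L[-a_i]\cap A)$ is either $0$ or has $\GKdim=2$; but it embeds in the $2$-pure module $N$, so if it is nonzero it has $\GKdim=2$, forcing $L[-a_i]\cap A=0$. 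The summands with $L[-a_i]\subseteq A$ form a direct summand of both $A$ and of $\LL$ and do not change $N$, so I split them off and assume $L[-a_i]\cap A=0$ for every $i$. Then each composite $L[-a_i]\hookrightarrow\LL\twoheadrightarrow N$ is injective, with image a submodule $L_i\cong L[-a_i]$, and $N=\sum_{i\in\II}L_i$. The goal is now reduced to finding $\JJ\subseteq\II$ with $N=\bigoplus_{j\in\JJ}L_j$ (internal direct sum): granting this, the surjection $\bigoplus_{j\in\JJ}L[-a_j]\to N$ is also injective (a relation $\sum_j x_j\mapsto 0$ yields $\sum_j\overline x_j=0$ in $\bigoplus_\JJ L_j$, so each $\overline x_j=0$ and hence $x_j=0$), so it is an isomorphism, which is exactly the assertion $\LL=A\oplus\bigoplus_{j\in\JJ}L[-a_j]$ together with $N\cong\bigoplus_{j\in\JJ}L[-a_j]$.

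Before extracting $\JJ$ I would record two inputs. First, $N$ is $g$-torsionfree: putting $\widetilde t=\{y\in\LL:yg^k\in A\ \text{for some }k\ge 0\}$, the $g$-torsion submodule of $N$ is $t(N)=\widetilde t/A$, and $\widetilde t$ is $g$-divisible in $\LL$ because $\LL$ is $g$-torsionfree, so $\widetilde t/\widetilde t g\hookrightarrow\LL/\LL g=\bigoplus_i M_p[-a_i]$, a module of $\GKdim\le 1$; hence $\widetilde t/\widetilde t g^k$ and its quotient $t(N)$ have $\GKdim\le 1$, and $2$-purity of $N$ forces $t(N)=0$. Second, the rigidity of homomorphisms between shifted copies of $L$: by Lemma~\ref{lem:red0}, $\uEnd_R(L)\cong\kk[g]$, and combined with Proposition~\ref{prop:Hompt} one has $\uHom_R(L[-a],L[-b])_0=\kk\cdot g^{\,b-a}$ for $b\ge a$ and $0$ otherwise; in particular every degree-zero endomorphism of $L$ is a scalar, which prevents pathologies such as $L$ containing a proper submodule isomorphic to itself, and likewise $\uHom_B(M_p,M_p)=\kk$.

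To produce $\JJ$ I pass modulo $g$. Since $N$ is $g$-torsionfree, $N/Ng$ is a quotient of $\bigoplus_i M_p[-a_i]$ with kernel $\overline A=(A+\LL g)/\LL g$. Under $\rqgr B\simeq\coh E$ (Lemma~\ref{lem:basics for B}), each $\pi(M_p[-a_i])$ is the image of the skyscraper $\mc{O}_p$ under the shift autoequivalence, hence again a skyscraper sheaf, so $\pi(\LL/\LL g)=\bigoplus_i\pi(M_p[-a_i])$ is a \emph{semisimple} object of $\coh E$ (with $\pi(M_p[-a_i])\cong\pi(M_p[-a_j])$ precisely when $a_i=a_j$, by \eqref{eq:truncshift}). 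A submodule of a semisimple object has a complement chosen among the given simple summands, so there is $\JJ\subseteq\II$ with $\pi(\LL/\LL g)=\pi(\overline A)\oplus\bigoplus_{j\in\JJ}\pi(M_p[-a_j])$ and $\bigoplus_{j\in\JJ}\pi(M_p[-a_j])\xrightarrow{\ \sim\ }\pi(N/Ng)$. I take this $\JJ$, set $\phi\colon\bigoplus_{j\in\JJ}L[-a_j]\to N$, and let $\overline\phi\colon\bigoplus_{j\in\JJ}M_p[-a_j]\to N/Ng$ be its reduction mod $g$, which is an isomorphism after applying $\pi$. Then $\ker\overline\phi$ is finite dimensional, hence $0$ since $\bigoplus_\JJ M_p[-a_j]$ is a direct sum of $1$-critical modules and so has no nonzero finite-dimensional submodule; and $\coker\overline\phi$ is finite dimensional, which must then be shown to vanish (using the rigidity $\uHom_B(M_p,M_p)=\kk$ and graded Nakayama to force each excluded $M_p[-a_i]$ into the span of the chosen ones, together with the properties of $N$). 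Finally, lift across $g$: since $\im\phi\subseteq N$ is $g$-torsionfree, the snake lemma for reduction mod $g$ shows $\ker\phi/(\ker\phi)g$ embeds in $\bigoplus_\JJ M_p[-a_j]$ and is annihilated by the injective $\overline\phi$, so $\ker\phi/(\ker\phi)g=0$ and $\ker\phi=0$ by graded Nakayama; likewise $\coker\phi/\coker\phi\, g=\coker\overline\phi=0$, so $\coker\phi=0$ by graded Nakayama. Thus $\phi$ is an isomorphism. An infinite index set $\II$ is reduced to the above by a direct-limit argument over the finite subsets of $\II$, every assertion used being of finite character.

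\textbf{Main obstacle.} The delicate step is the passage from the clean semisimple picture in $\coh E$ to an honest coordinate-type decomposition of $\LL$ over $R$. Two points require genuine care: (i) that $N$ is $g$-torsionfree — this is where $2$-purity of $N$ is really used; and (ii) that the isomorphism detected in $\rqgr B$ first promotes to an honest isomorphism $\bigoplus_\JJ M_p[-a_j]\xrightarrow{\sim}N/Ng$ in $\rgr B$ (the kernel is free by $1$-purity of point modules, but ruling out a nonzero finite-dimensional cokernel needs the rigidity of the $M_p[-a_i]$, not merely $\GKdim$-purity of $N$) and then to an isomorphism over $R$ by Nakayama in the $g$-direction, with attention needed because $\LL$ is not locally finite when $\II$ is infinite. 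It is precisely here that $\uEnd_R(L)=\kk[g]$ is indispensable: for a general $2$-critical $L$ the analogous statement fails.
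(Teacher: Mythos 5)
Your approach diverges substantially from the paper's, and as written it has a genuine gap at the decisive step. The paper first chooses $\JJ$ \emph{minimal} with $\LL=A+\bigoplus_{j\in\JJ}L[-a_j]$ (this is cheap via graded Nakayama on the left-bounded $N$, since each $L[-a_j]$ is cyclic), so that surjectivity of $\bigoplus_{j\in\JJ}L[-a_j]\to N$ is free; the real work is proving the sum is direct. For that the paper uses a chain of line-module tools that never appear in your write-up: \cite[Lemma~8.9(2)]{R-Sklyanin} to embed a cyclic submodule $uR$ into a shifted line module up to finite codimension, the vanishing $\uExt^1_R(\kk,L)=0$ from Lemma~\ref{lem:Rdual}(5) to lift across that finite-dimensional gap, and the rigidity $\Hom_{\rgr R}(L[-a],L[-b])=\kk\cdot g^{b-a}$ to collapse the lifted element into a single coordinate summand $K\cong L[-a_j]$; then $2$-criticality of $L$ together with $2$-purity of $N$ force $K\subseteq A$, contradicting minimality of $\JJ$.

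Your route instead picks $\JJ$ so that the images in $\rqgr B$ are complementary and then tries to promote this through $\rgr B$ back to $\rgr R$. The \emph{injectivity} half is fine: $\ker\overline{\phi}$ is a torsion submodule of $\bigoplus_{j\in\JJ}M_p[-a_j]$, hence $0$, and then surjectivity of $g$ on the left-bounded $\ker\phi$ (snake lemma) forces $\ker\phi=0$. But for \emph{surjectivity} of $\phi$ you must show $\coker\overline{\phi}=0$, and at exactly this point you write only that it ``must then be shown to vanish,'' gesturing at rigidity and Nakayama without an argument. This is a real gap. From the $\rqgr B$ complement you only learn that $C=\coker\overline{\phi}$ is torsion, equivalently that $D=\coker\phi=\LL/(A+\bigoplus_{\JJ}L[-a_j])$ has $D/Dg$ torsion, hence (in the finitely generated case) $\GKdim D\le 1$. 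That does not contradict $2$-purity of $N$: purity constrains submodules, not quotients. And $2$-criticality of each $L[-a_i]$ only gives $L[-a_i]\cap(A+\bigoplus_{\JJ}L[-a_j])\neq 0$, not $L[-a_i]\subseteq A+\bigoplus_{\JJ}L[-a_j]$, so you cannot push the leftover generators into the span. In short, your $\JJ$ chosen as a $\rqgr B$-complement may simply be too small: $\phi$ is injective but you have not ruled out $\coker\phi\neq 0$. Closing this hole seems to require essentially the machinery the paper deploys. (The closing remark about reducing an infinite $\II$ to finite subsets by ``finiteness of character'' also needs care, since the sets $\JJ_0$ obtained for nested finite $\II_0$ are not a priori compatible; the paper simply handles infinite $\II$ directly, using that $N$ is left bounded.)
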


\begin{proof}
We claim first that there exists a subset $\JJ \subseteq \II$ which is 
minimal under inclusion in the family 
$\mc{S}=\left\{ \KK\subseteq \II : \LL = A + \bigoplus_{i \in \KK} L[-a_i]\right\}$.  
   Indeed, since $N$ is left bounded, the graded 
Nakayama lemma (which does not require finite generation) 
applies and shows that $\KK \in \mc{S}$ if and only if the induced map 
$\psi: \bigoplus_{i \in \KK} L[-a_i]/L[-a_i]R_{\geq 1} \to N/NR_{\geq 1}$ is surjective.  
 Since   $L$  is cyclic, $\dim_\kk L[-a_i]/L[-a_i]R_{\geq 1}=1$.  Thus any   $\JJ$ for which $\psi$ is an
  isomorphism of $\kk$-spaces will be   minimal  
in $\mc{S}$, proving the claim.

Fix some such   $\JJ$.  We will prove that $\LL = A \oplus \bigoplus_{i \in \JJ} L[-a_i]$.  Suppose that  this fails;
thus  $0 \neq A\, \cap\, \bigoplus_{i \in \JJ} L[-a_i]$.   
 Write $L[-a_i]=\alpha_i R$, for homogeneous $\alpha_i$ and 
choose   $0 \neq u  = (\alpha_{i} x_{i})  \in A\, \cap\, \bigoplus_{i \in \JJ} L[-a_i]$,    
  for some homogeneous elements $x_{i} \in R$.    Let $\II^1 $ be the (finite)
 set $ \{ i \in \JJ :   \alpha_{i} x_{i} \neq 0\}$.  Without loss of generality, we may choose such 
 $u$ so that $|\II^1|$ is minimal.  Let  $\LL^1 = \bigoplus_{i\in \II^1} L[-a_i]$.

 Let $J = \Ann_R(u)$ and, for $i \in \II^1$, set $J_i = \Ann_R(\alpha_i x_i)$.   If $r \in J_i$,  then 
 $ur$ has strictly more zero entries than $u$, and so  the choice of $u$  forces $ur=0$.  So 
 $J_i \subseteq J  \subseteq J_i$.  Thus $J_i = J$ is independent of the choice of $i \in \II^1$.  
Fixing  an arbitrary $i \in \II^1$, we have $uR \cong R/J = R/J_i \cong \alpha_i x_i R \subseteq L[-a_i]$.  
 By \cite[Lemma~8.9(2)]{R-Sklyanin}, there is a (shifted) line module $L' \subseteq L[-a_i]$, containing
  $\alpha_i x_i R$, for which  $L'/(\alpha_i x_i R) = F$ is finite-dimensional. 
Since $uR \cong \alpha_i x_i R$, there is an injective homomorphism $uR \hra L'$ with  a finite-dimensional cokernel.
 Since $\uExt^1_R(F, \LL^1) = 0$ by Lemma~\ref{lem:Rdual}(5), the canonical injection  $uR \hra \LL^1$
  lifts to an injection $L' \hra \LL^1 \subseteq \LL$.  In other words, there is some $u' = (\alpha_i x'_i)$ 
  so that $uR \subseteq u'R \cong L'$, where $u'R/uR$ is finite-dimensional.  As $\LL^1$ is torsionfree,  
  $\alpha_i x'_i \neq 0$ if and only if $i \in \II^1$.  The  argument from the beginning 
  of the paragraph 
shows that $\Ann_R(\alpha_\ell x'_\ell) = \Ann_R(u')$, and hence that  $\alpha_\ell x'_\ell R \cong L'$, for all $\ell \in \II^1$.

Choose   $j\in \II^1$ such that $a_j = \max\{ a_i : i \in  \II^1\}$ and write $L'=\beta R$ for some homogeneous $\beta$.  
For any other $i \in \II^1$, by Lemma~\ref{lem:red0}(3) we may identify 
$\kk \cdot g^{a_j-a_i}=\Hom_{\rgr R}(L[-a_j], L[-a_i]) $, where  $g^{a_j-a_i}$ maps $\alpha_j \mapsto \alpha_i g^{a_j-a_i}$.
Now   
consider the following two    maps in $\Hom_{\rgr R}(L',L[-a_i])$:
\[ f:  \beta x \ \mapsto\ \alpha_i x'_i x,  \quad \quad \text{and}
 \quad \quad f': \beta x \mapsto\ \alpha_j x'_j x \  \mapsto\ \alpha_i g^{a_j-a_i} x'_j  x . \]
 Since $\Hom_{\rgr R}(L', L[-a_i]) \neq 0$, by Lemma~\ref{lem:red0}(2)  it is one-dimensional.
   Thus there is $\lambda_i \in \kk^*$ so that $f = \lambda_i f'$; that is
 $\alpha_i x'_{i} = \lambda_i \alpha_i g^{a_j-a_i} x'_j$.  Thus $u' = (\lambda_i \alpha_i g^{a_j-a_i} )x'_j$, 
whence  $L'\cong u' R\subseteq K = v R$, where $v = (\lambda_i \alpha_i g^{a_j-a_i})$. Clearly 
$K \cong L[-a_j]$.  
Since $L$ is $2$-critical, $\GKdim K/uR \leq 1$.  Since $uR \subseteq A$ and $\LL/A$ is $2$-pure,
 we must have $K \subseteq A$.  
Now it is easy to see that $\LL^1 = \bigoplus_{i \in \II^1} L[-a_i] = 
K \oplus \bigoplus_{i \in \II^1 \setminus \{ j \}} L[-a_i]$.  This implies 
that $A + \bigoplus_{i \in \JJ \setminus \{ j \}} L[-a_i] = \LL$, contradicting the choice of $\JJ$.  
Hence 
$\LL = A \oplus \bigoplus_{i \in \JJ} L[-a_i]$, as required.
\end{proof}

If $L$ is a line module over an elliptic algebra $R$, we now use it to define the largest extension 
$  \wt{K}$ of a   reflexive module $K\subset R_{(g)}$ by sums of shifts of $L$.\label{wt-defn}

\begin{lemma}
\label{lem:tilde}
Let $R$ be an elliptic algebra, and let $L = R/J$ be a right $R$-line module  with $\Div L = p \in E$.    
Let $K \subseteq R_{(g)}$ be a graded finitely generated $g$-divisible reflexive right $R$-module.  
Set  
\begin{equation}\label{eq:tilde}
\wt{K}  \ = \ \wt{K}_L\ = \ \sum_{\alpha}\Bigl\{  N_{\alpha} :   K \subseteq N_{\alpha}\subset Q_{\gr} (R) 
\ \text{ with }\   N_{\alpha}/K \cong L[-i_\alpha] \ \text{ for some } i_\alpha\in \ZZ\Bigr\}.
\end{equation}  Then the following hold.
\begin{enumerate}
\item $\wt{K} = \uHom_R(J, K) R$ and $\wt{K} \subseteq R_{(g)}$.

\item   As right $R$-modules,   $\wt{K}/K \cong \bigoplus_{i \in \mb{Z}} L[-i]^{\oplus a_i}$ 
for some $a_i \geq 0$. Hence $\hilb \wt{K}/K   = p(s)(1-s)^{-2}$  for 
 $p(s) =  \sum_{i\in\ZZ}a_is^i$. 
 \item Moreover,  $\hilb \uExt^1_R(L,K) = p(s)(1-s)^{-1}$.  
\item If $\uExt^1_R(L,L)  =0$ then $\uExt^1_R(L, \wt{K}) = 0$.
\end{enumerate}
\end{lemma}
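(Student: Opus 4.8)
\textbf{Proof proposal for Lemma~\ref{lem:tilde}.}

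The plan is to establish parts (1)--(3) first, since part (4) is the ``only if'' enhancement we ultimately want, and it will lean on the structural description of $\wt K$ obtained in (1) and (2).

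For part (1), the key identity is the standard one: a submodule $N$ with $K\subseteq N\subset Q_{\gr}(R)$ and $N/K\cong L[-i]=R/J\,[-i]$ is the same data as a homomorphism $J\to K$ that does not extend to a map $R\to K$ (equivalently, an element of $\uHom_R(J,K)\smallsetminus\uHom_R(R,K)=\uHom_R(J,K)\smallsetminus K$, using the identification of graded Homs between submodules of $Q_{\gr}(R)$ with quotients inside $Q_{\gr}(R)$ from Section~\ref{BACKGROUND}). Concretely, given $\theta\in\uHom_R(J,K)_i$, the module $N=\theta^{-1}J$--or more precisely $K+ \theta\cdot(\text{coset representatives})$--has $N/K$ a cyclic module killed by $J$ and of the right Hilbert series, hence a shift of $L$. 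Conversely, any such $N$ is cyclic modulo $K$ with annihilator of the cyclic generator containing $J$, and since $L=R/J$ has the minimal Hilbert series compatible with being killed by $J$, the annihilator is exactly $J$, giving the map $J\to K$. Summing over all such $N$ gives exactly $\uHom_R(J,K)R$; the containment $\wt K\subseteq R_{(g)}$ follows because $K$ is $g$-divisible reflexive, so by Lemma~\ref{lem:ideal}(1) $\uHom_R(J,K)\subseteq R_{(g)}$, and $R_{(g)}$ is a ring.

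For parts (2) and (3): the module $\wt K/K$ is $g$-torsionfree (since $K$ is $g$-divisible) and is generated by images of shifts of $L$, hence is a factor of $\bigoplus_i L[-i]^{\oplus b_i}$ for suitable $b_i$; one checks it is GK-$2$-pure (its submodules are extensions of copies of $L$ inside $R_{(g)}/K$, which is $2$-pure because $K$ is reflexive, by Lemma~\ref{lem:reflexive}(1)). Now apply Proposition~\ref{prop:image} with $\LL=\bigoplus_i L[-i]^{\oplus b_i}$ mapping onto $\wt K/K$: we get $\wt K/K\cong\bigoplus_{i}L[-i]^{\oplus a_i}$ internally, so $\hilb\wt K/K = p(s)(1-s)^{-2}$ with $p(s)=\sum a_i s^i$. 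For (3), apply $\uHom_R(L,-)$ (equivalently $\uHom_R(-,K)$ is not what we want; use the sequence $0\to K\to\wt K\to\wt K/K\to 0$ and apply $\uHom_R(L,-)$): since $\wt K$ is reflexive (it is $g$-divisible by (1) and $\overline{\wt K}$ is saturated because... actually more simply: $\uExt^1_R(L,\wt K/K)\cong\bigoplus\uExt^1_R(L,L)[-i]^{\oplus a_i}$ and the connecting data gives $\hilb\uExt^1_R(L,K)$)---the cleanest route is to combine the long exact sequence with the fact, from Lemma~\ref{lem:Rdual}(5), that $\uExt^1_R(\kk,K)=0$ is not quite it; rather, one uses that $\wt K$ is reflexive so that $\uHom_R(L,\wt K)$ and $\uExt^1_R(L,\wt K)$ behave well, together with Lemma~\ref{lem:red0}-type Hilbert-series bookkeeping. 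I expect (3) to come out of the identity $\hilb\uExt^1_R(L,K)-\hilb\uHom_R(L,K)$ combined with (2) via an Euler-characteristic argument, using that $\wt K$ is by construction ``saturated with respect to $L$-extensions'' so $\uHom_R(L,\wt K)\to\uHom_R(L,\wt K/K)$...

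For part (4), the main point: assume $\uExt^1_R(L,L)=0$. Apply $\uHom_R(L,-)$ to $0\to K\to\wt K\to\wt K/K\to 0$. By part (2), $\wt K/K\cong\bigoplus_i L[-i]^{\oplus a_i}$, so $\uExt^1_R(L,\wt K/K)\cong\bigoplus_i\uExt^1_R(L,L)[-i]^{\oplus a_i}=0$ and $\uExt^2_R(L,\wt K/K)\cong\bigoplus_i\uExt^2_R(L,L)[-i]^{\oplus a_i}$. The long exact sequence gives
\[
\uExt^1_R(L,K)\too\uExt^1_R(L,\wt K)\too \uExt^1_R(L,\wt K/K)=0,
\]
so $\uExt^1_R(L,\wt K)$ is a quotient of $\uExt^1_R(L,K)$. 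By part (3), $\hilb\uExt^1_R(L,K)=p(s)(1-s)^{-1}$. Now I want to show the surjection $\uExt^1_R(L,K)\twoheadrightarrow\uExt^1_R(L,\wt K)$ is in fact zero. The mechanism: every class in $\uExt^1_R(L,K)$ corresponds to an extension $0\to K\to N\to L[-i]\to 0$, and by the very definition \eqref{eq:tilde} of $\wt K$ (as the sum of \emph{all} extensions of $K$ by shifts of $L$ inside $Q_{\gr}(R)$), such an $N$ embeds into $\wt K$ compatibly---provided the extension is ``torsionfree'', i.e. $N\subseteq R_{(g)}$. So I must argue that the relevant Ext classes are represented by submodules of $R_{(g)}$; the $g$-torsion classes need separate treatment, but by Lemma~\ref{lem:barhom}(2) (or directly) $\uExt^1_R(L,K)$ has only finite-dimensional $g$-torsion, and one checks via the degree count from (3) that $\uExt^1_R(L,K)$ is in fact $g$-torsionfree here (its Hilbert series $p(s)/(1-s)$ matches $\kk[g]^{\oplus(\sum a_i)}$ up to shifts, by the argument in Proposition~\ref{prop:0.6}). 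Hence every class splits off inside $\wt K$, i.e. maps to $0$ in $\uExt^1_R(L,\wt K)$, forcing $\uExt^1_R(L,\wt K)=0$.

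\textbf{Main obstacle.} The delicate step is part (4): making rigorous the claim that ``$\wt K$ absorbs all $L$-extensions of $K$'', i.e. that the map $\uExt^1_R(L,K)\to\uExt^1_R(L,\wt K)$ vanishes. The subtlety is that \eqref{eq:tilde} only collects extensions realized \emph{inside} $Q_{\gr}(R)$, whereas a priori an element of $\uExt^1_R(L,K)$ could be a non-torsionfree extension, or a torsionfree one not obviously embeddable in $Q_{\gr}(R)$; handling the $g$-torsion part of $\uExt^1_R(L,K)$ (showing it is actually zero in the present situation) and checking the embeddability/compatibility will be the crux. Parts (1)--(3) I expect to be routine given Proposition~\ref{prop:image}, Lemma~\ref{lem:ideal}(1), Lemma~\ref{lem:reflexive}, and the Hom-calculus of Section~\ref{BACKGROUND}.
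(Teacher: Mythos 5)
Your parts (1) and (2) are correct and follow the same route as the paper: identify $\wt K$ with $\uHom_R(J,K)R$ via the correspondence between cyclic extensions inside $Q_{\gr}(R)$ and elements of $\uHom_R(J,K)$, invoke Lemma~\ref{lem:ideal}(1) for $\wt K\subseteq R_{(g)}$, check $\wt K/K$ is $2$-pure using Lemma~\ref{lem:reflexive}(1), and apply Proposition~\ref{prop:image} for the internal direct sum decomposition.

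For parts (3) and (4), there is a genuine gap, and you have partly diagnosed where it is. The crux of both is showing that the connecting map $\beta\colon\uExt^1_R(L,K)\to\uExt^1_R(L,\wt K)$ in the long exact sequence for $0\to K\to\wt K\to\wt K/K\to 0$ vanishes; once $\beta=0$, part (3) follows immediately because $\uHom_R(L,\wt K/K)\xrightarrow{\sim}\uExt^1_R(L,K)$ and $\hilb\uHom_R(L,\wt K/K)=p(s)/(1-s)$ by Lemma~\ref{lem:red0} and part (2), and part (4) follows by combining $\beta=0$ with $\uExt^1_R(L,\wt K/K)=0$. Your proposal for (3) trails off into an inconclusive Euler-characteristic heuristic and never arrives at this, and your proposal for (4) correctly identifies the obstacle (showing $\beta=0$ via realizing extensions inside $Q_{\gr}(R)$) but offers the wrong mechanism. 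The $g$-torsion freeness of the abelian group $\uExt^1_R(L,K)$ is a red herring: even if that group is $g$-torsionfree, an individual extension $0\to K\to N\to L[j]\to 0$ need not a priori be Goldie torsionfree or land in $Q_{\gr}(R)$. What is missing is the essentiality argument: one shows that in any \emph{nonsplit} extension $0\to K\xrightarrow{\iota}N\to L[j]\to 0$, the inclusion $\iota(K)\subseteq N$ is essential. This goes by taking $G\subseteq N$ maximal with $\iota(K)\cap G=0$, so $K\hookrightarrow N/G$ is essential; then $\GKdim(L[j]/\pi(G))\leq 1$ since $L$ is $2$-critical, and reflexivity of $K$ (whence $2$-purity of $R_{(g)}/K$, Lemma~\ref{lem:reflexive}(1)) forces $L[j]/\pi(G)=0$, giving the contradiction $N=K\oplus G$. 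Once $\iota$ is essential, $N$ embeds in $Q_{\gr}(R)$ (the injective hull of $K$), hence $N\subseteq\wt K$ by definition, so $\theta$ maps to zero in $\uExt^1_R(L,\wt K)$. Without this step your argument does not close.
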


\begin{proof}  Throughout the proof $N_\alpha$ will denote a module satisfying the properties defined by \eqref{eq:tilde}.

(1) If $N_\alpha   \subseteq Q_{gr}(R)$ satisfies $N_\alpha  /K \cong L[-i]$, then 
$N_{\alpha}  = xR + K$ for some $x \in Q_{gr}(R)_i$.  Then $xJ \subseteq K$ and so 
$N_\alpha   \subseteq \uHom_R(J, K) R$.   Thus $\wt{K} \subseteq \uHom_R(J, K) R$.
Conversely, 
if $x \in \uHom_R(J, K)_i\subset Q_{\gr}(R)$ then $(xR + K)/K$ is a homomorphic image of 
$(R/J)[-i] = L[-i]$.  As $K$ 
is reflexive, $(xR+K)/K$ is either $0$ or $2$-pure by Lemma~\ref{lem:reflexive}(1).  Therefore, because $L$ is 
$2$-critical, either $x \in K$ or else $(xR+K)/K \cong L[-i]$.  
In either case,  $x \in \wt{K}$ by the definition of $\wt{K}$.  Thus $\uHom_R(J, K) R \subseteq \wt{K}$ 
since $\wt{K}$ 
is a right $R$-module.    

Since $J$ and $K$ are  
$g$-divisible,  $\uHom_R(J,K) \subseteq R_{(g)}$ by Lemma~\ref{lem:ideal}(1), 
and so $\wt{K} \subseteq R_{(g)}$.  

(2) Clearly  $\wt{K}/K$ is a homomorphic image of 
$\bigoplus_{\alpha} N_{\alpha}/K \cong \bigoplus_{\alpha} L[-i_{\alpha}]$.  
Once again,   Lemma~\ref{lem:reflexive}(1) implies that $\wt{K}/K$ 
is either $2$-pure or $0$.    
Since $J$ and $K$ are left bounded and locally finite, so is $\uHom_R(J,K)R =\wt{K}$.  Thus there is a 
lower bound $d$ such that $d \leq i_{\alpha}$ for all 
$\alpha$.  Then Proposition~\ref{prop:image} applies, and shows that  
 $\wt{K}/K \cong \bigoplus_{i \in \mb{Z}} L[-i]^{\oplus a_i}$, where  
$a_i = 0$ for $i < d$. Also, the   $a_i$ are finite   since $\uHom_R(J,K) R$ is locally finite.
Since $\hilb L=(1-s)^{-2}$, it  is immediate that  $\hilb \wt{K}/K  =p(s)(1-s)^{-2}$.

(3) Consider the exact sequence
\begin{equation}\label{tilde2}
0 \too \uHom_R(L, \wt{K}/K) \overset{\alpha}{\too} \uExt^1_R(L, K) \overset{\beta}{\too}
 \uExt^1_R(L, \wt{K}) \overset{\gamma}{\too}   \uExt^1_R(L, \wt{K}/K)\too\cdots
\end{equation}
We will show that $\beta = 0$.  
So, suppose that  $0\not=\theta \in \uExt^1_R(L, K)_{-j}$, corresponding to
 a nonsplit extension $0 \to K \overset{\iota}{\to} N \overset{\pi}{\to} L[j] \to 0$.
  We claim that $\iota: K \to N$ 
is an essential extension.  If not, choose $0 \neq G \subseteq N$ maximal such that $\iota(K) \cap G  = 0$.   
Then there is an exact sequence $0 \to K \to N/G \to L[j]/\pi(G) \to 0$ where $K \to N/G$ is now essential.
But $\GKdim L[j]/\pi(G)\leq 1$ since $L$ is $2$-critical, and as $K$ is reflexive this forces $L[j]/\pi(G) = 0$.  
Thus $N = K \oplus G$, giving the required contradiction.  This proves the claim.  

Since $\iota$ is essential, we may take  the extension  $K \subseteq N$  inside $Q_{\gr}(R)$. 
As such,  $N \subseteq \wt{K}$ by definition  and so $\theta$ cannot induce a nontrivial extension of $\wt{K}$ by $L$.  Thus, $\beta=0$.

As $L_R$ is finitely generated, $\uHom_R(L, \bigoplus_{i \in \mb{Z}} L[-i]^{\oplus a_i}) = \bigoplus_{i \in \mb{Z}} \uHom_R(L, L[-i])^{\oplus a_i}$.   
Therefore,  since $\hilb \uHom_R(L, L) = 1/(1-s)$ by Lemma~\ref{lem:red0}, this implies that 
$\hilb \uHom_R(L, \wt{K}/K) = (\sum_{i \in \mb{Z}} a_i s^i)/(1-s)$.  
Applying   $\alpha$ shows
that $\hilb \uExt^1_R(L,K)=(\sum_{i \in \mb{Z}} a_i s^i)/(1-s)$, as well.

\medskip
$(4)$   If $\uExt^1_R(L, L) = 0$,  then  $ \uExt^1_R(L, \wt{K}/K) = 0$ in \eqref{tilde2}. Since $\beta=0$,  this forces 
$ \uExt^1_R(L, \wt{K}) =0$. 
\end{proof}
\noindent

We now come to one of the main results of the paper by showing that,  under mild conditions, in  applying the tilde operation 
 to $R$ itself one  obtains a ring $\widetilde{R}=\wt{R}_L$. As we show later in the section,   this operation is a good  non-commutative 
analogue of blowing down a line of self-intersection $-1$. In fact,  the self-intersection condition  is not quite the right concept when
 $\rqgr R$ is not smooth, and so the theorem is stated under the weaker condition \eqref{eq:line}.

\begin{theorem}\label{thm:I.1}
 Let $R$ be an elliptic algebra with $R/gR \cong B(E, \mc{M}, \tau)$.  Let $L = R/J$ be a right line module 
 with $\Div L = p$, satisfying 
\begin{equation}
\label{eq:line}
\hilb \uEnd_R(J)=\hilb  R.
\end{equation}   
Then  the module $\wt{R}=\wt{R}_L$  constructed in Lemma~\ref{lem:tilde} is a connected graded subalgebra of $R_{(g)}$.  
It  is also equal to $\uHom_R(J,J) R$ and satisfies the following properties.
\begin{enumerate}
 \item  As right $R$-modules, $\wt{R}/R \cong  \bigoplus_{i \geq 1} L[-i]$.
\item As left $R$-modules, $ \wt{R}/R \cong \bigoplus_{i \geq 1} L^{\vee} [-i]$, where  $L^{\vee}= \uExt^1_R(L, R)[1]$ is the dual line module.  
\item $\wt{R}$ is an  elliptic algebra with $\wt{R}/g\wt{R} \cong B(E, \sM(\tau^{-1}(p)), \tau)$. 
\end{enumerate}
\end{theorem}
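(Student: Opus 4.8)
The plan is to build $\wt{R} = \uHom_R(J,J)R$ concretely inside $R_{(g)}$ and verify each property in turn, using the structural results for line modules and elliptic algebras already proved. By Lemma~\ref{lem:tilde}(1), $\wt{R} = \uHom_R(J,J)R$ equals the module $\wt{R}_L$ of \eqref{eq:tilde} applied to $K = R$ (note $J$ is $g$-divisible and reflexive by Lemma~\ref{lem:Rdual}(2), so the lemma applies), and it sits inside $R_{(g)}$. The ring structure is the first point: since $\End_R(J)$ contains $1$ and is a subring of $Q_{\gr}(R)$ (under the identification $\uHom_R(J,J) = \{x \in Q : xJ \subseteq J\}$), the set $S := \uHom_R(J,J)$ is closed under multiplication, hence $\wt{R} = SR$ is closed under multiplication once one checks $RS \subseteq SR$; but in fact $R \subseteq S$ because $J$ is a right ideal, so $SR = S$ is already a ring. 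Here is where hypothesis \eqref{eq:line} enters: it is precisely condition (3) of Theorem~\ref{thm:one}, hence (via part (a) of that theorem) equivalent to $\uExt^1_R(J,J) = 0$ and to $\uExt^1_R(L,L) = 0$; these equivalences will be invoked repeatedly. Since $\uEnd_R(J) = S$ is left bounded with $\hilb S = \hilb R$ and $S_0 = \kk$ (by Lemma~\ref{lem:homJ}), $\wt{R} = S$ is connected graded.

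**Parts (1) and (2).** For part (1): $\wt{R}/R = \uHom_R(J,J)R/R$, and by Lemma~\ref{lem:tilde}(2) this is $\bigoplus_{i} L[-i]^{\oplus a_i}$ with $\hilb \wt{R}/R = p(s)/(1-s)^2$ where $p(s) = \sum a_i s^i$. Now $\hilb \wt{R}/R = \hilb \uHom_R(J,J) - \hilb R = 0$... no — rather, $\hilb \wt{R} - \hilb R$: we have $\wt{R} = \uHom_R(J,J)R$, which strictly contains $\uHom_R(J,J) \supseteq R$; I would instead compute directly. By \eqref{eq:main} (with $L = L'$, $J = J'$) together with hypothesis \eqref{eq:line} and $\uExt^1_R(L,L) = 0$, one reads off $\hilb \uHom_R(J,L) = s/(1-s)^2$, and then the exact sequence defining $\wt{R}$ via $\uHom_R(J,R) = J^*$ shows $\wt{R}/R \cong \bigoplus_{i\geq 1} L[-i]$ — concretely, $p(s) = s/(1-s) \cdot (1-s) $... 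I would match Hilbert series: $\hilb \wt{R}/R$ must equal $s/(1-s)^2 \cdot$ (something). The cleanest route: Lemma~\ref{lem:tilde}(3) gives $\hilb \uExt^1_R(L,R) = p(s)/(1-s)$, and by Lemma~\ref{lem:Rdual}(1), $\uExt^1_R(L,R)[1] = L^\vee$ is a left line module, so $\hilb \uExt^1_R(L,R) = s/(1-s)^2$; hence $p(s) = s/(1-s)$, i.e. $a_i = 1$ for $i \geq 1$ and $0$ otherwise. This gives part (1). For part (2): apply $\uHom_R(-,R)$ to $0 \to R \to \wt{R} \to \bigoplus_{i\geq 1}L[-i] \to 0$; since each $L$ is CM with $\uExt^1_R(L,R)[1] = L^\vee$ and $\uExt^{\geq 2}_R(L,R) = 0$ (Lemma~\ref{lem:Rdual}(1)), and since $\wt{R}$ is reflexive (it lies between $R$ and $Q_{\gr}$ with $2$-pure quotient — check via Lemma~\ref{lem:reflexive}(1) using that $L$ is $2$-pure), one gets $\uExt^1_R(\wt{R},R) = 0$ and the dual sequence $0 \to \wt{R}^* \to R \to \bigoplus_{i\geq 1}L^\vee[-i] \to \cdots$. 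Reflexivity of $\wt{R}$, i.e. $\wt{R} = \wt{R}^{**}$, will need the argument that $Q_{\gr}(R)/\wt{R}$ is $2$-pure, which follows since $\wt{R}$ is the maximal such extension. I would then identify the cokernel as giving $\wt{R}/R \cong \bigoplus_{i\geq 1}L^\vee[-i]$ as left modules.

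**Part (3): the hardest part.** The main obstacle is showing $\wt{R}/g\wt{R} \cong B(E, \sM(\tau^{-1}(p)), \tau)$. First, $\wt{R}$ is $g$-divisible: it equals $\uHom_R(J,J)R$ and $\uHom_R(J,J)$ is $g$-divisible by Lemma~\ref{lem:ideal}(1) (both $J$'s $g$-divisible), and $R$ is $g$-divisible, and the sum of $g$-divisible submodules of $R_{(g)}$ is $g$-divisible — actually I must be slightly careful here, but $\wt{R}/R \cong \bigoplus L[-i]$ is $g$-torsionfree by Lemma~\ref{lem:basics}, and $R$ is $g$-torsionfree, so $\wt{R}$ is $g$-torsionfree, and $g$-divisibility follows since $\wt{R} \supseteq R$ and $R_{(g)}/\wt{R}$ is a quotient of $R_{(g)}/R$ which is $g$-torsionfree. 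Then $\overline{\wt{R}} = \wt{R}/g\wt{R} \subseteq Q_{\gr}(B) = \kk(E)[t,t^{-1};\tau]$. Reducing the defining formula mod $g$: $\overline{\wt{R}} = \overline{\uHom_R(J,J)}$, and by Lemma~\ref{lem:barhom}(1) the inclusion $\overline{\uHom_R(J,J)} \subseteq \uHom_B(\bar J, \bar J)$ has finite-dimensional cokernel; but hypothesis \eqref{eq:line} via \eqref{eq:homJJ} forces $\im\alpha = 0$ in \eqref{eq:JJ}, hence $\overline{\uHom_R(J,J)} = \uHom_B(\bar J, \bar J)$ exactly. Now $\bar J = \bigoplus_{n\geq 0} H^0(E, \sM_n(-p))t^n$ is the line ideal reduction (from Lemma~\ref{lem:Rdual}(2)), so by Lemma~\ref{lem:uhomB}, $\uHom_B(\bar J, \bar J) = \bigoplus_{n\geq 0} H^0(E, (\mc{I}_p^{\tau^n})^{-1}\otimes \mc{I}_p \otimes \sM_n)t^n$. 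A sheaf computation: $(\mc{I}_p^{\tau^n})^{-1}\otimes \mc{I}_p \otimes \sM_n \cong \sM_n(\tau^{-n}(p) - p)$. Now I claim this graded ring is $B(E, \sM(\tau^{-1}(p)), \tau)$: writing $\sM' = \sM(\tau^{-1}(p))$, one computes $\sM'_n = \sM' \otimes (\sM')^\tau \otimes \cdots \otimes (\sM')^{\tau^{n-1}} = \sM_n(\tau^{-1}(p) + \tau^{n-2}(p) + \cdots)$ — wait, I need $\sM_n(\tau^{-n}(p) - p)$, so I should check the twist carefully; the correct identity is $\sM'_n \cong \sM_n \otimes \struct(\sum_{j=0}^{n-1}\tau^{j}(\tau^{-1}(p))) \otimes$ (a degree-zero correction). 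The precise bookkeeping of which translate of $p$ appears is exactly the routine-but-delicate step; I expect it to work out because the degree matches ($\deg\sM' = \deg\sM$, consistent with part of the blowup theory) and because the whole construction is designed to invert $Bl_p$, under which $B(E,\sM,\tau)$ of the blowup is $B(E, \sM(-p)$ or a translate$, \tau)$ — comparing with \cite[Theorem~1.1]{R-Sklyanin} / the exceptional line description pins down the translate. Finally, once $\overline{\wt{R}} \cong B(E,\sM(\tau^{-1}(p)),\tau)$, I note $g \in \wt{R}_1$ is still central (it was central in $R_{(g)}$) and a nonzerodivisor (as $\wt{R}$ is a domain, being a subring of $Q_{\gr}(R)$), so $\wt{R}$ is an elliptic algebra of degree $\deg\sM \geq 3$, completing part (3).
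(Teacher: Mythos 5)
Your proposal contains a fundamental error that propagates through the whole argument. You assert that $R \subseteq S = \uEnd_R(J)$ ``because $J$ is a right ideal,'' and conclude $\wt{R} = SR = S$. But $\uEnd_R(J)$ is $\{x \in Q_{\gr}(R) : xJ \subseteq J\}$ acting by \emph{left} multiplication; the right-ideal property $JR \subseteq J$ has nothing to do with left multiplication, and $J$ is not a two-sided ideal in general (indeed, the proof of Theorem~\ref{thm:inverse} shows $R(\tau(p))_1 J \subseteq J$, not $R_1 J \subseteq J$). So $R \not\subseteq \uEnd_R(J)$, and $\wt{R} = \uEnd_R(J)R$ is a strictly larger module than $\uEnd_R(J)$.

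You can already see the error from a Hilbert-series check you half-began: hypothesis~\eqref{eq:line} says $\hilb \uEnd_R(J) = \hilb R$, while by part~(1) of the theorem $\hilb \wt{R} = \hilb R + \sum_{i \geq 1} s^i/(1-s)^2 > \hilb R$. Likewise in part~(3), if $\wt{R}$ were equal to $\uEnd_R(J)$, then $\overline{\wt{R}}$ would equal $\uEnd_B(\overline{J}) = B(E, \mc{M}(\tau^{-1}(p) - p), \tau)$, a TCR of degree $\deg \mc{M}$, whereas the theorem asserts $\overline{\wt{R}} = B(E, \mc{M}(\tau^{-1}(p)), \tau)$ of degree $\deg \mc{M} + 1$. (Your closing remark that ``the degree matches ($\deg \sM' = \deg \sM$)'' is thus directly inconsistent with the statement you are trying to prove.) The missing ideas are (i) the proof that $\uEnd_R(J)R = \uHom_R(J,R)R = \wt{R}$, which requires the TCR multiplication identity $B'B = B''$ (where $B' = B(E, \mc{M}(\tau^{-1}(p)-p),\tau)$ and $B'' = B(E,\mc{M}(\tau^{-1}(p)),\tau)$) together with a Hilbert-series squeeze, and (ii) the ring property, which follows not from $R \subseteq S$ but from $R\cdot \uEnd_R(J) \subseteq R \cdot \uHom_R(J,R) = \uHom_R(J,R) \subseteq \uEnd_R(J)R$, hence $(\uEnd_R(J)R)^2 = \uEnd_R(J)R$. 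Part~(2) is also on shakier ground than you suggest: dualizing $0 \to R \to \wt{R} \to \bigoplus_{i\geq 1}L[-i]\to 0$ yields a product $\prod_i \uExt^1_R(L[-i],R)$ rather than a direct sum, which requires a separate argument to tame; the paper instead performs the whole tilde construction on the left with $L^\vee$ and then identifies $\wt{R}^\ell_1 = \wt{R}_1$ using Lemma~\ref{lem:Rdual}(3), after which generation in degree one finishes the job.
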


\begin{remark}\label{blowdown}  (1) In the notation of the theorem, we  say that $\wt{R}_L$   is obtained by {\em blowing down} 
or {\em contracting} $L$ (or alternatively its dual $L^\vee$).

(2) 
Note that by Theorem~\ref{thm:one}, if $(L \bigdot L) = -1$ then \eqref{eq:line} holds and so 
Theorem~\ref{thm:I.1} gives a method of contracting a line of self-intersection $(-1)$ on an elliptic algebra.

(3) If $\rqgr R$ is smooth then the conditions $(L \bigdot L) = -1$ and \eqref{eq:line}  are equivalent (see Remark~\ref{smooth-thm:one}).  
However, when $\rqgr R$ is not smooth the later condition can definitely be weaker and   there do exist  line modules $L$ with  
$(L \bigdot L) \not= -1$ that can still be blown down by the theorem. See Corollary~\ref{eg-cor1} for one such example.

\end{remark}

\begin{proof}   Lemma~\ref{lem:Rdual}(1) implies that 
  $\hilb \uExt^1_R(L, R) = s/(1-s)^2 = (s+ s^2 + \dots)/(1-s)$.
Thus (1) holds  by comparing parts (2) and (3) of Lemma~\ref{lem:tilde}.
Also, by Lemma~\ref{lem:tilde}(1) we have  $\wt{R} = \uHom_R(J, R)R \subseteq R_{(g)}$.
Indeed,  since $ \uHom_R(J,R)$ is automatically a left $R$-module, $\wt{R}$ is actually an $R$-bimodule. Note that, to this point,  we have not used \eqref{eq:line}.

The main part of the proof will be to prove  that $\wt{R}$ is a subalgebra of $Q_{\gr}(R)$, the first step in which will be to prove that 
 $\wt{R} = \uEnd_R(J) R$. 
 
 Certainly $\uEnd_R(J) R \subseteq \uHom_R(J, R)R=\wt{R}$.
Since $J$ is $g$-divisible,    Lemma~\ref{lem:ideal}(1) implies that $\uEnd_R(J)$ is $g$-divisible.
By hypothesis,  $\overline{R} = \bigoplus_{n \geq 0} H^0(E, \mc{M}_n) t^n$ and 
$\overline{J} = \bigoplus_{n \geq 0} H^0(E, \mc{M}_n(-p)) t^n$ and so, by 
  Lemma~\ref{lem:uhomB},
  \[\uEnd_{\overline{R}}(\overline{J}) = \bigoplus_{n \geq 0} H^0(E, \mc{M}_n(-p + \tau^{-n}(p)))=B(E, \mc{M}', \tau)\qquad\text{for }\ \mc{M}' =  \mc{M}(-p + \tau^{-1}(p)).\]
Let $B = B(E, \mc{M}, \tau)$.   
Clearly $\overline{\uEnd_R(J)} \subseteq \uEnd_B(\overline{J}).$ Conversely, \eqref{eq:line} 
and $g$-divisibility imply that  
$\hilb \overline{\uEnd_R(J)}=\hilb  \overline{R} =\hilb  B(E, \mc{M}', \tau)=\hilb  \uEnd_B(\overline{J})$. Hence, 
$
\overline{\uEnd_R(J)} =  \uEnd_B(\overline{J}) = B(E, \mc{M}', \tau).
$
This in turn implies that   $\overline{\uEnd_R( J) R} =
 \overline{\uEnd_R(J)}\cdot \overline{R} = B'B,$ where $B'= B(E, \mc{M}', \tau)$.

We claim next that $B' B = B''$, where $B'' = B(E, \mc{N}, \tau)$ for $\mc{N} = \mc{M}(\tau^{-1}(p))$.
Indeed, by  Lemma~\ref{lem:basics for B} and the fact that all the relevant invertible sheaves are generated by their global sections,
\begin{gather*}
\sum_{i= 0}^n B'_i B_{n-i} \ = \  \sum_{i=0}^n H^0\left(E, \mc{M}_i\left(-p + \tau^{-i}(p)\right)\right) 
\cdot H^0\left(E, \mc{M}_{n-i}^{\tau^i}\right)  
 \ = \  \sum_{i=0}^n H^0\left(E, \mc{M}_n\left(-p + \tau^{-i}(p)\right)\right) \\
 \ = \   H^0\Big(E, \,\, \sum_{i=0}^n\mc{M}_n\left(-p + \tau^{-i}(p)\right)\Big)
 \ = \ H^0\left(E, \mc{M}_n\left(\tau^{-1}(p) + \dots + \tau^{-n}(p)\right)\right) = B''_n,
\end{gather*}
proving the claim.

Although we do not know a priori that $\uEnd_R(J) R$ is $g$-divisible,  the previous paragraph at least gives the inequality 
$\hilb \uEnd_R(J) R \geq (\hilb B'')/(1-s)$.   Since Riemann-Roch 
gives $\hilb B'' = \hilb B + \sum_{i \geq 1} s^i/(1-s)$, 
  this implies that $(\hilb B'')/(1-s) = \hilb \wt{R}$.  This forces 
$\uEnd_R(J)R = \uHom_R(J, R) R = \wt{R}$ as desired at the beginning of the proof. It follows  that
$\wt{R}/\wt{R}g = B''$ and so  $\wt{R}$ is indeed
 $g$-divisible.

Consequently,   
\[R\cdot \uEnd_R(J) \subseteq R \cdot\uHom_R(J, R)  = \uHom_R(J, R) \subseteq   \uEnd_R(J) R.\]  Thus 
$\left(\uEnd_R(J) R\right)^2 =\left(\uEnd_R(J) \right)^2 R =\uEnd_R(J)R$
and so $\wt{R}$ is indeed a subalgebra of $Q_{\gr}(R)$.  Moreover, since   $\wt{R}/\wt{R}g = B''$, 
it follows that  $\wt{R}$ is an elliptic algebra. 

Finally, all of the above arguments hold  for the 
 left line module $L^{\vee} = R/J^{\vee}$, and so 
one obtains a elliptic subalgebra $\wt{R}^{\ell}$ of $Q_{\gr}(R)$ with ${}_R (\wt{R}^{\ell}/R) \cong \bigoplus_{i \geq 1} L^{\vee}[-i]$.

By Lemma~\ref{lem:Rdual}(3),   $M = \uHom_R(J^{\vee},R)$ is a right $R$-module such that 
$M/R \cong L[-1]$. Dually, $M^{\vee} = \uHom_R(J,R)$ is a left $R$-module with 
$M^{\vee}/R \cong L^{\vee}[-1]$.
In particular, $M_1 J \subseteq R$ and so $M_1 \subseteq \uHom_R(J,R)_1 = M^{\vee}_1$.   
By symmetry,   $M_1 = M^{\vee}_1$.  Now by construction, $\wt{R}_1 = [\uHom_R(J,R)R]_1 = M^{\vee}_1$ and so, dually, $\wt{R}^{\ell}_1 = M_1$.
 Thus $\wt{R}^{\ell}_1 = \wt{R}_1$.  Since $\wt{R}$ and $\wt{R}^{\ell}$ are both generated in degree $1$ by Proposition~\ref{prop:elliptic},  
it follows that they are equal.  This proves part (2) and completes the proof.
\end{proof}

\begin{corollary}\label{cor:tilde-op}
Let $R$ be an elliptic algebra with a line module $L = R/J$ satisfying \eqref{eq:line}, and let 
$K \subseteq R_{(g)}$ be a reflexive $g$-divisible finitely generated $R$-module.  Then $\wt{K}_L$ 
is a right $\wt{R}_L$-module.
\end{corollary}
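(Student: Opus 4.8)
The plan is to verify the corollary directly from the concrete descriptions of the two modules involved, with all computations taking place inside $Q_{\gr}(R)$. By Lemma~\ref{lem:tilde}(1) we have $\wt{K}_L = \uHom_R(J,K)R \subseteq R_{(g)}$ (this is where the hypotheses that $K$ be finitely generated, $g$-divisible and reflexive enter), and by Theorem~\ref{thm:I.1} (applicable since $J$ satisfies \eqref{eq:line}) we have $\wt{R}_L = \uEnd_R(J)R = \uHom_R(J,R)R$, a connected graded subalgebra of $R_{(g)}$ containing $R$. So it suffices to check that $\wt{K}_L\cdot\wt{R}_L \subseteq \wt{K}_L$ as subsets of $Q_{\gr}(R)$; the resulting right $\wt{R}_L$-action then automatically restricts to the given right $R$-action because $R \subseteq \wt{R}_L$.

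The first step is the elementary observation that $\uHom_R(J,K)$ is a right $\uEnd_R(J)$-module under multiplication in $Q_{\gr}(R)$. Using the identifications from Section~\ref{BACKGROUND}, $\uHom_R(J,K) = \{x \in Q_{\gr}(R) : xJ \subseteq K\}$ and $\uEnd_R(J) = \{\varphi \in Q_{\gr}(R) : \varphi J \subseteq J\}$; hence for $x \in \uHom_R(J,K)$ and $\varphi \in \uEnd_R(J)$ one has $(x\varphi)J = x(\varphi J) \subseteq xJ \subseteq K$, so $x\varphi \in \uHom_R(J,K)$. Thus $\uHom_R(J,K)\,\uEnd_R(J) \subseteq \uHom_R(J,K)$.

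The second step is to reuse the containment $R\cdot\uEnd_R(J) \subseteq \uEnd_R(J)R$ already established in the proof of Theorem~\ref{thm:I.1} (there it comes out of $R\cdot\uEnd_R(J) \subseteq R\cdot\uHom_R(J,R) = \uHom_R(J,R) \subseteq \uEnd_R(J)R$, the middle equality holding because $\uHom_R(J,R)$ is a left $R$-module). Combining the two steps with $RR = R$,
\[
\wt{K}_L\cdot\wt{R}_L \;=\; \uHom_R(J,K)\,R\,\uEnd_R(J)\,R \;\subseteq\; \uHom_R(J,K)\,\uEnd_R(J)\,R\,R \;\subseteq\; \uHom_R(J,K)\,R \;=\; \wt{K}_L,
\]
which is the desired conclusion.

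There is no genuine obstacle here; the only points requiring a little care are purely bookkeeping: one should make sure that the identifications of the various $\uHom$-groups with subsets of $Q_{\gr}(R)$ are the ones that are compatible with the ambient ring structure (so that associativity in $Q_{\gr}(R)$ may be used freely in the displayed inclusion), and that the hypotheses on $K$ and on $L$ are invoked only through Lemma~\ref{lem:tilde} and Theorem~\ref{thm:I.1}. Once that is in place the proof is the single inclusion computation above.
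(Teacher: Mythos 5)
Your proof is correct and follows essentially the same route as the paper's: identify $\wt{K}_L = \uHom_R(J,K)R$ and $\wt{R}_L = \uEnd_R(J)R$, reuse the containment $R\cdot\uEnd_R(J) \subseteq \uEnd_R(J)R$ from the proof of Theorem~\ref{thm:I.1}, and compute the one-line inclusion $\wt{K}_L\wt{R}_L \subseteq \wt{K}_L$. The only difference is cosmetic: you make explicit the (true and implicitly used) fact that $\uHom_R(J,K)\uEnd_R(J) \subseteq \uHom_R(J,K)$, which the paper leaves to the reader.
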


\begin{proof}
By the proof of Theorem~\ref{thm:I.1},  $\wt{R} =\wt{R}_L = \uEnd_R( J) R$, while
 $\wt{K} =\wt{K}_L= \uHom_R(J, K) R$ by Lemma~\ref{lem:tilde}.  We also saw that 
$R\cdot  \uEnd_R(J) \subseteq \uEnd_R( J) R$ in the proof of Theorem~\ref{thm:I.1}. 
Thus
\[
\wt{K} \wt{R} = \uHom_R(J, K)  R\cdot \uEnd_R(J) R \subseteq \uHom_R(J, K)
 \uEnd_R( J) R \subseteq \uHom_R(J, K) R = \wt{K}
\]
as required.
\end{proof}

To conclude this section, we explain how the above construction of ring-theoretic  blowing 
down is formally the inverse 
of noncommutative blowing up.  When $R$ is an elliptic algebra of degree  $\mu\geq3$ with 
$R/Rg = B(E, \mc{M}, \tau)$, then 
for any $p \in E$ one may define the ring-theoretic blowup of $R$ at $p$ to be the subring 
$R' = R(p) \subseteq R$  generated by $R'_1 = \{x \in R : \overline{x} \in H^0(E, \mc{M}(-p)) \}$
 (see \cite{R-Sklyanin} and \cite{RSSshort} for the basic properties of these blowups).  
Then \cite[Theorem~1.1]{RSSshort}
implies that $R'$ is also elliptic,  of degree $\mu-1$, with $R'/R'g \cong B(E, \mc{M}(-p), \tau)$.  
The ring $R'$ automatically has an \emph{exceptional}  line module $L$ satisfying
$(R/R') \cong \bigoplus_{i = 1}^{\infty} L[-i]$ as right $R'$-modules.  As we show in the next theorem, one can then recover $R$ by blowing down this line.  

As an aside, we note that one can also allow elliptic algebras to have degree $1$ or $2$ (though in this paper the  definition excludes them) and  
the ring-theoretic blowup at a point of an elliptic algebra of  degree  $\mu= 2$ can still  be defined.
  However, such a  blowup will not now be generated in degree one and so  a more complicated 
definition is necessary     (see \cite{RSSshort}
for the details).

\begin{theorem}
\label{thm:inverse} {\rm (1)}
Let $R$ be an elliptic algebra, of degree at least $4$, with $R/Rg = B(E, \mc{M}, \tau)$.  
If 
  $p \in E$, then the exceptional line module $L = R(p)/J$ 
of the ring-theoretic blowup $ R(p)$ satisfies \eqref{eq:line}; in fact, $\uEnd_{R(p)}(J) = R(\tau(p))$. 
 Thus the blowdown $\wt{R(p)}_L$ of  $R(p)$ 
along   $L$ is defined, and it  equals  $R$.

{\rm (2)} Conversely, suppose that $R'$ is an elliptic algebra  with a line module $L$   satisfying \eqref{eq:line}
 and  set   $\wt{R}'=\wt{R}'_L$.  If   $\Div L = \tau(p)$, then the ring-theoretic blowup 
$\wt{R}'(p)$   equals $R'$.
\end{theorem}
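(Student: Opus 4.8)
The plan is to prove the two statements essentially by tracking what happens to the relevant invertible sheaves and line ideals under blowing up and down, using the explicit descriptions built up in Sections~\ref{BACKGROUND}--\ref{LINES}. For part~(1): let $R' = R(p)$, so by \cite[Theorem~1.1]{RSSshort} $R'$ is elliptic with $R'/gR' \cong B(E, \mc{M}(-p), \tau)$ and $R/R' \cong \bigoplus_{i\geq 1} L[-i]$ as right $R'$-modules, where $L$ is the exceptional line module. First I would identify the exceptional line ideal $J$ with $J \cong \uHom_{R'}(R', \cdot)$-type data: since $R/R' \cong \bigoplus L[-i]$ and in particular the degree-$1$ piece $R_1/R'_1$ is one-dimensional, the line module $L = R'/J$ has $L_0 = R_1/R'_1$ and $\bbar L = L/Lg$ must be the point module of $B' = B(E, \mc{M}(-p), \tau)$ corresponding to some point of $E$. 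I would pin down that point by a direct computation: $\bbar J = \bigoplus_{n\geq 0} H^0(E, (\mc{M}(-p))_n(-q))$ for the point $q = \Div L$, and comparing with the actual construction of the blowup (the degree-one generators are those vanishing at $p$) forces $q$ to be a specific translate; the cleanest route is to use that $\wt{R'}_L/gR'_L \cong B(E, \mc{M}(-p)(\tau^{-1}(q)), \tau)$ by Theorem~\ref{thm:I.1}(3), and we want this to equal $B(E, \mc{M}, \tau)$, so $\mc{M}(-p)(\tau^{-1}(q)) \cong \mc{M}$, giving $\tau^{-1}(q) = p$, i.e. $q = \tau(p)$. Thus $\Div L = \tau(p)$. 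Then Lemma~\ref{lem:uhomB} computes $\uEnd_{R'}(\bbar J) = \uEnd_{B'}(\bbar J) = B(E, \mc{M}(-p)(-\tau(p)+\tau^{-1}(\tau(p))), \tau) = B(E, \mc{M}(-p), \tau)$, wait — the correct computation gives $B(E, (\mc{M}(-p))(-q + \tau^{-1}(q)), \tau)$; since the blowup $R'(\tau(p))$ has $R'(\tau(p))/g \cong B(E, \mc{M}(-p)(-\tau(p)), \tau)$ this does not literally match $\uEnd$, so instead I verify \eqref{eq:line} directly: $\hilb \uEnd_{R'}(J) = \hilb \uEnd_{B'}(\bbar J)/(1-s)$ after using $g$-divisibility and Lemma~\ref{lem:barhom}, and Riemann--Roch shows this equals $\hilb B'/(1-s) = \hilb R'$. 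Granting \eqref{eq:line}, Theorem~\ref{thm:I.1} applies and $\wt{R'}_L$ is an elliptic algebra with $\wt{R'}_L/R' \cong \bigoplus_{i\geq 1}L[-i] = R/R'$; since both $R$ and $\wt{R'}_L$ are elliptic algebras containing $R'$ with the same graded quotient by $R'$ and both are generated in degree one (Proposition~\ref{prop:elliptic}), comparing degree-one pieces gives $\wt{R'}_L = R$. The claim $\uEnd_{R(p)}(J) = R(\tau(p))$ then follows by identifying this $\uEnd$ inside $Q_{\gr}$ using the explicit sheaf description and the definition of the blowup at $\tau(p)$.

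For part~(2): given $R'$ elliptic with line module $L$ satisfying \eqref{eq:line} and $\Div L = \tau(p)$, Theorem~\ref{thm:I.1}(3) gives $\wt{R}' = \wt{R'}_L$ elliptic with $\wt{R}'/g\wt{R}' \cong B(E, \mc{M}'(\tau^{-1}(\tau(p))), \tau) = B(E, \mc{M}(p), \tau)$ where I write $R'/gR' = B(E, \mc{M}, \tau)$ and $\mc{M}' = \mc{M}$ — more precisely, with $\mc{N} = \mc{M}(\tau^{-1}(\Div L)) = \mc{M}(p)$. Now form the ring-theoretic blowup $\wt{R}'(p)$: by \cite[Theorem~1.1]{RSSshort} its degree-$1$ part is $\{x \in \wt{R}' : \bar{x} \in H^0(E, \mc{N}(-p)) = H^0(E, \mc{M})\}$, and its reduction mod $g$ is $B(E, \mc{M}, \tau) = R'/gR'$. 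Since $R' \subseteq \wt{R}'$ with $\wt{R}'/R' \cong \bigoplus_{i\geq 1}L[-i]$ which is $g$-torsionfree, I can check $R'_1 = \{x \in \wt{R}'_1 : \bar x \in H^0(E,\mc{M})\}$ by a degree count: $\dim \wt{R}'_1 - \dim R'_1 = \dim L_0 = 1$, and the reduction mod $g$ of $\wt{R}'_1$ is $H^0(E, \mc{M}(p))$ which is one dimension larger than $H^0(E, \mc{M})$; since the single extra generator of $\wt{R}'$ over $R'$ must reduce to something outside $H^0(E,\mc{M})$ (otherwise $R'$ and $\wt R'$ would agree mod $g$ in degree $1$, impossible as $\wt R'/g = B(E,\mc M(p),\tau) \neq B(E,\mc M,\tau) = R'/g$), the preimage of $H^0(E,\mc{M})$ in $\wt{R}'_1$ is exactly $R'_1$. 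Hence $\wt{R}'(p)$ and $R'$ have the same degree-one part, and both are generated in degree one and elliptic, so $\wt{R}'(p) = R'$.

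\textbf{Main obstacle.} The genuinely delicate step is keeping the bookkeeping of translates and divisors straight — in particular nailing down that the exceptional line module of $R(p)$ has divisor $\tau(p)$ rather than $p$ or $\tau^{-1}(p)$, and making sure the conventions for $\mc{M}_n$, for left vs.\ right point modules, and for the section-functor twist (Remark~\ref{leftpoint-remark}, \eqref{eq:truncshift}) all line up so that blowing up at $p$ followed by blowing down at $\Div L = \tau(p)$ composes to the identity. A secondary technical point is verifying \eqref{eq:line} for the exceptional line module: one must show $\overline{\uEnd_{R(p)}(J)} = \uEnd_{B'}(\bbar J)$ (not merely $\subseteq$), which requires the Hilbert-series equality from $g$-divisibility together with the Riemann--Roch computation of $\hilb \uEnd_{B'}(\bbar J)$ via Lemma~\ref{lem:uhomB} — this is where I would be most careful to avoid an off-by-one in the degree shift.
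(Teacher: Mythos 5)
Your plan contains a genuine circularity in the key step of part~(1), namely the verification that the exceptional line module $L = R(p)/J$ satisfies \eqref{eq:line}. You assert that $\hilb \uEnd_{R(p)}(J) = \hilb \uEnd_{B'}(\bbar J)/(1-s)$ ``after using $g$-divisibility and Lemma~\ref{lem:barhom}.'' But Lemma~\ref{lem:barhom} only gives that the inclusion $\overline{\uEnd_{R(p)}(J)} \subseteq \uEnd_{B'}(\bbar J)$ has \emph{finite-dimensional} cokernel; the claim that this cokernel is zero is exactly condition \eqref{eq:line} rewritten, since by $g$-divisibility of $\uEnd_{R(p)}(J)$ one has $\hilb \uEnd_{R(p)}(J) = \hilb \overline{\uEnd_{R(p)}(J)}/(1-s)$. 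Put differently, equation~\eqref{eq:homJJ} only gives the \emph{upper} bound $\hilb \uEnd_{R(p)}(J) \leq \hilb R(p)$; the Riemann--Roch computation of $\hilb \uEnd_{B'}(\bbar J)$ via Lemma~\ref{lem:uhomB} supplies no lower bound, and your argument never produces one. The same circularity infects your first attempt to pin down $\Div L = \tau(p)$: you invoke Theorem~\ref{thm:I.1}(3), which requires \eqref{eq:line} as a hypothesis.

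The paper's proof supplies precisely the missing lower bound by a direct subalgebra inclusion. One first verifies the explicit identity $R(p)_1 R_1 = R_1 R(\tau(p))_1$ (both sides are the degree-$2$ elements reducing into $H^0(E,\mc{M}_2(-p))$). Since $R/R(p) \cong \bigoplus_{i\geq 1} L[-i]$ gives $J = \{x \in R(p) : R_1 x \subseteq R(p)\}$, one computes $R_1 R(\tau(p))_1 J_1 = R(p)_1 R_1 J_1 \subseteq R(p)$, forcing $R(\tau(p))_1 J_1 \subseteq J$; because $J$ and $R(\tau(p))$ are both generated in degree one, this yields $R(\tau(p)) \subseteq \uEnd_{R(p)}(J)$ and hence $\hilb \uEnd_{R(p)}(J) \geq \hilb R(\tau(p)) = \hilb R(p)$. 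Combined with the upper bound from \eqref{eq:homJJ}, you get equality of Hilbert series together with an actual containment, so $\uEnd_{R(p)}(J) = R(\tau(p))$; this also disposes of the second gap in your plan, where you wave at ``the explicit sheaf description'' to identify $\uEnd_{R(p)}(J)$ with $R(\tau(p))$ without producing either inclusion. Once \eqref{eq:line} is established, your route to $\wt{R(p)}_L = R$ --- noting $R \subseteq \wt{R(p)}_L$ from the maximality of the tilde construction and then comparing Hilbert series --- is sound and arguably a shade slicker than the paper's degree-one generation argument; and your treatment of part~(2) is essentially the same as the paper's.
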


\begin{proof}  
We  remark that the apparent shift $p\mapsto \tau(p)$ between parts (1) and (2) comes from the fact that, by 
\cite[Lemma~9.1]{R-Sklyanin}, the line module $L$ in part (1) has $\Div L=\tau(p)$.

 (1)
We first claim that $R(p)_1 R_1 = R_1 R(\tau(p))_1$.  Indeed, since $g R_1 \subseteq R(p)_1 R_1$, it 
follows that \[R(p)_1 R_1 = \{ x \in R_2 : \overline{x} \in \overline{R(p)_1 R_1} = H^0(E, \mc{M}_2(-p)) \}.\] 
 A similar 
calculation shows that $R_1 R(\tau(p))_1$ is equal to the same  subspace of $R_2$, 
proving the claim.

Since $R/R(p) \cong \bigoplus_{i \geq 1} L[-i]$,  clearly   $R_{\leq 1} R(p)/R(p) \cong L[-1]$
and so    $J = \{x \in R(p) : R_1 x \subseteq R(p) \}$.  Thus 
$R_1 R(\tau(p))_1 J_1 = R(p)_1 R_1 J_1 \subseteq R(p)_1 R(p)_2 \subseteq R(p)$, and so 
$R(\tau(p))_1 J_1 \subseteq J$.   By  Lemma~\ref{lem:Rdual}(2), the line ideal  $J$ 
  is generated in degree $1$.  Thus
$R(\tau(p))_1 J_1 R(p) \subseteq J$ implies $R(\tau(p))_1 J \subseteq J$.  Since $R(\tau(p))$ is also
generated in degree $1$ it follows that $R(\tau(p)) \subseteq \uEnd_{R(p)}(J)$, 
and so $\hilb \uEnd_{R(p)}(J) \geq \hilb R(\tau(p)) = \hilb R(p)$.  Conversely, 
$\hilb \uEnd_{R(p)}(J) \leq \hilb R(p)$ follows, for example, from
  \eqref{eq:homJJ}.  Thus $\hilb \uEnd_{R(p)}(J) = \hilb R(\tau(p))$ and hence
$\uEnd_{R(p)}(J) = R(\tau(p))$.

Now Theorem~\ref{thm:I.1} applies to define the blowdown $\wt{R(p)}$ of $R(p)$ along $L$.  
By that theorem,  $\wt{R(p)} = \uEnd_{R(p)}(J) R(p) = R(\tau(p)) R(p)$.  Since 
$R_1 = R(\tau(p))_1 + R(p)_1 = \wt{R(p)}_1$ and both $R$ and $\wt{R(p)}$ are 
generated in degree $1$ as algebras, necessarily $\wt{R(p)} = R$.   

(2)   In this case, the blowdown  $\wt{R}'$   satisfies 
$\wt{R}'/\wt{R}'g = B(E, \mc{M}(p), \tau)$, 
by Theorem~\ref{thm:I.1}.  The blowup of $\wt{R}'$ at the point $p$ is thus the subring of $R'$
 generated in degree $1$ 
by $\{ x \in \wt{R}'_1 : \overline{x} \in H^0(E, \mc{M}) \}$. This is precisely $R'$.
\end{proof}

 
\section{Smoothness}\label{SMOOTH} 

A key feature of the commutative geometry described in Proposition~\ref{prop1} and Theorem~\ref{CCC} is that if $\pi: Y \to X$ is
 a birational morphism of surfaces where the exceptional locus is a $(-1)$ line, then $Y$ is nonsingular if and only if $X$ is nonsingular.
In this section we will prove a natural analogue of this result (see Theorem~\ref{thm:bigsmooth}).   
Recall that for a graded ring $R$ we say that $\rqgr R$ is \emph{smooth} if this category has finite homological dimension.  
 
\begin{theorem}\label{thm:bigsmooth}
 Let $T$ be an elliptic algebra of degree $\geq 4$ associated to the elliptic curve $E$, and let $p \in E$.
Let $L$ be  the exceptional line module for the blowup $T(p) \subseteq T$.
The following are equivalent:
\begin{enumerate}
 \item  $\rqgr T(p)$ is smooth.   
\item $\rqgr T$ is smooth and  $\pdim_{T(p)^\circ} L^\circ <\infty$.  
\end{enumerate}
\end{theorem}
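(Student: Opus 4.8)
\textbf{Proof plan for Theorem~\ref{thm:bigsmooth}.}
The plan is to reduce everything to the localised rings $T^\circ = T[g^{-1}]_0$ and $T(p)^\circ = T(p)[g^{-1}]_0$, using Lemma~\ref{lem:Dansmooth} which translates ``$\rqgr T$ is smooth'' into ``$T^\circ$ has finite global dimension'' (and likewise for $T(p)$). Since blowing up only changes the ring in the ``$E$-direction'', the two localised rings should be closely related; the key structural input is that $T(p)^\circ$ and $T^\circ$ are, roughly, an ``affine blowup pair'', and that passing between them is controlled by the exceptional line module $L$ (more precisely, by $L^\circ$ as a $T(p)^\circ$-module). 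So the first step is to record, from Theorem~\ref{thm:inverse} and its proof, the exact relationship between $T(p)$, $T$, and $\wt{R(p)}_L = T$: in particular that $T = \uEnd_{T(p)}(J)\, T(p)$ where $L = T(p)/J$, that $T/T(p) \cong \bigoplus_{i\ge 1} L[-i]$ as right $T(p)$-modules and $\cong \bigoplus_{i\ge1} L^\vee[-i]$ as left $T(p)$-modules, and that $\uEnd_{T(p)}(J) = T(\tau(p))$. After inverting $g$ and taking degree-zero parts, this gives a concrete bimodule $T^\circ$ over $T(p)^\circ$ with $T^\circ/T(p)^\circ$ built from finitely many copies of $L^\circ$ and $(L^\vee)^\circ$.

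Next I would prove the direction (1) $\Rightarrow$ (2). If $T(p)^\circ$ has finite global dimension, then $L^\circ$, being a finitely generated $T(p)^\circ$-module, automatically has finite projective dimension, giving the second half of (2). For the first half, I would use the bimodule relationship above: $T^\circ$ is a finitely generated $T(p)^\circ$-module (a finite extension of $T(p)^\circ$ by copies of $L^\circ$), so $T^\circ$ has finite projective dimension as a $T(p)^\circ$-module; dually on the left. A standard change-of-rings / global-dimension argument (e.g.\ via the fact that $T^\circ$ is a ``finite'' algebra over $T(p)^\circ$ which is split as a bimodule summand, so that $T(p)^\circ$-projective resolutions of $T^\circ$-modules are finite) then forces $\gldim T^\circ < \infty$. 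Here I expect to need the concrete structure of Lemma~\ref{lem:Rdual} and Lemma~\ref{lem:tilde} to see that restriction and induction along $T(p)^\circ \hookrightarrow T^\circ$ behave well homologically.

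For the converse (2) $\Rightarrow$ (1), assume $T^\circ$ has finite global dimension and $\pdim_{T(p)^\circ} L^\circ < \infty$. I would show $\gldim T(p)^\circ < \infty$ by resolving an arbitrary finitely generated $T(p)^\circ$-module $M^\circ$: since $T^\circ$ is a finitely generated $T(p)^\circ$-module with $T^\circ/T(p)^\circ$ filtered by (shifts of) $L$, and since $\Hom$ and $\Ext$ from $T^\circ$ down to $T(p)^\circ$ are governed by $L^\circ$ and $T^\circ$ themselves — both of which now have finite projective dimension over $T(p)^\circ$ — a dévissage using the short exact sequence $0 \to T(p)^\circ \to T^\circ \to (T^\circ/T(p)^\circ) \to 0$ (together with its left-hand analogue and the finite filtration of the quotient) bounds $\pdim_{T(p)^\circ} M^\circ$ in terms of $\gldim T^\circ$ and $\pdim_{T(p)^\circ} L^\circ$. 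Combined with Lemma~\ref{lem:Dansmooth} this yields (1). \textbf{The main obstacle} I anticipate is making the change-of-rings argument along $T(p)^\circ \hookrightarrow T^\circ$ genuinely rigorous in the noncommutative, non-(semi)simple setting: one must verify that $T^\circ$ is a reasonable ``finite'' extension (finitely generated and projective, or at least finite projective dimension, on both sides) and that the bimodule decomposition of $T^\circ/T(p)^\circ$ into copies of $L^\circ$ (right) and $(L^\vee)^\circ$ (left) is compatible enough to run dévissage in both variables simultaneously; the hypothesis $\pdim_{T(p)^\circ}L^\circ<\infty$ in (2) is precisely what is needed to close that gap, and checking it is not implied for free (cf.\ the non-smooth examples promised in Section~\ref{EXAMPLE}) is the subtle point.
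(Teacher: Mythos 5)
Your plan rests on the claim that $T^\circ$ is a finitely generated $T(p)^\circ$-module, ``a finite extension of $T(p)^\circ$ by copies of $L^\circ$'', and that one can therefore run a dévissage/change-of-rings argument along $T(p)^\circ\hookrightarrow T^\circ$. This is the step that fails. From $T/T(p)\cong\bigoplus_{i\ge 1}L[-i]$ as right $T(p)$-modules, inverting $g$ (which is exact) and taking degree zero gives
\[
T^\circ/T(p)^\circ \;\cong\; \bigoplus_{i\ge 1}\bigl(L[-i]\bigr)[g^{-1}]_0 \;\cong\; \bigoplus_{i\ge 1} L^\circ,
\]
since the localisation is strongly graded and the shifts $[-i]$ are absorbed. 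So the quotient is an \emph{infinite} direct sum of copies of $L^\circ$; in particular $T^\circ$ is not a finitely generated $T(p)^\circ$-module, the filtration never terminates, and the bounded dévissage that was supposed to control $\pdim_{T(p)^\circ}M^\circ$ in terms of $\gldim T^\circ$ and $\pdim_{T(p)^\circ}L^\circ$ has no base from which to induct.

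The paper's proof is structured precisely to sidestep this. Its Proposition~\ref{prop:smooth} allows $(V/U)_U\cong L^{\oplus\JJ}$ for an arbitrary index set and never uses finite generation of $V$ over $U$: the key structural fact, established via the Dual Basis Lemma from projectivity of $J^\circ$, is $L\otimes_U V=0=\Tor_1^U(L,V)$, which makes $U\to V$ a (perfect) ring epimorphism rather than a finite extension. One then gets a category equivalence $L^\perp\simeq\rMod V$, and the Cartan--Eilenberg change-of-rings spectral sequence collapses to give $\Ext^i_V(M,N)\cong\Ext^i_U(M,N)$ for $M,N\in L^\perp$; this yields $\gldim V\le\gldim U$. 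For the converse (Proposition~\ref{prop:smoothconv}) the paper uses a universal-extension argument to move an arbitrary $U$-module into $L^\perp$ up to a finite-pdim error, exploiting $\End_U(L)=\kk$, $\uExt^1(L,L)=0$ and $\injdim U<\infty$. So the real mechanism is a perpendicular-category/universal-localisation argument, not a dévissage over a finite bimodule; you would need to switch to that kind of Morita-theoretic framework, and in particular prove the vanishing $\Tor_1^U(L,V)=0$ and $L\otimes_U V=0$, before your strategy could close.
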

 
 We remark that by Lemma~\ref{lem:pdL}, we have $\pdim_{T(p)^\circ} L^\circ <\infty$ $\iff$ $\pdim_{T(p)^\circ} L^\circ =1$.
Note also that we will show later that blowing up the point $p$ in the blowup $T(p)$ leads to a non-smooth noncommutative 
 scheme (see Corollary~\ref{eg-cor2} for the details) and so the extra conditions  of the theorem are necessary.

As an immediate corollary of Theorem~\ref{thm:bigsmooth}, we obtain: 

\begin{corollary}\label{cor:smooth}
Let $R$ be an elliptic algebra of degree $\geq 3$ and suppose that $L$ is a line module with $\pdim L^\circ < \infty$ and $(L\bigdot L) =-1$.  Let  $\wt{R}=\wt{R}_L$ be the blowdown of $R$   constructed by Theorem~\ref{thm:I.1}.

Then $ \rqgr \wt{R}$ is smooth if and only if  $\rqgr R$ is smooth.
\end{corollary}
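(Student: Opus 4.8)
The strategy is to reduce Corollary~\ref{cor:smooth} to Theorem~\ref{thm:bigsmooth} via the inversion statement in Theorem~\ref{thm:inverse}. First I would record that $(L\bigdot L)=-1$ implies condition \eqref{eq:line} for the line ideal $J$ of $L$, by Theorem~\ref{thm:one}(a) (the implication $(5)\Rightarrow(3)$, equivalently $(5)\Rightarrow(1)\Leftrightarrow(3)$). Hence the blowdown $\wt R=\wt R_L$ is defined by Theorem~\ref{thm:I.1}, and $\wt R$ is an elliptic algebra with $\wt R/g\wt R\cong B(E,\sM(\tau^{-1}(p)),\tau)$ where $p=\Div L$. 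The key point is then that $R$ is itself a blowup of $\wt R$: set $r=\tau^{-1}(p)$, so $\Div L=\tau(r)$, and Theorem~\ref{thm:inverse}(2) gives $\wt R(r)=R$, with $L$ the exceptional line module of this blowup. (One must check $\wt R$ has degree $\geq 4$; since $R$ has degree $\geq 3$ by our standing hypothesis and $\deg\wt R=\deg R+1$ by Theorem~\ref{thm:I.1}, indeed $\deg\wt R\geq 4$, so Theorem~\ref{thm:bigsmooth} applies to $T=\wt R$, $T(p)=R$.)

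Next I would invoke Theorem~\ref{thm:bigsmooth} directly with $T=\wt R$ and $T(r)=R$. It states: $\rqgr R$ is smooth $\iff$ $\rqgr\wt R$ is smooth and $\pdim_{R^\circ}L^\circ<\infty$. By hypothesis $\pdim_{R^\circ}L^\circ<\infty$ (and in fact $=1$ by Lemma~\ref{lem:pdL}), so the second clause on the right is automatic. Therefore $\rqgr R$ is smooth $\iff$ $\rqgr\wt R$ is smooth, which is exactly the assertion of the corollary.

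The only genuine bookkeeping is matching up the base points correctly: Theorem~\ref{thm:inverse} carries an apparent shift $p\mapsto\tau(p)$, so I need to be careful that the point at which $R$ is the blowup of $\wt R$ is $\tau^{-1}(\Div L)$, not $\Div L$ itself. With that identification the hypotheses of both Theorem~\ref{thm:inverse}(2) and Theorem~\ref{thm:bigsmooth} are met verbatim. I do not anticipate a real obstacle here — the content has all been done in the earlier sections — but the one place to be attentive is confirming that the "$\pdim L^\circ<\infty$" hypothesis in the corollary is precisely the quantity "$\pdim_{T(p)^\circ}L^\circ<\infty$" appearing in Theorem~\ref{thm:bigsmooth}(2), i.e.\ that $R^\circ = T(r)^\circ$ under the identification $R=\wt R(r)$, which is immediate since $R=T(r)$ as rings.
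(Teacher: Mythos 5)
Your proof is correct and takes essentially the same route as the paper's, which says tersely: "By Theorem~\ref{thm:inverse} $R = \wt{R}(q)$, where $q = \tau^{-1}(\Div L)$. Thus the result is a direct application of Theorem~\ref{thm:bigsmooth}." Your extra bookkeeping — invoking Theorem~\ref{thm:one} to get \eqref{eq:line}, verifying $\deg\wt R\geq 4$, and identifying $L$ as the exceptional line module via Theorem~\ref{thm:I.1}(1) — is all correct and simply spells out what the paper leaves implicit.
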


\begin{proof}[Proof of Corollary~\ref{cor:smooth}]  
By Theorem~\ref{thm:inverse} $R = \wt{R}(q)$, where $q = \tau^{-1}( \Div L)$.  Thus the result is a direct application of  Theorem~\ref{thm:bigsmooth}.
\end{proof}

The rest of the section is devoted to  the proof of Theorem~\ref{thm:bigsmooth}. 
We work mostly in the localised category of modules over $U=T^\circ$;  note that, by Lemma~\ref{lem:Dansmooth}, $\rqgr T$ is smooth if and only if  $\gldim T^\circ < \infty$.

\begin{proposition}\label{prop:smooth}
 Let $U$ be a noetherian domain with division ring of fractions $D=Q(U)$ and  a projective right ideal $J$. Set  $ L = U/J$ and $ L^\vee = \Ext^1_U(L, U)$.   
Let $U \subset V\subset D$ be an overring  satisfying:
\begin{enumerate}
\item[(a)] $(V/U)_U \cong L^{\oplus \JJ} $ for some index set $\JJ$;  
\item[(b)] $ \Hom_U(L, V) = \Ext^1_U(L, V) =0 $ and the same for $L^\vee$.
\end{enumerate}
Then:
\begin{enumerate}
\item $L \otimes_U V = 0 = \Tor_1^U(L, V)$.

 \item 
 Let $L^\perp$ be the full subcategory of $\rMod U$ consisting of modules $M$ satisfying $\Hom_U(L, M) = \Ext^1_U(L, M) = 0$.  Then $L^\perp \simeq \rMod V$.
\item $\gldim V \leq \gldim U$.  In particular, if $\gldim U < \infty$ then $\gldim V < \infty$.
\end{enumerate}
\end{proposition}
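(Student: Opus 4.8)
\textbf{Proof plan for Proposition~\ref{prop:smooth}.}

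The plan is to establish the three parts in order, using (a) and (b) to transfer homological information between $U$ and $V$, with the key mechanism being the short exact sequence
\[
0 \too U \too V \too V/U \too 0
\]
together with the hypothesis $(V/U)_U \cong L^{\oplus\JJ}$.

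First, for part (1): I would apply $L\otimes_U -$ to the displayed sequence. Since $J$ is projective, $L=U/J$ has projective dimension $\le 1$, so $\Tor_i^U(L,-)$ vanishes for $i\ge 2$; thus the long exact sequence reads
\[
0 \too \Tor_1^U(L,U) \too \Tor_1^U(L,V) \too \Tor_1^U(L,V/U) \too L\otimes_U U \too L\otimes_U V \too L\otimes_U V/U \too 0.
\]
Now $\Tor_1^U(L,U)=0$ (as $U$ is projective), and $V/U\cong L^{\oplus\JJ}$, so the term $\Tor_1^U(L,V/U)\cong\Tor_1^U(L,L)^{\oplus\JJ}$ and $L\otimes_U(V/U)\cong(L\otimes_U L)^{\oplus\JJ}$. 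From the projective resolution $0\to J\to U\to L\to 0$ one computes $\Tor_1^U(L,L)=0$ (since $J$ is projective, hence $J\otimes_U L\hookrightarrow L$) and $L\otimes_U L\cong L/JL$; a small computation identifies this with $L$ itself up to the relevant identification. The map $L\otimes_U U = L \to L\otimes_U V$ followed by $L\otimes_U V\to L\otimes_U(V/U)$ recovers the natural surjection $L\to L$, so it is an isomorphism, forcing $L\otimes_U V=0$ and then $\Tor_1^U(L,V)=0$ as well. (One should double-check the edge behaviour at the left end, but this is the mechanism.)

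For part (2): the inclusion $U\hookrightarrow V$ induces restriction of scalars $\rMod V\to\rMod U$, whose essential image lands in $L^\perp$ because $\Hom_U(L,V)=\Ext^1_U(L,V)=0$ by (b), and these vanishings pass to arbitrary $V$-modules since any $V$-module is a quotient of a free $V$-module, resolvable by frees, and $L$ has finite projective dimension over $U$ so $\Ext^i_U(L,-)$ commutes with the relevant colimits/extensions; more carefully, for $W\in\rMod V$ one resolves $W$ by free $V$-modules and uses that $\Ext^{\le 1}_U(L,V^{(\Lambda)})=0$, which holds because $\Ext$ into a direct sum of copies of $V$ is the direct sum (again using $\pd_U L\le 1$ and finite generation of $L$, or rather of $J$). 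Conversely, given $M\in L^\perp$, I would show $M$ carries a natural $V$-module structure extending its $U$-structure: applying $\Hom_U(-,M)$ to $0\to U\to V\to L^{(\JJ)}\to 0$ and using $\Hom_U(L,M)=\Ext^1_U(L,M)=0$ gives $\Hom_U(V,M)\xrightarrow{\sim}\Hom_U(U,M)=M$, so each $m\in M$ extends uniquely to a $U$-linear map $V\to M$; checking this defines an associative unital $V$-action is a routine diagram chase, and uniqueness makes the two functors mutually inverse. This is the technical heart of the argument, but it is a standard ``universal localisation / perpendicular category'' argument in the style of Geigle--Lenzing.

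For part (3): given $W\in\rMod V$, take a projective $U$-resolution $P_\bullet\to W$ of length $\le\gldim U$. Apply $-\otimes_U V$. Using part (1), $L\otimes_U V=0=\Tor_1^U(L,V)$, and since $(V/U)$ is a sum of copies of $L$, one deduces $\Tor_i^U(M,V)=0$ for all $i\ge 1$ and every $U$-module $M$ that is a subquotient appearing in the resolution of objects of $L^\perp$ — more directly, it suffices that $V$ is \emph{flat} as a left $U$-module, which follows from $\Tor_1^U(L,V)=0$ together with $(V/U)\cong L^{(\JJ)}$ and the fact that flatness can be tested on the single cyclic module $U/J$ when $\pd U/J\le 1$ (or one argues flatness of $V$ directly from the $\Tor$ vanishing via the sequence $0\to U\to V\to V/U\to 0$). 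Then $P_\bullet\otimes_U V$ is a complex of projective $V$-modules; I must check it is still a resolution of $W\otimes_U V$, and that $W\otimes_U V\cong W$ for $W\in\rMod V$ — the latter because $W$ is a quotient of free $V$-modules and $V\otimes_U V\to V$ is an isomorphism (as $V/U$ is killed by $-\otimes_U V$ by part (1)). Hence $W$ has a projective $V$-resolution of length $\le\gldim U$, giving $\gldim V\le\gldim U$. The main obstacle I anticipate is the flatness of $V$ over $U$ and the precise bookkeeping in part (2) showing that every object of $L^\perp$ is genuinely a $V$-module with the module axioms holding; once those are in place, part (3) is a clean dimension-shifting argument.
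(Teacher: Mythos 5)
Your proposal has genuine gaps in parts (1) and (3); part (2) is essentially the paper's argument.

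In part (1), there is a sided-ness error. Hypothesis (a) gives $(V/U)_U \cong L^{\oplus\JJ}$ as \emph{right} $U$-modules, but $\Tor_1^U(L,V/U)$ and $L\otimes_U(V/U)$ require $V/U$ as a \emph{left} $U$-module, which is a different structure entirely (one about which the hypotheses say nothing). Your identification $\Tor_1^U(L,V/U) \cong \Tor_1^U(L,L)^{\oplus\JJ}$ and the subsequent analysis of the connecting map therefore fail. The paper instead gets $\Tor_1^U(L,V) = 0$ directly from the exact sequence $\Tor_2^U(L,D/V)\to\Tor_1^U(L,V)\to\Tor_1^U(L,D)$, using $\pd_U L \leq 1$ and flatness of $D$; and the key mechanism you are missing is the use of condition (b) \emph{for $L^\vee$}: applying $\Hom_U(-,V)$ to $0\to U\to J^*\to L^\vee\to 0$ shows $\Hom_U(J^*,V)\cong V$, so the inclusion $U\hookrightarrow V$ lifts to an embedding $J^*\hookrightarrow V$, and the Dual Basis Lemma gives $JV\supseteq JJ^*\ni 1$, whence $J\otimes_U V = JV = V$ and $L\otimes_U V = 0$. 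Notice you never invoke the $L^\vee$ half of (b); that is a symptom of the gap.

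In part (3), the claim that $V$ is flat as a left $U$-module is unjustified, and the asserted justification — ``flatness can be tested on the single cyclic module $U/J$ when $\pd U/J \leq 1$'' — is false: vanishing of $\Tor_1^U(L,V)$ for one particular $L$ does not imply $\Tor_1^U(M,V) = 0$ for all $M$. There is no reason $V$ should be flat over $U$ here, and the paper does not need it. Instead, the paper proves the sharper statement $\Ext^i_V(M,N)\cong\Ext^i_U(M,N)$ for $i\geq 1$ and all $M,N\in\rMod V$, using the Cartan--Eilenberg change-of-rings spectral sequence for $\phi:U\to V$: by part (2), $\Ext^j_U(L,N)=0$ for all $j$ (as $\pd_U L\leq 1$), so applying $\Hom_U(-,N)$ to $0\to U\to V\to L^{\oplus\JJ}\to 0$ gives $\Ext^q_U(V,N)=0$ for $q\geq 1$ and $\Hom_U(V,N)=N$, and the spectral sequence collapses. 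Then $\gldim V\leq\gldim U$ follows immediately. Your part (2) is in the same spirit as the paper's perpendicular-category argument (and workable, with the small wrinkle that showing $\Hom_U(L,N)=0$ for $V$-modules $N$ via free $V$-resolutions requires first establishing $\Ext^1_U(L,-)=0$ on $V$-modules); the paper instead invokes the base-change spectral sequence $\Ext^p_V(\Tor_q^U(L,V),N)\Rightarrow\Ext^{p+q}_U(L,N)$ at that step, which is more economical.
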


\begin{proof}
 $(1)$ We first compute $J \otimes_U V$.  Since  $V\subset D$,
 there is an exact sequence
 \[   \Tor^U_2(L,D/V) \to \Tor^U_1(L, V) \to \Tor^U_1(L, D).\]
 Using that $\pd(L)=1$ and that $D$ is a flat $U$-module,   
the outside terms are zero  in this  sequence and so $\Tor^U_1(L, V)=0$. Thus    the natural map
  $\phi: J \otimes_U V \to U \otimes_U V  = V$ is injective
and we identify $J \otimes_UV$ with $JV=\text{Im}\,\phi$. 
  
   From the exact sequence
$0 \to U \to J^* \to L^\vee \to 0$ and (b), the natural map $\Hom_U(J^*, V) \to \Hom_U(U,V)=V$ is an isomorphism.  
In particular, the inclusion $U \subseteq V$ lifts to an inclusion
 $J^* \subseteq V$ and so, by the Dual Basis Lemma, $J V \supseteq J J^*\ni 1$.  
So $J \otimes_U V = V$.  
It follows by tensoring the exact sequence $0 \to J \to  U \to  L \to 0$ with $V$ that $L \otimes_U V = 0$.

$(2)$
Consider the functors
$F = \Hom_U(V, -):  \rMod U \to \rMod V$ and $G = \operatorname{res}_U:  \rMod V \to \rMod U$.  
We claim that $F$ and $G$ give inverse equivalences between $L^\perp$ and $\rMod V$.

Let $M \in L^\perp$, and consider the exact sequence
\[   \Hom_U(L^{\oplus \JJ}, M) \to  \Hom_U(V, M) \to  \Hom_U(U, M) \to \Ext^1_U(L^{\oplus \JJ}, M) \]
induced from (a).  
As the outside terms are zero, this provides a natural isomorphism of $U$-modules: $GF(M)  = \Hom_U(V,M) \ \buildrel{\sim}\over\too\ M$. Hence 
$M$ carries a natural right $V$-action. Similarly, if $M' \in L^\perp$ as well then a $U$-module homomorphism $\theta: M\to M'$ induces a $V$-module homomorphism $ \Hom_U(V, M)\to  \Hom_U(V, M')$, from which it follows that $\theta$ is  already a $V$-module map. 
Thus $GF \cong \id_{L^\perp}$.

Now let $N \in \rMod V$.
From the spectral sequence $\Ext^p_V(\Tor_q^U(L,V),N) \Rightarrow \Ext^{p+q}_U(L,N)$ \cite[Theorem~10.74]{Rotman}, we have 
$\Hom_U(L,N) = 0 =   \Ext^1_U(L, N)$, and  so $G(N)\in L^\perp$.
 There is a natural $V$-module map $N \to \Hom_U(V, N)$ given by $n \mapsto (s \mapsto ns)$, which is the inverse of the natural isomorphism  
 $\Hom_U(V, N) \to N$  discussed above and so it is also an $V$-module isomorphism.   Thus $FG \cong \id_{\rMod V}$
 on objects.   It is routine to check that this respects morphisms   and  so $F,G$ are indeed inverse equivalences.

$(3)$
Let $M, N \in \rMod V$, which we identify with $L^\perp$, using $(2)$.  
It is clearly sufficient to prove that
\beq\label{sameext}
\Ext^i_V(M, N) \cong \Ext^i_U(M, N)\qquad \text{for}\ i\geq 1.
\eeq
To prove this we will use the spectral sequence $(2)_{4}$ from \cite[Section~XVI.5, p349]{CE} for the injection $\phi: U\to V$. We begin with a couple of observations.
 
By part (2) and the fact that $L_U$ has projective  dimension $\pd_U(L)\leq 1$, we have  $ \Ext^j_U(L,N)=0$ for all $j\geq 0$.
Now consider the long exact sequence obtained by applying $\Hom_U(-, N)$ to  the exact sequence $0\to U\to V \to L^{\oplus \mathbb{J}}\to 0$ 
arising from  (a).   Then certainly  $\Ext^q_U(V, N) = 0$ for $q \geq 1$. Moreover,
    $\Hom_U(V,N)=\Hom_U(U,N)=N$; thus ${}^{(\phi)}N=N$ in the notation of \cite{CE}. 
Therefore, as explained in \cite{CE},   the  cited spectral sequence collapses and  the edge homomorphism  $(3)_{4}$ from 
  \cite[Section~XVI.5, Case 4, p.349]{CE}  becomes the desired  isomorphism
$\Ext^i_V(M, N) =\Ext^i_V(M,\, {}^{(\phi)}N) \cong \Ext^i_U(M,N).$
\end{proof}

We next prove a partial converse to Proposition~\ref{prop:smooth}(3), for which we need the following result on universal extensions.

\begin{lemma}\label{lem:univ2}
Let $U$ be a noetherian $\kk$-algebra and let $L$ be a finitely generated right $U$-module satisfying $\End_U(L)= \kk$ and $\Ext^1_U(L,L)=0$.  
For any right $U$-module $Q$ such that $\Hom_U(L, Q) =0$, there is a short exact sequence
\[ 0 \to Q \to N \to \Ext^1_U(L,Q) \otimes_{\kk} L \to 0,\]
for some $N \in L^\perp$.
\end{lemma}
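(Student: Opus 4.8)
The statement is the standard construction of a universal extension, so the plan is to build $N$ as the pushout of the ``identity'' extension along the evaluation map. Set $V = \Ext^1_U(L,Q)$, a vector space over $\kk = \End_U(L)$. Since $U$ is noetherian and $L$ is finitely generated, $V$ is finite-dimensional over $\kk$ (this uses that $Q$ need not be finitely generated, so strictly one should first reduce to the finitely generated case or invoke that $\Ext^1_U(L,-)$ commutes with direct limits together with a rank bound coming from a finite presentation of $L$; in the application $Q$ will be of a controlled form, so I will just assume $\dim_\kk V < \infty$, or else work with $V$ of arbitrary dimension and interpret $V \otimes_\kk L$ as $L^{\oplus V}$). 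Then there is a canonical element of
\[
\Ext^1_U(V \otimes_\kk L,\, Q) \;\cong\; \Hom_\kk\bigl(V,\, \Ext^1_U(L,Q)\bigr) \;=\; \End_\kk(V),
\]
namely the one corresponding to $\id_V$; let $N$ be the middle term of the corresponding short exact sequence
\[
0 \too Q \too N \too V \otimes_\kk L \too 0.
\]
This already gives the displayed sequence; it remains only to check $N \in L^\perp$, i.e.\ $\Hom_U(L,N) = \Ext^1_U(L,N) = 0$.

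For this I would apply $\Hom_U(L,-)$ to the defining sequence, obtaining the long exact sequence
\[
0 \to \Hom_U(L,Q) \to \Hom_U(L,N) \to \Hom_U(L, V\otimes_\kk L) \xrightarrow{\ \partial\ } \Ext^1_U(L,Q) \to \Ext^1_U(L,N) \to \Ext^1_U(L, V\otimes_\kk L).
\]
By hypothesis $\Hom_U(L,Q) = 0$. Since $\End_U(L) = \kk$ and $L$ is finitely generated, $\Hom_U(L, V\otimes_\kk L) \cong V \otimes_\kk \End_U(L) = V$, and by construction the connecting map $\partial$ is precisely the identification $V = \Ext^1_U(L,Q)$; in particular $\partial$ is an isomorphism. (The point that $\partial$ is ``the identity'' is the standard fact that the boundary map of a pushout extension is, up to the natural identifications, the classifying morphism one pushed out along — this is where I would be careful, using the explicit description of the connecting homomorphism via the snake lemma on the pushout square.) Hence $\Hom_U(L,N) = 0$ from exactness at $\Hom_U(L,N)$, and $\Ext^1_U(L,N) \hookrightarrow \Ext^1_U(L, V\otimes_\kk L) \cong V \otimes_\kk \Ext^1_U(L,L) = 0$ by the hypothesis $\Ext^1_U(L,L) = 0$. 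Therefore $\Ext^1_U(L,N) = 0$ as well, so $N \in L^\perp$.

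The only genuine obstacle is the verification that the connecting homomorphism $\partial$ attached to the universal extension really is an isomorphism (equivalently, the identity under $\Hom_U(L,V\otimes_\kk L)\cong V$). This is a diagram chase with the pushout construction: one writes the class in $\Ext^1_U(V\otimes_\kk L, Q)$ corresponding to $\id_V \in \End_\kk(V) \cong \Hom_\kk(V, \Ext^1_U(L,Q))$, unwinds the isomorphism $\Ext^1_U(V\otimes_\kk L,Q)\cong \Hom_\kk(V,\Ext^1_U(L,Q))$ (which is just additivity of $\Ext^1$ in the first variable applied to $V\otimes_\kk L = L^{\oplus V}$), and observes that restricting the extension $0\to Q\to N\to V\otimes_\kk L\to 0$ along the inclusion of each copy $L \hookrightarrow V\otimes_\kk L$ recovers the corresponding component of $\id_V$, i.e.\ a basis of $\Ext^1_U(L,Q)$. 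Composing with $\Hom_U(L,-)$ then shows $\partial$ sends the generator $\mathrm{id}_L$ of the $v$-th summand of $\Hom_U(L,V\otimes_\kk L)$ to the $v$-th basis vector of $\Ext^1_U(L,Q)$, hence is an isomorphism. Everything else is formal homological algebra over the noetherian ring $U$.
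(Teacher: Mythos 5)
Your proposal is correct and takes essentially the same route as the paper: both construct $N$ as the universal extension classified by $\id_V$ under the identification $\Ext^1_U(V\otimes_\kk L, Q)\cong\Hom_\kk(V,\Ext^1_U(L,Q))$ (the paper citing Erdmann--Holm for the construction, you carrying it out by hand), then apply $\Hom_U(L,-)$ and observe that the connecting map is identified with $\id_V$, so that $\Hom_U(L,N)=0$ follows from $\Hom_U(L,Q)=0$ and $\Ext^1_U(L,N)=0$ from $\Ext^1_U(L,L)=0$. Your aside about finiteness of $V$ is unnecessary: $L$ is finitely generated over a noetherian ring, hence finitely presented with a finitely generated projective resolution, so both $\Hom_U(L,-)$ and $\Ext^1_U(L,-)$ commute with arbitrary direct sums, and the argument goes through with $V\otimes_\kk L=L^{\oplus\dim V}$ of any cardinality, exactly as the paper implicitly assumes.
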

\begin{proof}
Let $E = \Ext^1_U(L, Q)$ and  choose a basis $\{e_i\}_{i\in \mb I}$ for $E$ as a $\kk$-vector space.  
As in \cite[Lemma~4.2]{EH2008},  construct a short exact sequence $0 \to Q \to N \to E\otimes_{\kk} L \to 0$ such that
 the pullback under $e_i \otimes \id_L : L \to E \otimes_{\kk} L$  is the extension  $0 \to Q \to N_i \to L \to 0$  corresponding to $e_i$.
(It can be shown by a diagram chase that $N$ is the element of $\Ext^1_U(E \otimes_{\kk} L ,Q) $ corresponding to $\id_E$ via the natural
 isomorphism  $\alpha: \Hom_{\kk}(E,E) \to \Ext^1_U(E \otimes_{\kk} L, Q)$.) 
By construction,  the diagram
\[ \xymatrix{0 \ar[r] & Q \ar[r] \ar@{=}[d] & N_i \ar[d] \ar[r] & L \ar[d]^{e_i \otimes \id_L} \ar[r] & 0 \\
 0 \ar[r] & Q \ar[r] & N \ar[r] & E\otimes_{\kk} L \ar[r] & 0 }\]
 commutes.
 Applying $\Hom_U(L, -)$, we obtain a commutative diagram:
 \beq \label{dagdiag}
 \xymatrix{0 \ar[r] & \Hom_U(L,Q ) \ar[r] \ar@{=}[d] & \Hom_U(L,N_i) \ar[d] \ar[r] & \Hom_U(L,L) \ar[d]^{(e_i \otimes \id_L) \circ (-)} \ar[r]^(0.6){\delta_i} &E \ar@{=}[d] \\
 0 \ar[r] & \Hom_U(L,Q ) \ar[r] & \Hom_U(L,N) \ar[r] & \Hom_U(L, E\otimes_{\kk} L) \ar[r]_(0.7){\delta} & E. }
 \eeq
 Thus
 $\delta(e_i \otimes \id_L) = \delta((e_i \otimes \id_L) \circ \id_L) = \delta_i(\id_L)$, and this equals  $e_i$ from 
the standard way that elements of $\Ext^1_U(L, Q)$ correspond to extensions.  Since $L$ is finitely generated and $\End_U(L) \cong \kk$, it follows  that $\delta$ is an isomorphism.  
 
 We now use that $\Hom_U(L, Q) = 0=\Ext^1_U(L,L) $. 
 Extending the bottom row of \eqref{dagdiag} gives the  long exact sequence
 \[
0\to \Hom_U(L,N)\to \Hom_U(L, E\otimes_{\kk} L)\stackrel{\delta}{\to}  E \to \Ext^1_U(L, N) \to  \Ext^1_U(L, E \otimes_{\kk} L).
\]
But $\Ext^1_U(L, E\otimes_{\kk} L) \cong E \otimes_{\kk} \Ext^1_U(L,L) = 0$.  
Since $\delta$ is an isomorphism, it follows that $N \in L^\perp$.
\end{proof}
  
\medskip

\begin{proposition}~\label{prop:smoothconv}
 Suppose that $U$, $V$ and $L$ satisfy the hypotheses of  Proposition~\ref{prop:smooth} and, in addition, that $\Hom_{U}(L,L) = \kk$ and $\injdim U=d < \infty$.
If $\gldim V < \infty$ then $\gldim U < \infty$.
\end{proposition}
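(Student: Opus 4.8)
<br>

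The goal is to show that if $\gldim V<\infty$ then $\gldim U<\infty$, which is a converse to part (3) of Proposition~\ref{prop:smooth}. The natural strategy is: since $U$ has finite injective dimension $d$, it suffices to prove that $\pd_U(M)<\infty$ for every finitely generated $M\in\rMod U$ (by the usual Auslander-type argument, in fact $\gldim U\le d$ once all modules have finite projective dimension; more carefully, one bounds $\pd_U M$ by a constant). The plan is to reduce an arbitrary module to one lying in $L^\perp$, where Proposition~\ref{prop:smooth}(2) identifies the category with $\rMod V$ and Proposition~\ref{prop:smooth}(3)'s proof shows $\Ext$-groups agree, so finiteness of $\gldim V$ takes over.

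First I would record that $L$ itself has finite projective dimension over $U$: indeed $L=U/J$ with $J$ projective, so $\pd_U L\le 1$. Next, given any finitely generated $M\in\rMod U$, I want to replace it by a module in $L^\perp$ at the cost of a bounded-length resolution by copies of $L$ (which have $\pd\le 1$) and one module in $L^\perp$. The tool for this is Lemma~\ref{lem:univ2}: the hypotheses $\End_U(L)=\kk$ and $\Ext^1_U(L,L)=0$ hold by assumption (the latter is part of hypothesis (b) of Proposition~\ref{prop:smooth}, applied with the overring $V$ — wait, rather $\Ext^1_U(L,L)=0$ follows since $L=U/J$, $J$ projective, and $L^\vee$-type considerations; in any case it is available here because condition (a) forces $\Ext^1_U(L,V)=0$ and the extension $0\to U\to V\to L^{\oplus\JJ}\to 0$ together with $\pd_U L\le1$ yields $\Ext^1_U(L,L)=0$ after a short diagram chase). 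Concretely: take a finitely generated $M$; if $\Hom_U(L,M)=0$ then Lemma~\ref{lem:univ2} produces $0\to M\to N\to \Ext^1_U(L,M)\otimes_\kk L\to 0$ with $N\in L^\perp$. Since $\Ext^1_U(L,M)$ is finite-dimensional ($U$ noetherian, $M$ and $L$ finitely generated), the right-hand term is a finite direct sum of copies of $L$, hence has projective dimension $\le 1$; therefore $\pd_U M\le \max(\pd_U N,\; 2)$ roughly speaking, and more precisely $\pd_U M\le \pd_U N+1$ via the long exact sequence. If instead $\Hom_U(L,M)\ne 0$, I first need to kill it: one can build a universal extension in the other direction, replacing $M$ by the kernel of a surjection from a sum of copies of $L$, or dually handle the sub generated by images of $L$; the cleanest route is to note that the $U$-submodule $M_0$ of $M$ generated by all images of maps $L\to M$ is a quotient of a finite sum $L^{\oplus k}$ (finite generation again), so $\pd_U M_0\le 1$, and $M/M_0$ has $\Hom_U(L,M/M_0)=0$ up to a further finite step — iterating this finitely often (it terminates since each step strictly enlarges the $L$-generated part and everything is noetherian) lands one in the case $\Hom_U(L,-)=0$.

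Once $M$ is reduced to $N\in L^\perp$, I invoke the $\Ext$-comparison \eqref{sameext} established inside the proof of Proposition~\ref{prop:smooth}(3): $\Ext^i_U(N,N')\cong\Ext^i_V(N,N')$ for $i\ge 1$ and all $N,N'\in L^\perp\simeq\rMod V$. Hence $\pd_U N\le \pd_V N\le \gldim V<\infty$ (one checks $\pd_U N$ is finite by taking $i>\gldim V$ and observing all $\Ext^i_U(N,-)$ vanish on $L^\perp$, while $\Ext^i_U(N,L^{\oplus\JJ})$ and $\Ext^i_U(N,U)$ for large $i$ are controlled using $\injdim U=d$ and the sequence $0\to U\to V\to L^{\oplus\JJ}\to 0$ — indeed $\Ext^i_U(N,V)=\Ext^i_V(N,V)=0$ for $i>\gldim V$, and then $\Ext^{i}_U(N,L^{\oplus\JJ})\cong\Ext^{i+1}_U(N,U)=0$ for $i\ge d$, so $\Ext^i_U(N,U)=0$ for $i$ large, giving $\pd_U N<\infty$). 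Combining, $\pd_U M<\infty$ for every finitely generated $M$, with a uniform bound depending only on $\gldim V$ and $d$; therefore $\gldim U<\infty$.

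The main obstacle I expect is the bookkeeping in the reduction step: showing that an arbitrary finitely generated $M$ can be brought into $L^\perp$ by finitely many "universal extension" moves, each changing projective dimension by a controlled amount, and verifying that the process terminates. The use of Lemma~\ref{lem:univ2} is clean on the side where $\Hom_U(L,M)=0$, but disposing of a nonzero $\Hom_U(L,M)$ (the "co-universal" direction) requires a small separate argument; getting this reduction exactly right — rather than the $\Ext$-transfer, which is essentially handed to us by the proof of Proposition~\ref{prop:smooth}(3) — is where the real work lies. A secondary point to be careful about is confirming $\Ext^1_U(L,L)=0$ from the stated hypotheses so that Lemma~\ref{lem:univ2} applies.
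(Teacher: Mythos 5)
Your overall strategy matches the paper's: reduce an arbitrary module to one in $L^\perp$ where the Ext-transfer \eqref{sameext} combined with $\gldim V<\infty$ applies, with Lemma~\ref{lem:univ2} as the reduction tool. Your verification that $\Ext^1_U(L,L)=0$ follows from the hypotheses (by applying $\uHom_U(L,-)$ to $0\to U\to V\to L^{\oplus\JJ}\to 0$ and using $\pd_U L\le 1$ together with $\Ext^1_U(L,V)=0$) is correct, and your endgame once you reach $L^\perp$ is fine.

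But there is a genuine gap in the reduction to $\Hom_U(L,M)=0$. Your fallback for $\Hom_U(L,M)\ne 0$ asserts that the $L$-generated submodule $M_0\subseteq M$, being a quotient of $L^{\oplus k}$, has $\pd_U M_0\le 1$; this is false in general, since quotients of modules of projective dimension $\le 1$ can have arbitrary projective dimension. Moreover the iteration ``enlarge the $L$-generated part until it stabilizes'' controls neither the projective dimension of the submodule you peel off nor the number of extension steps contributing to $\pd_U M$, so the reduction does not close.

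The paper sidesteps this entirely: it takes $0\to Q\to F\to M\to 0$ and $0\to Q'\to F'\to M'\to 0$ with $F,F'$ free, and uses $\injdim U=d<\infty$ to get $\Ext^i_U(M,M')\cong\Ext^i_U(Q,Q')$ for $i>d$. Since $U$ is a noetherian domain and $J\ne 0$ is essential, $L=U/J$ is Goldie torsion, while $Q,Q'$, as submodules of free modules, are Goldie torsionfree; hence $\Hom_U(L,Q)=\Hom_U(L,Q')=0$ automatically and Lemma~\ref{lem:univ2} applies with no case analysis. This is the real role of the $\injdim U<\infty$ hypothesis here (not just to pass from ``all modules have finite pd'' to ``$\gldim U<\infty$''): it lets you replace $M$ by a syzygy, which is automatically in the right ``torsionfree'' position relative to $L$. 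You would need to incorporate this step, or an equivalent device, to make your argument go through.
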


\begin{proof} Let $M,M'\in \rMod U$. We need to prove that $\Ext^i_U(M,M')=0$ for $i\gg 0$. 
Take exact sequences $0\to Q\to F\to M\to 0$ and  $0\to Q'\to F'\to M'\to 0$
for free modules $F,F'$ and  
consider the induced long exact sequences of $\Ext$ groups. Using that $\injdim F=\injdim U =d$,
 it follows that $\Ext^i_U(M,M')  \cong\Ext^{i+1}_U(M,Q') \cong \Ext^i_U(Q,Q')$ for $i>d$. 
 Thus, it  suffices to prove that $\Ext^j_U(Q,Q')=0$ for $j\gg 0$.  Since $Q$ and $Q'$ are Goldie torsionfree as defined on 
 page \pageref{tors-defn}, Lemma~\ref{lem:univ2} applies and produces exact sequences 
 $0\to Q\to N\to E\otimes_\kk L\to 0$ and 
 $0\to Q'\to N'\to E'\otimes_\kk L\to 0$, where $N,N'\in L^\perp$.

Once again, from the  induced long exact sequences for $\Ext$ groups, it suffices to prove that 
\begin{equation}\label{conv1}
\Ext^k_U(H,H')=0 \quad \text{ for } \quad k\gg 0,  \quad \text{where}\
H\in\{L,N\}   \  \text{and}\ H'\in \{L,N'\}. \end{equation}
Since $L=U/J$, with $J$ projective, $\injdim(L)<\infty$ and $\pd(L)=1$.  So certainly \eqref{conv1} holds 
if either $H=L$ or $H'=L$. The remaining case, where $H$ and $H'\in L^\perp$, follows from \eqref{sameext}.
\end{proof}

We now prove Theorem~\ref{thm:bigsmooth}.
\begin{proof}[Proof of Theorem~\ref{thm:bigsmooth}]  

Suppose first that $\rqgr T(p)$ is smooth.   Recall that, by Lemma~\ref{lem:Dansmooth}, if $R$ is an elliptic algebra then 
$\rqgr R$ is smooth if and only if $\gldim R^{\circ}<\infty$.   Thus $\gldim T(p)^{\circ} <  \infty$, and in particular 
$\pdim_{T(p)^{\circ}} L^{\circ} < \infty$  (and thus $\pdim_{T(p)^{\circ}} L^{\circ} = 1$ by 
Lemma~\ref{lem:pdL}). 
  By Theorem~\ref{thm:inverse}, $T$ is the blowdown of $T(p)$ along $L$, and $J$ satisfies the condition $\hilb \uEnd_{T(p)}(J) = \hilb T(p)$ 
  as in \eqref{eq:line}.   Since $L^{\circ} = T(p)^{\circ}/J^{\circ}$, the right ideal $J^{\circ}$ is projective, 
and  by Theorem~\ref{thm:one} it follows that $\uExt_{T(p)}^1(L, L) = 0$.  By Lemma~\ref{lem:tilde}(3), $\uExt^1_{T(p)}(L, T) = 0$.
Since by Lemma~\ref{lem:Rdual}(4) $\uExt^1_{T(p)}(L^\vee, L^\vee) = 0$, applying Lemma~\ref{lem:tilde}(3) on the left gives that 
$\uExt^1_{T(p)}(L^\vee, T) =0$.  Finally, it is obvious that $\uHom_{T(p)}(L, T) = 0 = \uHom_{T(p)}(L^{\vee}, T)$.  
Now using Lemma~\ref{lem0.1}, it follows from the above observations that the hypotheses of Proposition~\ref{prop:smooth}
hold with $V = T^{\circ}$, $U = T(p)^{\circ}$.  Thus $\gldim U < \infty$, Proposition~\ref{prop:smooth} implies 
$\gldim V < \infty$, and so $\rqgr T$ is smooth.

Conversely, suppose that $\rqgr T$ is smooth and that $\pdim_{T(p)^{\circ}} L^{\circ} < \infty$, so again  $\pdim_{T(p)^{\circ}} L^{\circ} = 1$.  
All of the arguments in the previous paragraph then go through to show that the hypotheses of Proposition~\ref{prop:smooth}
hold with $V = T^{\circ}$, $U = T(p)^{\circ}$.  We also have $\uHom_{T(p)}(L,L) = k[g]$ by \ref{lem:red0}(3), and $\injdim T(p) < \infty$ is part 
of the Auslander-Gorenstein condition which holds by Proposition~\ref{prop:elliptic}.  Thus applying Lemma~\ref{lem0.1}, the hypotheses of 
Proposition~\ref{prop:smoothconv} hold.  Thus $\gldim V < \infty$ implies $\gldim U < \infty$, and so $\rqgr T(p)$ is smooth.
\end{proof}

We do not know how to characterise when $\pdim L^\circ= 1$ (equivalently, $\pdim L^\circ < \infty$), although we conjecture:

\begin{conjecture}\label{conj:sierra}
 Let $T$ be an elliptic algebra with $\deg T \geq 4$ and let $T(p)$ be the blowup of $T $ at $p \in E$ with   exceptional line $L$. 
If there is no $T$-line module $L'$ with $\Div L' = \tau(p)$, then $\pdim_{T(p)^\circ}L^\circ =1$.
\end{conjecture}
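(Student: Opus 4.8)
Since this is a conjecture, what follows is a proposal for a line of attack rather than a complete argument. By Lemma~\ref{lem:pdL} it suffices to show $\pdim_{T(p)^\circ}L^\circ<\infty$, and as $L^\circ=T(p)^\circ/J^\circ$ this is the statement that the localised line ideal $J^\circ$ is a projective right $T(p)^\circ$-module. The plan is to exploit the bimodule structure of $J$: by Theorem~\ref{thm:inverse}(1) we have $\uEnd_{T(p)}(J)=T(\tau(p))$, the blowup of $T$ at $\tau(p)$, so after inverting $g$ and passing to degree-zero parts $J^\circ$ becomes a $(U',U)$-bimodule, where $U=T(p)^\circ$ and $U'=T(\tau(p))^\circ=\End_U(J^\circ)$; recall also that $T(\tau(p))\cdot T(p)=\wt{T(p)}_L=T$ by Theorem~\ref{thm:I.1}. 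A first, free, observation is that $\uExt^1_{T(p)}(J,J)$ is already $g$-torsionfree: the map $\alpha$ in the exact sequence \eqref{eq:JJ} vanishes because the proof of Theorem~\ref{thm:I.1} identifies $\overline{\uEnd_{T(p)}(J)}$ with all of $\uEnd_B(\overline{J})=B(E,\mc M(-\tau(p)),\tau)$. So the real content is the vanishing \emph{in rank} of $\uExt^2_{T(p)}(L,-)$.

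The first concrete step is a dévissage. Since $L^\circ$ is finitely generated over the noetherian ring $U$, the functor $\Ext^2_U(L^\circ,-)$ commutes with direct limits, so by Lemma~\ref{lem0.1} the module $J^\circ$ is projective if and only if $\grk\uExt^2_{T(p)}(L,N)=0$ for every $N\in\rgr T(p)$. Filtering $N$, the $g$-torsion and finite-dimensional factors contribute nothing; for a point module one has $\uExt^2_{T(p)}(L,M_{p'})\cong\uExt^2_{T(p)/gT(p)}(M_{\tau(p)},M_{p'})=0$ by Lemma~\ref{lem:extRextB} and Proposition~\ref{prop:Hompt}; and $\uExt^2_{T(p)}(L,T(p))=0$ since $L$ is CM by Lemma~\ref{lem:Rdual}(1). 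Thus the obstruction is concentrated in $\uExt^2_{T(p)}(L,N)$ for $N$ a shifted line module or a reflexive module of GK-dimension $3$, the latter reducing further via $0\to N\to N^{**}\to C\to 0$ with $\GKdim C\le 1$ (Lemma~\ref{lem:reflexive}(1)). For each such $N$, applying $\uHom_{T(p)}(L,-)$ to $0\to N[-1]\overset{\timesg}{\too}N\to N/Ng\to 0$ and using Lemma~\ref{lem:extRextB} shows that whether $\uExt^2_{T(p)}(L,N)$ is $g$-torsionfree is controlled by $\uExt^1_{\rqgr(T(p)/gT(p))}(M_{\tau(p)},N/Ng)\cong\uExt^1_{\coh E}(\mc O_{\tau(p)},N/Ng)$. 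This is exactly where the point $\tau(p)$ enters, explaining the phrasing of the hypothesis.

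To finish one must show that a failure of projectivity manufactures a genuine $T$-line module with divisor $\tau(p)$. One natural candidate is built from the trace ideal $\mc T=J^\circ\cdot\uHom_U(J^\circ,U)$, a nonzero two-sided ideal of $U'=T(\tau(p))^\circ$ which by the Dual Basis Lemma equals $U'$ precisely when $J^\circ$ is projective; if it is proper, one lifts it to a proper graded two-sided ideal of $T(\tau(p))$, transports it through the blowdown identity $T=T(\tau(p))\cdot T(p)$ and a reflexive hull, and reads off its divisor. I expect the hard part to be this last step: controlling the divisor of the resulting reflexive $T$-module, and ruling out that the obstruction is carried by a ``curve of points'' (a $\GKdim\le 1$ quotient) rather than by a line module — precisely the kind of delicate bookkeeping of exceptional configurations on a \emph{non-smooth} noncommutative surface that has so far kept the conjecture open. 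An alternative, perhaps more tractable, route would bypass the trace ideal: use the computation of the previous paragraph together with Proposition~\ref{prop:Hompt} to show that the only $T(p)$-line modules $L'$ which can contribute to $\uExt^2_{T(p)}(L,L')$ are those with $\Div L'$ on the $\tau$-orbit of $\tau(p)$, and then blow down $T(p)$ along each such $L'$ (Theorem~\ref{thm:I.1}) to produce the forbidden $T$-line.
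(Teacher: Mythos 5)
As you rightly note, this statement is a \emph{conjecture} in the paper, with no proof to compare against, and your submission is an outline of a possible strategy rather than an argument.  The opening reductions are sound: Lemma~\ref{lem:pdL} reduces the claim to projectivity of $J^\circ$; the identity $\uEnd_{T(p)}(J)=T(\tau(p))$ from Theorem~\ref{thm:inverse}(1), together with the argument in the proof of Theorem~\ref{thm:I.1}, does give $\overline{\uEnd_{T(p)}(J)}=\uEnd_B(\overline{J})$, hence $\alpha=0$ in \eqref{eq:JJ} and $\uExt^1_{T(p)}(J,J)$ is $g$-torsionfree; and any $g$-torsion test module (a finite-dimensional one, or a point module over $B=T(p)/gT(p)$) is killed by inverting $g$.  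A minor slip: Proposition~\ref{prop:Hompt} is silent on $\uExt^2_B$, so it does not justify ``$\uExt^2_B(M_{\tau(p)},M_{p'})=0$'', but the intended conclusion holds anyway for the simpler reason just given.

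There remain two genuine gaps.  First, the d\'evissage does not obviously terminate: passing to reflexive hulls still leaves $g$-divisible reflexive test modules of GK-dimension $3$, and nothing you have written controls $\grk\uExt^2_{T(p)}(L,-)$ on those.  Second --- and this is the central missing idea, which you yourself flag --- the bridge from ``$J^\circ$ not projective'' to ``there exists a $T$-line module with divisor $\tau(p)$'' is not constructed.  The Dual Basis Lemma does give that $J^\circ(J^\circ)^*$ is a proper two-sided ideal of $U'=T(\tau(p))^\circ$, so $T(\tau(p))^\circ$ is not simple; but explaining how a proper ideal of this localised blowup must be witnessed by a $T$-line module with the specific divisor $\tau(p)$ is precisely the content of the conjecture (if $T(p)^\circ$ or $T(\tau(p))^\circ$ were always simple the conclusion would be immediate, so the difficulty is concentrated in the non-simple case).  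The alternative route has a parallel hole: blowing down along a $T(p)$-line module $L'$ via Theorem~\ref{thm:I.1} requires $L'$ to satisfy \eqref{eq:line}, which is not established, and even when the blowdown exists it is only an abstract elliptic algebra, not manifestly $T$, so no forbidden $T$-line module is directly produced.
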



 \section{An example of  undefined self-intersection}\label{EXAMPLE}

 In this section we describe an elliptic algebra $R$ with an exceptional line module $L$ for  which the self-intersection $(L\dotms L)$
  is undefined. Moreover, $(L\bigdot L) \not=-1$ 
 yet  the associated line ideal $J$ does satisfy $\hilb \uEnd_R(J)=\hilb R$. Thus one can still use Theorem~\ref{thm:I.1} to blow down
  the line $L$. This  justifies the comments made
in Remark~\ref{blowdown}  concerning that theorem and also  gives the example  
 promised before  Definition~\ref{our-intersection}.

 In a way that will shortly be made precise, the ring $R$ is obtained by blowing up  the same  point $p\in E$  twice  in the Veronese
  ring $T=S^{(3)} $ of the Sklyanin algebra $S$.  The key property, here,  is that the resulting scheme $\rqgr R$ is not smooth.  
This   may be explained by analogy with the commutative situation: iterated ring-theoretic blowups of $T$ are analogs of commutative rings of the form 
$A = \bigoplus_{n \geq 0} H^0(\mb{P}^2, (\mc{I} \otimes \mc{O}(9))^{\otimes n})$, where $\mc{I}$ is an ideal sheaf defining a zero-dimensional
 subscheme $Z$ of $\mb{P}^2$.  When $Z$ is not reduced, a failure of ampleness means that $\Proj A$ need not be isomorphic to  the blow-up
  of $\mathbb{P}^2$ at $Z$ and, moreover, $\Proj A$ need  not be smooth.
Of course,  this also shows that the analogy between commutative and noncommutative blowups is less precise in the non-generic situation.

 \begin{notation}\label{starting}
 We start with the relevant notation, which will be fixed throughout the section. Let $T=S^{(3)}$ be a Sklyanin elliptic algebra, as 
 defined in Example~\ref{sklyanin-defn} for an automorphism $\sigma$ of infinite order, with quotient ring $T/gT=B=B(E,\mathcal{M},\tau)$. 
 Fix a point $p\in E$. Following \cite{R-Sklyanin} we blow up $p$ once to get a ring $R'=T(p)$ and then blow up $R'$ at $p$ again  to give the ring $R=R'(p)=T(2p)$.  
 
  Let $L=R/J$ be the exceptional $R$-line module,  with line ideal $J$,  corresponding to the extension $R\subset R'$; this exists by
   \cite[Lemma~9.1]{R-Sklyanin}. Similarly, let $L'$ be the exceptional $R'$-line module corresponding to the extension $R'= T(p)\subset T$.  Finally, write  
  $$X = R'_{\leq 1} R  \ \subseteq \ Y = T_{\leq 1} R\ \subseteq\  Z = T_{\leq 1} R'\ \subseteq\  T.$$   \end{notation}

The following fact, due to Simon Crawford,  will be used several times.

\begin{proposition}[\cite{SC}]
\label{prop:SC}
The localised algebra $R^\circ=R[g^{-1}]_0$ is simple.  
\end{proposition}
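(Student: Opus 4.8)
The plan is to translate simplicity of $R^\circ$ into a statement about point modules of $R=T(2p)$, and then to invoke the known structure of the (fat) point scheme of the Sklyanin blowup. First I would set up the standard dictionary between $R^\circ$ and $R$. Since $R$ is a noetherian domain with central nonzerodivisor $g\in R_1$, localisation gives $R[g^{-1}]\cong R^\circ[g,g^{-1}]$, a strongly graded noetherian domain, so $\rMod R^\circ\simeq\rGr R[g^{-1}]$ and $R^\circ$ is an infinite-dimensional noetherian domain. In particular a simple ring has no nonzero finite-dimensional module, so it suffices to show $R^\circ$ has none. Conversely, given a proper nonzero two-sided ideal $I\subsetneq R^\circ$, I would let $\mathfrak a\subseteq R$ be the preimage of the corresponding graded two-sided ideal of $R[g^{-1}]$ and $\widehat{\mathfrak a}=\{y\in R: yg^k\in\mathfrak a\ \text{for some } k\ge 0\}$ its $g$-saturation; then $\widehat{\mathfrak a}$ is a nonzero $g$-divisible graded two-sided ideal with $\widehat{\mathfrak a}[g^{-1}]_0=I$, and since $I\ne R^\circ$ no power of $g$ lies in $\widehat{\mathfrak a}$, so $\widehat{\mathfrak a}\subsetneq R$ and $(\widehat{\mathfrak a}+gR)/gR$ is a nonzero ideal of $B'=R/gR\cong B(E,\mathcal M(-2p),\tau)$, hence cofinite by \cite[Lemma~4.4]{AS} (as used in the proof of Lemma~\ref{lem:reflexive}). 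The graded Nakayama lemma then makes $R/\widehat{\mathfrak a}$ a finitely generated, $g$-torsionfree $\kk[g]$-module, hence $\cong\kk[g]^{\oplus r}$ with $r\ge 1$, so that $R^\circ/I\cong(R/\widehat{\mathfrak a})[g^{-1}]_0\cong\kk^{\oplus r}$ is a nonzero finite-dimensional $R^\circ$-module. Thus $R^\circ$ is simple if and only if it has no nonzero finite-dimensional module.

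Next I would reduce this to a statement about point modules. A nonzero finite-dimensional $R^\circ$-module has a simple subquotient; transporting it along $\rMod R^\circ\simeq\rGr R[g^{-1}]$ and then passing to a simple quotient object of a corresponding finitely generated $R$-module yields a simple object $\mathcal S$ of $\rqgr R$ that is not $g$-torsion. Writing $\mathcal S=\pi(N)$ and replacing $N$ by $N/t(N)$, I may assume $N$ is $g$-torsionfree; then $N$ has no finite-dimensional submodule, so $\pi(N)$ being simple forces $N$ to be $1$-critical, whence $N/Ng$ — a proper quotient of $N$ — is finite-dimensional, $N$ is a finitely generated $g$-torsionfree $\kk[g]$-module, hence free of rank one, i.e.\ $N\cong R/I$ is a $g$-torsionfree point module of $R$. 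Equivalently, $R$ acts on $N\cong\kk[g]$ through a $\kk$-algebra homomorphism $\mu\colon R\to\kk$ with $\mu(g)=1$; conversely such a $\mu$ produces a $g$-torsionfree point module and a $1$-dimensional $R^\circ$-module. So everything comes down to proving that $R=T(2p)$ has no $g$-torsionfree point module, equivalently no $\kk$-algebra homomorphism $\mu\colon R\to\kk$ with $\mu(g)\ne 0$.

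This last point is the geometric heart of the argument and the step I expect to be the main obstacle. The point scheme of the Sklyanin algebra $S$ is the elliptic curve $E=V(g)$, so every point module of $S$, and hence of $T=S^{(3)}$, is $g$-torsion; I would then appeal to the blowing-up analysis of \cite{R-Sklyanin,RSSshort} to conclude that the (fat) point scheme of $T(p)$, and then of $R=T(2p)$, again lies on $E$, so every point module of $R$ is $g$-torsion — giving the required contradiction and finishing the proof. The difficulty is that this cannot be established by a purely homological argument (as it could be if $\rqgr R$ were smooth), precisely because $\rqgr R$ is \emph{not} smooth here, this being the double blowup at the same point with its attendant failure of ampleness described at the start of this section; one is forced to use the explicit module theory of the Sklyanin blowup, which is the content of \cite{SC}. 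A convenient alternative input would be that $R$ has no $\kk$-algebra quotient isomorphic to $\kk$ sending $g$ to a unit: this follows for $S$ itself, since the commutative quotient of the generic Sklyanin algebra is Artinian local with maximal ideal containing $S_{\ge 1}\ni g$ (the defining quadrics of $S$ have empty common zero locus in $\mathbb P^2$), but transporting this fact from $S$ down to the subalgebra $R=T(2p)$ still requires genuine care.
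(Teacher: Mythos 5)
The paper itself does not prove Proposition~\ref{prop:SC}: it is stated with a citation to Crawford's thesis \cite{SC}, listed in the bibliography as ``to appear,'' so there is no in-paper argument against which to compare your proposal. Your first reduction---that $R^\circ$ is simple if and only if it has no nonzero finite-dimensional module---is correct. The forward direction is standard for an infinite-dimensional noetherian domain, and the converse you sketch (take the $g$-saturated preimage $\widehat{\mathfrak a}$ of a proper nonzero ideal $I$, note $\widehat{\mathfrak a}\not\subseteq gR$ by $g$-divisibility, invoke cofiniteness of nonzero ideals of $B'=R/gR$ from \cite[Lemma~4.4]{AS}, then apply graded Nakayama and $g$-torsionfreeness of $R/\widehat{\mathfrak a}$) is sound.

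The second step contains a genuine gap. From a simple finite-dimensional $R^\circ$-module you correctly extract a finitely generated $g$-torsionfree $N\in\rgr R$ with $\pi(N)$ simple, and deduce that $N/Ng$ is finite-dimensional and that $N$ is $\kk[g]$-free of finite rank $d=\dim_\kk N/Ng$. But the assertion ``hence free of rank one'' does not follow: $1$-criticality of $N$ over $R$ (equivalently, simplicity of $\pi(N)$ in $\rqgr R$) places no upper bound on $d$. A simple finite-dimensional $R^\circ$-module of dimension $d>1$ corresponds to a $g$-torsionfree \emph{fat point} module of multiplicity $d$, and in that case the translation into a $\kk$-algebra homomorphism $\mu\colon R\to\kk$ with $\mu(g)\ne 0$ is simply not available. (Fat point modules of multiplicity $>1$ do occur for connected graded algebras of Gelfand--Kirillov dimension $3$; for instance, homogenized Weyl algebras in characteristic $p$ have $g$-torsionfree ones of multiplicity $p$.) So what must actually be ruled out is \emph{every} $g$-torsionfree fat point module of $R=T(2p)$, not merely the point modules; your closing parenthetical ``(fat)'' acknowledges this, but the formal reduction as written has already discarded them. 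Beyond that, you candidly concede that the geometric core---that $T(2p)$ has no such modules, which indeed resists a purely homological argument here precisely because $\rqgr R$ is not smooth---is exactly the content of \cite{SC} and is not supplied. That is accurate, and it is why the paper cites Crawford rather than giving a proof: your proposal is a nearly correct reduction (modulo the fat-point issue) to the hard input, not a proof of it.
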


 We note the following useful facts about the line ideals  $L$ and $L'$.
 
 \begin{lemma}\label{eg-lem-a}  
(1)  The line module $L$ has no proper $g$-torsionfree factor $R$-modules; equivalently, $L^\circ$ is a simple $R^\circ$-module.

(2) As $R$-modules, $  Z/R' = (Y+R')/R'\cong L[-1]$  and hence $L \cong L'$.  
 \end{lemma}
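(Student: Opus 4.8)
The plan is to treat the two parts separately: part (1) will come down to the simplicity of $R^{\circ}$ (Proposition~\ref{prop:SC}), and part (2) to comparing the two exceptional line modules inside $T$.

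\emph{Part (1).} First I would note that the two formulations are equivalent. Since $R[g^{-1}]\cong R^{\circ}\otimes_{\kk}\kk[g,g^{-1}]$ is strongly graded, the assignment $N\mapsto N[g^{-1}]_0$ is an inclusion-preserving bijection between the $g$-divisible $R$-submodules $N$ of $L$ (equivalently, the submodules $N$ with $L/N$ $g$-torsionfree) and the $R^{\circ}$-submodules of $L^{\circ}=L[g^{-1}]_0$, sending $0$ and $L$ to $0$ and $L^{\circ}$; hence $L^{\circ}$ is simple precisely when the only $g$-torsionfree factors of $L$ are $0$ and $L$. To prove the latter, suppose for contradiction that $0\neq N\subsetneq L$ with $L/N$ $g$-torsionfree. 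By Lemma~\ref{lem:basics} the line module $L$ is $2$-critical, so $\GKdim(L/N)\leq 1$; as $L/N$ is finitely generated and $g$-torsionfree, Lemma~\ref{lem:ideal}(2) produces a nonzero graded ideal $I=\Ann_R(L/N)$ of $R$. Since $R$ is a domain (Proposition~\ref{prop:elliptic}), $I$ is $g$-torsionfree, so by strong grading $I^{\circ}=I[g^{-1}]_0$ is a nonzero two-sided ideal of $R^{\circ}$, and it annihilates $(L/N)^{\circ}$. By Proposition~\ref{prop:SC} the ring $R^{\circ}$ is simple, so $I^{\circ}=R^{\circ}$, whence $(L/N)^{\circ}=0$; thus $L/N$ is $g$-torsion, and being also $g$-torsionfree it vanishes, contradicting $N\neq L$.

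\emph{Part (2).} Here I would use the two standard blowup extensions $R'/R\cong\bigoplus_{i\geq1}L[-i]$ as right $R$-modules and $T/R'\cong\bigoplus_{i\geq1}L'[-i]$ as right $R'$-modules, coming from \cite[Lemma~9.1]{R-Sklyanin} and the defining property of the exceptional line module (recall also $\Div L=\Div L'=\tau(p)$). The easy step is to read off $Z/R'$ as a right $R'$-module: since $1\in T_{\leq1}$ we have $R'\subseteq Z$, so $Z/R'$ is the $R'$-submodule of $T/R'=\bigoplus_{i\geq1}L'[-i]$ generated by the image of $T_{\leq1}$; that image vanishes in degree $0$ and is one-dimensional in degree $1$, spanning $L'[-1]_1$. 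As $L'[-1]$ is cyclic, generated in degree $1$, while the summands $L'[-i]$ with $i\geq2$ vanish in degree $1$, this submodule is precisely the first summand, so $Z/R'\cong L'[-1]$ as right $R'$-modules, hence also as right $R$-modules. It remains to identify $Z/R'$ with $L[-1]$ as right $R$-modules. For this I would first check $Y+R'=Z$, so that $Z/R'=(Y+R')/R'\cong Y/(Y\cap R')$; this last module is generated in degree $1$ as a right $R$-module by the image of $T_{\leq1}$ (one-dimensional in degree $1$, since $\dim Y_1-\dim R'_1=1$, and zero in degree $0$), while the isomorphism just proved gives $\hilb(Z/R')=s/(1-s)^2$. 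Hence $Z/R'\cong(R/J'')[-1]$ for a line ideal $J''$ of $R$, and computing $\Div(R/J'')$ and comparing with $\Div L=\tau(p)$ identifies $R/J''\cong L$, i.e.\ $Z/R'\cong L[-1]$. Combining the two identifications yields $L[-1]\cong Z/R'\cong L'[-1]$ as right $R$-modules, and therefore $L\cong L'$.

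The main obstacle is the computational core of part (2): verifying $Y+R'=Z$ and computing the divisor of the line module $Z/R'[1]$. I expect both to be handled by passing to the reductions modulo $g$ inside $Q_{gr}(B)=\kk(E)[t,t^{-1};\tau]$ and using Lemma~\ref{lem:basics for B} (and Lemma~\ref{lem:uhomB}) to describe $\overline{R'}$, $\overline{Y}$, $\overline{Z}$ as explicit twisted homogeneous coordinate modules over $E$ --- e.g.\ $\overline{R'}_n=H^0(E,\mathcal{M}_n(-p-\tau^{-1}(p)-\cdots-\tau^{-(n-1)}(p)))t^n$ --- and then checking $\overline{Y}+\overline{R'}=\overline{Z}$ and the divisor equality by routine manipulations of linear systems on $E$ (surjectivity of multiplication maps and vanishing orders at the points $\tau^{-i}(p)$, which are distinct since $\tau$ has infinite order). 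One also needs that $Y=T_{\leq1}R$ and $Z=T_{\leq1}R'$ are $g$-divisible, which I would deduce from the $g$-torsionfreeness of $T/Y$ and $T/Z$, again reducing over the domain $B$ to a statement on $E$. These steps are routine but slightly tedious, and I would present them compactly.
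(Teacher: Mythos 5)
Part (1) is fine: your route (annihilator ideal from Lemma~\ref{lem:ideal}(2) plus simplicity of $R^\circ$) is a mild variant of the paper's argument, which instead notes that $L^\circ$ is $1$-critical and that the simple infinite-dimensional ring $R^\circ$ has no nonzero finite-dimensional modules; both work. The identification $Z/R'\cong L'[-1]$ as $R'$-modules in part (2) is also correct and is exactly how the paper obtains it.

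The genuine gap is your last step: having shown $Z/R'\cong (R/J'')[-1]$ for some line ideal $J''$ of $R$, you propose to conclude $R/J''\cong L$ by computing $\Div(R/J'')$ and comparing it with $\Div L=\tau(p)$. But the divisor of a line module only records its reduction modulo $g$ (i.e.\ $\overline{J''}=\overline{J}$), and it does not determine the line module: the paper's Section~\ref{INTLINE} explicitly works with non-isomorphic line modules $L\not\cong L'$ having the same divisor (the case $\epsilon_{p,p'}=1$ in \eqref{epsilon}), so a divisor comparison cannot pin down $J''=J$. What is actually needed, and what the paper supplies, is the explicit computation in the Sklyanin algebra: writing $W(r)=H^0(E,\sL(-r))\subset S_1$ and using $S_1W(r)=W(\sigma^{-1}(r))S_1$, one identifies $J_1=W(\sigma^3(p))W(\sigma(p))W(\sigma^2(p))$ and checks $T_1J_1=R'_1R_1=X_2$, whence $T_1J\subseteq X\subseteq R'$ (using that $J$ is generated in degree one). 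Writing $T_1=R'_1\oplus\kk a$, this gives a well-defined nonzero $R$-module map $\pi\colon L[-1]=(R/J)[-1]\to Z/R'$, $1\mapsto a$; part (1) forces $\pi$ to be injective (its image is a nonzero $g$-torsionfree factor of $L[-1]$), and equality of Hilbert series makes it an isomorphism. Note also that the equality $Z=Y+R'$ you wanted to verify separately falls out of the surjectivity of $\pi$, since $Y+R'=R'+aR$. Without some replacement for the computation $T_1J\subseteq R'$, your argument establishes only $\overline{J''}=\overline{J}$, not $L\cong L'$.
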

 \begin{proof}
(1) By Proposition~\ref{prop:SC},  $R^\circ$ has no finite-dimensional modules.  
The simplicity of $L^\circ$ is then a consequence of the 1-criticality of $L^\circ$, which follows from Lemma~\ref{lem:basics}.
 
(2) We first make some calculations in the Sklyanin algebra $S$.
  Recall that $S_1$ may be identified with $H^0(E, \sL)$ for some invertible sheaf $\sL$ on $E$ of degree 3.
  For $r \in E$, let $W(r) = H^0(E, \sL(-r)) \subset S_1$. Then
   $S_1 W(r) = W(\sigma^{-1}(r))S_1$, by \cite[Lemma~4.1]{R-Sklyanin}, while 
  $R'_1 = W(p) S_2$ by \cite[Lemma~4.2]{R-Sklyanin}.  
We also have $R_1 = W(p) W(\sigma (p)) S_1$ by \cite[Lemma~4.6]{R-Sklyanin}.

Let $V =  W(\sigma^3(p)) W(\sigma (p)) W (\sigma^2 (p))$.  We show next that $J_1 = V$.  
We calculate that 
\[
R'_1 V = W(p) S_2 W(\sigma^3(p)) W(\sigma(p)) W (\sigma^2(p)) = W(p) W(\sigma(p)) S_1 W(p) W(\sigma(p)) S_1 = R_2.
\]
We have $X/R = R'_{\leq 1}R/R \cong L[-1] \cong (R/J)[-1]$, as a consequence of Theorem~\ref{thm:I.1}, since $R'$ is the blowdown of $R$ along $L$ 
by Theorem~\ref{thm:inverse}.  Thus $J = \{ x \in R : R'_1 x \subseteq R \}$ and so $V \subseteq J_1$.    
The Hilbert series of $J$ is known, and $\dim_\kk J_1 = 6$.  On the other hand, using Lemma~\ref{lem:basics for B} we calculate 
in $B(E, \mc{M}, \tau)$ that $\dim_\kk \overline{V} = 6$.  Thus $V = J_1$.

We now claim that 
\beq\label{oreo} T_1 J \subseteq X.\eeq 
This follows from the calculation  
  \beq\label{oreo2}
  T_1 J_1 = S_3 W(\sigma^3(p)) W(\sigma (p)) W (\sigma^2 (p)) = W(p) S_2 W(p) W(\sigma(p))S_1 = R'_1 R_1 = X_2
  \eeq
and the fact that by Lemma~\ref{lem:Rdual}(2), $J$ is generated in degree one as a right $R$-ideal.

Now since $T$ is the blowdown of $R'$ along $L' = R'/J'$, similarly to the  above we obtain $Z/R' = (T_{\leq 1} R')/R' \cong L'[-1]$ and $J' = \{ x \in R' : T_1 x \subseteq R' \}$.
Since $X \subseteq R'$, \eqref{oreo} gives $J \subseteq J'$.  By construction, $\dim_\kk T_1/R_1'=1$, so write  $T_1 = R_1' \oplus \kk a$ for some $a\in T_1$.  
As $J \subseteq J'$, there is a nonzero homomorphism $\pi: L[-1] \cong (R/J)[-1] \to Z/R' \cong L'[-1]$ sending $1$ to $a$.  
Since $L^\circ$ is simple,  $L$ has no  proper $g$-torsionfree  factors, and so $\pi$ is injective. By comparing Hilbert series, it is an isomorphism.
\end{proof}

 We further have:
 \begin{lemma}\label{eg-lem-b}
As $R^\circ$-modules, $Y^\circ$ is projective, while $J^\circ$ is not.
\end{lemma}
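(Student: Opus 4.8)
First I would record what we already know about the chain $R\subseteq R'\subseteq T$ and its localizations. By Theorem~\ref{thm:inverse}, $R'=T(p)$ is the blowup of $T$ at $p$ and $R=R'(p)$ is the blowup of $R'$ at $p$ again, with exceptional line modules $L'$ (for $R'\subset T$) and $L\cong L'$ (for $R\subset R'$) by Lemma~\ref{eg-lem-a}(2). From the construction, $Y/R=T_{\leq 1}R/R$ and the computation \eqref{oreo2} in Lemma~\ref{eg-lem-a}, one sees that $Y/R$ is a sum of (shifts of) the line module $L$, indeed $Y/R\cong L[-1]$. The overring $Y=T_{\leq 1}R$ is $g$-divisible (being a finitely generated submodule of $R_{(g)}$ squeezed between $g$-divisible modules, or directly from the description), so $\bbar Y$ makes sense and $\bbar Y/\bbar R\cong M_q$ for the point $q=\Div L$. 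Localizing at $g^{-1}$ and taking degree $0$, this gives an exact sequence $0\to R^\circ\to Y^\circ\to L^\circ\to 0$ of $R^\circ$-modules.

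\textbf{Projectivity of $Y^\circ$.} The key point is that $Y^\circ$ is, up to the equivalence $\rgr R[g^{-1}]\simeq\rmod R^\circ$, a cyclic right $R^\circ$-module generated by the image of an element of $T_1$, and it should in fact be a generator — a full idempotent corner or a progenerator. More concretely, I would argue that $Y^\circ\cong e R^{\circ}$ (after possibly passing to a matrix algebra) by exhibiting $Y^\circ$ as a direct summand of a free $R^\circ$-module: from $Y=T_{\leq 1}R$ and the fact that $T_1=R'_1\oplus\kk a$ with $R'_1R\subseteq R$, one gets that $Y$ is generated over $R$ by $1$ and $a$, so $Y^\circ$ is a quotient of $(R^\circ)^{\oplus 2}$; dually, using that $Y^\vee$ (the relevant $\uExt$-dual, cf.\ Lemma~\ref{lem:exths}) is itself a submodule of $R_{(g)}$ of the same shape, one produces a surjection back, showing $Y^\circ$ is reflexive of projective dimension $0$. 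Alternatively and more cleanly: since $R^\circ$ is simple by Proposition~\ref{prop:SC}, every finitely generated $R^\circ$-module is a direct sum of copies of the unique simple-up-to-progenerator object, or at least every torsionfree one is projective because a simple ring has global dimension governed by its (unique) simple modules — here one uses that $Y^\circ$ is a nonzero submodule of the division ring $Q(R^\circ)$-worth of $T^\circ$, hence torsionfree, and an essential extension of $R^\circ$; combined with $Y^\circ/R^\circ\cong L^\circ$ being simple (Lemma~\ref{eg-lem-a}(1)) and $\Ext^1_{R^\circ}(L^\circ,R^\circ)$ being computable via Lemma~\ref{lem:Rdual}(1) and Lemma~\ref{lem0.1}, the extension $0\to R^\circ\to Y^\circ\to L^\circ\to 0$ splits off enough to force $Y^\circ$ projective.

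\textbf{Non-projectivity of $J^\circ$.} For the second assertion I would argue by contradiction: if $J^\circ$ were projective, then $L^\circ=R^\circ/J^\circ$ would have $\pdim_{R^\circ}L^\circ=1$, and then by Theorem~\ref{thm:one}(b) the condition $(L\bigdot L)=-1$, equivalently $\uExt^1_R(L,L)=0$, would hold. But the whole point of this example — and what the section is building toward (Corollaries~\ref{eg-cor1}, \ref{eg-cor2}) — is that $(L\bigdot L)\neq -1$, because blowing up the same point twice produces a non-smooth scheme $\rqgr R$. So I would show directly that $\uExt^1_R(L,L)\neq 0$: using \eqref{eq:main2} this is equivalent to $\hilb\uEnd_R(J)\neq\hilb R$ failing in the sharp way, or rather to $\uHom_R(J,L)$ being strictly bigger than $s(1-s)^{-2}$; and this in turn follows from the geometry — the divisor $\Div L$ is $\tau(p)$ and there exists a second line module through the relevant point coming from the doubled blowup, which via Theorem~\ref{thm:three}-type analysis forces $\uExt^1_R(J,J)\neq 0$. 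Concretely, Lemma~\ref{eg-lem-a}(2) gives $L\cong L'$ as $R$-modules where $L'$ is really an $R'$-module pulled back, and the inclusion $R\subset R'$ with $R'/R\cong\bigoplus_{i\geq 1}L[-i]$ produces a nonsplit self-extension of $L$ inside $R'/gR'$-style modules. The \emph{main obstacle} is precisely this last point: pinning down $\uExt^1_R(L,L)\neq 0$ cleanly, since it is not formal — it uses the special non-generic geometry of the repeated blowup (failure of ampleness of the relevant ideal sheaf tensor power) rather than any general principle, and one must either compute a Hilbert series of $\uEnd_R(J)$ or $\uHom_R(J,L)$ explicitly using the descriptions $J_1=V$ and $T_1J_1=X_2$ from Lemma~\ref{eg-lem-a}, or invoke the explicit module-theoretic structure of the double blowup from \cite{R-Sklyanin}. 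Once $\uExt^1_R(L,L)\neq 0$ is in hand, Lemma~\ref{lem0.1} together with Lemma~\ref{lem:barhom}(2) (the $g$-torsion part of $\uExt^1_R$ is finite-dimensional) shows $\uExt^1_R(L,L)$ is not $g$-torsion, hence $\Ext^1_{R^\circ}(L^\circ,L^\circ)\neq 0$, so $\pdim_{R^\circ}L^\circ\geq 2$, and therefore $J^\circ$ is not projective.
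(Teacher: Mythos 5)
Your proposal does not close either half of the lemma, and in one place it rests on a false step. First, the identification $Y/R\cong L[-1]$ is wrong: since $Y=T_{\leq 1}R\supsetneq X=R'_{\leq 1}R\supsetneq R$ with $X/R\cong L[-1]$ and $Y/X\cong L[-1]$ (this is exactly Lemma~\ref{eg-lem-c}), the module $Y/R$ has Hilbert series $2s/(1-s)^2$ and is a \emph{nonsplit} extension of $L[-1]$ by $L[-1]$, not a single line. This breaks your ``clean'' short exact sequence $0\to R^\circ\to Y^\circ\to L^\circ\to 0$. Worse, the alternative you offer for projectivity of $Y^\circ$ is based on a false principle: it is not true that a torsionfree (or finitely generated) module over a simple noetherian ring is automatically projective. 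Indeed $R^\circ$ here is simple (Proposition~\ref{prop:SC}) and yet has \emph{infinite} global dimension (Corollary~\ref{eg-cor2}) — that is the whole point of the section. The paper instead computes $\uEnd_R(Y)=T(2\tau^{-1}(p))$ directly, observes that $T(2\tau^{-1}(p))^\circ=\End_{R^\circ}(Y^\circ)$ is simple (Proposition~\ref{prop:SC} again), and then a Dual Basis Lemma argument inside $Q(R^\circ)$ yields projectivity of $Y^\circ$. Your $\Ext$-based ``splits off enough'' suggestion does not substitute for this.

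For the non-projectivity of $J^\circ$, your plan — show $\uExt^1_R(L,L)\neq 0$, hence $\Ext^1_{R^\circ}(L^\circ,L^\circ)\neq 0$, hence $\pdim_{R^\circ}L^\circ\geq 2$ — is logically sound but you concede you cannot execute the key step. The concession is not a small one: the two places in the paper that establish $\uExt^1_R(L,L)\neq0$ (Lemma~\ref{eg-lem-c} and Corollary~\ref{eg-cor1}) both rely on Lemma~\ref{eg-lem-b}, so following that route would be circular, and computing $\hilb\uEnd_R(J)=\hilb R$ gets you nowhere since Theorem~\ref{thm:one}(a) only gives the implication $(2)\Rightarrow(3)$, not the converse, absent projectivity of $J^\circ$. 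The paper's actual argument is quite different and much shorter: by Theorem~\ref{thm:inverse}, $\uEnd_R(J)\cong T(p+\tau(p))$, whose localized ring $T(p+\tau(p))^\circ$ is \emph{not} simple (by \cite[Proposition~11.2(1)]{R-Sklyanin}); but $R^\circ$ is simple, so $J^\circ$ is a generator; if $J^\circ$ were also projective it would be a progenerator, making $\End_{R^\circ}(J^\circ)$ Morita equivalent to the simple ring $R^\circ$ — a contradiction. You should adopt this Morita argument rather than trying to establish $\Ext^1\neq 0$ first.
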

\begin{proof}
Let $	q = \tau (p)$.  
 By Theorem~\ref{thm:inverse}, $\uEnd_R(J)\cong F=R'(q)$. We now pass to the ring $R^\circ $ and notice that, by standard 
 localisation theory,  $F^\circ=\End_{R^\circ}(J^\circ)$.  Moreover, $F = T(p + \tau(p))$, the blowup of $T$ at 
two consecutive points on a $\tau$-orbit, is shown in \cite[Proposition~11.2(1)]{R-Sklyanin} to have a proper ideal $I$ 
such that $F/I$ is $g$-torsionfree.  Thus $I^{\circ}$ is a proper ideal of $F^{\circ}$ and $F^{\circ}$ is not simple, whereas by Proposition~\ref{prop:SC} $R^\circ$ is simple.  
In particular,  $R^{\circ} J^{\circ} = R^{\circ}$ and thus $J^\circ$ is an $R^\circ$-generator; since $F^\circ$ is not Morita equivalent 
to $R^\circ$, it follows that $J^\circ$ is not projective as a right $R^\circ$ module. 

Now let $\ell=\tau^{-1}(p)$.  We claim that $\uEnd_R(Y) = T(2\ell )$. This will complete the proof of the lemma since 
now  $\End_{R^\circ} (Y^\circ) \cong T(2\ell)^\circ$, which is again simple by Proposition~\ref{prop:SC}.  
By  the Dual Basis Lemma, $Y^\circ$ is  therefore projective as a right $R^\circ$-module.

In order to prove the claim, we note that, from the formul\ae\ from \cite[Lemmas~4.1 and~4.6]{R-Sklyanin}
noted in the proof of Lemma~\ref{eg-lem-a},
\begin{equation}
\label{eq: pointshift}
T(2\ell)_1 T_1  \ = \ W({\sigma^{-3}} (p))W({\sigma^{-2}} (p))  S_1S_3 
\ = \ S_3 W(p)W(\sigma (p) ) S_1\  =  \ T_1R_1.
\end{equation}
Moreover, as $T(2\ell)\subseteq T$, certainly $T(2\ell)_1R\subseteq  T_1R$ and hence $T(2\ell)_1Y = T(2\ell) (R+T_1R) \subseteq Y$. 
Since $T(2\ell)$ is generated in degree one by  definition, it follows that $T(2\ell)\subseteq \uEnd_R(Y)$. 
Equation~\eqref{eq: pointshift} also implies by induction that $T(2\ell)_n T_1 = T_1 R_n$ for all $n \geq 0$, so $T(2\ell) T_1 = T_1 R$. 
It follows that $T_1 R$ is a finitely generated left $T(2\ell)$-module.   In particular, writing a $\kk$-basis $\{ x_i \}$ of $T_1$ as fractions
$x_i = y_i z^{-1}$ with a common denominator, where $y_i, z \in T(2\ell)$, we see that $T_1 R z \subseteq T(2\ell)$, 
and then $Y z = (k + T_1 R)z \subseteq T(2\ell)$.
Thus $\uEnd_R(Y) Y z \subseteq Y z \subseteq T(2 \ell)$,  which means that $\uEnd_R(Y)$ and $T(2 \ell)$ are equivalent
 orders.  Since  $T(2\ell)$ is a maximal order by  \cite[Theorem~1.1(2)]{R-Sklyanin}, the inclusion $T(2\ell)\subseteq  \uEnd_R(Y)$ is actually an equality. 
\end{proof}

We next show that $(L\bigdot L) \neq -1$.  In fact, we prove:
\begin{lemma}\label{eg-lem-c}
There is a nonsplit exact sequence
\beq\label{dagdag} 0 \to L[-1] \to Y/R \to L[-1] \to 0 .\eeq
\end{lemma}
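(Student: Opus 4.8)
The plan is to exhibit the short exact sequence \eqref{dagdag} explicitly from the filtration $R \subseteq X \subseteq Y$ set up in Notation~\ref{starting}, and then to verify non-splitting using the simplicity of $L^\circ$ established in Lemma~\ref{eg-lem-a}(1). First I would observe that $X/R \cong L[-1]$: this is recorded in the proof of Lemma~\ref{eg-lem-a}(2), since $R'$ is the blowdown of $R$ along $L$ by Theorem~\ref{thm:inverse}, so that $X = R'_{\leq 1}R$ satisfies $X/R \cong L[-1]$ by Theorem~\ref{thm:I.1}(1) (applied to the degree-one generator of $R'/R \cong \bigoplus_{i\geq1}L[-i]$). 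Then I would identify the top quotient $Y/X$. We have $Y = T_{\leq 1}R$ and $Z = T_{\leq 1}R'$, and by Lemma~\ref{eg-lem-a}(2) the quotient $Z/R' \cong L'[-1] \cong L[-1]$. The map $Y \to Z/R'$ has image $(Y+R')/R'$; since $X = R'_{\leq 1}R \subseteq R'$ (because $R'_{\leq 1} \subseteq R'$), one gets $Y \cap R' \supseteq X$, and a Hilbert series count (using $\hilb Y/R = 2\hilb L[-1]$, obtained from $\dim_\kk T_1/R'_1 = 1$ and the structure already computed) should force $Y\cap R' = X$, so that $Y/X \cong (Y+R')/R' \hookrightarrow Z/R' \cong L[-1]$, and again by Hilbert series this is an isomorphism $Y/X \cong L[-1]$. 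Combining $X/R \cong L[-1]$ and $Y/X \cong L[-1]$ gives the exact sequence $0 \to L[-1] \to Y/R \to L[-1] \to 0$.

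For the Hilbert-series bookkeeping I would work in $\overline{R} \subseteq \overline{R'} \subseteq \overline{T}$, i.e. in the twisted homogeneous coordinate rings $B(E,\mathcal M(-2p),\tau) \subseteq B(E,\mathcal M(-p),\tau) \subseteq B(E,\mathcal M,\tau)$, where everything reduces to Riemann–Roch on $E$; alternatively, since $R/R'\cong\bigoplus_{i\geq1}L[-i]$ and $R'/R\cong$(the analogous sum for the blowup $R=R'(p)$), the Hilbert series of each term is already determined. The point is just to confirm that $\hilb Y/R = 2s/(1-s)^2$ and that the inclusions $X \subseteq Y\cap R'$ and $Y/X \hookrightarrow Z/R'$ are equalities by dimension count in each degree.

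Finally, non-splitting. If \eqref{dagdag} split, then $Y/R$ would contain a submodule isomorphic to $L[-1]$ mapping isomorphically onto the quotient $L[-1] = Y/X$; but the only submodule of $Y/R$ mapping onto $Y/X$ with the right Hilbert series is $Y/R$ itself unless there is a complement to $X/R$. Concretely, a splitting would give a nonzero map $L[-1] \to Y/R$ composing to the identity on $Y/X$, hence a submodule $N \subseteq Y$ with $N/R \cong$ (image), $N \cap X = R$, and $N + X = Y$. Then $N$ is a $g$-torsionfree $R$-module with $N/R \cong L[-1]$, and reducing mod the $g$-torsion appropriately, the image of $L[-1]$ inside $Y/R$ would be a $g$-torsionfree homomorphic image of $L[-1] \cong L'[-1]$; but $N\cap X = R$ would force $N/R \to Y/R \to Y/X$ to be injective on the submodule $N/R$, i.e. $N/R$ is a $g$-torsionfree factor of $L[-1]$ which is also a submodule of $Y/R$ complementary to $X/R$ — and then $X$ would be a direct summand of $Y$ as an $R$-module, contradicting indecomposability. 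The cleanest route, which I expect to be the one the authors take, is: a splitting of \eqref{dagdag} would make $Y/R \cong L[-1]^{\oplus2}$, hence $Y^\circ/R^\circ \cong (L^\circ)^{\oplus 2}$ as $R^\circ$-modules; but by Lemma~\ref{eg-lem-b} $Y^\circ$ is projective while $R^\circ$ is simple (Proposition~\ref{prop:SC}) and $L^\circ$ is simple (Lemma~\ref{eg-lem-a}(1)), and one computes $\Ext^1_{R^\circ}(L^\circ, R^\circ) \neq 0$ from the fact that $J^\circ$ is \emph{not} projective (Lemma~\ref{eg-lem-b}) via the sequence $0 \to J^\circ \to R^\circ \to L^\circ \to 0$ — so the class of $Y^\circ$ in $\Ext^1_{R^\circ}(L^\circ \oplus L^\circ, R^\circ)$ cannot be split since that would make $\Tor$ or projectivity computations inconsistent with $J^\circ$ non-projective.

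The main obstacle is pinning down precisely why the particular extension $Y/R$ is non-split rather than merely that \emph{some} non-split extension of $L[-1]$ by $L[-1]$ exists: one must track that $Y$ is built by iterating the \emph{same} point $p$, so that the extension class is exactly the one detected by the non-projectivity of $J^\circ = J^\circ_{R^\circ}$. I would handle this by passing to $R^\circ$-modules throughout: there $Y^\circ$ projective forces $\Tor_1^{R^\circ}(L^\circ \oplus L^\circ, -) $ and $\Ext$ constraints that a split $Y^\circ \cong (L^\circ)^{\oplus2}$ cannot satisfy because $L^\circ$ has a non-projective syzygy $J^\circ$, whereas a projective module has projective syzygies; more directly, $\pd_{R^\circ}(Y^\circ) = 0 < 1 = \pd_{R^\circ}(L^\circ)$ (the latter by Lemma~\ref{lem:pdL}, since $J^\circ$ is not projective so $\pd L^\circ \neq 0$), so $Y^\circ$ cannot be a direct sum of copies of $L^\circ$, which is exactly what a splitting of \eqref{dagdag} localised at $g$ would produce. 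That is the contradiction that finishes the proof.
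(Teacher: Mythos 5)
Your non-splitting argument is, at its core, the same as the paper's: localise \eqref{dagdag} at $g$, observe $\pd_{R^\circ}(Y^\circ/R^\circ) = 1$ because $Y^\circ$ is projective (Lemma~\ref{eg-lem-b}), and note that a splitting would force $\pd_{R^\circ}(Y^\circ/R^\circ) = \pd_{R^\circ}(L^\circ)$; but $\pd_{R^\circ}(L^\circ) > 1$ because $J^\circ$ is not projective (Lemma~\ref{eg-lem-b} again). One small correction: you assert ``$\pd_{R^\circ}(L^\circ) = 1$'' citing Lemma~\ref{lem:pdL}, but that lemma says $\pd_{R^\circ}(L^\circ) = 1$ \emph{only under the hypothesis} that $\pd_{R^\circ}(L^\circ)$ is finite, which is exactly what fails in this example; what you want is $\pd_{R^\circ}(L^\circ) \geq 2$, which is where the contradiction comes from.

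The genuine gap is in your identification of the top quotient $Y/X$. You want to deduce $Y/X \cong L[-1]$ from the chain $Y/X \cong (Y+R')/R' \hookrightarrow Z/R' \cong L[-1]$ together with a Hilbert-series count, but this presupposes two things you never establish: (i) $Y\cap R' = X$ (you only observe $X \subseteq Y\cap R'$); and (ii) $\hilb(Y/R) = 2s(1-s)^{-2}$. Neither is routine. $Y = T_{\leq 1}R$ is not obviously $g$-divisible, so one cannot read off $\hilb Y$ by Riemann--Roch on $E$, and in practice proving (ii) amounts to establishing exactly the structure one is trying to deduce; and even granting (ii), one gets only $\hilb Y/(Y\cap R') \leq \hilb Y/X$, not equality, unless (i) is already known. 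The paper avoids this circularity by a direct construction: it first proves (equation \eqref{oreo}) that $T_1 J \subseteq X$, which immediately yields a surjection $\pi\colon (R/J)[-1] \to Y/X = T_1R/R_1'R$ sending $1 \mapsto a$, where $T_1 = R'_1 \oplus \kk a$; injectivity of $\pi$ then follows because $Y/X$ is $g$-torsionfree (as $R'/X \cong \bigoplus_{i\geq 2}L[-i]$ is $g$-torsionfree and $Q_{gr}(T)/R'$ is $g$-torsionfree by $g$-divisibility of $R'$) while by Lemma~\ref{eg-lem-a}(1) $L$ has no proper $g$-torsionfree factor. In that argument the Hilbert series of $Y/R$ is a consequence, not an input. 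Without the computation \eqref{oreo} (or an equivalent substitute) your route does not close.
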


\begin{proof} 
By \eqref{oreo}, $T_1 J \subseteq X$.   Thus there is a homomorphism $\pi: (R/J)[-1] \to Y/X = T_1 R/R'_1 R$ which sends
 $1$ to $a$, where  $T_1 = R'_1 \oplus \kk a$.  Since $T_1 R = R'_1 R + aR$, $\pi$ is surjective.   Now since $R'/X \cong \bigoplus_{i \geq 2} L[-i]$ 
as right $R$-modules by Theorem~\ref{thm:I.1}, $R'/X$ is $g$-torsionfree.  Since $R'$ is $g$-divisible, $Q_{gr}(T)/R'$ is $g$-torsionfree, 
and so $Q_{gr}(T)/X$ and thus $Y/X$ are also $g$-torsionfree.  As noted in the proof of Lemma~\ref{eg-lem-a}, $L$ has no proper $g$-torsionfree 
factor modules, and this forces $\pi$ to be injective as well.  Thus $Y/X \cong L[-1]$ as right $R$-modules.  
 
We saw that $X/R \cong L[-1]$ in the proof of Lemma~\ref{eg-lem-a}, and so the exact sequence \eqref{dagdag} exists 
as claimed.
Finally, localising \eqref{dagdag} gives the exact sequence $0\to L^\circ\to  (Y/R)^\circ \to L^\circ \to 0$. By Lemma~\ref{eg-lem-b} 
  $\pdim L^\circ > 1 = \pdim (Y^\circ/R^\circ)$. Thus, neither this sequence nor  \eqref{dagdag} is  split.
\end{proof}

\begin{corollary}\label{eg-cor1} Let $R=T(2p)$ with exceptional line module $L$, as above. Then $(L\bigdot L)\not=-1$. 
On the other hand, 
$\hilb \uEnd_R(J)=\hilb R$ and so, by Theorem~\ref{thm:I.1}, one can blow down $L$.
\end{corollary}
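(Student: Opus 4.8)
The plan is to assemble the two assertions from results already established in this section. For the blowdown claim, I would simply verify the hypothesis $\hilb\uEnd_R(J)=\hilb R$ of Theorem~\ref{thm:I.1}. This is immediate from Theorem~\ref{thm:inverse}: there we computed that $\uEnd_{R}(J)=\uEnd_{T(2p)}(J)\cong T(p+\tau(p))=R'(\tau(p))$, the blowup of $R'=T(p)$ at $\tau(p)$, and since blowups of elliptic algebras of degree $\geq 4$ are again elliptic of one smaller degree with the expected Hilbert series, $\hilb\uEnd_R(J)=\hilb R'(\tau(p))=\hilb R$. (Alternatively, as the statement hints, one can get $\hilb\uEnd_R(J)\leq\hilb R$ directly from \eqref{eq:homJJ} and the reverse inequality from the explicit containment $R(\tau(p))\subseteq\uEnd_R(J)$, exactly as in the proof of Theorem~\ref{thm:inverse}(1).) Hence Theorem~\ref{thm:I.1} applies and $L$ can be blown down.

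For the inequality $(L\bigdot L)\neq-1$, I would argue by contradiction using the nonsplit exact sequence \eqref{dagdag} of Lemma~\ref{eg-lem-c},
\[ 0\to L[-1]\to Y/R\to L[-1]\to 0,\]
together with Theorem~\ref{thm:one}. Indeed, suppose $(L\bigdot L)=-1$. By Theorem~\ref{thm:one}(a) this is equivalent to $\uExt^1_R(L,L)=0$. Applying $\uHom_R(L,-)$ to \eqref{dagdag} and using $\uExt^1_R(L,L)=0$ would force the sequence to split after applying $\uHom_R(L,-)$, and more to the point it would force the class of \eqref{dagdag} in $\Ext^1_{\rgr R}(L[-1],L[-1])\cong\uExt^1_R(L,L)_0$ to be zero, contradicting nonsplitness of \eqref{dagdag}. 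So $\uExt^1_R(L,L)\neq 0$ and therefore $(L\bigdot L)\neq -1$.

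Actually the cleanest route is to invoke the equivalence $(2)\iff(5)$ of Theorem~\ref{thm:one} directly: $(L\bigdot L)=-1$ holds if and only if $\uExt^1_R(L,L)=0$, and Lemma~\ref{eg-lem-c} exhibits a nonsplit self-extension of $L[-1]$, hence a nonzero element of $\Ext^1_{\rgr R}(L[-1],L[-1])\cong\uExt^1_R(L,L)_0\subseteq\uExt^1_R(L,L)$. Thus $\uExt^1_R(L,L)\neq 0$, so condition~(2) of Theorem~\ref{thm:one} fails, so condition~(5) fails, i.e. $(L\bigdot L)\neq-1$. No further computation is needed — all the real work has been done in Lemmas~\ref{eg-lem-a}--\ref{eg-lem-c} and Theorems~\ref{thm:one}, \ref{thm:I.1}, \ref{thm:inverse}.

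The only genuine subtlety — and the step I would be most careful about — is making sure the nonsplitness of \eqref{dagdag} really does produce a nonzero \emph{graded} element of $\uExt^1_R(L,L)$ in the correct internal degree, rather than merely a nonsplit extension in some ambient category; but Lemma~\ref{eg-lem-c} already records that \eqref{dagdag} is a nonsplit sequence of graded right $R$-modules, so its extension class lies in $\Ext^1_{\rgr R}(L[-1],L[-1])=\uExt^1_R(L,L)_0$ by \eqref{elementary-shift}, and that is exactly what Theorem~\ref{thm:one}(2) needs to be violated. So the proof is essentially a two-line citation of earlier results, and I do not expect any real obstacle.
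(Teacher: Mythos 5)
Your proposal is correct and lands on essentially the same argument the paper gives, with one small variation in the chain of implications for $(L\bigdot L)\neq -1$. The paper localises: it computes $\End_{R^\circ}(L^\circ)=\kk$ via Lemmas~\ref{lem:red0} and~\ref{lem0.1}, reads off $\Ext^1_{R^\circ}(L^\circ,L^\circ)\neq 0$ from the localised nonsplit sequence in Lemma~\ref{eg-lem-c}, and concludes $(L\bigdot L)\geq 0$ directly from Lemma~\ref{lem0.1}; you instead stay in $\rgr R$, note that the graded nonsplit sequence \eqref{dagdag} gives a nonzero class in $\uExt^1_R(L,L)_0$, and invoke the equivalence $(2)\iff(5)$ of Theorem~\ref{thm:one}. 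Your route is slightly cleaner (no localisation needed at this stage), and it is sound precisely because Theorem~\ref{thm:one}(a) holds without any projectivity hypothesis on $J^\circ$; it just trades a direct rank computation for a citation. Both arguments hinge on the real content, namely Lemma~\ref{eg-lem-c}. One small notational slip: in your alternative derivation of $\hilb\uEnd_R(J)=\hilb R$ you write the containment as $R(\tau(p))\subseteq\uEnd_R(J)$, where you mean $R'(\tau(p))\subseteq\uEnd_R(J)$ (the ``$R$'' in the statement of Theorem~\ref{thm:inverse} plays the role of $R'=T(p)$ here); this does not affect the argument.
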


\begin{proof}  By Lemmas~\ref{lem:red0} and~\ref{lem0.1},  $\End_{ R^\circ}(L^\circ)=\kk$.
 On the other hand,  $\Ext_{R^\circ}^1(L^\circ,L^\circ) \neq 0$ by Lemma~\ref{eg-lem-c}. 
 Thus, by Lemma~\ref{lem0.1}, $(L\bigdot L)\geq 0>-1$. 
 Finally, by  Theorem~\ref{thm:inverse}, $\uEnd_R(J)=R'(q)$, and so the equality $\hilb \uEnd_R(J)=\hilb R$ follows from \cite[Theorem~1.1(1)]{R-Sklyanin}.
\end{proof}

The next result shows that there is a particularly interesting self-extension of $J^\circ$.

\begin{proposition}\label{eg-prop}
There is a nonsplit exact sequence
\beq\label{orange} 0 \to J^\circ \to P \to J^\circ \to 0\eeq
of $R^\circ$-modules, where $P$ is projective.
\end{proposition}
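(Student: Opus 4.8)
~The plan is to exhibit the extension~\eqref{orange} explicitly using the modules introduced in Notation~\ref{starting}, and then to prove projectivity of the middle term by a localisation argument.

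\textbf{Constructing the sequence.} First I would consider the inclusion $X = R'_{\leq 1}R \subseteq Y = T_{\leq 1}R$ together with the line ideals $J,J'$. The key identity is \eqref{oreo}, $T_1 J \subseteq X$, which together with $X \subseteq R'$ shows $J \subseteq J'$. I would let $P = \uHom_R(J', Y)$, or more precisely its localisation $P = \Hom_{R^\circ}((J')^\circ, Y^\circ)$; an alternative, and perhaps cleaner, choice is to realise $P$ as a suitable submodule of $Q_{\gr}(R)$ built from $Y$ and the degree-one generators of $T_1$. Concretely, the natural map $J' \to Z/R' \cong L'[-1] \cong L[-1]$ (using Lemma~\ref{eg-lem-a}(2)) and the map $Y/X \cong L[-1]$ from the proof of Lemma~\ref{eg-lem-c} both have kernel/cokernel matching that of $J \subseteq J'$ up to shift. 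I expect the cleanest route is: apply $\uHom_{R}(-, Y)$ (or rather work after inverting $g$ and restricting to degree $0$, where everything becomes an honest module category over $R^\circ$) to the short exact sequence $0 \to J \to J' \to L[-j] \to 0$ for the appropriate shift $j$, and track that $\uHom_R(L, Y)$ and $\uExt^1_R(L,Y)$ behave well. Since $Y^\circ$ is projective over $R^\circ$ by Lemma~\ref{eg-lem-b}, $\Ext^i_{R^\circ}((J')^\circ, Y^\circ)$ and $\Ext^i_{R^\circ}(J^\circ, Y^\circ)$ will vanish for $i \geq 1$, which is what makes the Hom-sequence exact and identifies the relevant Hom-modules with $J^\circ$ itself (using $\uHom_R(J,J) \cong \uEnd_R(J) \cong R'(q)$ and the various Hilbert-series computations from Theorems~\ref{thm:I.1} and~\ref{thm:inverse}).

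\textbf{Projectivity of $P$ and non-splitness.} Once the sequence \eqref{orange} is in place with $P$ some naturally-occurring $R^\circ$-module, I would prove $P$ is projective by identifying $P$ with $\End_{R^\circ}$ of a projective generator, or directly by the Dual Basis Lemma applied as in the proof of Lemma~\ref{eg-lem-b}: the module $Y^\circ$ is projective and an $R^\circ$-generator there because $\uEnd_R(Y) = T(2\ell)$ is simple; I expect $P$ to be built so that a similar equivalent-orders/maximal-order argument (citing \cite[Theorem~1.1(2)]{R-Sklyanin} and Proposition~\ref{prop:SC}) forces $\uEnd_R(P)$ to be simple, hence $P^\circ$ projective by Dual Basis. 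For non-splitness: if \eqref{orange} split then $P \cong (J^\circ)^{\oplus 2}$, so $J^\circ$ would be a direct summand of a projective module and hence projective itself, contradicting the second assertion of Lemma~\ref{eg-lem-b}. That gives non-splitness immediately and for free.

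\textbf{Main obstacle.} The real work is in step one: pinning down the correct module $P$ and verifying that the connecting maps and Hom-identifications produce \emph{exactly} the sequence $0 \to J^\circ \to P \to J^\circ \to 0$ with no spurious torsion or shift discrepancies. This requires careful bookkeeping with the chain $X \subseteq Y \subseteq Z \subseteq T$, the identifications $X/R \cong L[-1]$, $Y/X \cong L[-1]$, $Z/R' \cong L'[-1]$, the Hilbert series of $J$, and the fact (Lemma~\ref{eg-lem-a}) that $L$ has no proper $g$-torsionfree factors so that any nonzero map out of $L$ with torsionfree target is injective. I anticipate that the natural candidate is $P = \{ x \in Q_{\gr}(R) : xJ \subseteq Y \}$ localised and truncated, sitting in $0 \to \uHom_R(J,Y) \to P \to ? \to 0$; checking that both outer terms collapse to copies of $J^\circ$ after inverting $g$ is the delicate point, and is where I would expect to spend most of the effort, using Lemma~\ref{lem0.1} to move freely between $\uExt$ over $R$, over $\rqgr R$, and over $R^\circ$.
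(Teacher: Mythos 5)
Your non-splitness argument at the end is correct and efficient: if \eqref{orange} split then $J^\circ$ would be a direct summand of the projective $P$, contradicting Lemma~\ref{eg-lem-b}. But the construction of the sequence itself rests on a false premise. You assume a short exact sequence $0 \to J \to J' \to L[-j] \to 0$ for some shift $j$, but this cannot hold: $J \subseteq R$ and $J' \subseteq R'$ are line ideals in different rings, and as $R$-modules $\hilb J'/J = \hilb R'/R = s/(1-s)^3$, whereas a shift of $L$ has Hilbert series $s^j/(1-s)^2$. (Indeed $R'/R \cong \bigoplus_{i\geq 1}L[-i]$ is an \emph{infinite} direct sum of shifted line modules, not a single one.) Consequently neither proposed candidate for $P$ — $\Hom_{R^\circ}((J')^\circ, Y^\circ)$ or the localised idealiser $\{x : xJ\subseteq Y\}$ — can be made to sit between two copies of $J^\circ$ by applying $\Hom(-,Y)$ to such a sequence; in fact $\Hom_{R^\circ}(L^\circ,Y^\circ)=0$ since $Y^\circ$ is Goldie torsionfree and $L^\circ$ is torsion. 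A secondary slip: you invoke projectivity of $Y^\circ$ to conclude $\Ext^i_{R^\circ}(-,Y^\circ)=0$ for $i\geq 1$, but projectivity controls $\Ext^i(Y^\circ,-)$, not Exts into $Y^\circ$; the intended vanishing would instead follow from $J$ being CM (so $\Ext^i_{R^\circ}(J^\circ,R^\circ)=0$ for $i\geq 1$) together with $Y^\circ$ being a summand of a free module.

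The paper's route is different and more elementary. It works directly with $Y^\circ/R^\circ$, which has $\pdim = 1$ because $Y^\circ$ is projective and $R^\circ$ is free, and with the localisation of the nonsplit sequence from Lemma~\ref{eg-lem-c}, namely $0\to L^\circ\to Y^\circ/R^\circ\to L^\circ\to 0$. The quotient $R^\circ\twoheadrightarrow L^\circ$ onto the right-hand $L^\circ$ lifts to a surjection $\phi:R^\circ\twoheadrightarrow Y^\circ/R^\circ$ (using simplicity of $L^\circ$), whose kernel $K$ is projective and satisfies $K\subseteq J^\circ$ with $J^\circ/K\cong L^\circ$. Splicing $0\to K\to J^\circ\to L^\circ\to 0$ with $0\to J^\circ\to R^\circ\to L^\circ\to 0$ by a Schanuel-type argument produces $P=R^\circ\oplus K$ and the sequence \eqref{orange}. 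No Hom or idealiser computation is involved, and $J'$ plays no role.
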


\begin{proof} Recall  that $X^\circ/R^\circ \cong Y^\circ/X^\circ \cong R^\circ/J^\circ \cong L^\circ$, from the proof of
Lemma~\ref{eg-lem-c}.
Thus the localisation of \eqref{dagdag} can be written  as:
\beq \label{dagdagprime}
0 \to X^\circ/R^\circ \to Y^\circ/R^\circ \to Y^\circ/X^\circ \to 0.
\eeq

The natural surjection $\phi':  R^\circ \to R^\circ/J^\circ \cong Y^\circ/X^\circ$ lifts to a homomorphism 
$\phi:  R^\circ \to Y^\circ/R^\circ$, which must be surjective as    $L^\circ$ is simple.
As was shown
 in the proof of Lemma~\ref{eg-lem-c},  \eqref{dagdagprime} is nonsplit.  
Let $K = \ker \phi$; thus  $K$ is projective since $\pdim(Y^\circ/R^\circ) = 1$.

Clearly $K \subseteq J^\circ=\ker (\phi')$ and  $J^\circ/K \cong L^\circ$.  This isomorphism  lifts to a map 
$\theta':  R^\circ \to J^\circ$ so that $\theta'(R^\circ)+ K = J^\circ$. This induces  a surjective homomorphism $\theta:  R^\circ \oplus K \to J^\circ$.  
It is routine to check that $\ker \theta = \{ (r, k)  \in R^{\circ} \oplus K : \theta'(r) = k \}$ and that as an $R^{\circ}$-module this is 
isomorphic to $(\theta')^{-1}(K) = J^{\circ}$.  Thus we have constructed the sequence \eqref{orange} with 
 $P = R^\circ \oplus K$. As $\pdim J^\circ > \pdim P$, it does not split.
\end{proof}

We now examine the higher Ext groups from $L^\circ$ to itself; the ultimate aim being to show that $(L\dotms L)$ is undefined.

 \begin{lemma}\label{eg-lemma3} Keep the above notation. Then:
 \begin{enumerate}
 \item $\Ext_{{R^\circ}}^n(J^\circ,J^\circ) \cong \Ext_{{R^\circ}}^{n+1}(J^\circ,J^\circ) \not=0$ for $n\geq 1$.
 
 \item $\Ext_{{R^\circ}}^n(J^\circ,J^\circ) \cong \Ext_{{R^\circ}}^{n-1}(J^\circ,L^\circ)\cong \Ext_{{R^\circ}}^n(L^\circ,L^\circ)$  for all $n\geq 2$.
 
 \item In particular, $\Ext_{{R^\circ}}^n(L^\circ,L^\circ) \cong \Ext_{{R^\circ}}^{n+1}(L^\circ,L^\circ)\not=0 $ for all $n\geq 2$.
 \end{enumerate}
 \end{lemma}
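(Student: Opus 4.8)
The plan is to extract everything from the two exact sequences \eqref{orange} and \eqref{dagdagprime} together with the homological facts we have already assembled: $\pdim_{R^\circ} L^\circ = \pdim_{R^\circ} (Y^\circ/R^\circ) = 1$, $Y^\circ$ and $P$ are projective $R^\circ$-modules, and $J^\circ$ is \emph{not} projective (Lemma~\ref{eg-lem-b}). The key observation driving part~(1) is that since $P$ is projective, applying $\uHom_{R^\circ}(-,J^\circ)$ to \eqref{orange} produces, for every $n\geq 1$, an isomorphism $\Ext^{n}_{R^\circ}(J^\circ,J^\circ)\cong\Ext^{n+1}_{R^\circ}(J^\circ,J^\circ)$: the terms $\Ext^n_{R^\circ}(P,J^\circ)$ vanish for $n\geq 1$, so the connecting maps in the long exact sequence are isomorphisms in those degrees. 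It then remains to see that $\Ext^1_{R^\circ}(J^\circ,J^\circ)\neq 0$. For this I would apply $\uHom_{R^\circ}(-,J^\circ)$ to the short exact sequence $0\to J^\circ\to R^\circ\to L^\circ\to 0$: the relevant piece reads $\Hom_{R^\circ}(R^\circ,J^\circ)\to\Hom_{R^\circ}(J^\circ,J^\circ)\to\Ext^1_{R^\circ}(L^\circ,J^\circ)\to\Ext^1_{R^\circ}(R^\circ,J^\circ)=0$, so $\Ext^1_{R^\circ}(L^\circ,J^\circ)\cong\Hom_{R^\circ}(J^\circ,J^\circ)/J^\circ$. Since $J^\circ$ is not projective, the sequence $0\to J^\circ\to R^\circ\to L^\circ\to 0$ does not split, so $\Ext^1_{R^\circ}(L^\circ,J^\circ)\neq 0$; feeding this into the long exact sequence obtained from $0\to J^\circ\to P\to J^\circ\to 0$ (applying $\uHom(-,J^\circ)$ again, now using $\pdim L^\circ=1$ to control $\Ext^2_{R^\circ}(L^\circ,J^\circ)=0$ if needed), one gets $\Ext^1_{R^\circ}(J^\circ,J^\circ)\neq 0$. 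Alternatively, and perhaps more cleanly, the non-split sequence \eqref{orange} is itself a nonzero element of $\Ext^1_{R^\circ}(J^\circ,J^\circ)$, which immediately gives $\Ext^1_{R^\circ}(J^\circ,J^\circ)\neq 0$ without any further argument. Combining this with the chain of isomorphisms from the previous sentence yields $\Ext^n_{R^\circ}(J^\circ,J^\circ)\cong\Ext^{n+1}_{R^\circ}(J^\circ,J^\circ)\neq 0$ for all $n\geq 1$, which is~(1).

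\textbf{Part (2): dimension shifting.} For the isomorphisms in~(2) I would use the two short exact sequences $0\to J^\circ\to R^\circ\to L^\circ\to 0$ and $0\to J^\circ\to R^\circ\to L^\circ\to 0$ applied in both variables. Applying $\uHom_{R^\circ}(J^\circ,-)$ to $0\to J^\circ\to R^\circ\to L^\circ\to 0$ gives a long exact sequence whose pieces $\Ext^n_{R^\circ}(J^\circ,R^\circ)$ vanish for $n\geq 2$ (this follows since $R^\circ$ is obtained by localising an elliptic algebra at $g$, so by Lemmas~\ref{lem:Rdual}(1) and~\ref{lem0.1}, $\Ext^n_{R^\circ}(L^\circ,R^\circ)=0$ for $n\neq 1$, hence $\Ext^n_{R^\circ}(J^\circ,R^\circ)=0$ for $n\geq 2$ from the same short exact sequence); therefore the connecting homomorphism yields $\Ext^{n-1}_{R^\circ}(J^\circ,L^\circ)\cong\Ext^{n}_{R^\circ}(J^\circ,J^\circ)$ for $n\geq 2$. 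Dually, applying $\uHom_{R^\circ}(-,L^\circ)$ to $0\to J^\circ\to R^\circ\to L^\circ\to 0$ and using $\pdim_{R^\circ}L^\circ=1$ (so $\Ext^n_{R^\circ}(L^\circ,L^\circ)$ agrees with $\Ext^{n-1}_{R^\circ}(J^\circ,L^\circ)$ once $\Ext^n_{R^\circ}(R^\circ,L^\circ)=0$ for $n\geq 1$) gives $\Ext^{n-1}_{R^\circ}(J^\circ,L^\circ)\cong\Ext^n_{R^\circ}(L^\circ,L^\circ)$ for $n\geq 2$. Chaining these, $\Ext^n_{R^\circ}(J^\circ,J^\circ)\cong\Ext^{n-1}_{R^\circ}(J^\circ,L^\circ)\cong\Ext^n_{R^\circ}(L^\circ,L^\circ)$ for all $n\geq 2$, which is~(2).

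\textbf{Part (3).} This is now immediate: for $n\geq 2$, part~(2) identifies $\Ext^n_{R^\circ}(L^\circ,L^\circ)$ with $\Ext^n_{R^\circ}(J^\circ,J^\circ)$ and $\Ext^{n+1}_{R^\circ}(L^\circ,L^\circ)$ with $\Ext^{n+1}_{R^\circ}(J^\circ,J^\circ)$, and part~(1) says these are isomorphic and nonzero. Hence $\Ext^n_{R^\circ}(L^\circ,L^\circ)\cong\Ext^{n+1}_{R^\circ}(L^\circ,L^\circ)\neq 0$ for all $n\geq 2$.

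\textbf{Expected main obstacle.} The arithmetic bookkeeping is routine once the vanishing $\Ext^{\geq 2}_{R^\circ}(L^\circ,R^\circ)=0$ and $\pdim_{R^\circ}L^\circ=1$ are in hand; the only subtlety is to make sure the degree ranges line up correctly in the long exact sequences so that each connecting map really is an isomorphism (rather than merely surjective or injective) in the claimed range of $n$. In particular one must be careful that the sequences $0\to J^\circ\to R^\circ\to L^\circ\to 0$, $0\to J^\circ\to P\to J^\circ\to 0$, and the $\uHom$ functors are applied in the correct variable at each step, since $J^\circ$ has infinite projective dimension while $L^\circ$ has projective dimension exactly~$1$ — mixing these up would break the argument. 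No genuinely hard step is anticipated.
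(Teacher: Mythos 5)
Your chain of isomorphisms is exactly the paper's: shift $\Ext^n_{R^\circ}(J^\circ,J^\circ)$ along $0\to J^\circ\to P\to J^\circ\to 0$ using projectivity of $P$, detect nonvanishing from the nonsplitness of that sequence, and then use $0\to J^\circ\to R^\circ\to L^\circ\to 0$ in each variable (with $\Ext^{\geq 1}_{R^\circ}(J^\circ,R^\circ)=0$ and the freeness of $R^\circ$) to transfer to $\Ext^n_{R^\circ}(L^\circ,L^\circ)$.

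However, you assert — both in your strategy paragraph and in the closing remarks — that $\pdim_{R^\circ}L^\circ=1$. This is \emph{false}: it contradicts Lemma~\ref{eg-lem-c}, which explicitly records $\pdim_{R^\circ}L^\circ>1$, and it contradicts part (3) of the very statement you are proving, which forces $\pdim_{R^\circ}L^\circ=\infty$. Worse, in the final paragraph you simultaneously write that $J^\circ$ has infinite projective dimension and that $L^\circ$ has projective dimension exactly $1$; but $J^\circ$ is the first syzygy of $L^\circ$ in $0\to J^\circ\to R^\circ\to L^\circ\to 0$, so $\pdim_{R^\circ}L^\circ=\pdim_{R^\circ}J^\circ+1$ whenever $J^\circ$ is not projective — the two claims are mutually inconsistent. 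This is not a slip of notation but a real misreading of the example: the whole point of Section~\ref{EXAMPLE} is that $R=T(2p)$ is \emph{not} smooth and $L^\circ$ has \emph{infinite} projective dimension over $R^\circ$.

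The saving grace is that the false claim is not load-bearing in the argument you ultimately settle on. Your first route to $\Ext^1_{R^\circ}(J^\circ,J^\circ)\neq 0$ does lean on $\pdim_{R^\circ}L^\circ=1$ and would not survive, but the ``alternatively'' you immediately give — the nonsplit sequence \eqref{orange} directly represents a nonzero class in $\Ext^1_{R^\circ}(J^\circ,J^\circ)$ — is correct and is precisely what the paper does. Likewise, in part (2) the isomorphism $\Ext^{n-1}_{R^\circ}(J^\circ,L^\circ)\cong\Ext^{n}_{R^\circ}(L^\circ,L^\circ)$ for $n\geq 2$ needs only $\Ext^{\geq 1}_{R^\circ}(R^\circ,L^\circ)=0$, i.e.\ that $R^\circ$ is free, exactly as your own parenthetical says; the invocation of $\pdim L^\circ=1$ there does no work. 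If you strike the false assertion and keep only the ``alternatively'' in part (1), the remaining proof is essentially the paper's. One small additional remark: you state $\Ext^n_{R^\circ}(J^\circ,R^\circ)=0$ for $n\geq 2$, but the step $\Ext^{1}_{R^\circ}(J^\circ,L^\circ)\cong\Ext^{2}_{R^\circ}(J^\circ,J^\circ)$ also requires $\Ext^1_{R^\circ}(J^\circ,R^\circ)=0$, so the vanishing must hold for all $n\geq 1$; your own derivation from the CM property of $L$ in fact yields this, so just adjust the stated range.
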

 
 \begin{proof}
 
 (1) Applying $\Hom_{R^\circ}(-, J^\circ) $ to   \eqref{orange} gives the exact sequence
 \begin{equation}\label{eg-lemma4}
 \Ext^m_{R^\circ}(P,J^\circ)\to \Ext^m_{R^\circ}(J^\circ,J^\circ)\to 
 \Ext^{m+1}_{R^\circ}(J^\circ,J^\circ)\to \Ext^{m+1}_{R^\circ}(P,J^\circ)
 \end{equation}
 for $m\geq 1$.
As $P$ is projective,
 it follows that  $  \Ext^m_{R^\circ}(J^\circ,J^\circ)\cong \Ext^{m+1}_{R^\circ}(J^\circ,J^\circ)$ for   $m\geq 1$.
 Moreover, \eqref{orange} ensures that $  \Ext^1_{R^\circ}(J^\circ,J^\circ)\not=0$ and hence $  \Ext^m_{R^\circ}(J^\circ,J^\circ)\not=0$  for  $m\geq 1$.

 (2,3) Applying $\Hom_{R^\circ}(J^\circ,-)$ to $0\to J^\circ\to {R^\circ}\to L^\circ\to 0$ gives
 $$ \Ext^m_{R^\circ}(J^\circ,{R^\circ})\to \Ext^m_{R^\circ}(J^\circ,L^\circ)\to \Ext^{m+1}_{R^\circ}(J^\circ,J^\circ)\to
  \Ext^{m+1}_{R^\circ}( J^\circ,{R^\circ} )\quad \text{for all } m\geq 1.$$
Now $J^\circ$ is CM by Lemma~\ref{lem:Rdual}(2)  and so, as $m\geq 1$,   the outside terms are zero in this equation. Hence, 
  \begin{equation}\label{eg-lemma5}
   \Ext^m_{R^\circ}(J^\circ,L^\circ)\cong \Ext^{m+1}_{R^\circ}(J^\circ,J^\circ)\cong \Ext^{m+2}_{R^\circ}(J^\circ,J^\circ)\cong  \Ext^{m+1}_{R^\circ}(J^\circ,L^\circ)
\qquad\text{ for all $m\geq 1$.}   \end{equation}
 By part (1) these groups are also non-zero.
 
From the   exact sequence $0\to J^\circ \to {R^\circ} \to L^\circ \to 0$,  one also obtains    $\Ext^m_{R^\circ}(J^\circ,L^\circ)\cong
 \Ext^{m+1}_{R^\circ}(L^\circ,L^\circ)$ for  $m\geq 1$. Combined with  \eqref{eg-lemma5} this implies that 
 $$\Ext^s_{R^\circ}(L^\circ, L^\circ) \cong \Ext^{s-1}_{R^\circ}(J^\circ, L^\circ)\cong \Ext^1_{R^\circ}(J^\circ, L^\circ)\cong
  \Ext^2_{R^\circ}(L^\circ, L^\circ) \quad \text{for all } s\geq 2.$$
Finally, by  \eqref{eg-lemma5}  and part (1),    $\Ext^s_{R^\circ}(L^\circ, L^\circ)\cong  \Ext^{s-1}_{R^\circ}(J^\circ, L^\circ)\cong 
  \Ext^{s}_{R^\circ}(J^\circ, J^\circ)\not=0,$ for all $s\geq 2$. \end{proof}

Finally, by combining  Lemma~\ref{eg-lemma3}  with Proposition~\ref{prop:0.6},  we get the promised example of an undefined self-intersection.

\begin{corollary}\label{eg-cor2}
Let $R=T(2p)$ as above, with exceptional line module $L$. Then  the self-intersection $(L\bullet_{MS}L)$ is an infinite sum 
and hence is undefined.  Further, $\gldim R^\circ = \infty$ and so $\rqgr R$ is not smooth.\qed
 \end{corollary}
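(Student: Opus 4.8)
The plan is to deduce Corollary~\ref{eg-cor2} directly from the preceding lemma and the general intersection-theoretic results already established. First I would recall from Lemma~\ref{eg-lemma3}(3) that $\Ext^n_{R^\circ}(L^\circ,L^\circ)\neq 0$ for every $n\geq 2$. By Lemma~\ref{lem0.1}, for each $n$ we have $\grk\uExt^n_{\rqgr R}(L,L)=\dim_\kk\Ext^n_{R^\circ}(L^\circ,L^\circ)\geq 1$; in particular infinitely many of the graded vector spaces $\Ext^n_{\rqgr R}(L,L)$ are nonzero. By the very definition of the Mori--Smith intersection number (Definition~\ref{intersect-ms}), the alternating sum $(L\dotms L)$ is declared undefined precisely when infinitely many terms are nonzero. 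Hence $(L\dotms L)$ is undefined.

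Next I would address the failure of smoothness. By Lemma~\ref{lem:Dansmooth}, $\rqgr R$ is smooth if and only if $\gldim R^\circ<\infty$. But we have just seen that $\Ext^n_{R^\circ}(L^\circ,L^\circ)\neq 0$ for all $n\geq 2$, where $L^\circ=R^\circ/J^\circ$ is a genuine (cyclic, finitely generated) $R^\circ$-module. The existence of a pair of finitely generated modules with nonvanishing $\Ext^n$ for arbitrarily large $n$ forces $\gldim R^\circ=\infty$. Therefore $\rqgr R$ is not smooth.

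There is essentially no obstacle here: both assertions are immediate formal consequences of Lemma~\ref{eg-lemma3} once one invokes Lemma~\ref{lem0.1} to transfer the nonvanishing of $\Ext$ groups over $R^\circ$ to nonvanishing of $\grk\uExt_{\rqgr R}$, and recalls the definitions of the intersection number and of smoothness. The only mildly delicate point worth spelling out is that $L^\circ$ is indeed finitely generated over $R^\circ$ (it is cyclic, being a quotient of $R^\circ$), so that its infinite projective dimension genuinely witnesses $\gldim R^\circ=\infty$; this is why the statement is flagged with \qed rather than given a separate proof block. Accordingly I would simply write: ``By Lemma~\ref{eg-lemma3}(3) and Lemma~\ref{lem0.1}, $\grk\uExt^n_{\rqgr R}(L,L)=\dim_\kk\Ext^n_{R^\circ}(L^\circ,L^\circ)\neq 0$ for all $n\geq 2$. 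Thus infinitely many terms in the alternating sum defining $(L\dotms L)$ are nonzero, so this self-intersection is undefined, and $\gldim R^\circ=\infty$, so $\rqgr R$ is not smooth by Lemma~\ref{lem:Dansmooth}.''
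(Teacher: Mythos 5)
Your argument follows the same overall route as the paper (which, just before the corollary, says that it is obtained by ``combining Lemma~\ref{eg-lemma3} with Proposition~\ref{prop:0.6}''). However, there is a small but genuine gap at the step where you write ``in particular infinitely many of the graded vector spaces $\Ext^n_{\rqgr R}(L,L)$ are nonzero.'' Definition~\ref{intersect-ms} defines $(L\dotms L)$ using the ungraded groups $\Ext^n_{\rqgr R}(L,L)$ (the degree-zero morphism spaces in $\rqgr R$), whereas Lemma~\ref{lem0.1} only computes the $\kk[g]$-rank of the \emph{graded} group $\uExt^n_{\rqgr R}(L,L)=\bigoplus_m \Ext^n_{\rqgr R}(L,L[m])$. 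Positive rank of $\uExt^n_{\rqgr R}(L,L)$ shows only that $\Ext^n_{\rqgr R}(L,L[m])\neq 0$ for \emph{some} $m$, not that the $m=0$ piece is nonzero, so the step does not quite close. The paper's citation of Proposition~\ref{prop:0.6} is precisely what bridges this: inside its proof one sees, via Lemma~\ref{lem0.3}, that for $L=L'$ (so $j=0$) the module $N=\uExt^n_{\rqgr R}(L,L)$ satisfies $N_{\geq 0}\cong\kk[g]^{\oplus r}$ with $r=\grk N$, whence $\dim_\kk\Ext^n_{\rqgr R}(L,L)=\dim_\kk N_0=\grk\uExt^n_{\rqgr R}(L,L)$. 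Combining that identity with Lemma~\ref{lem0.1} and Lemma~\ref{eg-lemma3}(3) gives $\dim_\kk\Ext^n_{\rqgr R}(L,L)\neq 0$ for all $n\geq 2$, which is what the definition actually requires. (One might object that Proposition~\ref{prop:0.6} is stated under the hypothesis that $(L\dotms L)$ is defined, but the equality $\dim_\kk N_0=\grk N$ in its proof does not use that hypothesis.)

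Your second claim --- that $\gldim R^\circ=\infty$ because $L^\circ$ is a finitely generated $R^\circ$-module with $\Ext^n_{R^\circ}(L^\circ,L^\circ)\neq 0$ for all large $n$, and therefore $\rqgr R$ is not smooth by Lemma~\ref{lem:Dansmooth} --- is correct and matches the paper's intent. In short: replace the appeal to Lemma~\ref{lem0.1} alone by an appeal to (the proof of) Proposition~\ref{prop:0.6} together with Lemma~\ref{lem0.1}, and the argument is complete.
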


   
 \section*{Index of Notation}\label{index}
\begin{multicols}{2}
{\small  \baselineskip 14pt

  Auslander Gorenstein and CM conditions    \hfill\pageref{def:gor}
   
Blowing down a line  \hfill\pageref{blowdown}

connected graded (cg) algebra  \hfill\pageref{cg-defn}

$\Div M$, the divisor of $M\in\rgr R$  \hfill\pageref{divisor-defn}

$d$-pure and $d$-critical modules  \hfill\pageref{pure-defn}

$E = \hilb \uExt^1_B(M_p, M_{p'})$,  $C = \hilb \im \delta$ \hfill\pageref{HEC-defn}

elliptic algebra,  degree of an elliptic algebra \hfill\pageref{elliptic-defn}

$\epsilon_{p,p'}$  \hfill\pageref{epsilon}

$g$-divisible \hfill\pageref{g-div}

 $g$-torsionfree modules   \hfill\pageref{div-defn}

Goldie torsionfree   modules   \hfill\pageref{tors-defn}

 $\grk M$, torsionfree rank of a $\kk[g]$-module  \hfill\pageref{grk-defn}

$H = \hilb \uHom_B(M_p, M_{p'})$    \hfill\pageref{HEC-defn}

 $\hilb M=h_M(s)$, the Hilbert series of $M$  \hfill\pageref{hilb-defn}

 $\uHom, \uExt$  \hfill\pageref{shift-defn}

  Intersection number $(M\bigdot N)$  \hfill\pageref{our-intersection}
 
 Intersection number $(M\dotms N)$  \hfill\pageref{intersect-ms}

$\wt{K}$,  extension of $K$ by shifts of a  line module  \hfill\pageref{wt-defn}

$L^\vee=\uExt^1_R(L,R)[1]$ dual line module \hfill\pageref{lem:Rdual}

 Line ideal $J$, line module $L=R/J$   \hfill\pageref{line-defn}

  Point  module $M_p$, for $p\in E$, point ideal   \hfill\pageref{point-defn}
 
 projective dimension  $\pd_R(L)$  \hfill\pageref{pd-defn}

$\rqgr R$,  quotient category of   $\rgr R$  \hfill\pageref{elementary-shift}

 $Q_{gr}(R)$,  graded quotient ring   \hfill\pageref{quot-defn}

$R^\circ=R[g^{-1}]_0$,   localisation of $R$    \hfill\pageref{grk-defn}
 
Saturated module \hfill\pageref{section-functor}
 
Shift $M[n]$ \hfill\pageref{shift-defn}

 Sklyanin algebras, $S$   \hfill\pageref{sklyanin-defn}

Smooth noncommutative scheme  \hfill\pageref{our-intersection}

 $\tau$, automorphism defining $R$ \hfill\pageref{elliptic-defn}

$R_{(g)}$, graded localisation \hfill\pageref{div-defn}

torsion and torsionfree   modules   \hfill\pageref{tors-defn}

TCR,   twisted coordinate ring $B(X,\mathcal{M},\theta)$  \hfill\pageref{TCR-defn}

  $X(L,L') =\hilb \uExt^1_R(L,L') -\hilb \uHom_R(L,L') $    \hfill\pageref{XLL-defn}

  $\omega: \rQgr R\to \rGr R$, section functor  \hfill\pageref{section-functor}

} \end{multicols}

\bibliographystyle{amsalpha}


\begin{thebibliography}{RRS3}

\bibitem[Ar]{Ar} M.\ Artin, \emph{Some problems on three-dimensional graded
domains}, in:  {Representation  Theory and Algebraic Geometry (Waltham, MA, 1995)},
1--19, London Math.\ Soc.\ Lecture  Note Ser., no.\ 238, Cambridge Univ.
Press, Cambridge, 1997.   



\bibitem[AS]{AS} M.~Artin and  J.~T.~Stafford, \emph{Noncommutative domains with quadratic growth},
Inventiones Math.\ \textbf{122} (1995), 231-276.


\bibitem[ATV]{ATV1990}
M.~Artin, J.~Tate, and M.~{Van den Bergh}, \emph{Some algebras associated to
  automorphisms of elliptic curves}, The Grothendieck Festschrift Vol. I,
  Progr. Math., vol.~86, Birkh{\"a}user Boston, Boston, 1990, pp.~33--85.

\bibitem[AV]{AV}
M.~Artin and M.~{Van den Bergh}, \emph{Twisted homogeneous coordinate rings}, J.
  Algebra \textbf{133} (1990),   249--271.

\bibitem[AZ]{AZ} M.~Artin and J.~J.~Zhang, \emph{Noncommutative projective schemes},
 Adv. Math.   \textbf{109} (1994),   228-287.  
  
  
  \bibitem[Bj]{Bj} J.~E.~Bjork, \emph{Filtered Noetherian rings}, $\underline{\rm in}$ Noetherian Rings and their 
  Applications, Ed. L.\ W.\ Small, Math.\ Surveys and Monographs, \textbf{No.~24}, Amer.\ Math.\ Soc., Providence, RI, 1987.
  
   
\bibitem[CE]{CE} H.~Cartan and S.~Eilenberg, \emph{Homological Algebra}, Princeton University Press, Princeton, NJ, 1956.

\bibitem[Cr]{SC} S.~Crawford, Ph.D. thesis, University of Edinburgh, to appear.



\bibitem[EH]{EH2008}
K.~Erdmann and T.~Holm, \emph{Maximal {$n$}-orthogonal modules for
  selfinjective algebras}, Proc. Amer. Math. Soc. \textbf{136} (2008),  
  3069--3078.  
 
\bibitem[KRS]{KRS}  D.~S.~Keeler, D.~Rogalski  and J.~T.~Stafford, 
\emph{Naive noncommutative blowing up}, Duke Math.\ J.  \textbf{126} (2003),  491-546.


\bibitem[KL]{KL} G.\ R.\ Krause and T.\ H.\ Lenagan, \emph{Growth of Algebras and Gelfand-Kirillov Dimension}, 
Research Notes in Math.\  {\textbf Vol.\ 116}, Pitman Boston, 1985.


\bibitem[Le]{Lev1992}
T.~Levasseur, \emph{Some properties of noncommutative regular graded
  rings}, Glasgow Math.\ J.\ \textbf{34} (1992),   277--300. 
 

\bibitem[Ma]{MacLane}
S.~Mac~Lane, \emph{Homology}, Classics in Mathematics, Springer-Verlag,
  Berlin, 1995.   
 

\bibitem[MS]{MS} I.\ Mori and  S.~P.~Smith, \emph{B\'ezout's theorem for noncommutative projective spaces}, 
J.\ Pure and Applied Algebra, \textbf{157} (2001), 279-299.
 
\bibitem[PV]{PV} D.~Presotto and M.~Van den Bergh, \emph{Noncommutative versions of some classical birational 
transformations}, J. Noncommut. Geom., \textbf{10} (2016), 221-244.

\bibitem[Rg]{R-Sklyanin}
D.~Rogalski, \emph{Blowup subalgebras of the {S}klyanin algebra},
 Adv.\ Math.\ \textbf{226} (2011), 1433-1473.
 
 

\bibitem[RSS1]{RSSshort}
D.~Rogalski, S.~J.~Sierra, and J.~T.~Stafford, \emph{Noncommutative blowups of elliptic algebras},  
  {A}lgebra and {R}ep.\ {T}heory, \textbf{18} (2015), 491-529.

\bibitem[RSS2]{RSSlong}
D.~Rogalski, S.~J. Sierra, and J.~T.~Stafford, \emph{Classifying orders in the
  {S}klyanin algebra}, Algebra and Number Theory \textbf{9} (2015), 2055-2119.
 
 \bibitem[RSS3]{RSS4} D.~Rogalski, S.~J. Sierra, and J.~T.~Stafford, \emph{Ring-theoretic blowing down 
 II: Birational transformations}, in preparation.    

\bibitem[RSS4]{RSSminimal} D.~Rogalski, S.~J. Sierra, and J.~T.~Stafford, \emph{The Sklyanin algebra is a minimal model}, in preparation.  
 
 
 \bibitem[Rt]{Rotman}
J.~J.~Rotman, \emph{An Introduction to Homological Algebra}, second ed.,
  Universitext, Springer, New York, 2009.


    
  \bibitem[SV]{SV} J.~T. Stafford and  M.~Van~den Bergh, \emph{Noncommutative curves
and noncommutative surfaces},
{\it Bull.\ Amer.\ Math.\ Soc.}   \textbf{38} (2001), 171-216.

 

\bibitem[VdB1]{VdB-blowups}  M.~{Van den Bergh}, \emph{Blowing up of non-commutative smooth surfaces},
  Mem. Amer. Math. Soc. \textbf{154} (2001), no.~734. 
  
  
\bibitem[VdB2]{VdB3}
 M.~{Van den Bergh}, \emph{Noncommutative quadrics},  Int.\ Math.\ Res.\ Not.\ (2011), \textbf{no. 17}, 3983--4026.
  \end{thebibliography}

 \def\cprime{$'$}
\providecommand{\bysame}{\leavevmode\hbox to3em{\hrulefill}\thinspace}
\providecommand{\MR}{\relax\ifhmode\unskip\space\fi MR }
\providecommand{\MRhref}[2]{%
  \href{http://www.ams.org/mathscinet-getitem?mr=#1}{#2}
}
\providecommand{\href}[2]{#2}


\end{document}